\newtheorem{thm}{Theorem}[section]
\newtheorem{claim}[thm]{Claim}
\newtheorem{con}[thm]{Conjecture}
\newtheorem{defn}[thm]{Definition}
\newtheorem{example}[thm]{Example}
\newtheorem{lem}[thm]{Lemma}
\newtheorem{prop}[thm]{Proposition}
\numberwithin{equation}{section}
\newcommand{\bN}{{\mathbb{N}}}
\newcommand{\bQ}{{\mathbb{Q}}}
\newcommand{\bR}{{\mathbb{R}}}
  \newcommand{\C}{{\mathcal{C}}}
  \newcommand{\D}{{\mathcal{D}}}
  \newcommand{\F}{{S}}
  \newcommand{\I}{{\mathcal{I}}}
  \newcommand{\M}{{\mathcal{M}}}
  \newcommand{\N}{{\mathcal{N}}}
\renewcommand{\S}{{\mathcal{S}}}
  \newcommand{\X}{{\mathcal{X}}}
  \newcommand{\Y}{{\mathcal{Y}}}
\newcommand{\rank}{\operatorname{rank}}
\newcommand{\td}{\mathrm{td}\,}
\begin{document}
\title[Symmetry-forced rigidity of frameworks on surfaces]{Symmetry-forced rigidity of frameworks on surfaces}
\author[Anthony Nixon]{Anthony Nixon}
\address{Department of Mathematics and Statistics\\ Lancaster University\\ Lancaster\\
LA1 4YF \\ U.K. }
\email{a.nixon@lancaster.ac.uk}
\author[Bernd Schulze]{Bernd Schulze}
\address{Department of Mathematics and Statistics\\ Lancaster University\\ Lancaster\\
LA1 4YF \\ U.K. }
\email{b.schulze@lancaster.ac.uk}
\date{\today}

\maketitle

\begin{abstract}
A fundamental theorem of Laman characterises when a bar-joint framework realised
generically in the Euclidean plane admits a non-trivial continuous deformation of its vertices. This
has recently been extended in two ways. Firstly to frameworks that are symmetric with respect to
some point group but are otherwise generic, and secondly to frameworks in Euclidean 3-space that
are constrained to lie on 2-dimensional algebraic varieties.

We combine these two settings and consider the rigidity of symmetric frameworks realised on
such surfaces. First we establish necessary conditions for a framework to be symmetry-forced rigid
for any group and any surface by setting up a symmetry-adapted rigidity matrix for such frameworks
 and by extending the methods in \cite{jkt} to this new context. This
gives rise to several new symmetry-adapted rigidity matroids on group-labelled quotient graphs.
In the cases when the surface is a sphere, a cylinder or a cone we then also provide combinatorial
characterisations of generic symmetry-forced rigid frameworks for a number of symmetry groups,
including rotation, reflection, inversion and dihedral symmetry. The proofs of these results are based
on some new Henneberg-type inductive constructions on the group-labelled quotient graphs that correspond
to the bases of the matroids in question.
 For the remaining symmetry groups in 3-space - as well as for other types of surfaces - we
provide some observations and conjectures.
\keywords{rigidity \and symmetry \and surfaces \and framework \and gain graph \and inductive construction}
\end{abstract}

\section{Introduction}

A finite simple graph embedded into Euclidean space $\bR^d$ with vertices interpreted as universal joints and edges as stiff bars is known as a \emph{bar-joint framework}. We are interested in establishing from the combinatorics of the graph when it is possible to deform such frameworks. A framework is \emph{rigid} if there is no edge-length preserving continuous motion of the vertices which changes the distance between a pair of unconnected joints \cite{asiroth,GSS,W1}. Deciding the rigidity of a framework is typically an NP-hard problem \cite{Ab}. One way around this is to restrict attention to generic frameworks; that is, frameworks whose vertex coordinates form an algebraically independent set over $\bQ$.
A fundamental result in rigidity theory is Laman's theorem which gives a combinatorial characterisation of generic rigid frameworks in Euclidean 2-space \cite{Lamanbib}.
Finding combinatorial characterisations of generic rigid bar-joint frameworks in dimensions 3 and higher remains a key open problem in discrete geometry (see \cite{W1}, for example). However, very recently, Laman-type characterisations have been established for  generic rigid bar-joint frameworks in 3D whose joints are constrained to concentric spheres or cylinders or to surfaces which have a one-dimensional space of tangential motions (e.g., the torus or surfaces of revolution) \cite{nop1,no,nop} (see also Theorem \ref{thm:surf3}).

Over the last decade, a number of papers have studied when symmetry causes frameworks on a graph to become infinitesimally flexible, or stressed, and when it has no impact. These questions not only lead to many interesting and appealing mathematical results (see \cite{jkt,mt1,mt,owen,BSlaman,BSWWorbit,tan,LT}, for example) but they also have a number of important practical applications in biochemistry and engineering, since many natural structures such as molecules and proteins, as well as many human-built structures such as linkages and other mechanical machines, exhibit non-trivial symmetries (see \cite{FGsymmax,dimers,W1}, for example).
  
Of particular interest are symmetry-induced infinitesimal motions which are fully symmetric (in the sense that the velocity vectors are invariant under all symmtries of the framework), because for symmetry-generic configurations (i.e., configurations which are as generic as possible subject to the given symmetry constraints), the existence of a fully-symmetric infinitesimal motion guarantees the existence of a finite (i.e., continuous) motion which preserves the symmetry of the framework throughout the path \cite{KG1,BS6}. A symmetric framework which has no non-trivial fully symmetric motion is said to be \emph{symmetry-forced rigid} \cite{jkt,mt1,mt,tan}.

To detect fully symmetric infinitesimal motions in a symmetric framework,
 a symmetric analog of the rigidity matrix, called the orbit rigidity matrix, has
recently been constructed in \cite{BSWWorbit}.
The orbit rigidity matrix of a framework with symmetry group $S$ has one row for each edge orbit, and one set of columns for each vertex orbit under the  group action of $S$, and its entries can explicitly be derived in a very simple and transparent fashion (see \cite{BSWWorbit} for details). The key properties of the orbit rigidity matrix are that its kernel is isomorphic to the space of $S$-symmetric infinitesimal motions of the framework, and its co-kernel is isomorphic to the space of $S$-symmetric self-stresses of the framework. 
 Using  the orbit rigidity matrix, combinatorial characterisations of symmetry-forced rigid symmetry-generic frameworks have recently been established for a number of symmetry groups in the plane (under the assumption that the symmetry group acts freely on the framework joints) \cite{jkt,mt,mt1}.

In this paper, we extend these concepts and some of these combinatorial results to symmetric frameworks in 3D whose joints are constrained to surfaces. The type of a surface (see Definition \ref{d:surfacetype}) is the dimension of the space of tangential isometries.
The combinatorial descriptions, with or without symmetry, depend on this type.

In Section~\ref{sec:necesscond}, we first establish an orbit rigidity matrix for such frameworks.
We then adopt the methods recently described in \cite{jkt} and use this new matrix to  derive necessary
 conditions for symmetric frameworks on surfaces to be symmetry-forced rigid for any point group
 which is compatible with the given surface.

Furthermore, in Sections~\ref{sec:combchar}--\ref{sec:thms} we use the orbit-surface rigidity matrix to derive combinatorial characterisations of symmetry-forced rigid frameworks which are embedded generically with inversive or certain improper-rotational  (where an improper rotation is a rotation followed by a reflection in a plane perpendicular to the rotation axis) or dihedral symmetry on the sphere, with rotational, reflective or inversive symmetry on the cylinder or with rotational, reflective, inversive or certain improper-rotational symmetry on the cone. We prove the sufficiency of these combinatorial counts by first showing that a short list of Henneberg-type inductive operations is sufficient to recursively generate all of the appropriate classes of group-labeled quotient graphs (Section~\ref{sec:recchar}). Then we adapt results from \cite{nop,nop1,W2} to show that each of these operations  preserves the maximality of the rank of the orbit-surface rigidity matrix (Section~\ref{sec:geom}). A summary of the results is given in Section~\ref{sec:combchar}. (See also the tables in Section~\ref{sec:furtherwork}.)

We finish by providing a number of conjectures for some other groups and surfaces (Section~\ref{sec:furtherwork}). In particular, we briefly discuss an alternative 2-fold rotational symmetry on the cylinder: half turn symmetry with axis perpendicular to the cylinder. This situation induces a symmetry-preserving motion in a framework that counts to be minimally rigid without symmetry! For this case and some others we conjecture that the necessary counts we derived here are sufficient.


\section{Frameworks on surfaces} \label{sec:fwsurf}

In \cite{nop,nop1} frameworks supported on surfaces were considered. In particular, attention was paid to classical surfaces such as spheres, cylinders and cones.
Formally, let $\M$ be a 2-dimensional irreducible algebraic variety embedded in $\bR^3$.
We expect that with minor modifications our theorems and arguments can almost certainly be extended to certain reducible varieties. However these varieties must have the special property (parallel planes, concentric cylinders, etc.) that the dimension of the space of tangential isometries of $\M$ is the same as in each irreducible component.

A framework on a surface $\M\subseteq \mathbb{R}^3$ is a pair $(G, p)$, where $G$ is a finite simple graph
and $p:V(G)\to \M$ is a map such that $p(i)\neq p(j)$ for all $\{i,j\}\in E(G)$. We also say that $(G,p)$ is a  \emph{realisation} of the \emph{underlying graph} $G$ in $\mathbb{R}^3$ which is \emph{supported} on $\M$. For $i\in V(G)$, we say that $p(i)$ is the \emph{joint} of $(G,p)$ corresponding to $i$, and for $e=\{i,j\}\in E(G)$, we say that the line segment between $p(i)$ and $p(j)$ is the \emph{bar} of $(G,p)$ corresponding to $e$. For simplicity, we denote $p(i)$ by $p_i$ for $i\in V(G)$. 

An \emph{infinitesimal motion} of a framework $(G, p)$ on a surface $\M$ is a sequence $u$ of velocity vectors $u_1, \dots , u_{|V(G)|}$,
considered as acting at the framework joints, which  are tangential to the surface and
satisfy the infinitesimal flex requirement in $\bR^3$, $(u_i-u_j)\cdot(p_i-p_j) = 0$,
for each edge $\{i,j\}$.
It is elementary to show that $u$ is an infinitesimal motion if and only if $u$ lies in the
nullspace (kernel) of the rigidity matrix $R_\M(G,p)$ given in the following definition.
The submatrix of $R_{\M}(G,p)$ given by the first
$|E(G)|$ rows provides the usual rigidity matrix, $R_3(G,p)$ say, for the unrestricted framework $(G,p)$ (see \cite{W1}, for example).
The tangentiality condition corresponds to
$u$ lying in the nullspace of the matrix formed by the last $|V(G)|$ rows.

\begin{defn}\label{defn:surfacematrix}
The
\emph{rigidity matrix} $R_\M(G,p)$ of $(G,p)$ on $\M$ is an $|E(G)|+|V(G)|$ by $3|V(G)|$ matrix of the form \[\begin{bmatrix} R_3(G,p)\\ \N(p) \end{bmatrix}. \] Consecutive triples of columns in $R_\M(G,p)$ correspond to
framework joints. $R_3(G,p)$ is the usual rigidity matrix of $(G,p)$, that is,  the first $|E(G)|$ rows of $R_\M(G,p)$ correspond to the bars of $(G,p)$ and the entries in row $e=\{i,j\}$ are zero except possibly in the column triples for
$p_i$ and $p_j$, where the entries are the coordinates of $p_i-p_j$ and $p_j-p_i$ respectively. The final $|V(G)|$ rows of $R_\M(G,p)$ (i.e. the rows of $\N(p)$) correspond to the joints of $(G,p)$ and the entries
in the row for vertex $i$ are zero except in the columns for $i$ where the entries are the coordinates of a normal vector $N(p_i)$  to $\M$
at $p_i$.
\end{defn}

An infinitesimal motion of a framework $(G,p)$ on $\M$ is called \emph{trivial} if  it lies in the kernel of $R_\M(K_n,p)$, where $K_n$ is the complete graph on the vertex set of $G$. If every infinitesimal motion of $(G,p)$ is trivial, then $(G,p)$ is called \emph{infinitesimally rigid}. Otherwise $(G,p)$ is called \emph{infinitesimally flexible}. 

Let $\bQ(p)$ denote the field extension of $\bQ$ formed by adjoining the coordinates of $p$.
A framework $(G,p)$ on $\M$ is said to be \emph{generic} for $\M$ if $\td[\bQ(p):\bQ]=2|V(G)|$. 
This implies, \cite[Corollary $3.2$]{JMN}, that
any rational polynomial $h(x)$ in $3|V(G)|$ variables that satisfies $h(p)=0$ satisfies $h(q)=0$ for all points $q \in \M^{3|V(G)|}$.

A framework $(G,p)$ supported on $\M$ is called \emph{regular} if $\rank R_\M(G,p)= max \{\rank R_\M(G,q): q \in \M^{|V(G)|}\}$. If a framework on $\M$ is generic,
then it is clearly also regular. Moreover, if some realisation of a graph $G$ on $\M$ is infinitesimally rigid, then the same is true for every regular (and hence every generic) realisation of $G$ on $\M$.

\begin{thm}[\cite{nop1}]\label{thm:finitemotion}
A regular framework $(G,p)$ on an algebraic surface $\M$ is infinitesimally rigid if and
only if it is continuously rigid on $\M$.
\end{thm}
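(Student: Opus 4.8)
The plan is to follow the classical argument of Asimow and Roth \cite{asiroth}, adapted to the surface setting. Write $n:=|V(G)|$ and regard the configuration space $\M^n\subseteq\bR^{3n}$ as a real algebraic variety; since $\M$ is irreducible of dimension $2$, $\M^n$ is irreducible of dimension $2n$, and at a regular framework $p$ is a smooth point of $\M^n$, so $T_p\M^n$ has dimension $2n$. Let $f_G\colon\M^n\to\bR^{|E(G)|}$ be the edge-length map $q\mapsto(\|q_i-q_j\|^2)_{\{i,j\}\in E(G)}$, and similarly define $f_{K_n}$; then $\mathrm{d}f_G|_p$ agrees up to a scalar with the first $|E(G)|$ rows of $R_\M(G,p)$, while the remaining rows cut out $T_p\M^n$, so $\ker R_\M(G,p)$ is the space of infinitesimal motions of $(G,p)$ on $\M$ and $\ker R_\M(K_n,p)$ the space of trivial ones. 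Since $f_{K_n}^{-1}(f_{K_n}(p))\subseteq f_G^{-1}(f_G(p))$, and a continuous edge-length-preserving motion of $(G,p)$ on $\M$ is exactly a path in $f_G^{-1}(f_G(p))\cap\M^n$ issuing from $p$ that changes a non-adjacent distance as soon as it leaves $f_{K_n}^{-1}(f_{K_n}(p))$, continuous rigidity of $(G,p)$ is equivalent to the two sets $f_G^{-1}(f_G(p))\cap\M^n$ and $f_{K_n}^{-1}(f_{K_n}(p))\cap\M^n$ coinciding in a neighbourhood of $p$.

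First I would show that infinitesimal rigidity implies continuous rigidity, by contraposition and without using regularity. If $(G,p)$ is not continuously rigid, then $p$ lies in the closure of the semialgebraic set $\bigl(f_G^{-1}(f_G(p))\setminus f_{K_n}^{-1}(f_{K_n}(p))\bigr)\cap\M^n$ but not in that set, so the Curve Selection Lemma yields a non-constant real-analytic path $\sigma\colon[0,\varepsilon)\to\M^n$ with $\sigma(0)=p$, $f_G(\sigma(t))\equiv f_G(p)$, and $f_{K_n}(\sigma(t))\neq f_{K_n}(p)$ for $t>0$. Using a local slice at $p$ for the action on $\M^n$ of the group $\mathrm{Iso}(\M)=\{A\in\mathrm{Iso}(\bR^3):A(\M)=\M\}$ — whose orbit through $p$ is tangent to the space of trivial infinitesimal motions — one replaces $\sigma$ by $t\mapsto A_t^{-1}\sigma(t)$ for a suitable analytic curve $A_t$ in $\mathrm{Iso}(\M)$ with $A_0=\mathrm{id}$, obtaining an analytic path which is still non-constant, still lies in $\M^n$, still preserves the edge lengths of $G$ (isometry-invariance of $f_G$), still avoids $f_{K_n}^{-1}(f_{K_n}(p))$ for $t>0$, and is transverse to the orbit at $p$. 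Differentiating the edge and tangency equations at $t=0$ then shows that the first non-vanishing derivative of this path is a vector in $\ker R_\M(G,p)$ that is not tangent to the orbit, hence a non-trivial infinitesimal motion; so $(G,p)$ is infinitesimally flexible.

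For the converse I would show that infinitesimal flexibility implies continuous flexibility, and this is where regularity is essential. Because $(G,p)$ is regular, $\rank R_\M(G,\cdot)$ is maximal, hence (by lower semicontinuity of rank on the smooth locus) locally constant, near $p$; by the constant-rank theorem $f_G^{-1}(f_G(p))\cap\M^n$ is, near $p$, a smooth connected submanifold of $\M^n$ of dimension $\dim\ker R_\M(G,p)$. Meanwhile the tangent cone at $p$ to $f_{K_n}^{-1}(f_{K_n}(p))\cap\M^n$ is contained in $\ker R_\M(K_n,p)$, so this set has local dimension at most $\dim\ker R_\M(K_n,p)$ at $p$. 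If $(G,p)$ is infinitesimally flexible then $\dim\ker R_\M(G,p)>\dim\ker R_\M(K_n,p)$, so near $p$ the manifold $f_G^{-1}(f_G(p))\cap\M^n$ properly contains $f_{K_n}^{-1}(f_{K_n}(p))\cap\M^n$; as a closed semialgebraic set of strictly smaller local dimension, the latter cannot be a neighbourhood of $p$ in the former, so its complement in the manifold accumulates at $p$, and joining $p$ by a path inside the manifold to a configuration realising a different non-adjacent distance produces a non-trivial continuous motion of $(G,p)$ on $\M$.

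Apart from the formal manifold-and-dimension counting, the step I expect to be the main obstacle — and the one needed for the first implication — is the identification of $\ker R_\M(K_n,p)$ with the tangent space at $p$ to the $\mathrm{Iso}(\M)$-orbit through $p$, equivalently the claim that a regular framework has no "extra" trivial infinitesimal motions beyond those induced by the tangential isometries of $\M$. This is precisely where the two-dimensionality of $\M$ (so that $\dim\M^n=2n$) and its special structure (so that $\dim\mathrm{Iso}(\M)$ equals the type of $\M$) enter, and it is what separates the surface case from the Euclidean one of \cite{asiroth}; once it is settled, the remaining steps follow the classical template.
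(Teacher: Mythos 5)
Your direction~2 (infinitesimal flexibility implies continuous flexibility, using regularity) is sound and is the standard Asimow--Roth step adapted to the surface setting: the constant-rank theorem for $R_\M(G,\cdot)$ near the regular point $p$ gives that $f_G^{-1}(f_G(p))\cap\M^n$ is a smooth manifold of dimension $\dim\ker R_\M(G,p)$, and the tangent-cone bound on the local dimension of $f_{K_n}^{-1}(f_{K_n}(p))\cap\M^n$ forces a non-trivial flex whenever $\dim\ker R_\M(G,p)>\dim\ker R_\M(K_n,p)$.

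The gap is in direction~1, and it is exactly the step you flag as the main obstacle --- but the difficulty is worse than ``a claim to be settled,'' because the claim is simply false. You want to identify $\ker R_\M(K_n,p)$ with the tangent space to the $\mathrm{Iso}(\M)$-orbit at $p$, i.e., to assert that a regular framework has no ``extra'' trivial infinitesimal motions. The paper points out immediately after the theorem that this fails: ``the complete graphs $K_2$ and $K_3$ provide curiosities when $\M$ is a cylinder in that they are continuously rigid but have non-trivial vectors in their nullspaces which are not tangential isometries.'' Indeed for a regular pair $p=(p_1,p_2)$ on the cylinder, $R_\M(K_2,p)$ is a rank-$3$ matrix with $6$ columns, so $\dim\ker R_\M(K_2,p)=3$, while $\dim\mathrm{Iso}(\text{cylinder})=2$. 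Because of this, slicing the analytic flex by an analytic curve in $\mathrm{Iso}(\M)$ kills only the orbit's $k$ directions; the first non-vanishing Taylor coefficient of the modified path, although transverse to the orbit, may still lie in $\ker R_\M(K_n,p)$ (which strictly contains the orbit's tangent space), in which case it is a \emph{trivial} infinitesimal motion in the sense of Definition~\ref{defn:surfacematrix} and gives no contradiction with infinitesimal rigidity. The inference ``not tangent to the orbit, hence a non-trivial infinitesimal motion'' therefore does not follow, and your proof of the direction ``infinitesimally rigid $\Rightarrow$ continuously rigid'' --- in particular the assertion that it holds without using regularity --- has a genuine hole.

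To repair this one cannot simply prove your claim, since it is false; one must either handle separately the small configurations on which $\ker R_\M(K_n,p)$ exceeds the orbit's tangent space (these are precisely the cases $|V(G)|<6-k$ the paper singles out, where the theorem holds for trivial reasons), or argue without reference to the orbit at all --- for instance by showing that regularity and infinitesimal rigidity of $(G,p)$ force $(K_n,p)$ to be regular too, so that $f_{K_n}^{-1}(f_{K_n}(p))\cap\M^n$ is itself a smooth manifold of dimension $\dim\ker R_\M(K_n,p)$ near $p$, and then invoking the fact that two nested smooth manifolds of equal dimension through $p$ coincide near $p$.
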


Note that the complete graphs $K_2$ and $K_3$ provide curiosities when $\M$ is a cylinder in that they are continuously rigid but have non-trivial vectors in their nullspaces which are not tangential isometries. For graphs with $|V(G)|\geq 6-k$ on a surface of type $k$ (see Definition \ref{d:surfacetype}) such worries disappear and both possible definitions of infinitesimal rigidity are equivalent.

\begin{defn}\label{d:surfacetype}
A  surface $\M$  is said to be of type $k$ if $\dim \ker R_{\M}(K_n,p)\geq k$ for all complete graph frameworks $(K_n,p)$ on $\M$ and $k$ is the largest such number.
\end{defn}

In other words $k$ is the dimension of the group $\Gamma$ of Euclidean isometries supported by $\M$.

 
A framework on a surface $\M$ is called \emph{isostatic} if it is minimally infinitesimally rigid, that is, if it is infinitesimally rigid and the removal of any bar results in an infinitesimally flexible framework. The following three results concerning generic isostatic frameworks on surfaces were recently established in \cite{nop,nop1}. 
 
\begin{thm}\label{thm:surf1}
Let $(G, p)$ be an isostatic generic framework on the algebraic surface $\M$  of type $k, 0\leq k\leq 3$,
with $G$ not equal to $K_1, K_2, K_3$ or $K_4$.
Then $|E(G)|=2|V(G)|-k$ and for every subgraph $H$ of $G$ with at least one edge,
$|E(H)|\leq 2|V(H)|-k$.
\end{thm}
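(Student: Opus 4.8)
The plan is to run the standard Maxwell-type rank count on the surface rigidity matrix $R_\M(G,p)$, which has $|E(G)|+|V(G)|$ rows and $3|V(G)|$ columns. First I would record two facts. (i) For a generic, hence regular, realisation the $|V(G)|$ rows of $\N(p)$ are linearly independent, since their nonzero entries lie in pairwise disjoint column triples and each normal vector $N(p_i)$ is non-zero. (ii) Since $(G,p)$ is isostatic, deleting any single bar strictly enlarges the infinitesimal motion space, hence strictly decreases $\rank R_\M(G,p)$; combined with (i) this forces all $|E(G)|+|V(G)|$ rows of $R_\M(G,p)$ to be linearly independent, because any nontrivial dependence among the rows must, by (i), involve an edge-row, which could then be solved for, contradicting the rank drop upon deleting the corresponding bar. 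Now fix a subgraph $H\subseteq G$ with at least one edge. The rows of $R_\M(G,p)$ indexed by $E(H)$ and by $V(H)$ have all their nonzero entries in the column triples of $V(H)$, so after discarding the identically zero columns they form exactly $R_\M(H,p|_{V(H)})$; by (ii) these rows are independent, so $\rank R_\M(H,p|_{V(H)})=|E(H)|+|V(H)|$. On the other hand $H$ is a spanning subgraph of the complete graph on $V(H)$, so $\ker R_\M(H,p|_{V(H)})\supseteq\ker R_\M(K_{|V(H)|},p|_{V(H)})$, which has dimension at least $k$ because $\M$ is of type $k$; hence $\rank R_\M(H,p|_{V(H)})\le 3|V(H)|-k$. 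Comparing the two gives $|E(H)|\le 2|V(H)|-k$, which with $H=G$ yields $|E(G)|\le 2|V(G)|-k$.

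For the reverse inequality I would invoke infinitesimal rigidity directly. Since $(G,p)$ is infinitesimally rigid its motion space is the space of trivial motions, i.e. $\ker R_\M(G,p)=\ker R_\M(K_{|V(G)|},p)$, and since $G\notin\{K_1,K_2,K_3,K_4\}$ this space is exactly $k$-dimensional on the type-$k$ surface $\M$. Hence $\rank R_\M(G,p)=3|V(G)|-k$, and as the rank cannot exceed the number of rows, $3|V(G)|-k\le|E(G)|+|V(G)|$, i.e. $|E(G)|\ge 2|V(G)|-k$. Together with the previous paragraph this gives $|E(G)|=2|V(G)|-k$.

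The only step that is not pure linear algebra is the assertion, used just above, that the space of trivial infinitesimal motions of a generic framework on a type-$k$ surface is exactly $k$-dimensional once $G\notin\{K_1,K_2,K_3,K_4\}$ --- equivalently that $\dim\ker R_\M(K_{|V(G)|},p)=k$ in that range. This is precisely why the four small complete graphs must be excluded: on a cylinder $K_2$ and $K_3$ carry nullspace vectors that are not tangential isometries, so for them $\dim\ker R_\M(K_n,p)$ genuinely exceeds $k$. I would settle this by appealing to \cite{nop1}, where the dimension of the trivial motion space is determined and the two competing notions of infinitesimal rigidity on a type-$k$ surface are shown to agree once $|V(G)|$ is large enough relative to $k$; the handful of remaining small graphs falling outside that range can be treated by direct inspection. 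So I expect the care in the proof to be concentrated entirely in this dimension bookkeeping, the counting argument itself being routine.
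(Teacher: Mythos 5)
The paper does not actually prove Theorem~\ref{thm:surf1}; it is stated as imported directly from \cite{nop,nop1} (``The following three results concerning generic isostatic frameworks on surfaces were recently established in \cite{nop,nop1}''), so there is no in-paper proof for me to compare your argument against.

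That said, your proposal is the standard Maxwell-style rank count on $R_\M$, and it is essentially sound. Points (i) and (ii) correctly yield full row rank for an isostatic framework: the $\N(p)$ rows are trivially independent, and any dependence must then carry an edge row, so deleting that edge leaves the rank and hence the kernel unchanged, which contradicts minimality of rigidity. The subgraph bound $|E(H)|\le 2|V(H)|-k$ then follows cleanly by restricting to the $V(H)$-column triples, since the rows indexed by $E(H)\cup V(H)$ are supported there and the type-$k$ bound $\dim\ker R_\M(K_{|V(H)|},p|_{V(H)})\ge k$ holds for \emph{all} configurations by Definition~\ref{d:surfacetype}, so no genericity of the restriction is needed. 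The only genuine remaining issue is the one you flag: to get $|E(G)|\ge 2|V(G)|-k$ you need the trivial-motion space to have dimension \emph{exactly} $k$, i.e.\ $\dim\ker R_\M(K_{|V(G)|},p)=k$ at a generic $p$, which is not automatic from the definition of type (that only gives $\ge k$). The exclusion of $K_1,\dots,K_4$ exists precisely to remove the small-graph anomalies (e.g.\ $K_2$, $K_3$ on the cylinder having extra, non-isometry null vectors), and the paper's own remark that the two rigidity notions agree once $|V(G)|\ge 6-k$ is exactly the ingredient you would want to cite from \cite{nop1}. Be slightly careful though: $G\notin\{K_1,\dots,K_4\}$ does \emph{not} by itself imply $|V(G)|\ge 5$; the case $|V(G)|=4$, $G\ne K_4$ must be absorbed either by the $|V|\ge 6-k$ threshold (which covers $k\ge 2$) or by observing that for $k\le 1$ no four-vertex simple graph can reach the requisite edge count. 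With that dimension bookkeeping supplied from \cite{nop1} (which is admissible here, since the paper itself defers the whole theorem to that reference), your argument goes through.
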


\begin{lem}\label{lem:surf2}
Let $(G,p)$ be a regular framework on a surface $\M$ of type $k$. Then $(G,p)$ is
isostatic on $\M$ if and only if
\begin{enumerate}
\item $\rank R_{\M}(G,p) = 3|V(G)|-k$ and
\item $ 2|V(G)|-|E(G)| =k.$
\end{enumerate}
\end{lem}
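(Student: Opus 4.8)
The plan is to reduce the statement to the linear‑algebraic assertion that a regular framework is isostatic on $\M$ exactly when $R_\M(G,p)$ is a square matrix of full row rank. The one external input I will use is that, for a regular framework $(G,p)$ on a surface of type $k$ with $G$ not one of the small exceptional graphs (so that the phenomenon described in the remark following Theorem~\ref{thm:finitemotion} does not occur), the space of trivial infinitesimal motions has dimension exactly $k$; equivalently $\dim\ker R_\M(K_n,p)=k$, i.e.\ $\rank R_\M(K_n,p)=3|V(G)|-k$, where $n=|V(G)|$. The inequality $\dim\ker R_\M(K_n,p)\ge k$ is the definition of type $k$, and the matching upper bound in the regular case is taken from \cite{nop1} (see also the discussion preceding Definition~\ref{d:surfacetype}).

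I first rephrase infinitesimal rigidity. Since $G$ is a spanning subgraph of $K_n$ we always have $\ker R_\M(K_n,p)\subseteq\ker R_\M(G,p)$, so $(G,p)$ is infinitesimally rigid if and only if these two kernels coincide, i.e.\ if and only if $\dim\ker R_\M(G,p)=k$, i.e.\ if and only if $\rank R_\M(G,p)=3|V(G)|-k$. Thus condition~(1) is precisely infinitesimal rigidity, and it remains to prove that, \emph{assuming}~(1) (so that $(G,p)$ is infinitesimally rigid), the framework is minimally infinitesimally rigid if and only if~(2) holds. Next I translate minimality. For an edge $e$, the framework $(G-e,p)$ has the same vertex set as $(G,p)$, so $\ker R_\M(K_n,p)\subseteq\ker R_\M(G,p)\subseteq\ker R_\M(G-e,p)$ with the first inclusion an equality; hence $(G-e,p)$ is infinitesimally flexible if and only if $\rank R_\M(G-e,p)<\rank R_\M(G,p)$, that is, if and only if the row of $R_\M(G,p)$ indexed by $e$ is not in the span of the remaining rows. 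So $(G,p)$ is minimally infinitesimally rigid if and only if no edge row of $R_\M(G,p)$ lies in the span of the other rows. Now the $|V(G)|$ rows of $\N(p)$ have pairwise disjoint supports and are nonzero (the normal does not vanish at a smooth point), hence are linearly independent; therefore ``no edge row lies in the span of the others'' is equivalent to all $|E(G)|+|V(G)|$ rows of $R_\M(G,p)$ being linearly independent, i.e.\ to $\rank R_\M(G,p)=|E(G)|+|V(G)|$.

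Combining the two translations: under~(1) we have $\rank R_\M(G,p)=3|V(G)|-k$, so $R_\M(G,p)$ has full row rank if and only if $|E(G)|+|V(G)|=3|V(G)|-k$, that is, if and only if $2|V(G)|-|E(G)|=k$; this is exactly~(2), which completes the proof. I expect the main obstacle to be the external input of the first paragraph, namely that $\dim\ker R_\M(K_n,p)=k$ for the regular framework: this is the only place where the type‑$k$ hypothesis and the exclusion of small graphs are genuinely needed, and — as the examples $K_2,K_3$ on a cylinder show — the equivalence fails without such a restriction, so it is cleanest to invoke it from \cite{nop1} rather than reprove it. A minor point to keep straight is that flexibility of $(G-e,p)$ must be detected via the same complete‑graph kernel of dimension $k$, which is legitimate precisely because $G-e$ and $G$ share the vertex set.
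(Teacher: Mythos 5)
The paper does not prove this lemma --- it is quoted from \cite{nop,nop1} together with Theorems~\ref{thm:surf1} and~\ref{thm:surf3} --- so there is no in-text argument to compare against, and your proof stands or falls on its own.

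It is correct. The two translations are sound. First, $\ker R_\M(K_n,p)\subseteq\ker R_\M(G,p)$ always holds since the rows of $R_\M(G,p)$ are a subset of those of $R_\M(K_n,p)$, so infinitesimal rigidity is the statement that the two kernels coincide; turning this into condition~(1) requires the equality $\dim\ker R_\M(K_n,p)=k$ at the given (regular) configuration, and you are right to flag this as the one external input --- Definition~\ref{d:surfacetype} only gives $\ge k$, and the $K_2,K_3$ examples on the cylinder (discussed after Theorem~\ref{thm:finitemotion}) show the equality genuinely requires the size restriction which the lemma leaves implicit. Second, the equivalence of ``no edge row lies in the span of the others'' with full row rank is correctly argued: a nontrivial cokernel vector $(\omega,\lambda)$ with $\omega=0$ would force a dependence among the $N_v$ rows alone, which is impossible since they are nonzero and have pairwise disjoint column supports, so any dependence must place nonzero weight on some edge row. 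Under~(1) full row rank becomes $|E(G)|+|V(G)|=3|V(G)|-k$, i.e.\ condition~(2), and the bidirectional reading (isostatic $\Leftrightarrow$ (1) and, given (1), minimal $\Leftrightarrow$ (2)) closes the equivalence.
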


A graph $G$ is called \emph{$(2,k)$-sparse} if for every subgraph $H$ of $G$, with at least one edge, we have $|E(H)|\leq 2|V(H)|-k$. A $(2,k)$-sparse graph satisfying $|E(G)|= 2|V(G)|-k$ is called \emph{$(2,k)$-tight}.

\begin{thm}\label{thm:surf3}
Let $G$ be a simple graph, let $\M$ be an irreducible algebraic surface in $\bR^3$ of type $k\in \{1,2\}$ and let $(G,p)$ be a generic framework on $\M$.
Then $(G,p)$ is isostatic on $\M$ if and only if $G$ is $K_1, K_2, K_3, K_4$ or is $(2,k)$-tight.
\end{thm}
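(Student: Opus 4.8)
The ``only if'' direction is immediate from Theorem~\ref{thm:surf1}: if $(G,p)$ is isostatic and $G\notin\{K_1,K_2,K_3,K_4\}$, then $|E(G)|=2|V(G)|-k$ and $|E(H)|\le 2|V(H)|-k$ for every subgraph $H$ with an edge, so $G$ is $(2,k)$-tight. The substance of the theorem is the converse. Since $G\subseteq K_n$ gives $\ker R_\M(K_n,q)\subseteq\ker R_\M(G,q)$, we always have $\rank R_\M(G,q)\le 3|V(G)|-k$; hence, by Lemma~\ref{lem:surf2} and the fact that a generic realisation attains the maximum rank of $R_\M$, it is enough to produce, for each $(2,k)$-tight graph $G$ and for each of $K_1,\dots,K_4$, \emph{one} configuration $q\in\M^{|V(G)|}$ with $\dim\ker R_\M(G,q)=k$; the existence of such a $q$ transfers to a generic $(G,p)$ because the corresponding non-vanishing maximal minor of $R_\M(G,\cdot)$ is a polynomial that does not vanish identically on $\M^{|V(G)|}$ (irreducibility of $\M$ together with \cite[Corollary~3.2]{JMN}). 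I would obtain such configurations by induction along a Henneberg-type recursive construction, with $K_1,\dots,K_4$ and finitely many small base $(2,k)$-tight graphs treated by direct rank computation.

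\textbf{Combinatorial step.} First I would show that every $(2,k)$-tight graph other than the base graphs reduces, by an inverse Henneberg move, to a smaller $(2,k)$-tight graph. From $\sum_v\deg(v)=2|E(G)|=4|V(G)|-2k$ one gets a vertex $v$ with $2\le\deg(v)\le 3$; if $\deg(v)=2$ one attempts an inverse $0$-extension (delete $v$), and if $\deg(v)=3$ an inverse $1$-extension (delete $v$ and add one edge between two of its former neighbours). The work is to show that, with an appropriate choice of $v$ and of the re-added edge, at least one such move keeps the graph simple and $(2,k)$-sparse; the standard device is to analyse the maximal $(2,k)$-tight subgraphs through $v$ that block each candidate, in the spirit of a Lov\'asz--Yemini/Mader argument. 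For $k=1$ one may in addition have to permit a $d=4$, $e=2$ operation (or a vertex-splitting move) to cover $(2,1)$-tight graphs in which every admissible low-degree vertex is blocked; pinning down the exact finite list of operations and proving completeness is the most delicate part of this step.

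\textbf{Geometric step.} Then I would show that each forward operation preserves the existence of an infinitesimally rigid realisation on $\M$. The $0$-extension is routine: placing the new degree-$2$ vertex at a point $p_v\in\M$ that is generic over the field generated by the reduced configuration makes the $3\times 3$ block of $R_\M(G,p)$ in the columns of $v$ (with rows the two new bars and the new normal-vector row) invertible, so $R_\M(G,p)$ is block-triangular and its rank exceeds that of the reduced matrix by exactly $3$. The $1$-extension is the crux and the step I expect to be the main obstacle. In the unrestricted plane setting one places the new degree-$3$ vertex $v$ on the line through the endpoints $p_x,p_y$ of the deleted edge, so that the rows of the two new bars $vx,vy$ span the row of the deleted edge while the third bar $vz$ (and here also the normal row at $v$) restore the lost rank; a rank bookkeeping then gives the required increase of $3$. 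The genuine obstruction for surfaces is that the line through $p_x$ and $p_y$ generally meets $\M$ only at those two points (for a quadric such as the cylinder a line lies on $\M$ only when it is a ruling), so this exact placement of $v$ is unavailable on $\M$ and a surface-adapted substitute is needed: for example, first moving to an isostatic realisation of the reduced graph in which $p_x,p_y$ lie on a ruling of $\M$ (when $\M$ is ruled, as for the cylinder and the cone), or a perturbation/limiting argument, or replacing this $1$-extension by a composite of admissible moves that avoids the bad configuration. Verifying that such a witnessing configuration actually exists on $\M$ --- this is where the specific geometry of the surface type $k$ enters --- is the technical heart of the proof. Combining the base-case computations with the inductive step then yields that every $(2,k)$-tight graph is generically isostatic on $\M$, which completes the converse.
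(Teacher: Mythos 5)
The paper does not prove Theorem~\ref{thm:surf3} --- it is quoted from \cite{nop,nop1}, and the proof there follows exactly the two-step scheme you outline: a recursive Henneberg-type characterisation of $(2,k)$-tight simple graphs, then a geometric verification that each generating operation preserves infinitesimal rigidity on $\M$. Your outline is substantively the same as that proof. You correctly flag the two genuine difficulties. First, for $k\in\{1,2\}$ inverse Henneberg $0$- and $1$-extensions alone do \emph{not} reduce every $(2,k)$-tight simple graph: one must add a vertex-to-$K_4$ move and a vertex-splitting (vertex-to-$4$-cycle) move, and for $k=1$ also an edge-joining move to deal with bridges (see the symmetric analogues, Theorems~\ref{thm:222construction} and~\ref{thm:211construction}, with base graphs $K_5-e$ and $K_4+e$ for $k=1$); your guess of a ``$d=4,e=2$'' move is not what is actually used. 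Second, the $1$-extension cannot be justified on $\M$ by the classical collinear placement, and among the substitutes you list the one actually used (see \cite{nop1} and its adaptation in Lemma~\ref{lem:hen2SM} here) is the limiting argument: specialise $p_v\to p_y$ along a controlled tangent direction, extract a convergent subsequence of unit-norm non-trivial infinitesimal motions orthogonal to the trivial ones, and show the limit restricts to a motion of the reduced framework, contradicting its isostaticity. The vertex-to-$K_4$ and vertex-splitting moves are handled by separate arguments (a block-triangular rank computation using a specialised degenerate configuration, and a Whiteley-style stress argument \cite{W2}), as adapted symmetrically in Lemmas~\ref{lem:vtok4cylinderrotation}, \ref{lem:vtok4type1} and \ref{lem:vto4cycleCC}.
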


We remark that for type $k=3$, it was shown in \cite{salww,BSWWconing} that Laman's theorem \cite{Lamanbib} applies to the sphere. It is an open problem to characterise generic isostatic frameworks on surfaces of type $k=0$. In particular, the natural analogue of Theorem~\ref{thm:surf3} is known to be false. The graph formed from $K_5$ by adding a degree $2$ vertex gives an example of a $(2,0)$-tight simple graph that is flexible on any such surface.
Due to this complication, we will consider symmetric analogues of Laman's theorem for surfaces of type $k>0$ only in this paper.

We finish this section by defining a \emph{stress} for a framework on $\M$. Stresses and stress matrices have been used to some effect in a variety of aspects of rigidity theory (e.g. \cite{C05,C&W92,W2}). Very recently the analogous properties of stresses for frameworks on surfaces have been developed \cite{JNstress}. We record the definition here as it will be useful for us in what follows. 

\begin{defn}\label{defn:stress}
A \emph{(self)-stress} for $(G,p)$ on $\M$ is a pair $(\omega,\lambda)$ such that $(\omega,\lambda) \in coker R_{\M}(G,p)$, that is, a vector $(\omega, \lambda)\in \mathbb{R}^{|E(G)|+|V(G)|}$ such that $(\omega,\lambda)^T R_\M(G,p)=0$.

Equivalently, $\omega$ is a stress if for all $1\leq i \leq |V(G)|$
\[ \sum_{\{i,j\}\in E(G)} \omega_{ij}(p_i-p_j) + \lambda_i N(p_i)=0. \]
\end{defn}


\section{Symmetric graphs} \label{sec:symgraph}

In this section we review some basic properties of symmetric graphs. In particular, we introduce the notion of a `gain graph' which is a useful tool to describe the underlying combinatorics of symmetric frameworks (see also \cite{jkt,Ross,schtan}, for example).

\subsection{Quotient gain graphs}

Given a group $S$, an \emph{$S$-gain graph} is a pair $(H,\psi)$, where $H$ is a directed multi-graph (which may contain $|S|-1$ loops at each vertex and up to $|S|$ multiple edges between any pair of vertices) and $\psi:E(H)\to S$ is a map which assigns an element of $S$ to each edge of $H$. The map $\psi$ is also called the \emph{gain function} of $(H,\psi)$ (see Figures~\ref{c2gaingraphs} (b) and (d) for examples of $\mathcal{C}_2$-gain graphs). A gain graph is a directed graph, but its orientation is only used as a reference orientation, and may be changed, provided that we also modify $\psi$ so that if an edge has gain $x$ in one orientation, then it has gain $x^{-1}$ in the other direction. Note that if $S$ is a group of order $2$, then the orientation is irrelevant. For simplicity, we omit the labels of edges with identity gain in the figures.

 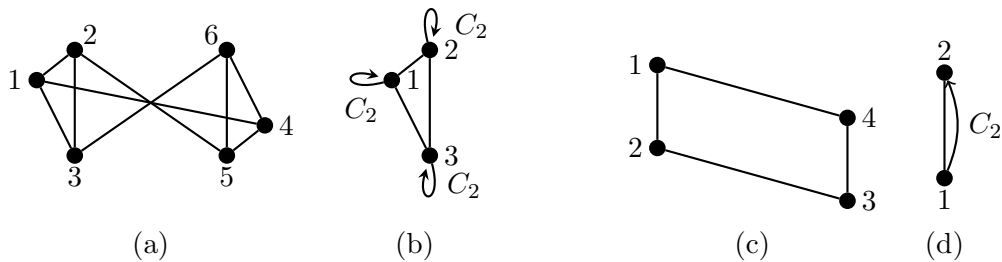
\begin{figure}[htp]
\begin{center}
\begin{tikzpicture}[very thick,scale=1]
       \tikzstyle{every node}=[circle, draw=black, fill=black, inner sep=0pt, minimum width=5pt];
   \path (0,-1.2) node (p3) [label = below: $3$] {} ;
    \path (2,-1.2) node (p5) [label = below: $5$] {} ;
    \path (2,0.2) node (p6) [label = above left: $6$] {} ;
   \path (0,0.2) node (p2) [label = above right: $2$] {} ;
   \path (-0.5,-0.2) node (p1) [label = left: $1$] {} ;
   \path (2.5,-0.8) node (p4) [label = right: $4$] {} ;
   \draw[thick] (p2) -- (p3);
     \draw[thick] (p1) -- (p2);
     \draw[thick] (p1) -- (p3);
     \draw[thick] (p1) -- (p4);
     \draw[thick] (p2) -- (p5);
     \draw[thick] (p6) -- (p3);
     \draw[thick] (p6) -- (p4);
     \draw[thick] (p6) -- (p5);
     \draw[thick] (p5) -- (p4);
       \node [rectangle, draw=white, fill=white] (b) at (0,-2.2) {$\quad$};
    \node [rectangle, draw=white, fill=white] (b) at (1,-2.4) {(a)};
        \end{tikzpicture}
          \hspace{0.3cm}
     \begin{tikzpicture}[very thick,scale=1]
\tikzstyle{every node}=[circle, draw=black, fill=black, inner sep=0pt, minimum width=5pt];
   \path (0,-1.2) node (p3) [label = right: $3$] {} ;
   \path (0,0.2) node (p2) [label = right: $2$] {} ;
   \path (-0.5,-0.2) node (p1) [label = right: $1$] {} ;
\path[thick]
(p1) edge (p3);
\path[thick]
(p2) edge (p3);
\path[thick]
(p2) edge (p1);
\path[thick]
(p1) edge [loop left,->, >=stealth,shorten >=2pt,looseness=26] (p1);
\path[thick]
(p2) edge [loop above,->, >=stealth,shorten >=2pt,looseness=26] (p2);
\path[thick]
(p3) edge [loop below,->, >=stealth,shorten >=2pt,looseness=26] (p3);
\node [rectangle, draw=white, fill=white] (b) at (-0.9,-0.6) {$C_2$};
\node [rectangle, draw=white, fill=white] (b) at (0.55,0.52) {$C_2$};
\node [rectangle, draw=white, fill=white] (b) at (0.45,-1.6) {$C_2$};
\node [rectangle, draw=white, fill=white] (b) at (-0.2,-2.4) {(b)};
\end{tikzpicture}
       \hspace{1.5cm}
    \begin{tikzpicture}[very thick,scale=1]
\tikzstyle{every node}=[circle, draw=black, fill=black, inner sep=0pt, minimum width=5pt];
        \path (0,0) node (p1) [label = left: $1$] {} ;
       \path (0,-1.1) node (p2) [label = left: $2$] {} ;
   \path (2.5,-1.8) node (p3) [label = right: $3$] {} ;
   \path (2.5,-0.7) node (p4) [label = right: $4$] {} ;
   \draw[thick] (p1) -- (p4);
      \draw[thick] (p3) -- (p4);
     \draw[thick] (p2) -- (p3);
      \draw[thick] (p2) -- (p1);
\node [rectangle, draw=white, fill=white] (a) at (1.25,-0.7) {};
\node [rectangle, draw=white, fill=white] (b) at (1.25,-2.4) {(c)};
         \end{tikzpicture}
               \hspace{0.3cm}
\begin{tikzpicture}[very thick,scale=1]
   \tikzstyle{every node}=[circle, draw=black, fill=black, inner sep=0pt, minimum width=5pt];
   \path (0,-1.5) node (p1) [label = below: $1$] {} ;
   \path (0,-0.1) node (p2) [label = above: $2$] {} ;
  \draw[thick] (p1) -- (p2);
\path[thick]
(p1) edge [->,bend right=22] (p2);
\node [rectangle, draw=white, fill=white] (b) at (0.54,-0.8) {$C_2$};
\node [rectangle, draw=white, fill=white] (b) at (0,-2.4) {(d)};
\end{tikzpicture}\end{center}
\vspace{-0.3cm}
\caption{$\mathcal{C}_2$-symmetric graphs ((a), (c)) and their quotient gain graphs ((b), (d)), where $\mathcal{C}_2$ denotes half-turn symmetry.}
\label{c2gaingraphs}
\end{figure}

Let $G$ be a finite simple graph. An \emph{automorphism} of $G$ is a permutation $\pi$ of the vertex set $V(G)$ of $G$ such that $\{i,j\}\in E(G)$ if and only if $\{\pi(i),\pi(j)\}\in E(G)$. The set of all automorphisms of $G$ forms a group, called the automorphism group $\textrm{Aut}(G)$ of $G$. An \emph{action} of a group $S$ on $G$ is a group homomorphism $\theta:S\to \textrm{Aut}(G)$. If $\theta(x)(i)\neq i$ for all $i\in V(G)$ and all non-trivial elements $x$ of the group $S$, then the action $\theta$ is called \emph{free}. If $S$ acts on $G$ by $\theta$, then we say that the graph $G$ is \emph{$S$-symmetric} (with respect to $\theta$). Throughout this paper, we only consider free actions, and we will omit to specify the action $\theta$ if it is clear from the context. In that case, we write $x v$ instead of $\theta(x)(v)$.

For an $S$-symmetric graph $G$, the \emph{quotient graph} $G/ S$ is the multi-graph which has the set $V(G)/ S$ of vertex orbits as its vertex set and the set $E(G)/ S$ of edge orbits as its edge set. Note that an edge orbit may be represented by a loop in $G/ S$.

While several distinct graphs may have the same quotient graph, a gain labeling makes the relation one-to-one, up to equivalence for the gain function (see Section~\ref{sec:balgaingr}), provided that the underlying action is free. To see this, choose an arbitrary representative vertex $i$ for each vertex orbit, so that each vertex orbit has the form $Si=\{x i| x\in S\}$. If the action is free, an edge orbit connecting $Si$ and $Sj$ can be written as $\{\{x i, x x' j\}| x\in S\}$ for a unique $x'$ in $S$. We then orient the edge orbit from $Si$ to $Sj$ in $G/ S$ and assign it the gain $x'$. This gives the \emph{quotient $S$-gain graph} $(G/ S, \psi)$. 

Conversely, let $(H,\psi)$ be an $S$-gain graph. For $x\in S$ and $i\in V(H)$, we denote the pair $(x,i)$ by $x i$. The \emph{covering graph} (or \emph{lifted graph}) of $(H,\psi)$ is the simple graph with vertex set $S\times V(H)=\{x i| x\in S, i\in V(H)\}$ and edge set $\{\{x i, x\psi(e) j\}| x\in S, e=(i,j)\in V(H)\}$. Clearly, $S$ acts freely on the covering graph with the action $\theta$ defined by $\theta(x): i\mapsto x i$ for $x\in S$ under which the quotient comes back to $(H,\psi)$. 

The map $c:G\to H$ defined by $c(x i)=i$ and $c(\{x i, x \psi(e) j\})=(i,j)$ is called a \emph{covering map}. The fiber $c^{-1}(i)$ of a vertex $i\in V(H)$ and the fiber $c^{-1}(e)$ of an edge $e\in E(H)$ coincide with a vertex orbit and edge orbit of $G$, respectively.


\subsection{Balanced gain graphs and the switching operation} \label{sec:balgaingr}

Let $(H,\psi)$ be an $S$-gain graph, and let $W=e_1, e_2, \ldots , e_k, e_1$ be a closed walk in $(H,\psi)$, where $e_i\in E(H)$ for all $i$. We define the \emph{gain} of $W$ as $\psi(W)=\psi(e_1)\cdot \psi(e_2)\cdot \cdots \psi(e_k)$ if each edge is oriented in the forward direction, and if an edge $e_i$ is directed backwards, then we replace $\psi(e_i)$ by $\psi(e_i)^{-1}$ in the product. (If $S$ is an additive group, then we replace the product by the sum.)  Note that for abelian groups, the gain of a closed walk is independent of the choice of the starting vertex. However, this is of course not the case for non-abelian groups. 

  For $v\in(H,\psi)$, we denote by $\mathcal{W}(H,v)$ the set of closed walks starting at $v$. Similarly, if $F\subseteq E(H)$ and $v\in V(H)$, then $\mathcal{W}(F,v)$ denotes the set of closed walks starting at $v$ and using only edges of $F$, where $\mathcal{W}(F,v)=\emptyset$ if $v$ is not incident to an edge of $F$.

For $F\subseteq E(H)$, the subgroup induced by $F$ relative to $v$ is defined as $\langle F\rangle_{\psi,v}=\{\psi(W)|\, W\in \mathcal{W}(F,v)\}$. We will sometimes omit the subscript $\psi$ of  $\langle F\rangle_{\psi,v}$ if it is clear from the context.

For any connected $F\subseteq E(H)$, we say that $F$ is \emph{balanced} if $\langle F\rangle_v=\{\textrm{id}\}$ for some $v\in V(F)$, and \emph{unbalanced} otherwise. By \cite[Proposition 2.1]{jkt}, this property is invariant under the choice of the base vertex $v\in V(F)$, and $F$ is unbalanced if and only if $F$ contains an unbalanced cycle. So we may extend this notion to any (possibly disconnected) $F\subseteq E(H)$, and call $F$ \emph{unbalanced} if $F$ contains an unbalanced cycle.


If $S$ is of order $2$, then we will think of $S$ as the group $\mathbb{Z}_2=\{0,1\}$ with addition as the group operation. So a subgraph of a $\mathbb{Z}_2$-gain graph $(H,\psi)$ will be unbalanced if and only if it contains a cycle with gain $1$. Note that a $\mathbb{Z}_2$-gain graph is commonly known as a \emph{signed graph} in the literature \cite{zas1,Zas}.
Let $v$ be a vertex in an $S$-gain graph $(H,\psi)$. To \emph{switch} $v$ with $x\in S$ means to change the gain function  $\psi$ on $E(H)$ as follows:
$$\psi'(e)=\left\{\begin{array}{ll} x\cdot \psi(e)\cdot x^{-1} & \textrm{ if $e$ is a loop incident with $v$;}\\x\cdot \psi(e) & \textrm{ if $e$ is a non-loop incident from $v$;}\\ \psi(e) \cdot x^{-1}& \textrm{ if $e$ is a non-loop incident to $v$;}\\\psi(e) & \textrm{ otherwise.} \end{array}\right.$$
In particular, if we switch a vertex $v$ in a  $\mathbb{Z}_2$-gain graph $(H,\psi)$ with $0$, then the gain function $\psi$ remains unchanged, and if we switch $v$ with $1$, then the gain of every non-loop edge that is incident with $v$ changes its gain from $0$ to $1$ or vice versa, and the gains of all other edges remain the same.

We say that a gain function $\psi'$  is \emph{equivalent} to another gain function $\psi$ on the same edge set if $\psi'$ can be obtained from $\psi$ by a sequence of switching operations.

In the following, we summarise some key properties of the switching operation. Detailed proofs of these results for an arbitrary discrete symmetry group $S$ can be found in \cite{jkt}. For the special case of signed graphs, these theorems were first proved by Zaslavsky in the 1980s \cite{Zas}.

\begin{prop}\label{prop:lem1}(\cite[Prop. 2.2]{jkt})  Switching a vertex of an $S$-gain graph $(H,\psi)$ does not alter the  balance of $(H,\psi)$.
\end{prop}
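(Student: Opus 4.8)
The plan is to show that switching a single vertex $v$ of an $S$-gain graph $(H,\psi)$ with an element $x\in S$ does not change whether any connected subgraph $F\subseteq E(H)$ is balanced, and in particular does not change whether $(H,\psi)$ is balanced. Since equivalence of gain functions is generated by single vertex switches, and balance of a (possibly disconnected) subgraph is determined by the balance of its connected components, it suffices to treat one switch and one connected subgraph. I would first reduce to the statement that for any connected $F$ and any base vertex $w\in V(F)$, the induced subgroup $\langle F\rangle_{\psi',w}$ is conjugate to $\langle F\rangle_{\psi,w}$; conjugate subgroups are simultaneously trivial or nontrivial, and by the cited invariance from \cite[Prop. 2.1]{jkt} the choice of base vertex is immaterial, so this gives the result.

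The key computation is to track what switching does to the gain $\psi(W)$ of a closed walk $W=e_1,e_2,\ldots,e_k,e_1$ based at $w$. First I would handle the case $v\notin V(F)$: then no edge of $F$ is incident with $v$, so $\psi'$ agrees with $\psi$ on $F$ and there is nothing to prove. So assume $v\in V(F)$. Choose the base vertex of the walks to be $w=v$ itself (legitimate by the base-vertex invariance). Now consider a closed walk $W$ starting and ending at $v$. Each time $W$ leaves $v$ along a non-loop edge $e$ directed from $v$, its gain picks up a factor $x$ on the left (replacing $\psi(e)$ by $x\psi(e)$); each time $W$ returns to $v$ along a non-loop edge directed into $v$, its gain picks up a factor $x^{-1}$ on the right; and a loop at $v$ contributes $x\psi(e)x^{-1}$. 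Reading $\psi'(W)$ as the ordered product along $W$, the internal $x^{-1}\cdots x$ pairs at each intermediate visit to $v$ telescope, and one is left with $\psi'(W)=x\,\psi(W)\,x^{-1}$. (One must be slightly careful with the bookkeeping when $W$ traverses an edge in the backward direction, since then $\psi(e)$ is replaced by $\psi(e)^{-1}$ and the roles of ``incident from'' and ``incident to'' swap; but the left/right factors still come out as $x$ on departure and $x^{-1}$ on arrival, so the telescoping is unaffected.) Hence $\langle F\rangle_{\psi',v}=x\langle F\rangle_{\psi,v}x^{-1}$, which is trivial if and only if $\langle F\rangle_{\psi,v}$ is trivial.

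Applying this with $F=E(H)$ (or with $F$ the edge set of any connected component of $H$) shows that switching preserves balance of $(H,\psi)$, completing the proof. The main obstacle is purely notational: setting up the ordered-product bookkeeping for a closed walk through $v$ carefully enough to see the telescoping, and in particular checking the backward-edge case and the loop case so that the conjugation $\psi'(W)=x\psi(W)x^{-1}$ holds verbatim for every closed walk at $v$. Once that identity is in hand, the rest is immediate from the definition of balance and from \cite[Prop. 2.1]{jkt}.
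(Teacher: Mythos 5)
The paper does not prove this proposition itself — it cites \cite[Prop.~2.2]{jkt} and states that detailed proofs of the switching results appear there. Your argument is correct and is the standard one: reduce to a single switch at $v$, base the closed walks at $v$, track the departure/arrival factors $x$ and $x^{-1}$ (including the backward-edge and loop cases) to obtain $\psi'(W)=x\,\psi(W)\,x^{-1}$, and conclude that $\langle F\rangle_{\psi',v}=x\langle F\rangle_{\psi,v}x^{-1}$ is trivial exactly when $\langle F\rangle_{\psi,v}$ is.
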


\begin{prop} \label{prop:lem2} (\cite[Prop. 2.3 and Lemma 2.4]{jkt}) An $S$-gain graph  $(H,\psi)$ is balanced if and only if the vertices in $V(H)$ can be switched so that every edge in the resulting $S$-gain graph $(H,\psi')$ has the identity element of $S$ as its gain.
\end{prop}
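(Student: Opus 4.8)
The plan is to prove the two implications separately; the forward (``only if'') direction is the one requiring work.

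For the ``if'' direction, suppose $\psi'$ is obtained from $\psi$ by a sequence of switchings and every edge of $(H,\psi')$ has gain $\mathrm{id}$. Then the gain of any closed walk in $(H,\psi')$ is a product of copies of $\mathrm{id}$ and hence equals $\mathrm{id}$, so $(H,\psi')$ is balanced; since switching does not change balance by Proposition~\ref{prop:lem1}, $(H,\psi)$ is balanced as well.

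For the converse, assume $(H,\psi)$ is balanced. Balance, the switching operation, and the desired conclusion all decompose over the connected components of $H$, so I may assume $H$ is connected. First I would fix a vertex $v_0$ and a spanning tree $T$ of $H$, and for each vertex $v$ let $g_v\in S$ be the gain of the unique $v_0$--$v$ path in $T$, with $g_{v_0}=\mathrm{id}$. A direct case analysis (according to which endpoint of a tree edge is separated from $v_0$ when that edge is deleted) shows that for every tree edge $e$, written as $(i,j)$ in the reference orientation, one has $\psi(e)=g_i^{-1}g_j$; this uses only that $T$ is a tree. I then switch every vertex $v$ with $g_v$. Since a non-loop edge $e=(i,j)$ is affected only by the switchings at its two endpoints, and switching at the tail left-multiplies its gain while switching at the head right-multiplies it (two operations that commute), the resulting gain function satisfies $\psi'(e)=g_i\,\psi(e)\,g_j^{-1}$ for every non-loop edge, and $\psi'(e)=g_v\,\psi(e)\,g_v^{-1}$ for a loop $e$ at $v$; in particular $\psi'(e)=\mathrm{id}$ for every tree edge $e$.

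It remains to deal with the non-tree edges. For a loop $e$ at $v$, the loop by itself is a cycle, so balance of $(H,\psi)$ gives $\psi(e)=\mathrm{id}$ and hence $\psi'(e)=\mathrm{id}$. For a non-loop non-tree edge $e=(i,j)$, the unique $v_0$--$i$ and $v_0$--$j$ paths in $T$ show that the $T$-path from $j$ to $i$ has gain $g_j^{-1}g_i$, so the fundamental cycle consisting of $e$ followed by this $T$-path has gain $\psi(e)\,g_j^{-1}g_i$ when based at $i$; balance forces this to equal $\mathrm{id}$, whence $\psi(e)=g_i^{-1}g_j$ and therefore $\psi'(e)=g_i\,\psi(e)\,g_j^{-1}=\mathrm{id}$. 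Thus every edge of $(H,\psi')$ has identity gain, as required. (Alternatively, once the tree edges carry gain $\mathrm{id}$ one may note that $(H,\psi')$ is still balanced by Proposition~\ref{prop:lem1} and read off the gains of the remaining edges from their fundamental cycles.) The computations are routine; the only point demanding care is the non-abelian setting, where one must keep the left/right conventions of the switching operation straight, check that the switchings at the two endpoints of an edge genuinely commute, and recall that ``a cycle has gain $\mathrm{id}$'' is independent of base point and orientation because changing either conjugates the gain. This is the gain-graph analogue of Zaslavsky's classical theorem for signed graphs, so no essential obstacle is expected.
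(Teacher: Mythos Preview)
Your proof is correct and is the standard spanning-tree argument for this classical result. The paper itself does not give a proof of this proposition: it merely cites \cite[Prop.~2.3 and Lemma~2.4]{jkt} (and notes that the signed-graph case goes back to Zaslavsky). Your argument is precisely the one underlying those references, so there is no substantive difference to discuss.
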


\begin{lem}\label{lem:balance}(\cite[Lemma 2.5]{jkt}) Let $(G,\psi)$ be an $S$-gain graph, and let $U\subseteq V(G)$ and $W\subseteq V(G)$ be subsets of $V(G)$. Further, let $H$ be the signed subgraph of $(G,\psi)$ induced by $U$, and let $K$ be the signed subgraph of $(G,\psi)$ induced by $W$, and suppose that $H$, $K$ and $H\cap K$ is connected. If $H$ and $K$ are balanced, then $H\cup K$ is also balanced.
\end{lem}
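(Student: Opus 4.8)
The plan is to argue through the switching characterisation of balance in Proposition~\ref{prop:lem2}, reducing everything to the situation where all the gains involved are the identity. First I would use that $H$ is connected and balanced to apply Proposition~\ref{prop:lem2}: there is a sequence of switchings, supported on the vertices of $U$, after which every edge of $H$ carries the identity gain. I perform this sequence of switchings inside $(G,\psi)$; by Proposition~\ref{prop:lem1} the subgraph $K$ stays balanced, and since $H$ and $K$ are induced subgraphs we have $E(H\cap K)\subseteq E(H)$, so every edge of $H\cap K$ now also has identity gain. Here $H\cap K$ is the subgraph induced by $U\cap W$, and connectedness of $H\cap K$ forces $U\cap W\neq\emptyset$, so $H\cup K$ is connected.

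Next I would apply Proposition~\ref{prop:lem2} to the connected balanced graph $K$: composing the resulting sequence of switchings into a single map $f\colon W\to S$, switching each $w\in W$ by $f(w)$ turns every edge of $K$ into identity gain. The key observation is that $f$ is then forced to be constant on $U\cap W$. Indeed, for an edge $(u_1,u_2)$ of $H\cap K$ — which currently has identity gain and also lies in $K$ — the switched gain $f(u_1)\,\mathrm{id}\,f(u_2)^{-1}$ must equal the identity, so $f(u_1)=f(u_2)$; since $H\cap K$ is connected, this propagates to all of $U\cap W$, say $f\equiv c$ there. Replacing $f$ by $w\mapsto c^{-1}f(w)$ — which, by the same conjugation computation, still sends every edge of $K$ to identity — I may assume that $f$ is the identity on $U\cap W$.

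Finally I would observe that this normalised switching is supported only on $W\setminus U$, and no edge of $H$ is incident to a vertex of $W\setminus U$, since both endpoints of such an edge lie in $U$. Hence $H$ still has all identity gains after this switching, while $K$ now does too. Therefore every edge of $H\cup K$ carries the identity gain, so $H\cup K$ contains no unbalanced cycle and is balanced, as required.

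I expect the main obstacle to be the bookkeeping forced by the possible non-abelianity of $S$: one must check that the potential $f$ realising the all-identity switching of $K$ really is constant (not merely constant up to conjugacy) on $U\cap W$, and, crucially, that renormalising $f$ to be the identity there — which requires multiplying by $c^{-1}$ on the correct side — does not destroy the all-identity property on $K$. Once the left/right conventions in the switching rule from Section~\ref{sec:balgaingr} are pinned down this is routine, but it is the one place where care is needed; everything else is formal manipulation of Propositions~\ref{prop:lem1} and~\ref{prop:lem2}.
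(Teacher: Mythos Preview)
The paper does not supply its own proof of this lemma; it is quoted verbatim from \cite[Lemma~2.5]{jkt} and used as a black box. So there is no ``paper's proof'' to compare against here.

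That said, your argument is correct and is essentially the standard proof of this fact via the potential-function characterisation of balance. The two points you flagged as delicate are handled properly: the edge-by-edge calculation $f(u_1)\,\mathrm{id}\,f(u_2)^{-1}=\mathrm{id}$ together with connectedness of $H\cap K$ does force $f$ to be genuinely constant (not just constant up to conjugacy) on $U\cap W$, and the left renormalisation $f\mapsto c^{-1}f$ preserves the all-identity property on $K$ because the new gain of an edge $(w_1,w_2)$ becomes $c^{-1}f(w_1)\,\psi(e)\,f(w_2)^{-1}c = c^{-1}\,\mathrm{id}\,c=\mathrm{id}$. After that, the observation that the residual switching is supported on $W\setminus U$ and hence leaves $E(H)$ untouched finishes the argument cleanly.
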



\section{Symmetric frameworks on surfaces} \label{sec:symfw}

Let $\M\subseteq \mathbb{R}^3$ be a surface, let $G$ be a finite simple graph, and let $p:V(G)\to \M$. A \emph{symmetry operation} of the framework $(G,p)$ on $\M$ is an isometry $x$ of $\mathbb{R}^3$ which maps $\M$ onto itself (i.e., $x$ is a symmetry of $\M$) such that for some $\alpha_x\in \textrm{Aut}(G)$, we have
\begin{equation} \label{eq:symop} x(p_i)=p_{\alpha_x(i)}\quad \textrm{for all } i\in V(G)\textrm{. }\nonumber\end{equation}  The set of all symmetry operations of a framework $(G,p)$ on $\M$ forms a group under composition, called the \emph{point group} of $(G,p)$. Clearly, we may assume w.l.o.g. that the point group of a framework is always a \emph{symmetry group}, i.e., a subgroup of the orthogonal group $O(\mathbb{R}^{d})$.

We use the Schoenflies notation for the symmetry operations and symmetry
groups considered in this paper, as this is one of the standard notations in
the literature about symmetric structures (see \cite{bishop,FGsymmax,KG1,KG2,BS6,schtan}, for example). In particular, $\mathcal{C}_s$ is a group of order $2$ generated by a single reflection $s$, and $\mathcal{C}_m$, $m\geq 1$, is a cyclic group generated by a rotation $C_m$ about an axis through the origin by an angle of $\frac{2\pi}{m}$. (See also Section~\ref{sec:combchar}.)

Given a surface $\M$, a symmetry group $S$ and a graph $G$, we let $\mathscr{R}^{\M}_{(G,S)}$ denote the set of all realisations of $G$ on $\M$ whose point group is either equal to $S$ or contains $S$ as a subgroup \cite{BS2,BS1}. In other words, the set $\mathscr{R}^{\M}_{(G,S)}$ consists of all realisations $(G,p)$ of $G$ in $\mathbb{R}^{3}$ which are supported on $\M$ and for which there exists an action $\theta:S\to \textrm{Aut}(G)$ so that
\begin{equation}\label{class} x\big(p(i)\big)=p(\theta(x)(i))\textrm{ for all } i\in V(G)\textrm{ and all } x\in S\textrm{.}\end{equation}
A framework $(G,p)\in \mathscr{R}^{\M}_{(G,S)}$ satisfying the equations in (\ref{class}) for the map $\theta:S\to \textrm{Aut}(G)$ is said to be \emph{of type $\theta$}, and the set of all realisations in $\mathscr{R}^{\M}_{(G,S)}$ which are of type $\theta$ is denoted by $\mathscr{R}^{\M}_{(G,S,\theta)}$ (see again \cite{BS2,BS1} and Figure~\ref{K22types}). It is shown in \cite{BS1} that if $p$ is injective, then
$(G,p)$ is of a unique type $\theta$ and $\theta$ is necessarily also a homomorphism.

\begin{figure}[htp]
\begin{center}
           \begin{tikzpicture}[very thick,scale=1]
\tikzstyle{every node}=[circle, draw=black, fill=black, inner sep=0pt, minimum width=5pt];
    \path (-0.7,0.8) node (p1) [label = above left: $p_1$] {} ;
    \path (0.7,0.8) node (p4) [label = above right: $p_4$] {} ;
    \path (-1.6,-0.5) node (p2) [label = below left: $p_2$] {} ;
     \path (1.6,-0.5) node (p3) [label = below right: $p_3$] {} ;
      \draw[thick] (p1) -- (p4);
    \draw[thick] (p1) -- (p2);
    \draw[thick] (p3) -- (p4);
    \draw[thick] (p2) -- (p3);
     \draw [dashed, thin] (0,-1) -- (0,1.3);
     \node [draw=white, fill=white] (z) at (-0.2,-1) {$s$};
      \node [draw=white, fill=white] (b) at (0,-1.6) {(a)};
        \end{tikzpicture}
        \hspace{2cm}
      \begin{tikzpicture}[very thick,scale=1]
\tikzstyle{every node}=[circle, draw=black, fill=black, inner sep=0pt, minimum width=5pt];
       \tikzstyle{every node}=[circle, draw=black, fill=black, inner sep=0pt, minimum width=5pt];
    \path (-0.7,0.8) node (p1) [label = above left: $p_1$] {} ;
    \path (0.7,0.8) node (p4) [label = above right: $p_3$] {} ;
    \path (-1.6,-0.5) node (p2) [label = below left: $p_2$] {} ;
     \path (1.6,-0.5) node (p3) [label = below right: $p_4$] {} ;
      \draw[thick] (p1) -- (p3);
    \draw[thick] (p1) -- (p2);
    \draw[thick] (p3) -- (p4);
    \draw[thick] (p2) -- (p4);
     \draw [dashed, thin] (0,-1) -- (0,1.3);
     \node [draw=white, fill=white] (z) at (-0.2,-1) {$s$};
      \node [draw=white, fill=white] (b) at (0,-1.6) {(b)};
        \end{tikzpicture}
\end{center}
\vspace{-0.3cm}
\caption{Realisations of the cycle graph $C_{4}$ in $\mathscr{R}^{\M}_{(C_4,\mathcal{C}_s)}$ of different types, where $\M$ is the Euclidean plane and $\mathcal{C}_s=\{id,s\}$ is the reflection group. The framework in (a) is of type
$\theta_{a}$, where $\theta_{a}: \mathcal{C}_{s} \to \textrm{Aut}(C_4)$ is the homomorphism defined by $\theta_{a}(s)=
(1 \, 4)(2 \, 3)$, and the framework in (b) is of type
$\theta_{b}$, where $\theta_{b}: \mathcal{C}_{s} \to \textrm{Aut}(C_4)$ is the homomorphism defined by $\theta_{b}(s)=
(1 \, 3)(2\, 4)$.}
\label{K22types}
\end{figure}
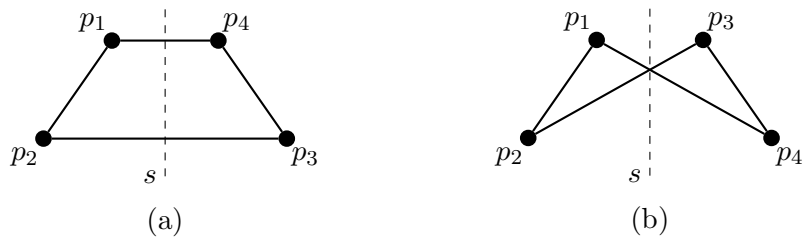

Let $S$ be an abstract group, and   $G$ be a  $S$-symmetric graph with respect to
a free action $\theta:S\rightarrow \textrm{Aut}(G)$.
Suppose also that $S$ acts on $\mathbb{R}^d$
via a homomorphism $\tau:S\rightarrow O(\mathbb{R}^d)$.
Then we say that a framework $(G,p)$ on a surface $\M$ is {\em $S$-symmetric} (with respect to $\theta$ and $\tau$) if
$(G,p)\in \mathscr{R}^{\M}_{(G,\tau(S),\theta)}$, that is, if 
\begin{equation*} 
\tau(x) (p(i))=p(\theta(x) i) \qquad \text{for all } x\in S \text{ and all } i\in V(G).
\end{equation*}

\begin{figure}
\begin{center}
\begin{tikzpicture}
\draw[black,thick] (0,4.1) ellipse (1.5 and .75);
\draw[black,thick] (0,0.1) ellipse (1.5 and .75);
 \draw[black,thick]
  (-1.5,0.1) -- (-1.5,4.1);
\draw[black,thick]
(1.5,0.1) -- (1.5,4.1);

\draw[black,thick] (6,4.1) ellipse (1.5 and 0.75);
\draw[black,thick] (6,0.1) ellipse (1.5 and 0.75);
 \draw[black,thick]
  (4.5,0.1) -- (4.5,4.1);
\draw[black,thick]
(7.5,0.1) -- (7.5,4.1);

\draw[dashed] (6,0) -- (6,4.1);

\draw[dashed] (-1.8,2.4) -- (-1.1,1.5) -- (1.6,1.5) -- (.9,2.4) -- (-1.8,2.4);

\filldraw[gray] (1,.3) circle (3pt);
\filldraw[gray] (-.5,.3) circle (3pt);
\filldraw (.5,1) circle (3pt);
\filldraw (-1,1) circle (3pt);
\filldraw[gray] (1,3.1) circle (3pt);
\filldraw[gray] (-.5,3.1) circle (3pt);
\filldraw (-1,2.5) circle (3pt);
\filldraw (.5,2.5) circle (3pt);

\filldraw (5,.8) circle (3pt);
\filldraw (7,.8) circle (3pt);
\filldraw[gray] (6.2,1.2) circle (3pt);
\filldraw (5.8,.4) circle (3pt);
\filldraw (5,2.8) circle (3pt);
\filldraw (7,2.8) circle (3pt);
\filldraw[gray] (6.2,3.2) circle (3pt);
\filldraw (5.8,2.4) circle (3pt);

\draw[black,thick]
(1,.3) -- (-.5,.3) -- (-1,1) -- (.5,1) -- (1,.3) -- (1,3.1);
\draw[black,thick]
(1,3.1) -- (-.5,3.1) -- (-1,2.5) -- (.5,2.5) -- (1,3.1);
\draw[black,thick]
(-.5,.3) -- (-.5,3.1);
\draw[black,thick]
(-1,1) -- (-1,2.5);
\draw[black,thick]
(.5,1) -- (.5,2.5);

\draw[black,thick]
(5,.8) -- (6.2,1.2) -- (7,.8) -- (5.8,.4) -- (5,.8) -- (5,2.8);
\draw[black,thick]
(6.2,3.2) -- (7,2.8) -- (5.8,2.4) -- (5,2.8) -- (6.2,3.2) -- (6.2,1.2);
\draw[black,thick]
(7,.8) -- (7,2.8);
\draw[black,thick]
(5.8,.4) -- (5.8,2.4);

\node [draw=white, fill=white] (b) at (0,-1.2) {(a)};
\node [draw=white, fill=white] (b) at (6,-1.2) {(b)};
\end{tikzpicture}
\end{center}
\vspace{-0.3cm}
\caption{Realisations of the cube graph $Q_3$ in $\mathscr{R}^{\M}_{(Q_3,S)}$ where $\M$ is a cylinder and $S$ is (a) $\C_s$ with mirror orthogonal to the axis of the cylinder and (b) $\C_4$ with the $4$-fold rotation around the axis of the cylinder. The grey joints are at the `back' of the cylinder.}
\label{fig:cylinderexamples}
\end{figure}
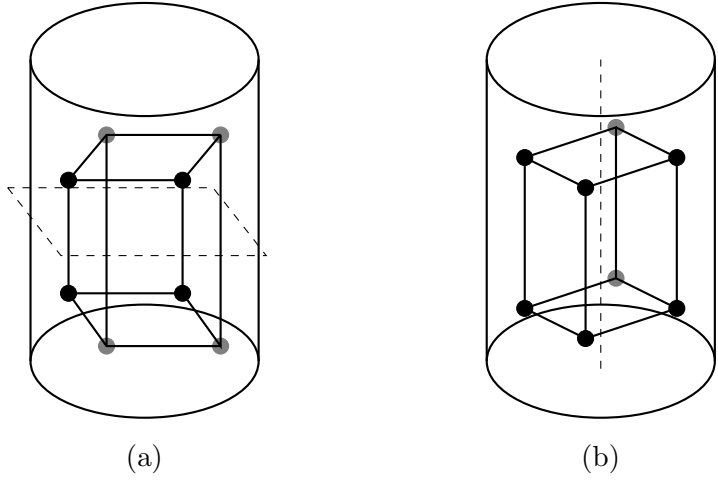

For simplicity, we will assume throughout this paper that a framework $(G,p)\in \mathscr{R}^{\M}_{(G,S,\theta)}$ has no joint that is `fixed' by a non-trivial symmetry operation in $S$ (i.e., $(G,p)$ has no joint $p_i$ with $x(p_i)=p_i$ for some $x\in S$, $x\neq id$).
In particular, this will simplify the construction of the orbit-surface rigidity matrix in the next section, since in this case this matrix has a set of $3$ columns for each orbit of vertices under the action $\theta$.

Let $\bQ_S$ denote the field extension of $\bQ$ formed by adjoining the entries of all the matrices in $S$ to $\bQ$. 
We say that a framework $(G,p)$ in $\mathscr{R}^{\M}_{(G,S,\theta)}$ with quotient $S$-gain graph $(G_0,\psi)$ is \emph{$S$-generic} if $\td[\bQ_S(p):\bQ_S]=2|V(G_0)|.$
This implies that the only polynomial equations in $3|V(G)|$ variables that evaluate to zero at $p$ are those that define $S$ or $\M$.
This is the natural extension of the definitions of generic seen in the literature \cite{JMN,jkt,nop1}.


\subsection{Symmetry-forced rigidity and the orbit-surface rigidity matrix}

Given an $S$-symmetric framework $(G,p)$ on a surface $\M$, we are interested in non-trivial motions of $(G,p)$ on $\M$ which preserve the symmetry group $S$ of $(G,p)$ throughout the path.
Infinitesimal motions corresponding to such symmetry-preserving continuous motions are `$S$-symmetri infinitesimal motions' (see also \cite{jkt,schtan,BSWWorbit}):

An infinitesimal motion $u$ of a framework $(G,p)$ in $\mathscr{R}^{\M}_{(G,S,\theta)}$  is \emph{$S$-symmetric} if \begin{equation*}\label{fulsymmot} x\big(u_i\big)=u_{\theta(x)(i)}\textrm{ for all } i\in V(G)\textrm{ and all } x\in S\textrm{,}\end{equation*} i.e., if $u$ is unchanged under all symmetry operations in $S$. Note that all the velocity
vectors $u_i$, considered as acting at the framework joints, are of course tangential to
the surface $\M$.

We say that $(G,p)\in \mathscr{R}^{\M}_{(G,S,\theta)}$ is \emph{$S$-symmetric infinitesimally rigid} if every $S$-symmetric infinitesimal motion is trivial. Note that the dimension of the space of trivial $S$-symmetric infinitesimal motions, denoted by $k_S$, can easily be read off from the character table for $S$ (see \cite{bishop}, for example).

Recall that the type $k$ of a surface $\M$ is the dimension of the group of isometries of $\bR^3$ acting tangentially to $\M$. Analogously, we call the dimension of the space of trivial $S$-symmetric infinitesimal motions, $k_S$, the \emph{symmetric type} of $\M$.


A self-stress $(\omega,\lambda)\in \mathbb{R}^{|E(G)|+|V(G)|}$ of $(G,p)$ is \emph{$S$-symmetric} if $\omega_e=\omega_f$
whenever $e$ and $f$ belong to the same edge orbit $S e=\{x e|\,x\in S\}$ of $G$, and $\lambda_i=\lambda_j$
whenever $i$ and $j$ belong to the same vertex orbit $S i=\{x i|\,x\in S\}$ of $G$.

\begin{figure}[htp]
\begin{center}
\begin{tikzpicture}[rotate=90, very thick,scale=1]
\tikzstyle{every node}=[circle, draw=black, fill=black, inner sep=0pt, minimum width=5pt];
    \path (0.1,1.2) node (p1) [label = left: $p_{4}$] {} ;
    \path (2.1,0) node (p4) [label = above right: $p_{1}$]{} ;
    \path (2.3,-1.3) node (p3) [label = right: $p_{2}$] {} ;
     \path (0.3,0) node (p2) [label = below: $p_{3}$] {} ;
        \draw[thick] (p1) -- (p4);
      \draw[thick] (p3) -- (p4);
     \draw[thick] (p2) -- (p3);
      \draw[thick] (p2) -- (p1);
            \draw [ultra thick, ->, black!40!white](p1) -- (0.52,0.74);
      \draw [ultra thick, ->, black!40!white](p3) -- (1.88,-0.84);
      \draw [ultra thick, ->, black!40!white](p2) -- (-0.5,-0.65);
      \draw [ultra thick, ->, black!40!white](p4) -- (2.9,0.55);
      \filldraw[fill=black, draw=black]
    (1.2,-0.05) circle (0.004cm);
     \node [draw=white, fill=white] (b) at (-1,-0) {(a)};
              \end{tikzpicture}
 \hspace{1cm}
            \begin{tikzpicture}[very thick,scale=1]
\tikzstyle{every node}=[circle, draw=black, fill=black, inner sep=0pt, minimum width=5pt];
    \path (-0.7,0.8) node (p1) [label = left: $p_{1}$] {} ;
    \path (0.7,0.8) node (p4) [label = right: $p_{4}$]{} ;
    \path (-1.6,-0.8) node (p2) [label = below: $p_{2}$] {} ;
     \path (1.6,-0.8) node (p3) [label = below: $p_{3}$] {} ;
      \draw[thick] (p1) -- (p4);
    \draw[thick] (p1) -- (p2);
    \draw[thick] (p3) -- (p4);
    \draw[thick] (p2) -- (p3);
     \draw [dashed, thin] (0,-1.6) -- (0,1.6);
     \draw [ultra thick, ->, black!40!white] (p1) -- (-0.7,1.3);
      \draw [ultra thick, ->, black!40!white] (p4) -- (0.7,1.3);
      \draw [ultra thick, ->, black!40!white] (p2) -- (-1.6,-0.3);
      \draw [ultra thick, ->, black!40!white] (p3) -- (1.6,-0.3);
      \node [draw=white, fill=white] (b) at (0,-2.1) {(b)};
        \end{tikzpicture}
                \hspace{1cm}
                \begin{tikzpicture}[very thick,scale=1]
\tikzstyle{every node}=[circle, draw=black, fill=black, inner sep=0pt, minimum width=5pt];
    \path (-0.7,0.8) node (p1) [label = left: $p_{1}$]  {} ;
    \path (0.7,0.8) node (p4)[label = right: $p_{4}$] {} ;
    \path (-1.6,-0.8) node [label = below: $p_{2}$](p2)  {} ;
     \path (1.6,-0.8) node [label = below: $p_{3}$](p3)  {} ;
      \draw[thick] (p1) -- (p4);
    \draw[thick] (p1) -- (p2);
    \draw[thick] (p3) -- (p4);
    \draw[thick] (p2) -- (p3);
     \draw [dashed, thin] (0,-1.6) -- (0,1.6);
     \draw [ultra thick, ->, black!40!white] (p1) -- (-0.5,0.3);
      \draw [ultra thick, ->, black!40!white] (p4) -- (0.9,1.3);
      \draw [ultra thick, ->, black!40!white] (p2) -- (-2.2,-1);
      \draw [ultra thick, ->, black!40!white] (p3) -- (1,-0.6);
      \node [draw=white, fill=white] (b) at (0,-2.1) {(c)};
        \end{tikzpicture}
       \end{center}
\vspace{-0.3cm}
\caption{Infinitesimal motions of frameworks in the Euclidean plane: (a) a $\mathcal{C}_2$-symmetric non-trivial infinitesimal motion; (b) a $\mathcal{C}_s$-symmetric trivial infinitesimal motion; (c) a non-trivial infinitesimal motion which is not $\mathcal{C}_s$-symmetric.}
\label{fulsym}
\end{figure}
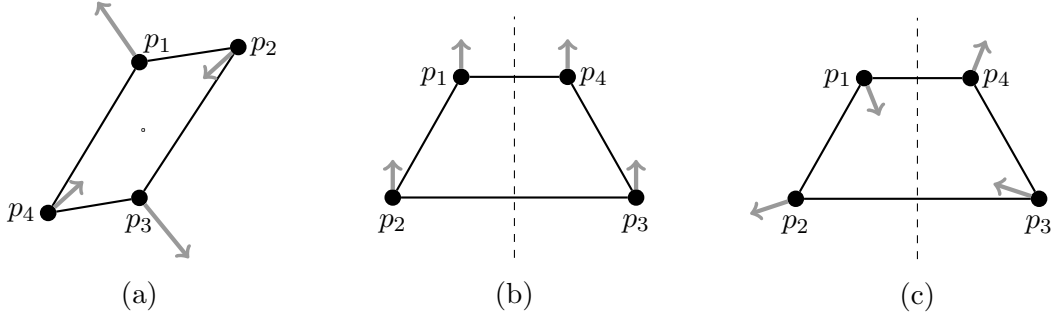

In Euclidean space, a key tool to study symmetric infinitesimal motions is the orbit rigidity matrix. This matrix is defined as follows (see also \cite{BSWWorbit}):

\begin{defn}\label{orbitmatrixdef}
Let $(G,p)$ be an $S$-symmetric framework (with respect to $\theta$ and $\tau$) in Euclidean $3$-space which has no joint that is `fixed' by a non-trivial symmetry operation in $S$. Further, let $(G_0,\psi)$ be the quotient $S$-gain graph of $(G,p)$. For each edge $e\in E(G_0)$, the \emph{orbit rigidity matrix} $O(G,p, S)$ of $(G,p)$ has the following corresponding ($3|V(G_0)|$-dimensional) row vector:
\begin{description}
\item[Case 1] Suppose $e=(i,j)$, where $i\neq j$. 
Then the corresponding row  in $O(G,p,S)$ is:
        \begin{displaymath}\renewcommand{\arraystretch}{0.8}
        \bordermatrix{  & &  i &  &  j &  \cr & 0 \ldots 0 & \big(p_i-\tau(\psi(e))(p_j)\big) & 0  \ldots  0 & \big(p_j-\tau(\psi(e))^{-1}(p_i)\big) & 0  \ldots  0}\textrm{.}
    \end{displaymath}
\item[Case 2] Suppose $e=(i,i)$ is a loop in $(G_0,\psi)$. Then $\psi(e)\neq id$ and  
the corresponding row  in $O(G,p,S)$ is:
\begin{displaymath}
\bordermatrix{ & &  i & \cr & 0  \ldots  0 & \big(2p_i-\tau(\psi(e))(p_i)-\tau(\psi(e))^{-1}(p_i)\big) & 0  \ldots  0}\textrm{.}
    \end{displaymath}
\end{description}
\end{defn}

Using the above definition of the orbit rigidity matrix for frameworks in Euclidean 3-space, we can easily set up the orbit-surface rigidity matrix as follows:

\begin{defn}
Let $(G,p)$ be a framework in $\mathscr{R}^{\M}_{(G,S,\theta)}$ with quotient $S$-gain graph $(G_0,\psi)$. The \emph{orbit-surface rigidity matrix} $O_{\M}(G,p,S)$ of  $(G,p)$ is the $(|E(G_0)|+|V(G_0)|)\times 3|V(G_0)|$ block matrix
\[\begin{bmatrix} O(G,p,S)\\ \N_0(p_0) \end{bmatrix} \]
where $O(G,p,S)$ is the standard orbit rigidity matrix for the  framework  and symmetry group considered in $\bR^3$ (see Definition \ref{orbitmatrixdef}) and $\N_0(p_0)$ represents the block-diagonalised matrix of surface normals to the framework joints corresponding to the vertices of $G_0$.
\end{defn}

A framework $(G,p)\in \mathscr{R}^{\M}_{(G,S,\theta)}$ is \emph{$S$-regular} if $O_\M(G,p,S)$ has maximal rank among all realisations in $\mathscr{R}^{\M}_{(G,S,\theta)}$. 
Note that if a framework on $\M$ is $S$-generic, then it is clearly also $S$-regular, and if some $S$-symmetric realisation of a graph $G$ is $S$-symmetric infinitesimally rigid, then the same is true for every $S$-regular realisation of $G$.

An $S$-gain graph $(G_0,\psi)$ is \emph{$S$-independent} if $O_{\M}(G,p,S)$ has linearly independent rows and \emph{$S$-dependent} otherwise. Clearly, if $(G,p)$ is \emph{$S$-isostatic} (i.e., minimally $S$-symmetric infinitesimally rigid) then $(G_0,\psi)$ is $S$-independent. The following lemma is an easy exercise.

\begin{lem}\label{lem:nullspace}
Let $\M$ be a surface with symmetric type $k_S$ with respect to a symmetry group $S$.
Let $N_\M(G,p,S)$ be the nullspace of $O_{\M}(G,p,S)$. Then $\dim N_\M(G,p,S)\geq k_S$.
\end{lem}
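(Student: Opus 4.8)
The plan is to show that the space of $S$-symmetric infinitesimal motions of the complete graph $K_n$ on $\M$ (realised $S$-symmetrically of an appropriate type) has dimension at least $k_S$, and that each such motion descends to an element of $N_\M(G,p,S)$. Indeed, by definition $k_S$ is the dimension of the space of trivial $S$-symmetric infinitesimal motions, i.e. the $S$-symmetric elements of the kernel of $R_\M(K_n,p)$ (where here $n=|V(G)|$ and $p$ is extended $S$-symmetrically to $V(K_n)$). First I would recall from the construction of the orbit rigidity matrix in \cite{BSWWorbit} that restricting an $S$-symmetric velocity field $u$ on $V(G)$ to one representative per vertex orbit gives a linear bijection between the space of $S$-symmetric infinitesimal motions of $(G,p)$ on $\M$ (the $S$-symmetric part of $\ker R_\M(G,p)$) and $N_\M(G,p,S)=\ker O_\M(G,p,S)$. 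This is exactly the defining property of the orbit-surface rigidity matrix: the row for an edge orbit $e=(i,j)$ encodes the flex condition $(u_i-u_j)\cdot(p_i-p_j)=0$ for one representative bar in the orbit (using $u_{xj}=\tau(x)u_j$ and $\tau(x)$ being an isometry to rewrite it purely in terms of the representative velocities $u_i,u_j$), and the rows of $\N_0(p_0)$ encode the tangency condition $u_i\cdot N(p_i)=0$ for one representative per vertex orbit; the remaining bars and joints in each orbit give the same equation after applying $\tau(x)$.

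Given this identification, it suffices to exhibit a $k_S$-dimensional space of $S$-symmetric infinitesimal motions of $(G,p)$ on $\M$. For this I would take the trivial $S$-symmetric infinitesimal motions of $\bR^3$ restricted to $\M$: by Definition~\ref{d:surfacetype} and the discussion following it, $\ker R_\M(K_n,p)$ consists of the velocity fields arising from the $k$-dimensional group $\Gamma$ of ambient isometries preserving $\M$, and the $S$-symmetric ones among these form a space of dimension exactly $k_S$ by the definition of symmetric type (this is the subspace of $\Gamma$, or of its Lie algebra, commuting with $\tau(S)$; its dimension is read off the character table as noted in the paragraph preceding Lemma~\ref{lem:nullspace}). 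Each such field is in particular an $S$-symmetric infinitesimal motion of $(G,p)$ on $\M$ — it is tangential to $\M$ and satisfies every bar equation since it satisfies them for $K_n\supseteq G$ — and these $k_S$ fields are linearly independent as velocity fields on $V(G)$ (the realisation uses all $n\ge 1$ vertices). Applying the bijection above, their images span a $k_S$-dimensional subspace of $N_\M(G,p,S)$, giving $\dim N_\M(G,p,S)\ge k_S$.

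The only genuine point requiring care — and the step I expect to be the main obstacle in writing this cleanly — is verifying that the restriction-to-representatives map really is a well-defined injection from $S$-symmetric infinitesimal motions on $\M$ into $\ker O_\M(G,p,S)$, and conversely that it is surjective, so that the trivial motions are not accidentally killed. Well-definedness on the $O(G,p,S)$ block is precisely the content of \cite{BSWWorbit}; the extension to the $\N_0(p_0)$ block is immediate since tangency is a pointwise (vertex-orbit-wise) condition and $\tau(x)$ maps the normal at $p_i$ to the normal at $p_{\theta(x)i}=\tau(x)p_i$. Injectivity is clear because an $S$-symmetric field is determined by its values on orbit representatives. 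Since we only need the inequality $\dim N_\M(G,p,S)\ge k_S$, we do not even need surjectivity — it is enough that the map sends our $k_S$-dimensional space of trivial motions injectively into $N_\M(G,p,S)$, which follows from injectivity alone. Thus the lemma reduces to the bijection of \cite{BSWWorbit} together with the elementary fact that trivial $S$-symmetric motions of $\bR^3$ tangential to $\M$ form a $k_S$-dimensional space, and the proof is a short "easy exercise" as stated.
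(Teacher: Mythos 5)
The paper offers no proof of this lemma, calling it ``an easy exercise,'' so there is nothing explicit to compare against; your argument is the natural one and is correct. You identify the two ingredients precisely: (i) the restriction-to-orbit-representatives map sends the $S$-symmetric part of $\ker R_\M(G,p)$ injectively into $\ker O_\M(G,p,S)$ (the content of \cite{BSWWorbit} together with the pointwise, equivariant nature of the tangency rows $\N_0(p_0)$), and (ii) the trivial $S$-symmetric motions --- the $S$-symmetric elements of $\ker R_\M(K_n,p)$, whose dimension is by definition $k_S$ --- are already $S$-symmetric infinitesimal motions of $(G,p)$ on $\M$ since $G\subseteq K_n$, so their images account for a $k_S$-dimensional subspace of $\ker O_\M(G,p,S)$. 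The only (minor) wrinkle, which you partially gesture at, is that the paper treats $k_S$ as a group-theoretic constant ``read off the character table,'' which silently presumes the framework is large enough that the Lie-algebra of $S$-symmetric tangential isometries injects into the space of velocity fields on $V(G)$; for very small configurations (e.g.\ a single vertex orbit on the sphere with $\C_i$) this identification and hence the stated bound can fail. Your proof inherits this implicit assumption from the paper, which is acceptable since the lemma is only applied to subgraphs where the counting argument is meaningful, but it would be worth flagging it explicitly.
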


The following result summarises the key properties of the orbit-surface rigidity matrix:

\begin{thm} \label{orbitmatrixthm}
Let $(G,p)$ be a framework in $\mathscr{R}^{\M}_{(G,S,\theta)}$. Then  the solutions to $O_{\M}(G,p,S)u = 0$ are isomorphic to the space of $S$-symmetric infinitesimal motions of $(G,p)$. Moreover, the solutions to $(\omega,\lambda)^TO_{\M}(G,p,S) = 0$ are isomorphic to the space of  $S$-symmetric self-stresses of  $(G,p)$.
\end{thm}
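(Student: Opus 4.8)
The plan is to leverage the two known isomorphism facts — the one for the orbit rigidity matrix $O(G,p,S)$ in $\bR^3$ from \cite{BSWWorbit}, and the one relating the nullspace and cokernel of $R_\M(G,p)$ to infinitesimal motions and self-stresses on $\M$ — and to show that passing to the symmetric setting is compatible with the block decomposition of $O_\M(G,p,S)$ into its $O(G,p,S)$ part and its $\N_0(p_0)$ part. First I would fix a set of orbit representatives for the vertices of $G$ (using our standing assumption that $S$ acts freely and fixes no joint, so each vertex orbit has exactly $|S|$ elements and contributes exactly one triple of columns to $O_\M$). This identifies a vector $u^0 \in \bR^{3|V(G_0)|}$ with a unique $S$-symmetric vector field $u$ on $V(G)$ via $u_{\theta(x)(i)} = \tau(x)(u^0_i)$; this correspondence is a linear isomorphism between $\bR^{3|V(G_0)|}$ and the space of $S$-symmetric velocity assignments on $(G,p)$.

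The key step is then to check that under this identification, the equation $O_\M(G,p,S)u^0 = 0$ says exactly that $u$ is an $S$-symmetric infinitesimal motion of $(G,p)$ on $\M$, i.e. that it lies in $\ker R_\M(G,p)$. This splits according to the block structure. For the $O(G,p,S)$ rows: by the established properties of the orbit rigidity matrix in $\bR^3$ \cite{BSWWorbit}, $O(G,p,S)u^0 = 0$ holds if and only if the corresponding $S$-symmetric $u$ satisfies the bar constraints $(u_i - u_j)\cdot(p_i - p_j) = 0$ for every edge of $G$ — one verifies directly, using Case 1 and Case 2 of Definition \ref{orbitmatrixdef}, that the single row for an edge orbit $e$ encodes the constraint for one representative bar, and that $S$-symmetry of $u$ together with the fact that $\tau(x)$ is an isometry propagates this to every bar in the orbit. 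For the $\N_0(p_0)$ rows: since $\N_0(p_0)$ is block-diagonal with the $i$-th block being the surface normal $N(p_i)$ at the representative joint, the equation $\N_0(p_0)u^0 = 0$ says $u^0_i \cdot N(p_i) = 0$ for each representative $i$; again because $\tau(x)$ is an isometry of $\bR^3$ mapping $\M$ to itself (hence carrying normals to normals, $\tau(x)N(p_i)$ being a normal at $\tau(x)p_i = p_{\theta(x)(i)}$), this is equivalent to $u_j \cdot N(p_j) = 0$ for every $j \in V(G)$, i.e. tangentiality of the whole $S$-symmetric field $u$. Combining the two blocks gives $u \in \ker R_\M(G,p)$, proving the first assertion. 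The cokernel statement is dual: a left null vector $(\omega,\lambda)$ of $O_\M(G,p,S)$ has one entry per edge orbit and one per vertex orbit, and extending it to an $S$-symmetric $(\omega,\lambda)$ on $G$ (constant on orbits, as in the definition of an $S$-symmetric self-stress) and applying the same block-wise computation — the $O(G,p,S)$ part via the cokernel property of the orbit matrix in $\bR^3$, the $\N_0$ part directly — shows $(\omega,\lambda)^T R_\M(G,p) = 0$, i.e. $(\omega,\lambda)$ is an $S$-symmetric self-stress; conversely every $S$-symmetric self-stress restricts to such a left null vector.

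The main obstacle is the bookkeeping in the bar-orbit computation when the edge orbit is represented by a loop in $(G_0,\psi)$, or more generally when $\psi(e) \neq \mathrm{id}$: here one must carefully track how the gain element acts, i.e. check that the row $\big(p_i - \tau(\psi(e))(p_j),\, p_j - \tau(\psi(e))^{-1}(p_i)\big)$ paired with the $S$-symmetric extension $u$ reproduces $(u_i - u_{\psi(e)j})\cdot(p_i - p_{\psi(e)j}) = 0$, using $u_{\psi(e)j} = \tau(\psi(e))(u^0_j)$ and the isometry property $\tau(\psi(e))^{-1}(p_i)\cdot u^0_j = p_i \cdot \tau(\psi(e))(u^0_j)$. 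This is exactly the verification already carried out in \cite{BSWWorbit} for the Euclidean case, so strictly speaking it suffices to cite that the $O(G,p,S)$ block has the claimed kernel/cokernel relation to $S$-symmetric bar-length-preserving (resp. stress) data, and then only the genuinely new content — that the $\N_0(p_0)$ block correctly encodes $S$-symmetric tangentiality — needs a short independent argument, which is the "easy exercise" flavour already signalled around Lemma \ref{lem:nullspace}.
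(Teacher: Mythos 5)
Your proposal is correct and matches the paper's argument: the paper also reduces to the kernel/cokernel characterisation of $O(G,p,S)$ from \cite{BSWWorbit} and then notes that the additional $\N_0(p_0)$ block encodes exactly the $S$-symmetric tangentiality constraint, which propagates from orbit representatives to all joints because each $\tau(x)$ is an isometry of $\bR^3$ preserving $\M$. Your write-up simply makes explicit the bookkeeping (the isomorphism between $\bR^{3|V(G_0)|}$ and $S$-symmetric velocity fields, and the normal-preservation argument) that the paper dispatches with the phrase ``follows immediately.''
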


\begin{proof}
This follows immediately from the corresponding result for the orbit rigidity matrix in Euclidean $3$-space \cite[Theorem 6.1 and Theorem 8.3]{BSWWorbit}.

For example, it was shown in \cite[Theorem 6.1]{BSWWorbit} that a vector $u$ lies in the kernel of the orbit rigidity matrix $O(G,p,S)$ if and only if $u$ is the restriction of an $S$-symmetric infinitesimal motion of $(G,p)$ to the joints corresponding to the vertices of the quotient $S$-gain graph of $G$. Therefore, a vector $u$ lies in the kernel of the orbit-surface rigidity matrix $O_{\M}(G,p,S)$ if and only if $u$ is in the kernel of $O(G,p,S)$ and the velocity vectors of $u$ are tangential to the surface $\M$. Moreover this holds if and only if $u$ is the restriction of an $S$-symmetric infinitesimal motion of $(G,p)$ on $\M$ to the joints corresponding to the vertices of the quotient $S$-gain graph of $G$.

Similarly, the proof of \cite[Theorem 8.3]{BSWWorbit} can easily be adapted to show that the solutions to $(\omega,\lambda)^TO_{\M}(G,p,S) = 0$ are isomorphic to the space of  $S$-symmetric self-stresses of  $(G,p)$.
\end{proof}

An $S$-symmetric framework $(G,p)$ is \emph{$S$-symmetric rigid} if every $S$-symmetric continuous motion is a rigid motion of $\M$.

It is an easy extension of \cite{asiroth} (see also \cite{nop1,BS6}) to show that for $S$-regular realisations, a symmetric infinitesimal motion implies a continuous symmetry preserving motion. Thus we have the following.

\begin{thm}
Let $(G,p)$ be an $S$-regular framework in $\mathscr{R}^{\M}_{(G,S,\theta)}$. Then $(G,p)$ is $S$-symmetric infinitesimally rigid on $\M$ if and only if $(G,p)$ is $S$-symmetric rigid on $\M$.
\end{thm}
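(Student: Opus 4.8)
The plan is to mirror the non-symmetric argument of Theorem~\ref{thm:finitemotion} (i.e.\ Asimow--Roth, adapted to the surface setting in \cite{nop1,BS6}) in the $S$-symmetric category, using the orbit-surface rigidity matrix as the symmetric analog of $R_\M(G,p)$. The implication "$S$-symmetric rigid $\Rightarrow$ $S$-symmetric infinitesimally rigid" is the easy direction and holds for all realisations, not just $S$-regular ones: an $S$-symmetric infinitesimal flex that is non-trivial would, if it extended to a finite $S$-symmetric motion, contradict $S$-symmetric rigidity; more precisely, one takes the contrapositive — if $(G,p)$ admits a non-trivial $S$-symmetric infinitesimal motion $u$, then by Theorem~\ref{orbitmatrixthm} this corresponds to a genuine nullvector of $O_\M(G,p,S)$ not in the $k_S$-dimensional trivial subspace, and the standard averaging/differentiation argument shows a non-trivial $S$-symmetric continuous motion, so $(G,p)$ is not $S$-symmetric rigid.

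For the converse — "$S$-regular and $S$-symmetric infinitesimally rigid $\Rightarrow$ $S$-symmetric rigid" — I would run the Asimow--Roth machinery on the space of $S$-symmetric configurations. Let $d_0 = 3|V(G_0)|$ and consider the map $f_\M : (\M)^{V(G_0)} \to \bR^{|E(G_0)|}$ sending a symmetric configuration (recorded by its orbit representatives $p_0$) to the squared edge-orbit lengths; its derivative at $p_0$ is essentially $O(G,p,S)$, while the constraint that each representative joint lie on $\M$ has derivative $\N_0(p_0)$, so that the differential of the combined system is exactly $O_\M(G,p,S)$. Here one uses that $\mathscr{R}^{\M}_{(G,S,\theta)}$, parametrised by the orbit representatives constrained to $\M$, is a manifold near an $S$-regular point, because $S$-regularity means the rank of $O_\M(G,p,S)$ is locally constant (indeed maximal). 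Then: $S$-regular + $S$-symmetric infinitesimally rigid means $\rank O_\M(G,p,S)$ is as large as possible, so the fibre of $f_\M$ through $p_0$, intersected with the $\M$-constraint, has dimension equal to $k_S$ near $p_0$; but the orbit of $p_0$ under the tangential isometry group already has dimension $k_S$ (by Lemma~\ref{lem:nullspace} and the definition of $k_S$), hence every nearby $S$-symmetric configuration with the same edge-orbit lengths is obtained from $p_0$ by an isometry of $\M$. A connectedness argument along any $S$-symmetric finite motion (the motion stays $S$-regular on a dense open set, and one handles the measure-zero exceptional set by a standard argument as in \cite{asiroth,nop1}) upgrades this local statement to the global one: every $S$-symmetric continuous motion is a rigid motion of $\M$.

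The main obstacle I expect is the rank/manifold bookkeeping on $\mathscr{R}^{\M}_{(G,S,\theta)}$: one must check that the tangent space to this set of symmetric realisations (coordinatised by orbit representatives on $\M$) is genuinely cut out by $\N_0(p_0)$ and that the Jacobian of the symmetric length map agrees with $O(G,p,S)$ up to the identifications of Theorem~\ref{orbitmatrixthm} — this is exactly the content that makes $O_\M(G,p,S)$ play the role of $R_\M$ in the symmetric quotient, and it is where the hypothesis that no joint is fixed by a non-trivial element of $S$ (so that there are clean $3$-column blocks per vertex orbit) is used. The second delicate point is dealing with non-$S$-regular configurations encountered along a finite motion; as in \cite{asiroth,nop1} these form a lower-dimensional set and do not obstruct the argument, but this must be invoked carefully. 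Given Theorem~\ref{orbitmatrixthm}, Theorem~\ref{thm:finitemotion}, and the cited extensions of Asimow--Roth to surfaces \cite{nop1} and to symmetry \cite{BS6}, the proof is essentially an assembly of these pieces, which is presumably why the authors call it "an easy extension."
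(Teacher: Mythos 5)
You have the two directions and their dependence on $S$-regularity confused, and this is a genuine error. The implication $S$-symmetric rigid $\Rightarrow$ $S$-symmetric infinitesimally rigid does \emph{not} hold for all realisations; it is precisely the implication that needs $S$-regularity, and it is exactly what the paper is pointing to when it writes ``for $S$-regular realisations, a symmetric infinitesimal motion implies a continuous symmetry preserving motion'' (take the contrapositive). Your ``more precise'' argument for this direction --- that ``the standard averaging/differentiation argument shows a non-trivial $S$-symmetric continuous motion'' from a non-trivial nullvector of $O_\M(G,p,S)$ --- does not work: differentiation goes from a continuous motion to an infinitesimal one, not the other way, and averaging over $S$ buys nothing since the infinitesimal motion is already $S$-symmetric. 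Passing from a non-trivial infinitesimal flex to a non-trivial continuous flex is the non-elementary content of Asimow--Roth and requires the locally-constant-rank (regularity) hypothesis. Without it, the implication is simply false in general.

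Fortunately, the manifold argument you sketch for the ``converse'' direction is, done at an $S$-regular point, essentially the whole proof and gives \emph{both} implications at once, just as in Asimow--Roth and in Theorem~\ref{thm:finitemotion} as treated in \cite{nop1}. At an $S$-regular $p$, the fibre of the edge-orbit length map through $p$, intersected with the $\M$-constraints on the orbit representatives, is locally a manifold of dimension $\dim\ker O_\M(G,p,S)\ge k_S$ (Lemma~\ref{lem:nullspace}); the orbit of $p$ under the tangential isometries of $\M$ compatible with $S$ is a $k_S$-dimensional submanifold of it; the two coincide near $p$ --- i.e.\ $(G,p)$ is $S$-symmetric rigid --- if and only if these dimensions agree, which is precisely $S$-symmetric infinitesimal rigidity. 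You should drop the spurious ``easy direction'' entirely, present this manifold argument as proving the biconditional, and note that for the direction infinitesimally rigid $\Rightarrow$ rigid the regularity hypothesis is automatic (infinitesimal rigidity forces $O_\M(G,p,S)$ to have rank $3|V(G_0)|-k_S$, which is maximal). The paper offers no proof here --- it cites \cite{asiroth,nop1,BS6} --- so the repair needed is directional bookkeeping, not a new idea.
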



\section{Necessary conditions for symmetry-forced rigidity}
\label{sec:necesscond}

We now establish analogues of Maxwell's theorem, showing combinatorial counts that must be satisfied by any $S$-regular $S$-isostatic  framework in $\mathscr{R}^{\M}_{(G,S,\theta)}$. We need the following definition (see also \cite{jkt,schtan}).

\begin{defn} \label{def:gainsparsity}
Let $(H,\psi)$ be an $S$-gain graph and let $k, \ell, m$ be nonnegative integers with $m\leq \ell$. Then
$(H,\psi)$ is called \emph{$(k,\ell,m)$-gain-sparse} if
\begin{itemize}
\item $|F|\leq k|V(F)|-\ell$ for any nonempty balanced $F\subseteq E(H)$;
\item $|F|\leq k|V(F)|-m$ for any nonempty $F\subseteq E(H)$.
\end{itemize}
A $(k,\ell,m)$-gain-sparse graph $(H,\psi)$ satisfying $|F|= k|V(F)|-m$ is called \emph{$(k,\ell,m)$-gain-tight}. 
Similarly, an edge set $E$ is called $(k,\ell,m)$-gain-sparse ($(k,\ell,m)$-gain-tight) if it induces a $(k,\ell,m)$-gain-sparse ($(k,\ell,m)$-gain-tight) graph.
\end{defn}

We first establish a necessary condition for a framework in $\mathscr{R}^{\M}_{(G,S,\theta)}$ to be $S$-isostatic.

\begin{thm}\label{thm:maxwell}
Let $\M$ be an irreducible algebraic surface of type $k$.
Let $S$ be a symmetry group of $\bR^3$ acting on $\M$ such that under $S$, $\M$ has type $k_S$. Let $(G,p)$ be an $S$-isostatic   framework in $\mathscr{R}^{\M}_{(G,S,\theta)}$ with quotient $S$-gain graph $(G_0,\psi)$. Then $(G_0,\psi)$ is $(2,k,k_S)$-gain-tight.
\end{thm}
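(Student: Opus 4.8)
The plan is to combine the rank formula for isostatic frameworks on surfaces (Lemma~\ref{lem:surf2}) with a subgraph-counting argument exactly as in the non-symmetric case (Theorem~\ref{thm:surf1}), but carried out on the quotient gain graph and keeping careful track of the distinction between balanced and unbalanced subgraphs. First I would record the ``equality'' count: since $(G,p)$ is $S$-isostatic, Theorem~\ref{orbitmatrixthm} tells us the nullspace of $O_\M(G,p,S)$ has dimension exactly $k_S$, so $O_\M(G,p,S)$ has rank $3|V(G_0)| - k_S$; since $(G,p)$ is minimally $S$-symmetric infinitesimally rigid, the rows of $O_\M(G,p,S)$ are independent, so the number of rows equals the rank, i.e. $|E(G_0)| + |V(G_0)| = 3|V(G_0)| - k_S$, giving $|E(G_0)| = 2|V(G_0)| - k_S$. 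This is the required tightness equation.

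Next I would prove the two sparsity inequalities for a nonempty edge subset $F \subseteq E(G_0)$. Let $H$ denote the subgraph of $G_0$ induced by $F$ (on vertex set $V(F)$), and consider the lifted subframework: the preimage $c^{-1}(F)$ is an edge subset of $G$ spanning the vertex orbits over $V(F)$, and the corresponding subframework of $(G,p)$ is itself $S$-symmetric on $\M$. The rows of $O_\M(G,p,S)$ indexed by $F$ together with the $|V(F)|$ surface-normal rows are precisely the orbit-surface rigidity matrix of this subframework; since $(G,p)$ is $S$-isostatic these rows are independent, so $|F| + |V(F)| \le \text{rank} \le 3|V(F)| - (\text{dim of the relevant nullspace})$. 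For \emph{any} nonempty $F$ the nullspace of the restricted matrix has dimension at least the number of symmetric trivial motions that survive when only the orbits over $V(F)$ are present; this is bounded below by the symmetric type $k_S$ when the subframework still ``sees'' all of $S$ (in particular when $F$ is unbalanced), so $|F| + |V(F)| \le 3|V(F)| - k_S$, i.e. $|F| \le 2|V(F)| - k_S = 2|V(F)| - m$ with $m = k_S$. For \emph{balanced} $F$, the gain labelling on $F$ is equivalent (after switching, using Proposition~\ref{prop:lem2}) to the trivial labelling, so the lifted subframework over the orbits of $V(F)$ breaks into $|S|$ disjoint isomorphic copies of an ordinary (unsymmetric) framework on $\M$ with vertex set $V(F)$; the symmetric infinitesimal rigidity of this piece is then governed by the \emph{ordinary} rigidity matrix $R_\M$ on $|V(F)|$ vertices, whose nullspace has dimension at least $k$ (Definition~\ref{d:surfacetype}). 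Independence of the rows then forces $|F| + |V(F)| \le 3|V(F)| - k$, i.e. $|F| \le 2|V(F)| - k = 2|V(F)| - \ell$ with $\ell = k$. Since $k_S \le k$ always holds (a symmetric trivial motion is in particular a trivial motion), the hypothesis $m \le \ell$ of Definition~\ref{def:gainsparsity} is satisfied, and the three facts together say exactly that $(G_0,\psi)$ is $(2,k,k_S)$-gain-tight.

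The step I expect to be the main obstacle is making rigorous the claim that the restricted rows of $O_\M(G,p,S)$ on a subset $F$ really are the orbit-surface rigidity matrix of a genuine $S$-symmetric subframework, together with the correct lower bound on the dimension of its nullspace in the \emph{unbalanced} case. The subtlety is that when we pass to a subgraph the vertex orbits survive intact (the group $S$ still acts freely on the lifted subframework), but one must check that the surface-normal block $\N_0$ restricts correctly and that no trivial symmetric motions are lost, so that the bound $\dim N_\M \ge k_S$ of Lemma~\ref{lem:nullspace} still applies to the subframework — this is where the assumption that no joint is fixed by a non-trivial element of $S$, and the fact that $S$ acts by isometries of $\M$, get used. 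In the balanced case the analogous care is needed to justify the splitting into $|S|$ disjoint copies and to invoke the unsymmetric surface count; here the switching operation (Proposition~\ref{prop:lem2}) does the work, but one has to note that switching corresponds to a change of representative joints in each vertex orbit and hence does not change the rank of the restricted matrix. Once these structural points are in place, the counting is routine and follows the template of \cite{jkt} and Theorem~\ref{thm:surf1}.
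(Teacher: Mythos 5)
Your proposal is correct and follows essentially the same route as the paper's proof: equality from the rank of $O_\M(G,p,S)$, the $k_S$ inequality from row independence plus the lower bound on the nullspace of the restricted matrix (the paper's Lemma~\ref{lem:nullspace}), and the balanced-$F$ inequality by switching to the trivial gain labelling and reducing to the ordinary surface rigidity matrix via Lemma~\ref{lem:surf2}. You also correctly flag the one non-trivial verification, namely that a switching operation leaves the rank of the orbit-surface matrix unchanged, which the paper handles by an explicit computation (Claim~\ref{claim:switch}) using that $\tau(\alpha)$ is an isometry of $\M$ and hence sends the normal at $p_v$ to a scalar multiple of the normal at $\tau(\alpha)(p_v)$.
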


\begin{proof}
First observe that if there exists an unbalanced subgraph $(H,\psi)$ of $(G_0,\psi)$ with $|E(H)|>2|V(H)|-k_S$, then Lemma \ref{lem:nullspace} implies that there is a row dependence in the orbit-surface matrix for that subgraph. Similarly, we must have $|E(G_0)|=2|V(G_0)|-k_S$.  So it remains to check that if $F\subseteq E(G_0)$ is balanced, then $|F|\leq 2|V(F)|-k$.

\begin{claim}\label{claim:switch}
Switching a vertex does not change the rank of the orbit-surface matrix.
\end{claim}

\begin{proof}
Let $v$ be the vertex we will switch with gain $\alpha$,
let $e=(u,v)$ be an edge, oriented into $v$, with gain $\psi(e)$ and let $f=(v,v)$ be a loop on $v$ with gain $\psi(f)$. Define $p_i'=p_i$ for all $v\in V(G_0)- v$ and $p'_{v}=\tau(\alpha) p_{v}$, and let $\psi'$ be the gain function obtained from $\psi$ by switching $v$ with $\alpha$. 

To prove the claim it suffices to show that the submatrix corresponding to the rows for $e,f$ and the row for $v$ has the same rank in $O_\M(G,p',S)$ and in $O_\M(G,p,S)$.

In $O_\M(G,p',S)$ we have

\setcounter{MaxMatrixCols}{20}
\[\begin{bmatrix}0&\dots&0 & p'_u-\tau(\psi'(e))p'_{v} & 0 & \dots & 0 & p'_{v}-\tau(\psi'(e))^{-1}p'_u & 0 & \dots & 0\\
0&\dots&&&&&0 & 2p'_{v}-\tau(\psi'(f))p'_{v}-\tau(\psi'(f))^{-1}p'_{v} & 0 & \dots & 0
\\ 0 & \dots & &&&&0& N_0(p'_v) & 0 & \dots & 0\end{bmatrix}.\]
Since $\psi'(e)=\psi(e)\alpha^{-1}$ and $\psi'(f)=\alpha\psi(f)\alpha^{-1}$ this is 
{\tiny
\setcounter{MaxMatrixCols}{20}
\[\begin{bmatrix}0&\dots&0 & p_u-\tau(\psi(e)\alpha^{-1}) \tau(\alpha) p_{v} & 0 & \dots & 0 & \tau(\alpha) p_{v}-\tau(\psi(e)\alpha^{-1})^{-1}p_u & 0 & \dots & 0\\ 
0&\dots&&&&&0 & 2\tau(\alpha) p_{v}-\tau(\alpha\psi(f)\alpha^{-1})\tau(\alpha) p_{v}-\tau(\alpha\psi(f)\alpha^{-1})^{-1}\tau(\alpha) p_{v} & 0 & \dots & 0\\
0 & \dots & &&&&0& N_0(\tau(\alpha)(p_v)) & 0 & \dots & 0\end{bmatrix},\]
}
which  simplifies (after scaling the third row if necessary) to 
\setcounter{MaxMatrixCols}{20}
\[\begin{bmatrix}0&\dots&0 & p_u-\tau(\psi(e))p_{v} & 0 & \dots & 0 & \tau(\alpha)(p_{v}-\tau(\psi(e))^{-1}p_u) & 0 & \dots & 0\\ 
0&\dots&&&&&0 & \tau(\alpha)(2p_{v}-\tau(\psi(f))p_{v}-\tau(\psi(f))^{-1}p_{v}) & 0 & \dots & 0\\
0 & \dots & &&&&0& \tau(\alpha)(N_0(p_v)) & 0 & \dots & 0\end{bmatrix},\]
since $N_0(\tau(\alpha)(p_v))$ is equal to a scalar multiple of $\tau(\alpha)(N_0(p_v)) $. To see this, recall that
$N_0(p_v)$ is a normal vector to $\M$ at the point $p_v$. Since $\tau(\alpha)$ is an isometry of $\mathbb{R}^3$, and $\tau(\alpha)$ maps $\M$ onto itself,  it maps the tangent plane to $\M$ at $p_v$ to the tangent plane to $\tau(\alpha)(\M)=\M$ at $\tau(\alpha)(p_v)$. 
   Thus, $\tau(\alpha)(N_0(p_v))$ is again a normal vector to $\M$ (with a possibly different magnitude), namely at the point $\tau(\alpha)(p_v)$.

By applying column operations to the triple for $p_v$ we turn this matrix into the submatrix of $O_\M(G,p,S)$ and hence the ranks are indeed the same. 
%
%
%
\end{proof}

Now suppose there exists a balanced edge set $F\subseteq E(G_0)$ with $|F|> 2|V(F)|-k$. Then, by the above Claim, we may switch the vertices of the  graph induced by $F$ so that every edge gain in this subgraph is the identity element of $S$. The submatrix of $O_{\M}(G,p,S)$ consisting of all those rows which correspond to the edges and vertices of the subgraph induced by $F$ is a standard surface rigidity matrix. Since $|F|> 2|V(F)|-k$, it follows from Lemma~\ref{lem:surf2} that this matrix has a row dependence, a contradiction.
\end{proof}

We remark that  the sparsity condition for unbalanced subgraphs is simpler than the reader may have anticipated. 
For most non-cyclic groups we can derive stronger necessary conditions by taking greater care to deal with the different possible
subgroups   $\langle F\rangle_{\psi,v}$ induced by $F$  (recall Section~\ref{sec:balgaingr}). 


For example, if $\M$ is the unit sphere and $S$ is a dihedral group $\D_m$, then it is possible that for some subset of edges $F$ of the $\D_m$-gain graph, the group $\langle F\rangle_v$ is neither trivial nor the entire group $\D_m$, but the cyclic subgroup $\C_m$ of $\D_m$. In that case, we need to adjust the number $k_{\D_m}=0$ to $k_{\langle F\rangle_v}=1$ in the sparsity count for $F$, where $k_{\langle F\rangle_v}$ is the dimension of the space of isometries of $\mathbb{R}^3$ which act tangentially on the unit sphere and are symmetric with respect to the group $\langle F\rangle_v=\C_m$.
 With this in mind, the following is proved similarly to Theorem \ref{thm:maxwell} (see, also, \cite[Lemma 5.2]{jkt}). 

\begin{thm}\label{thm:maxwellnon-abelian}
Let $\M$ be an irreducible algebraic surface of type $k$.
Let $S$ be a non-cyclic symmetry group of $\bR^3$ acting on $\M$ such that $\M$ has type $k_{S}$ under $S$. 
Let $(G,p)$ be an  $S$-isostatic framework in $\mathscr{R}^{\M}_{(G,S,\theta)}$ with quotient $S$-gain graph $(G_0,\psi)$. Then $(G_0,\psi)$ satisfies \begin{enumerate}
\item $|E(G_0)|=2|V(G_0)|-k_{S}$,
\item $|F|\leq 2|V(F)|-k_{\langle F\rangle_v}$ $\quad$ for all $F\subseteq E(G_0)$ and all $v\in V(F)$.
\end{enumerate}
\end{thm}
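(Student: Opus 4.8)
The plan is to mimic the proof of Theorem~\ref{thm:maxwell}, upgrading the balanced case so that it tracks the actual subgroup $\langle F\rangle_{\psi,v}$ induced by $F$ rather than only distinguishing "balanced" from "unbalanced". As before, the starting point is that $(G,p)$ is $S$-regular (being $S$-generic follows from $S$-isostatic) and $S$-isostatic, so by Theorem~\ref{orbitmatrixthm} the orbit-surface rigidity matrix $O_{\M}(G,p,S)$ has linearly independent rows and corank exactly $k_S$, which immediately gives $(1)$: $|E(G_0)| = 3|V(G_0)| - (3|V(G_0)| - k_S) = 2|V(G_0)| - k_S$, using that $O_{\M}$ has $3|V(G_0)|$ columns, $|V(G_0)|$ normal rows, and full row rank. (Strictly one also notes $\dim N_\M = k_S$ here, not just $\geq k_S$ from Lemma~\ref{lem:nullspace}, because $S$-regular $S$-isostatic forces equality.)

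For $(2)$, fix $F\subseteq E(G_0)$ and $v\in V(F)$, and let $H = H_F$ be the subgraph induced by $F$. First I would invoke Claim~\ref{claim:switch}: switching a vertex does not change the rank of the orbit-surface matrix (this is exactly the content already proved inside Theorem~\ref{thm:maxwell}, and needs no modification). By Proposition~\ref{prop:lem2}, if $F$ were balanced we could switch all of $V(H)$ so that every edge of $H$ has identity gain; more generally, after switching we may assume $\langle F\rangle_{\psi,v}$ is a fixed subgroup $\Gamma_0 \leq S$ and that the gains on $F$ lie in $\Gamma_0$ — indeed the covering construction shows that the rows of $O_{\M}(G,p,S)$ corresponding to $F$ and to $V(H)$ form, up to the column-permutation/identification coming from the $S$-action, precisely an orbit-surface rigidity matrix for the $\Gamma_0$-symmetric sub-framework living on $\M$ with group $\Gamma_0$ acting on it. The key point is that the ambient surface $\M$ is unchanged, but the relevant symmetry group has shrunk from $S$ to $\Gamma_0$, so the relevant trivial-motion dimension is $k_{\Gamma_0} = k_{\langle F\rangle_v}$, not $k_S$.

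Then I would apply Lemma~\ref{lem:nullspace} (or the rank bound underlying Lemma~\ref{lem:surf2}) to that smaller orbit-surface matrix: its nullspace has dimension at least $k_{\langle F\rangle_v}$, so its rank is at most $3|V(F)| - k_{\langle F\rangle_v}$, hence the number of its rows that can be independent is at most that minus the $|V(F)|$ normal rows, i.e. $2|V(F)| - k_{\langle F\rangle_v}$. Since $O_{\M}(G,p,S)$ has all rows independent (from $S$-isostaticity), in particular the $|F|$ rows indexed by $F$ are independent, so $|F| \leq 2|V(F)| - k_{\langle F\rangle_v}$, which is exactly $(2)$. When $\Gamma_0 = \{\textrm{id}\}$ this recovers the balanced count $|F|\le 2|V(F)|-k$; when $\Gamma_0 = S$ it recovers $|F|\le 2|V(F)|-k_S$; and the intermediate cases (such as $\C_m \leq \D_m$ on the sphere, giving $k_{\C_m}=1$) are handled uniformly.

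The main obstacle — and the step deserving the most care — is making precise that the submatrix of $O_{\M}(G,p,S)$ on rows $F \cup V(H)$ genuinely \emph{is} an orbit-surface rigidity matrix for a $\Gamma_0$-symmetric framework, so that Lemma~\ref{lem:nullspace} is legitimately applicable to it. This requires (a) checking that, after switching so the gains on $F$ all lie in $\Gamma_0$, one may further switch so that distinct vertex-columns of $H$ are not forced to coincide — i.e. that the fiber structure over $H$ is that of a genuine $\Gamma_0$-cover, which is where freeness of the action and the "no fixed joint" assumption are used; and (b) confirming that restricting a normal vector $N(p_v)$ to $\M$ is insensitive to which group acts, so the normal-row block is literally the normal-row block of the $\Gamma_0$-framework. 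Both points are essentially bookkeeping once one unwinds Definitions~\ref{orbitmatrixdef} and the covering-graph construction, and the switching invariance (Claim~\ref{claim:switch}) does the real work; this is why the theorem can be stated as "proved similarly to Theorem~\ref{thm:maxwell}", with the reference to \cite[Lemma 5.2]{jkt} supplying the template for the subgroup-refined sparsity argument.
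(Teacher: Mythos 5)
Your proposal is correct and follows exactly the route the paper indicates: the paper gives no explicit proof for this theorem, stating only that it "is proved similarly to Theorem~\ref{thm:maxwell} (see, also, \cite[Lemma 5.2]{jkt})", and your argument — switch so that the gains on $F$ lie in $\Gamma_0 = \langle F\rangle_{\psi,v}$, identify the corresponding rows of $O_{\M}(G,p,S)$ as the $\Gamma_0$-orbit-surface matrix of a $\Gamma_0$-symmetric subframework, and apply the $k_{\Gamma_0}$ nullity bound — is precisely the refinement of the balanced case of Theorem~\ref{thm:maxwell} that the preceding discussion in the paper anticipates. Your worry in point~(a) about forcing distinct vertex columns to coincide is unnecessary (the $3$-column blocks of the submatrix are indexed by $V(F)$ regardless, and the action-freeness assumption already guarantees the fiber structure), but it does no harm; otherwise the argument is sound and complete.
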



\section{Combinatorial characterisations of generic rigidity} \label{sec:combchar}

In the rest of this paper we will consider the more substantial problem of proving these counts are sufficient to guarantee that a symmetric framework supported on a surface is symmetry-forced isostatic. We will focus on three classical surfaces in $3$-space, namely the sphere, the cylinder and the cone. That is
\begin{itemize}
\item the unit sphere $\S$ centered at the origin, defined by the equation $x^2+y^2+z^2=1$;
\item the unit cylinder $\Y=S^1 \times \bR$ about the $z$-axis, defined by the equation $x^2+y^2=1$;
\item the unit cone $\C$ about the $z$-axis, defined by the  equation $x^2+y^2=z^2$.
\end{itemize}

In the Schoenflies notation, the relevant symmetry groups which are compatible with each of these surfaces are $\mathcal{C}_s$, $\mathcal{C}_m$, $\mathcal{C}_i$, $\mathcal{C}_{mv}$,  $\mathcal{C}_{mh}$,  $\mathcal{D}_{m}$, $\mathcal{D}_{mh}$, $\mathcal{D}_{md}$
 and $S_{2m}$.  As defined in Section~\ref{sec:symfw}, $\mathcal{C}_s$ is generated by a single reflection $s$, and $\mathcal{C}_m$, $m\geq 1$, is a group
generated by an $m$-fold rotation $C_m$. $\mathcal{C}_i$ is the group generated by an inversion, $\mathcal{C}_{mv}$ is a dihedral group that is generated by a rotation $C_m$ and
a reflection  whose reflectional plane contains the rotational axis of $C_m$, and 
 $\mathcal{C}_{mh}$ is generated by a rotation $C_m$ and the reflection  whose reflectional plane is perpendicular to the axis of $C_m$. Further, $\mathcal{D}_{m}$ denotes a symmetry group that
is generated by a rotation $C_m$ and another $2$-fold rotation $C_2$ whose rotational axis is perpendicular to the one of $C_m$. $\mathcal{D}_{mh}$ and $\mathcal{D}_{md}$ are generated by the generators $C_m$ and $C_2$ of a group $\mathcal{D}_{m}$ and by a reflection $s$.  In the case of $\mathcal{D}_{mh}$, the
mirror of $s$ is the plane that is perpendicular to the $C_m$ axis and
contains the origin (and hence contains the rotational axis of $C_2$), whereas in
the case of $\mathcal{D}_{md}$, the mirror of $s$ is a plane that contains the $C_m$ axis and forms an angle of $\frac{\pi}{m}$ with the $C_2$ axis.  Finally, $S_{2m}$ is a symmetry group which is generated by a $2m$-fold improper rotation (i.e., a rotation by $\frac{\pi}{m}$ followed by a reflection in the plane which is perpendicular to the rotational axis).

\subsection{The Sphere}
\label{subsec:sphere}

Although it has not previously been stated, by combining results of  \cite{jkt} and \cite{BSWWconing}, the following theorem is immediate.

\begin{thm}[Rotation, reflection or dihedral symmetry on the sphere]\label{thm:spherecyclic}
Let $S$ be the group $\mathcal{C}_m$ representing $m$-fold rotational symmetry or the group $\mathcal{C}_s$ representing reflectional symmetry about a plane through the origin. 
Let $(G,p)$ be an $S$-generic framework in $\mathscr{R}^{\S}_{(G,S,\theta)}$ with quotient $S$-gain graph $(G_0,\psi)$.
Then $(G,p)$ is $S$-isostatic if and only if $(G_0,\psi)$ is $(2,3,1)$-gain-tight.

 Moreover, if $S$ is a dihedral group $\mathcal{C}_{mv}$, where $m$ is odd, then $(G,p)$ is $S$-isostatic if and only if $G_0$ is `maximum $\mathcal{D}$-tight' (as defined in \cite[Def. 7.1]{jkt}), i.e., if $G_0$ satisfies conditions 1 and 2 in Theorem~\ref{thm:maxwellnon-abelian} .
\end{thm}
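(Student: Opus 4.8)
The plan is to reduce Theorem~\ref{thm:spherecyclic} entirely to results already in the literature, since the sphere case is essentially the Euclidean case in disguise via the coning construction of \cite{salww,BSWWconing}. First recall that the sphere $\S$ has type $k=3$ (the full orthogonal group acts tangentially), so the orbit-surface rigidity matrix $O_{\S}(G,p,S)$ and the ordinary orbit rigidity matrix $O(G,p,S)$ in $\bR^3$ are related in exactly the way that, in the non-symmetric setting, the surface rigidity matrix on the sphere relates to the $3$-dimensional rigidity matrix: the normal rows $\N_0(p_0)$ contribute a predictable amount to the rank. Concretely, I would invoke the coning theorem of \cite{BSWWconing}, which states that a symmetric framework is $S$-isostatic in $\bR^2$ (with the appropriate $2$-dimensional symmetry group and $k_S=k_{S,2}$) if and only if its cone is $S$-isostatic in $\bR^3$, and then combine this with the observation that generic frameworks on the sphere in $\bR^3$ correspond, under the projection/coning dictionary, to generic frameworks in the plane. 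This matches the remark already made in the excerpt that ``for type $k=3$, it was shown in \cite{salww,BSWWconing} that Laman's theorem applies to the sphere.''

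For the first part of the statement ($S=\C_m$ or $S=\C_s$), the key steps are: (i) observe that for $S=\C_m$ acting on $\S$ by rotation about an axis through the origin, or $S=\C_s$ acting by reflection in a plane through the origin, the induced symmetry in the plane (under coning) is again $\C_m$ rotation, respectively $\C_s$ reflection, and that $k_S=1$ in both the $\bR^3$-on-$\S$ setting and the planar setting; (ii) apply the coning equivalence of \cite{BSWWconing} to transfer $S$-isostaticity between $\S\subseteq\bR^3$ and $\bR^2$; (iii) apply the planar combinatorial characterisation of \cite{jkt} (and \cite{mt,mt1}), which says that an $S$-generic framework in the plane with $S\in\{\C_m,\C_s\}$ is $S$-isostatic if and only if its quotient $S$-gain graph is $(2,3,1)$-gain-tight. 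Chaining these gives exactly the claimed equivalence. One must also check genericity is preserved under the coning/projection dictionary, but this is a transcendence-degree bookkeeping argument of the type already used around \cite{JMN}: a framework generic on $\S$ for $\bR^3$ pulls back to a generic planar configuration for the coned group and conversely, so ``$S$-regular'' statements transfer.

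For the ``moreover'' clause with $S=\C_{mv}$, $m$ odd, the same coning reduction applies: $\C_{mv}$ acting on $\S$ cones down to a dihedral group $\D_m$ acting on the plane, and \cite[Theorem]{jkt} characterises $\D_m$-generic $\D_m$-isostatic planar frameworks (for $m$ odd) by the ``maximum $\D$-tight'' condition of \cite[Def.~7.1]{jkt}, which by construction is precisely conditions 1 and 2 of Theorem~\ref{thm:maxwellnon-abelian} specialised to this group and surface. So again the result is immediate once the coning dictionary is set up. The main obstacle I anticipate is not any single hard step but the careful verification that \textbf{all} the relevant equivalences line up simultaneously: that the coning construction of \cite{BSWWconing} respects the group action in the precise way needed (sending the sphere-compatible group $S$ to the correctly-identified planar group, and sending $k_S$ on the sphere to $k_S$ in the plane), that it sends $S$-generic to $S$-generic in both directions, and that the notion of ``maximum $\D$-tight'' from \cite{jkt} is literally the specialisation of Theorem~\ref{thm:maxwellnon-abelian}. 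None of these is deep, but assembling them correctly — and making sure no parity or dimension subtlety is swept under the rug for the odd-$m$ dihedral case — is where the actual work lies; accordingly I would keep the proof short, citing \cite{jkt} and \cite{BSWWconing} for the substance and spelling out only the dictionary.
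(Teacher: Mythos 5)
Your proposal is correct and takes essentially the same approach as the paper: cone the spherical framework to $\bR^3$, pass (via inversion of vertex orbits and gnomonic projection) to a planar $S$-symmetric framework, invoke the coning equivalence of \cite{BSWWconing} to transfer $S$-isostaticity, and conclude with the planar characterisations of \cite[Theorems 6.3 and 8.2]{jkt}. The only difference is cosmetic: the paper spells out the inversion-and-projection step explicitly and cites \cite{BSWWorbit} for the preservation of $S$-symmetric infinitesimal rigidity under coning, while you treat this as a single dictionary; both arguments are otherwise identical.
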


\begin{proof} Let $S$ be one of the groups listed above. We may think of an $S$-symmetric framework $(G,p)$ supported on the unit sphere as the `coned framework' $(G*0,p^*)$, where $(G*0,p^*)$ is the framework obtained from $(G,p)$ by adding a new joint $p_0$ at the origin (i.e., the centre of the sphere) which is linked to every joint of $(G,p)$ by a bar. We may now invert  vertex orbits (under the symmetry group $S$ of $(G*0,p^*)$) so that we obtain a framework on the upper half-sphere, and then project (gnomonically) the resulting framework from the origin to the plane $z=1$. Note that this yields a framework $(G,q)$ in the plane which also has symmetry $S$. Moreover, as shown in \cite{BSWWorbit}, the $S$-symmetric infinitesimal rigidity properties of $(G*0,p^*)$ and $(G,q)$ are the same. Therefore, the result follows directly from \cite[Theorems 6.3 and 8.2]{jkt} and \cite[Theorems 3.7 and 6.2]{BSWWconing}.
\end{proof}

We note that in the cases of rotation and reflection symmetry groups, the proof techniques we employ below can easily be adapted to give direct inductive proofs of these results (see also \cite{schtan}). We leave the details to the reader. We also point out that for these non-dihedral groups, we could also prove the theorem using \cite[Theorems 3 and 4]{mt1}.

Note that for dihedral groups of the form $\mathcal{C}_{mv}$, where $m$ is even, there does not exist a combinatorial characterisation of symmetry-generic symmetry-forced rigid frameworks in the plane. For example, it was shown in \cite{jkt} that Bottema's mechanism (a realisation of the complete bipartite graph $K_{4,4}$ with $\mathcal{C}_{2v}$ symmetry in the plane) is falsely predicted to be $\mathcal{C}_{2v}$-symmetric infinitesimally rigid by the sparsity counts for the orbit rigidity matrix. Thus, the corresponding results for the sphere also remain open.

In general, for any point group $S$ of a framework on the sphere, except for the groups $\mathcal{C}_m$, $\mathcal{C}_s$, $\mathcal{C}_i$, $\mathcal{C}_{mh}$ and $\mathcal{S}_{2m}$ there are no tangential isometries (i.e., no rotations) which are $S$-symmetric. Thus, for those groups, we need to cope with the $(2,3,0)$-gain-sparsity count to establish characterisations for symmetry-forced rigidity on the sphere. This is a significant obstacle, as it was recently observed that this gain-sparsity count is in general not matroidal \cite{jkt}.

For the group $\mathcal{C}_i$, we will prove the following theorem in the subsequent sections.

\begin{thm}[Inversion symmetry on the sphere]\label{thm:sphereinversion} \label{thm:sphereinv}
Let $S$ be the group $\mathcal{C}_i$ representing  inversion symmetry. 
Let $(G,p)$ be an $S$-generic framework in $\mathscr{R}^{\S}_{(G,S,\theta)}$ with quotient $S$-gain graph $(G_0,\psi)$.
Then $(G,p)$ is $S$-isostatic if and only if $(G_0,\psi)$ is $(2,3,3)$-gain-tight.
\end{thm}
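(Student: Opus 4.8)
The plan is to prove both directions of the equivalence, with the forward direction following immediately from our Maxwell-type necessary condition and the real work lying in the converse. For necessity, observe that $\mathcal{C}_i$ is a cyclic group of order $2$, and on the sphere no non-trivial isometry of $\mathbb{R}^3$ commuting with the inversion acts tangentially, so $k_S = k_{\mathcal{C}_i} = 3$; hence $k = 3$ for the sphere forces $\ell = m = 3$, and Theorem~\ref{thm:maxwell} gives that an $S$-isostatic framework has a $(2,3,3)$-gain-tight quotient gain graph. For sufficiency, I would follow the two-step scheme announced in the introduction: first show (Section~\ref{sec:recchar}) that every $(2,3,3)$-gain-tight $\mathcal{C}_i$-gain graph can be generated from a small base graph by a list of Henneberg-type operations on gain graphs (vertex additions of degree $2$, edge splits, and possibly a loop-creating or loop-transferring operation adapted to the inversion), and then show (Section~\ref{sec:geom}) that each such operation, performed generically, preserves the maximality of the rank of the orbit-surface rigidity matrix $O_{\mathcal{S}}(G,p,\mathcal{C}_i)$.

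\textbf{Key steps.} First I would set up the base case: a $\mathcal{C}_i$-gain graph on one vertex with $|E| = 2\cdot 1 - 3 < 0$ is impossible, so the true base is a small graph (a single vertex with one loop is excluded by the count; the natural base will be something like two vertices joined by a suitable number of edges, or the image under the covering construction of $K_4$, since $K_4$ is exceptional on surfaces of type $k\le 3$). I would determine this carefully and verify directly (or via the coning/projection correspondence used in Theorem~\ref{thm:spherecyclic}) that the base framework is $\mathcal{C}_i$-isostatic. Next, the recursive step: using the switching operation (Propositions~\ref{prop:lem1}, \ref{prop:lem2}) to normalise gains on balanced subgraphs and a careful degree/count argument, show a minimum-degree vertex in a $(2,3,3)$-gain-tight graph has degree $2$ or $3$, and that an appropriate inverse Henneberg move (removing a degree-$2$ vertex, or performing an inverse edge-split at a degree-$3$ vertex while respecting the gain-sparsity on both balanced and all subgraphs) can always be applied so that the resulting smaller gain graph is again $(2,3,3)$-gain-tight. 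Then the geometric step: adapt the arguments of \cite{nop,nop1,W2} to show each operation preserves the rank property — the degree-$2$ addition is routine, while the edge-split requires exhibiting a single non-generic placement of the new vertex on the sphere where the rank is maximal and then invoking genericity (via the transcendence-degree/specialisation argument built into the definition of $S$-generic).

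\textbf{Main obstacle.} I expect the hard part to be the inductive construction on the $\mathcal{C}_i$-gain graphs, specifically ensuring that an inverse operation exists which simultaneously preserves \emph{both} the balanced count $|F|\le 2|V(F)|-3$ and the general count $|F|\le 2|V(F)|-3$ — since here $\ell=m=3$ these coincide, which is actually a simplification, so the real subtlety is instead handling loops: a loop in a $\mathcal{C}_i$-gain graph must carry the non-trivial gain, corresponds geometrically to a bar between an antipodal pair, and contributes to unbalanced counts in a way that constrains which edge-splits and vertex-removals are legal. Managing the interplay between loops, the inversion gain, and the edge-split operation (which may create or destroy loops) will be the delicate combinatorial core. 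The geometric side should then go through by a direct adaptation of the surface edge-split lemmas in \cite{nop1}, checking that the normal-vector rows of $\N_0$ behave correctly under the inversion-symmetric specialisation.
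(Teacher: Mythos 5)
Your overall strategy---necessity from the Maxwell-type count and sufficiency via a recursive characterisation of $(2,3,3)$-gain-tight graphs followed by geometric lemmas preserving the rank of $O_{\mathcal{S}}(G,p,\mathcal{C}_i)$---is exactly what the paper does. However, the ``delicate combinatorial core'' you flag as the main obstacle does not exist. A $(2,3,3)$-gain-tight gain graph can contain \emph{no loops at all}: a loop is a subset $F$ with $|F|=1$ and $|V(F)|=1$, violating $|F|\le 2|V(F)|-3 = -1$. Likewise it can contain no multiple edges ($|F|=2$, $|V(F)|=2$, violating $|F|\le 1$). This is what makes the $(2,3,3)$ case the easiest of the four recursive characterisations: the gain labels play essentially no combinatorial role, the base graph is simply $K_2$ with a single edge of arbitrary $\mathcal{C}_i$-gain (not $K_4$ or a loop-vertex), and the only moves needed are H1a and H2a. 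The simplification you correctly noticed (that $\ell=m$ makes the balanced and unbalanced counts coincide) is a symptom of this, but the deeper point is that the quotient gain graph is forced to be a simple graph, so one can invoke the classical $(2,3)$-tight arguments almost verbatim. Anticipating a ``loop-creating or loop-transferring operation adapted to the inversion'' would have sent you down a dead end; there is nothing to transfer.

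One further slip: your stated reason for $k_{\mathcal{C}_i}=3$ is backwards. You write that ``no non-trivial isometry of $\mathbb{R}^3$ commuting with the inversion acts tangentially,'' which would give $k_S=0$, yet conclude $k_S=3$. The correct observation is the opposite: the inversion $-I$ is central in $O(3)$, so \emph{every} rotation of the sphere is $\mathcal{C}_i$-symmetric, and the full three-dimensional space of tangential isometries survives, giving $k_{\mathcal{C}_i}=3$. You land on the right count, but the reasoning as stated asserts the contrary.
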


 Note that the covering graph of a  $(2,3,3)$-gain-tight graph is not $(2,3)$-tight. For example, for the group $\mathcal{C}_i$, a triangle whose edges all have trivial gains lifts to the disjoint union of two triangles. By Theorem~\ref{thm:sphereinv}, a $\mathcal{C}_i$-generic realisation of this graph on the sphere is  $\mathcal{C}_i$-isostatic.

For the groups $\mathcal{C}_{mh}$, where $m$ is odd, and $S_{2m}$, where $m$ is even, we will prove the following theorem in the subsequent sections.

\begin{thm}[Improper rotational symmetry on the sphere]\label{thm:spheredihedral} \label{thm:spheredih}
Let $S$ be the \break group $\mathcal{C}_{mh}$, where $m$ is odd, or $S_{2m}$, where $m$ is even. 
Let $(G,p)$ be an $S$-generic framework in $\mathscr{R}^{\S}_{(G,S,\theta)}$ with quotient $S$-gain graph $(G_0,\psi)$.
Then $(G,p)$ is $S$-isostatic if and only if $(G_0,\psi)$ is $(2,3,1)$-gain-tight.
\end{thm}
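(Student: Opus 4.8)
\emph{Necessity.} I would obtain this directly from the Maxwell-type theorem. The sphere $\S$ has type $k=3$, and for both choices of $S$ the space of trivial $S$-symmetric infinitesimal motions of $\S$ is one-dimensional — it consists of the infinitesimal rotations about the principal axis, as is immediate from the character table — so $k_S=1$. Thus Theorem~\ref{thm:maxwell} gives at once that an $S$-isostatic $(G,p)\in\mathscr{R}^{\S}_{(G,S,\theta)}$ has a $(2,3,1)$-gain-tight quotient. I would also include a short remark on why no finer count is forced here: for these parities both groups are cyclic ($\mathcal{C}_{mh}\cong\bZ_{2m}$ with $m$ odd, $S_{2m}\cong\bZ_{2m}$) and neither contains the inversion, so every nontrivial subgroup $\langle F\rangle_{\psi,v}$ is generated by a proper or improper rotation about the principal axis and hence has $k_{\langle F\rangle_{\psi,v}}=1$; therefore the refined bounds of Theorem~\ref{thm:maxwellnon-abelian} collapse to ordinary $(2,3,1)$-gain-sparsity.

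\emph{Sufficiency.} In contrast with the rotation and reflection cases of Theorem~\ref{thm:spherecyclic}, the coning-and-projection reduction to the plane is not available, since no change of coordinates makes the full improper-rotational group act on a plane; I would therefore argue directly on $\S$ using the orbit-surface rigidity matrix $O_{\S}(G,p,S)$. The first step — which I expect to be the main obstacle — is purely combinatorial and is the content of Section~\ref{sec:recchar}: to prove a recursive characterisation of connected $(2,3,1)$-gain-tight $\bZ_{2m}$-gain graphs, showing that each such graph is obtained from a short list of small base gain graphs (in particular the one-vertex graph carrying a single loop of non-identity gain) by a sequence of gain-graph $0$- and $1$-extensions together with the additional moves needed to create unbalanced cycles, with each operation preserving $(2,3,1)$-gain-tightness, and conversely that every such graph which is not a base graph admits an inverse move. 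The delicate part, typical for gain-sparsity counts, is the degree and case analysis required to guarantee that a legal reduction always exists, keeping track of which edge subsets are balanced versus unbalanced and of where the loops sit; the tension between the bound $|F|\le 2|V(F)|-3$ on balanced sets and the weaker bound $|F|\le 2|V(F)|-1$ on arbitrary sets is what makes this intricate.

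The second step is geometric (Section~\ref{sec:geom}) and should be comparatively routine given the literature. For the base graphs I would check directly that an $S$-generic realisation is $S$-isostatic; for instance, for the one-vertex graph with a single loop of gain $g$, the matrix $O_{\S}(G,p,S)$ has the required rank $2=3|V(G_0)|-1$ precisely when $2p_v-\tau(g)p_v-\tau(g)^{-1}p_v$ is not a scalar multiple of the sphere normal $N(p_v)=p_v$, and resolving $\tau(g)$ into its action on the plane orthogonal to the principal axis and on the line along it shows that this fails only when $g$ is the inversion — which these groups do not contain — so it holds for $S$-generic $p_v$; this is the one place where the improper-rotational form of $\tau(g)$ genuinely enters. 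For the inductive step I would show that each operation in the recursive characterisation preserves the maximality of $\rank O_{\S}(G,p,S)$ over $S$-regular realisations, adapting the surface $0$- and $1$-extension arguments of \cite{nop,nop1} and the gain-graph vertex-splitting techniques of \cite{W2,jkt}, with genericity of the newly added joint on $\S$ ensuring that each rank increment is maximal. Combining the two steps, a $(2,3,1)$-gain-tight $(G_0,\psi)$ is generated from realisable base graphs by rank-preserving moves, so it admits an $S$-isostatic realisation on $\S$; since $(G,p)$ is $S$-generic it is $S$-regular, and therefore itself $S$-isostatic.
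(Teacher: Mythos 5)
Your proposal is correct and follows essentially the same route as the paper: necessity from Theorem~\ref{thm:maxwell}, a rank computation for the base gain graph $K_1^*$ (a single loop with non-inversion gain at one vertex), and induction over a Henneberg-type recursive characterisation. The one simplification you overlooked is that the recursive characterisation of $(2,3,1)$-gain-tight graphs is already in the literature (\cite[Theorem 4.4]{jkt}) and uses only Henneberg 1 and Henneberg 2 moves, so the paper simply cites it rather than reproving it, and the geometric step reduces to Lemmas~\ref{lem:hen1SM} and~\ref{lem:hen2SM} with no need for the vertex-splitting or other vertex-surgery arguments you anticipate.
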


For the remaining groups $\mathcal{C}_{mh}$, where $m$ is even, and $\mathcal{S}_{2m}$, where $m$ is odd, we will provide the corresponding conjectures in Section~\ref{sec:furtherwork}. (See also Table 1.) Note that these groups contain the group $\mathcal{C}_i$ as a subgroup (whereas $\mathcal{C}_{mh}$, where $m$ is odd, and $\mathcal{S}_{2m}$, where $m$ is even, do not). 
Thus, we need to consider gain-sparsity counts which depend on the groups $\langle F\rangle_v$ induced by edge subsets $F$ of the gain graph $G_0$, since there is only a $1$-dimensional space of rotations which is symmetric with respect to the subgroups $\mathcal{C}_{s}$, $\mathcal{C}_{m}$, $\mathcal{C}_{m'h}$, or $\mathcal{S}_{2m'}$, but there is a $3$-dimensional space of rotations which is symmetric with respect to the inversion subgroup $\mathcal{C}_i$.


\subsection{The cylinder}

We now consider the case of the surface being a cylinder. Note that the cylinder has point group symmetry $\mathcal{D}_{\infty h}$, and hence the possible point groups of frameworks on the cylinder are $\mathcal{C}_s$, $\mathcal{C}_m$ (with the rotational axis being the axis of the cylinder for $m>2$), $\mathcal{C}_i$, $\mathcal{C}_{mv}$,  $\mathcal{C}_{mh}$, $\mathcal{D}_{m}$, $\mathcal{D}_{mh}$, $\mathcal{D}_{md}$ and $S_{2m}$. 

We will prove the following two main theorems for the cylinder in the subsequent sections.

\begin{thm}[Rotation symmetry on the cylinder] \label{thm:cylindercyclic}
Let $S$ be the symmetry group $\mathcal{C}_m$ representing $m$-fold rotational symmetry around the $z$-axis. 
Let $(G,p)$ be an $S$-generic framework in $\mathscr{R}^{\Y}_{(G,S,\theta)}$ with quotient $S$-gain graph $(G_0,\psi)$.
Then $(G,p)$ is $S$-isostatic if and only if $(G_0,\psi)$ is $(2,2,2)$-gain-tight.
\end{thm}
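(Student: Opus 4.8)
The plan is to prove Theorem~\ref{thm:cylindercyclic} by the standard two-part strategy announced in the introduction: first establish that the combinatorial condition is necessary, then establish sufficiency via an inductive construction combined with a geometric (rank-preservation) argument. Necessity is essentially already in hand: the cylinder has type $k=2$, and the cyclic group $\mathcal{C}_m$ acting by rotation about the cylinder's axis supports exactly a $2$-dimensional space of $\mathcal{C}_m$-symmetric tangential isometries (the translation along the axis and the full rotation about the axis are both fixed by $\mathcal{C}_m$), so $k_S=2$. Hence Theorem~\ref{thm:maxwell} immediately gives that an $S$-isostatic framework has a $(2,k,k_S)=(2,2,2)$-gain-tight quotient gain graph. (One should double-check the $k_S=2$ claim against the character table, but it is routine for a cyclic group about the symmetry axis.)

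For sufficiency, I would first prove a purely combinatorial \emph{recursive characterisation}: every $(2,2,2)$-gain-tight $\mathcal{C}_m$-gain graph can be built up from a small set of base graphs by a short list of Henneberg-type operations on gain graphs --- a gain-graph $0$-extension (adding a vertex of degree $2$ with two new edges of arbitrary gains), a gain-graph $1$-extension (subdividing an edge and adding one further edge, with gains adjusted so that balance/sparsity counts are preserved), and possibly a loop-addition or vertex-to-vertex operation to handle the cases where a vertex orbit is collapsed. This mirrors the inductive constructions for $(2,k)$-tight graphs in \cite{nop,nop1} and the gain-graph constructions in \cite{jkt}, and the main combinatorial work is to show that in a $(2,2,2)$-gain-tight graph with no degree-$2$ vertex there is always an admissible edge to split (a $1$-extension) --- this is where one typically needs a careful case analysis using Lemma~\ref{lem:balance} to control how balanced subgraphs interact after the operation, since one must avoid creating a violating balanced subset (those are capped at $2|V(F)|-2$) or a violating overall count. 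The second, geometric half is then to show that each admissible operation preserves the maximal rank of the orbit-surface rigidity matrix $O_{\M}(G,p,S)$ for $S$-generic $p$: the $0$-extension is immediate (the two new rows are clearly independent because the new vertex sits on a generic point of the cylinder and the tangency row plus two bar rows span its $3$ columns, as in the unrestricted surface case), and the $1$-extension requires the usual averaging/limiting argument adapted to the cylinder --- one shows that some specialisation keeps the rank maximal, using genericity together with the irreducibility of the cylinder and Corollary~3.2 of \cite{JMN} as invoked in Section~\ref{sec:symfw}. Here I would lean on the corresponding lemmas in \cite{nop,nop1,W2} and simply check that they go through verbatim once rows and columns are indexed by orbits rather than by individual vertices and edges.

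Finally I would assemble the pieces: a $(2,2,2)$-gain-tight graph is generated from the base cases by admissible operations; each base case is checked directly to be $S$-isostatic on the cylinder by explicit computation with $O_{\M}(G,p,S)$; each operation preserves maximal rank and the count $|E(G_0)|=2|V(G_0)|-2$; so by Lemma~\ref{lem:surf2}'s orbit analogue (rank $=3|V(G_0)|-k_S$ together with the edge count) the resulting framework is $S$-isostatic, proving the ``if'' direction and completing the equivalence.

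I expect the main obstacle to be the combinatorial base-case-and-admissibility analysis for the $1$-extension on $\mathcal{C}_m$-gain graphs: one must verify that splitting off a suitable edge and re-routing it always avoids creating a balanced subgraph with more than $2|V(F)|-2$ edges \emph{and} keeps the global count, and the gain bookkeeping (choosing the gain on the new edge so that, for every relevant cycle, balance is neither spuriously created nor destroyed) is delicate when loops and parallel edges are present. The rank-preservation step, by contrast, I expect to be essentially a transcription of existing arguments, so the real content is combinatorial.
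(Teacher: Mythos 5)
Your overall strategy (necessity via the gain-sparse count from Theorem~\ref{thm:maxwell}, sufficiency via a recursive characterisation of $(2,2,2)$-gain-tight gain graphs combined with rank-preservation lemmas for the orbit-surface rigidity matrix) matches the paper, and the necessity half is fine. However, there is a genuine gap in the combinatorial half: you propose to generate all $(2,2,2)$-gain-tight $\mathcal{C}_m$-gain graphs from $K_1$ using only gain-graph $0$-extensions and $1$-extensions (i.e.\ H1 and H2 moves), plus a possible loop-addition, and this recursion does not hold. First, loops cannot occur at all in a $(2,2,2)$-gain-tight graph, since a single loop already violates $|F|\le 2|V(F)|-2$; so loop-addition is vacuous here. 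The real obstruction is the $K_4$ configuration: if a degree-$3$ vertex $v$ has three distinct neighbours $a,b,c$ and the edges $(a,b),(a,c),(b,c)$ are all present with the gains forced by the inverse H2 operation (equivalently, $v$ lies in a copy of $K_4$ whose gain labelling is equivalent to the trivial one), then every inverse H2a move would create a parallel edge with the \emph{same} gain and hence a non-simple covering graph, so no inverse Henneberg move is admissible at $v$. This is precisely the dichotomy isolated in Lemma~\ref{lemma:deg3}, and it is why Theorem~\ref{thm:222construction} adds the vertex-to-$K_4$ and vertex-to-$4$-cycle operations to the generating set. Without those two moves the induction cannot reach, for example, a $(2,2,2)$-gain-tight graph in which every degree-$3$ vertex sits inside a trivially-gained $K_4$; the same obstruction already forces the use of vertex surgery in the non-symmetric $(2,2)$-tight characterisations on the cylinder in \cite{nop,nop1}.

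Consequently your geometric half is also incomplete: beyond the (straightforward) H1 step and the (standard limiting) H2 step, one must show that vertex-to-$K_4$ and vertex-to-$4$-cycle operations preserve the maximal rank of $O_{\Y}(G,p,S)$. For the $\mathcal{C}_m$ rotation about the cylinder axis this is Lemma~\ref{lem:vtok4cylinderrotation}, which uses a block-matrix argument exploiting the $S$-isostaticity of the $K_4$ subframework and a specialisation to a degenerate configuration, and Lemma~\ref{lem:vto4cycleCC}, which adapts Whiteley's vertex-splitting stress argument to the orbit-surface setting. Neither of these is a verbatim transcription of a Henneberg-move lemma, and your plan needs to address them explicitly to close the sufficiency direction.
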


\begin{thm}[Reflection or inversion symmetry on the cylinder]\label{thm:cylinderreflection}
Let $S$ be the group $\mathcal{C}_s$ \break (where the mirror plane of the reflection either contains the $z$-axis or is equal to the plane $z=0$) or the inversion group $\mathcal{C}_i$. 
Let $(G,p)$ be an $S$-generic framework in $\mathscr{R}^{\Y}_{(G,S,\theta)}$ with quotient $S$-gain graph $(G_0,\psi)$.
Then $(G,p)$ is  $S$-isostatic if and only if $G_0$ is $(2,2,1)$-gain-tight.
\end{thm}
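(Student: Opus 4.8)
The plan is to prove Theorem~\ref{thm:cylinderreflection} (and, by the same method, Theorem~\ref{thm:cylindercyclic}) by the standard two-step strategy announced in the introduction: first a purely combinatorial recursive characterisation of the relevant gain-tight graphs via Henneberg-type moves, then a geometric argument that each move preserves maximal rank of the orbit-surface rigidity matrix. Necessity of the count is already Theorem~\ref{thm:maxwell} (with $k=2$ and $k_S=1$ for $\C_s$ with either admissible mirror, and for $\C_i$ on the cylinder), so the whole content is sufficiency: every $(2,2,1)$-gain-tight $S$-gain graph $(G_0,\psi)$ is realised by an $S$-isostatic framework on $\Y$, and hence — by $S$-regularity of $S$-generic configurations — every $S$-generic realisation is $S$-isostatic.

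First I would establish the \emph{recursive construction}. The base case is the smallest $(2,2,1)$-gain-tight graph: a single vertex with one unbalanced loop (which lifts, for $\C_s$ or $\C_i$, to a $K_2$ on the cylinder — exactly the ``curiosity'' flagged after Theorem~\ref{thm:finitemotion}, which is continuously and $S$-symmetric infinitesimally rigid on $\Y$). The operations should be the gain-graph analogues of the moves used in \cite{nop,nop1}: a symmetric $0$-extension (add a new vertex of degree $2$, with two edges carrying arbitrary gains), a symmetric $1$-extension (delete an edge, add a new degree-$3$ vertex joined to its two endpoints and one other vertex, with gains chosen so the count is preserved), plus a ``loop/self-extension'' type move (e.g. add a vertex with a single edge to an existing vertex together with an unbalanced loop on the new vertex, or splitting off an unbalanced loop), since the $(2,2,1)$ count has $m=1$ and the covering graph need not be $(2,2)$-tight. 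The combinatorial lemma to prove is that every $(2,2,1)$-gain-tight graph reduces to the base by inverse moves. This is a degree/sparsity argument: the count forces average degree $<4$, so there is a vertex of degree $\leq 3$; a degree-$2$ vertex admits an inverse $0$-extension directly; for a degree-$3$ vertex one shows (by a careful count of the ``tight'' and ``balanced-tight'' subgraphs through its neighbours, as in \cite{jkt}) that some inverse $1$-extension preserves both sparsity conditions, after possibly switching at the new vertex to normalise gains. The case analysis for which pair of incident edges to use, and for handling loops at a low-degree vertex, is where most of the bookkeeping lives.

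Second, the \emph{geometric step}: show each move, applied to an $S$-generic (equivalently $S$-regular) framework on $\Y$, takes an $S$-isostatic framework to an $S$-isostatic framework. Because $O_{\Y}(G,p,S)$ is literally the $3\times 3$-block orbit rigidity matrix of Definition~\ref{orbitmatrixdef} stacked on top of the block-diagonal normals $\N_0(p_0)$, and the gain-graph rows have exactly the same shape as ordinary rigidity-matrix rows (entries supported on two vertex blocks, of the form $p_i-\tau(g)p_j$), the rank arguments of \cite{nop,nop1,W2} transfer essentially verbatim. The $0$-extension is trivial: two new columns and two new rows plus one new normal row, and one checks the new rows are independent because the new joint has a $2$-dimensional tangent plane on $\Y$. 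For the $1$-extension one argues by the usual contradiction: if the extended framework were $S$-dependent at some $S$-generic point, a limiting/specialisation argument (degenerate the new vertex onto the line of the deleted edge) produces a self-stress of the original framework using the deleted edge with nonzero coefficient, contradicting $S$-isostaticity of the original; the surface normal row on the new vertex is accommodated exactly as in the non-symmetric surface proofs since on the cylinder the tangent space is $2$-dimensional at every point. The loop-type move is handled by the same specialisation, using that an unbalanced loop with gain $s$ (reflection) or $-I$ (inversion) behaves, after fixing the new vertex's single neighbour, like adding a ``pinned'' vertex on the cylinder whose two tangential coordinates are fixed by one bar and the ambient symmetry.

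The main obstacle I expect is the geometric non-degeneracy of the $1$-extension in the presence of a \emph{non-trivial gain} on the edges incident to the new vertex: one must verify that the three rows created (two gained bars plus the normal) remain independent at an $S$-generic point, i.e. that the new joint can be placed on $\Y$ so that $p_v-\tau(g_1)p_a$, $p_v-\tau(g_2)p_b$ and $N(p_v)$ are linearly independent and compatible with the inherited self-stress analysis. For $\C_i$ the complication is sharper, since $\tau$ of the nontrivial element is $-I$, so ``gained'' bars to the new vertex point toward reflected copies of old joints; one needs to rule out accidental collinearities forced by the cylinder's equation. This is exactly the kind of statement that follows from the generic transfer principle recorded after the definition of $S$-generic (only the defining polynomials of $S$ and $\M$ vanish at $p$), so the resolution is to exhibit \emph{one} valid placement on $\Y$ — typically a symmetric but otherwise explicit choice — and invoke genericity; finding and checking that explicit placement, separately for the mirror-containing-axis, mirror-$z{=}0$, and inversion cases, is the real work.
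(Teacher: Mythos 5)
Your overall strategy (necessity via Theorem~\ref{thm:maxwell}, then a recursive construction of $(2,2,1)$-gain-tight graphs whose moves preserve rank of the orbit-surface matrix) is the same as the paper's. But there is a genuine gap in the combinatorial half of your argument, and it propagates into the geometric half.

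You assert that after dealing with degree-$2$ vertices, ``for a degree-$3$ vertex one shows \dots that some inverse $1$-extension preserves both sparsity conditions, after possibly switching.'' This is false in general, and the failure is precisely what makes this theorem harder than the earlier gain-sparsity results. When a degree-$3$ vertex $v$ with three distinct neighbours lies in a copy of $K_4$ whose gain labelling is equivalent to the trivial one (Lemma~\ref{lemma:deg3} in the paper shows this is exactly the obstruction), \emph{every} candidate inverse $H2$ move creates a parallel edge in the covering graph, so no inverse Henneberg operation is admissible. Handling this requires introducing the vertex-to-$K_4$ contraction; and the $K_4$ contraction may itself be blocked by an external vertex $x$ adjacent to two of the $K_4$ vertices with equal gains, which forces a $4$-cycle contraction; and \emph{that} may be blocked when $x$ is also joined to the fourth vertex with trivial gain, giving a $K_5-e$ or, if the $K_4$'s vertices carry an extra edge, a $K_4+e$. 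The endgame is then a counting argument showing the graph is either one of these small irreducibles or contains a bridge, whence the \emph{edge-joining} operation applies. So the correct recursion (Theorem~\ref{thm:221construction}) has \emph{two} base graphs (a single vertex with an unbalanced loop, \emph{and} $K_4+e$ with a non-trivially gained extra edge), and needs $H1a$--$H1c$, $H2a$--$H2c$, vertex-to-$K_4$, vertex-to-$4$-cycle, and edge joining. Your sketch omits the vertex surgery and edge-joining moves and the second base case entirely.

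The geometric consequence is that Henneberg-move rank arguments (which you describe correctly) are not enough: you also need that vertex-to-$K_4$, vertex-to-$4$-cycle, and edge joining preserve $S$-isostaticity on $\Y$. The last two are comparatively soft (a stress argument \`a la Whiteley's vertex splitting for $4$-cycles, and a block-matrix count for edge joining), but vertex-to-$K_4$ on a surface of symmetric type $k_S=1$ is genuinely delicate: the paper proves it via a ``well-behaved $K_4$ contraction'' limiting argument (Lemma~\ref{lem:NOP} and Lemma~\ref{lem:vtok4type1}), controlling the blow-up of the rotational part of the motion as the four joints coalesce, with an explicit curvature/Taylor expansion on the cylinder. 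Your proposal does not foresee this step, so as written it would not close the induction. Finally, note that the rank-$11$ verification for the $K_4+e$ base case (done by an explicit placement, one for each of the three groups) must also be added; your remark about ``finding and checking that explicit placement'' is on the right track but was aimed at the wrong object.
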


We will discuss the remaining groups and provide some conjectures in Section~\ref{sec:furtherwork}. (See Table 2.)


\subsection{The cone} Note that the cone defined by the polynomial $x^2+y^2=z^2$ has the same point group symmetry $\mathcal{D}_{\infty h}$ as the cylinder, and hence the possible point groups of frameworks on the cone are the same as the ones for the cylinder. 

We will prove the following main theorem for the cone in the subsequent sections.

\begin{thm}[Reflection, rotation, inversion or improper rotational symmetry on the cone]\label{thm:conecyclic}
Let $S$ be the group $\mathcal{C}_m$ representing $m$-fold rotation around the $z$-axis, or the group $\mathcal{C}_s$ (where the mirror plane of the reflection is perpendicular to the $z$-axis), or the inversion group $\mathcal{C}_i$, or the group $\mathcal{C}_{mh}$, or the improper rotational group $S_{2m}$. 
Let $(G,p)$ be an $S$-generic framework in $\mathscr{R}^{\C}_{(G,S,\theta)}$ with quotient $S$-gain graph $(G_0,\psi)$.
Then $(G,p)$ is $S$-isostatic if and only if $(G_0,\psi)$ is $(2,1,1)$-gain-tight.
\end{thm}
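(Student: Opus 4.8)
The plan is to prove Theorem~\ref{thm:conecyclic} by the same two-stage strategy announced in the introduction: first a combinatorial recursive characterisation of the relevant gain-tight graphs via Henneberg-type operations, and then a geometric argument showing each operation preserves the maximal rank of the orbit-surface rigidity matrix $O_{\C}(G,p,S)$. The cone has type $k=1$ (only the vertical translation acts tangentially), and for each of the listed groups $\C_m$, $\C_s$ (mirror $\perp z$-axis), $\C_i$, $\C_{mh}$, $S_{2m}$ one computes directly that the symmetric type is $k_S=1$ as well: the only candidate trivial motion is the vertical translation $(0,0,1)$, which is fixed by rotations about the $z$-axis and by the horizontal mirror, but is \emph{reversed} by inversion and by any improper rotation of the form $S_{2m}$ — so for $\C_i$ and $S_{2m}$ the vertical translation is \emph{not} $S$-symmetric, yet it does not contribute, giving $k_S=1$ in all cases (one should double-check the half-turn subgroups hidden inside $S_{2m}$ and $\C_{mh}$, but since their axis is the $z$-axis a half-turn sends $(x,y,z)\mapsto(-x,-y,z)$, preserving the vertical vector). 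Granting $k_S=1$, Theorem~\ref{thm:maxwell} gives the necessity of $(2,1,1)$-gain-tightness immediately; note that since $k=\ell=m=1$ here, the balanced and unbalanced sparsity counts coincide, so $(2,1,1)$-gain-sparse simply means $|F|\le 2|V(F)|-1$ for \emph{every} nonempty $F\subseteq E(G_0)$, including loops, and $(2,1,1)$-gain-tight adds $|E(G_0)|=2|V(G_0)|-1$.

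For sufficiency, the first step is the recursive (Henneberg-type) characterisation carried out in Section~\ref{sec:recchar}: I would show that every $(2,1,1)$-gain-tight $S$-gain graph can be built from a single base graph by a short list of operations — the gain-graph analogues of the $0$-extension (add a vertex of degree $2$, with the two new edges assigned appropriate gains, allowing them to be parallel or a loop), the $1$-extension (subdivide an edge and add one further edge, preserving gains around cycles), and possibly a loop-addition / vertex-to-loop operation to handle the fact that in a $(2,1,1)$-tight graph a single vertex carries exactly one loop as its base case. Since $\ell=m$, there are no balanced-versus-unbalanced subtleties in the count itself, which makes the deletion/contraction bookkeeping considerably simpler than in the sphere cases; the main care is in choosing, when an operation would create an over-counted subgraph, an admissible edge to delete and re-route so the gains stay consistent — this is the standard ``admissible edge exists'' lemma, proved by a counting argument on the tight subgraphs through the relevant vertex.

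The second step, in Section~\ref{sec:geom}, is to show each operation preserves the property that $O_{\C}(G,p,S)$ has rank $2|V(G_0)|-1$ on an $S$-generic (hence $S$-regular) configuration. For the $0$-extension this is essentially automatic: adding a degree-$2$ vertex adds two rows and three columns, and one checks the $3\times 3$ block formed by the two new edge-rows and the new surface-normal row is nonsingular for generic placement of the new joint on the cone, using the explicit entries from Definition~\ref{orbitmatrixdef} together with the fact that a generic normal to the cone is not coplanar with two generic chord directions. For the $1$-extension one uses the now-standard ``rank argument'': delete the removed edge to get a graph with a one-dimensional space of symmetric infinitesimal motions, place the new vertex generically on the line/surface, and show the new edge can be chosen to kill exactly the non-trivial motion; concretely one specialises the new joint to coincide with one endpoint of the subdivided edge and invokes a limiting/continuity argument for the rank, adapting \cite{nop,nop1,W2}. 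Because $k_S=1$ matches $k=1$, the orbit-surface matrix on the cone behaves formally like an ordinary $(2,1)$-rigidity matrix on a type-$1$ surface, so the transfer of the arguments from \cite{nop,nop1} is reasonably direct, the only genuinely new input being the verification that the gain-decorated chord vectors in the orbit matrix remain in ``general position'' relative to the cone normals — which holds for $S$-generic $p$ by the transcendence-degree hypothesis and \cite[Corollary 3.2]{JMN}.

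I expect the main obstacle to be the inductive construction step rather than the geometry: one must produce a clean, complete list of operations that generates \emph{all} $(2,1,1)$-gain-tight graphs for each of the groups $\C_m$, $\C_s$, $\C_i$, $\C_{mh}$, $S_{2m}$ simultaneously, and handle the case analysis when the vertex chosen for reduction is incident to loops or to the one forced loop in a $(2,1,1)$-tight graph — including verifying that after a $1$-reduction the gains can always be adjusted (via the switching operation of Section~\ref{sec:balgaingr}, which by Claim~\ref{claim:switch} does not change the matrix rank) so that the resulting smaller graph is again $(2,1,1)$-gain-tight. The geometric preservation lemmas, by contrast, should follow the templates of \cite{nop,nop1,W2} with only routine modifications, since on the cone $k=k_S$ removes the extra complications that arise on the sphere and cylinder where $k\ne k_S$.
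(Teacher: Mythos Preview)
Your overall two-stage strategy (recursive characterisation plus rank-preservation) is exactly the paper's, and your remark that $k=k_S=1$ collapses the balanced/unbalanced distinction is correct. However, there are two genuine problems.

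First, a factual slip: the cone $x^2+y^2=z^2$ is \emph{not} preserved by vertical translation, so the one tangential isometry is rotation about the $z$-axis, not the translation $(0,0,1)$. Your subsequent case analysis of which groups fix the trivial motion is therefore based on the wrong vector; it happens that $k_S=1$ is still the right answer for all five groups (the infinitesimal rotation field $(-y,x,0)$ is invariant under each of $\C_m,\C_s,\C_i,\C_{mh},S_{2m}$ with the stated axes/mirrors), but your justification needs to be redone.

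Second, and more seriously, your proposed list of operations is too short. Henneberg $0$- and $1$-extensions (even allowing loops and parallel edges) do \emph{not} generate all $(2,1,1)$-gain-tight gain graphs from a single base. The paper's Theorem~\ref{thm:211construction} requires, in addition to H1a--c and H2a--d, the vertex-to-$K_4$, vertex-to-$4$-cycle, and edge-joining operations, and it needs \emph{three} base graphs: a vertex with an unbalanced loop, $K_4+e$ (trivial gains on the $K_4$, nontrivial on $e$), and $K_5-e$ with all trivial gains. The obstruction is concrete: in $K_5-e$ every degree-$3$ vertex sits inside a trivially-gained $K_4$, so no inverse Henneberg~2 move on it can produce a simple covering graph without first contracting that $K_4$ (or a suitable $4$-cycle). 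Your intuition that ``$\ell=m$ makes the bookkeeping simpler'' is misleading here: while the gain labels indeed impose no extra sparsity constraints, the underlying $(2,1)$-tight combinatorics is strictly harder than $(2,3)$-tight, precisely because min-degree-$3$ configurations can get stuck inside $K_4$'s and $K_5-e$'s (cf.\ \cite{no,nop}).

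Consequently your geometric step is also incomplete: you will need rank-preservation lemmas for vertex-to-$K_4$ (a limiting/contraction argument on the cone, Lemma~\ref{lem:vtok4type1}), for vertex-to-$4$-cycle (a stress argument, Lemma~\ref{lem:vto4cycleCC}), and for edge joining (block-matrix, Lemma~\ref{lem:joinCC}), together with direct rank checks for the two additional base graphs $K_4+e$ and $K_5-e$ on the cone.
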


Again we will return to the remaining groups in Section~\ref{sec:furtherwork}. (See Table 3.)

\subsection{A note on  $(2,2,1)$-gain-tight graphs}

We finish this section by observing a corollary to Theorem~\ref{thm:cylinderreflection} which points out that for certain types of graphs, groups and surfaces, generic rigidity (without symmetry) is equivalent to symmetry-generic symmetry-forced rigidity. We will need the following result.

\begin{prop}\label{prop:22tight} Let $S$ be the group $\mathbb{Z}_2=\{0,1\}$, and let $G$ be a simple $S$-symmetric graph, where the action $\theta:S\to \rm{Aut}(G)$ is free on both $V(G)$ and $E(G)$. Then $G$ is $(2,2)$-tight if and only if its quotient $S$-gain graph $G_0$ is $(2,2,1)$-gain-tight. 
\end{prop}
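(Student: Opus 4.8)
The plan is to relate the edge/vertex count of $G$ directly to that of its quotient $G_0$, using the fact that the $\mathbb{Z}_2$-action is free on both vertices and edges. Since each vertex orbit and each edge orbit has size exactly $|S|=2$, we immediately get $|V(G)|=2|V(G_0)|$ and $|E(G)|=2|E(G_0)|$. Hence the global equality $|E(G)|=2|V(G)|-2$ is equivalent to $|E(G_0)|=2|V(G_0)|-1$, which is the global equality in the $(2,2,1)$-gain-tight condition. So the content of the proposition lies entirely in matching the sparsity conditions on subgraphs.

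\medskip

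I would prove each direction by relating subgraphs of $G_0$ to subgraphs of $G$ via the covering map $c:G\to G_0$. For the direction ``$G_0$ $(2,2,1)$-gain-tight $\Rightarrow$ $G$ $(2,2)$-tight'': given a subgraph $H\subseteq G$ with at least one edge, let $H_0\subseteq G_0$ be its image under $c$. Two cases arise depending on whether $c$ restricted to $H$ is injective (equivalently, whether $H$ contains two vertices or two edges from the same orbit). If $c|_H$ is a bijection onto $H_0$, then $H$ lifts isomorphically and one can switch gains on $H_0$ so that all edges of (a copy of) $H_0$ carry the identity — this uses that a subgraph mapping injectively lies inside one ``sheet'' and is therefore balanced — so $|E(H)|=|E(H_0)|\le 2|V(H_0)|-2=2|V(H)|-2$ using the balanced count. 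If $c|_H$ is not injective, then some orbit is fully contained, and one argues $H$ must be (after possibly enlarging to an orbit-closed subgraph $\widetilde H=c^{-1}(H_0)$, with $|V(\widetilde H)|=2|V(H_0)|$, $|E(\widetilde H)|=2|E(H_0)|$, and $|E(H)|\le|E(\widetilde H)|$, $|V(H)|\ge|V(H_0)|$) bounded using the unbalanced count $|E(H_0)|\le 2|V(H_0)|-1$: indeed $|E(H)|\le 2|E(H_0)|\le 2(2|V(H_0)|-1)=4|V(H_0)|-2$, and since a non-injective $H$ has $|V(H)|\ge |V(H_0)|+1$ or contains a full vertex orbit forcing $|V(H)|=2|V(H_0)|$, careful bookkeeping yields $|E(H)|\le 2|V(H)|-2$.

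\medskip

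For the converse, ``$G$ $(2,2)$-tight $\Rightarrow$ $G_0$ $(2,2,1)$-gain-tight'': take $F\subseteq E(G_0)$. If $F$ is balanced, switch so all gains in $F$ are trivial; then $c^{-1}$ of the subgraph on $F$ splits as two disjoint isomorphic copies $F',F''\subseteq E(G)$ with $|F'|=|F|$, $|V(F')|=|V(F)|$, and applying the $(2,2)$-sparsity of $G$ to $F'$ gives $|F|\le 2|V(F)|-2$, the balanced count. If $F$ is arbitrary (possibly unbalanced), then $c^{-1}$ of the $F$-subgraph is a \emph{connected} double cover on each unbalanced component (connectedness of the lift over an unbalanced connected base is the key structural fact about $\mathbb{Z}_2$-gain graphs), so it is a subgraph $\widehat F\subseteq E(G)$ with $|\widehat F|=2|F|$ and $|V(\widehat F)|=2|V(F)|$; the $(2,2)$-sparsity of $G$ gives $2|F|\le 2\cdot 2|V(F)|-2$, i.e. $|F|\le 2|V(F)|-1$, the unbalanced count. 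Combined with the global equality already established, this shows $G_0$ is $(2,2,1)$-gain-tight.

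\medskip

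The main obstacle I anticipate is the case analysis in the first direction when $c|_H$ is not injective but $H$ is not orbit-closed: one must carefully track how many vertices and edges of $G$ survive in $H$ versus its orbit-closure, and in particular handle half-edges/partial orbits correctly so the bound $2|V(H)|-2$ is not exceeded. This is essentially the same bookkeeping that appears in lifting arguments for gain-sparse graphs (cf. the proofs in \cite{jkt}), and I expect it to go through by reducing to the orbit-closed case plus the observation that deleting a vertex or edge from its orbit-partner changes the count $2|V|-|E|$ monotonically in the right direction. Passing to the balanced/unbalanced dichotomy via the switching operations of Proposition~\ref{prop:lem2} and Lemma~\ref{lem:balance} is the organising principle throughout.
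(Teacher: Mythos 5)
Your two directions are labelled the opposite way round from the paper's, but the content is clear. The direction ``$G$ is $(2,2)$-tight $\Rightarrow$ $G_0$ is $(2,2,1)$-gain-tight'' (your ``converse'') is correct and agrees with the paper's argument: lift an arbitrary $F\subseteq E(G_0)$ and apply $(2,2)$-sparsity of $G$ to $c^{-1}(F)$ for the unbalanced count, and switch a balanced $F$ to trivial gains and apply it to one sheet for the balanced count. (The aside about connectedness of the double cover over an unbalanced base is never used and can be dropped.) For the other direction you take a genuinely different route from the paper. The paper assumes $G$ has a \emph{minimal} violating subgraph $H$, observes that $H$ is not $S$-symmetric, pairs it with its symmetric partner $H'=\theta(1)(H)$, and extracts a violation in $G_0$ from $H\cup H'$ using the submodularity of $2|V(\cdot)|-|E(\cdot)|$ together with a claim that $c(H\cup H')$ is balanced when $V(H\cap H')=\emptyset$. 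You instead try to bound $|E(H)|$ directly for an arbitrary $H\subseteq G$ in terms of $H_0=c(H)$, split by whether $c|_H$ is injective. This is conceptually cleaner (no minimality, no pairing), and your injective case is correct: if $c|_H$ is injective then every edge of $H_0$ has a unique lift in $H$, the vertex-level choice function gives a switching that trivialises all gains on $H_0$, so $H_0$ is balanced and $|E(H)|=|E(H_0)|\leq 2|V(H_0)|-2=2|V(H)|-2$.

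However, your non-injective case has a real gap as written. The chain $|E(H)|\leq 2|E(H_0)|\leq 4|V(H_0)|-2$ combined with $|V(H)|\geq |V(H_0)|+1$ does \emph{not} yield $|E(H)|\leq 2|V(H)|-2$: if $|V(H)|=|V(H_0)|+a$ this requires $4|V(H_0)|-2\leq 2|V(H_0)|+2a-2$, i.e.\ $a\geq|V(H_0)|$, which only holds when $H$ is vertex--orbit-closed. Your fallback assertion that non-injectivity ``forces $|V(H)|=2|V(H_0)|$'' is false (one full vertex orbit suffices for non-injectivity), and the monotonicity remark about deleting vertices from $\widetilde H=c^{-1}(H_0)$ does not close it either, since vertices of $\widetilde H\setminus H$ may have degree $<2$ in $\widetilde H$. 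The fix is a sharper count: let $A\subseteq V(H_0)$ be the vertices with both preimages in $H$; only edges with both endpoints in $A$ can contribute two lifted edges to $H$, so $|E(H)|\leq |E(H_0)|+|E(H_0[A])|$. Since $|E(H_0)|\leq 2|V(H_0)|-1$ and $|E(H_0[A])|\leq 2|A|-1$ (or $0$ if $|A|\leq 1$), this gives $|E(H)|\leq 2|V(H_0)|+2|A|-2=2|V(H)|-2$, as required. With this refinement your approach goes through and is arguably more elementary than the paper's; as submitted, though, the inequality that is supposed to carry the non-injective case is insufficient.
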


\begin{proof} Suppose first that $G$ is $(2,2)$-tight, and $G_0$ is not $(2,2,1)$-gain-tight. If $|E(G_0)|\neq 2|V(G_0)|-1$, then $|E(G)|\neq 2|V(G)|-2$, a contradiction. If $|F|> 2|V(F)|-1$ for some $F\subseteq E(G_0)$, then  $|c^{-1}(F)|>2|c^{-1}(V(F))|-2$, where $c:G\to G_0$ is the covering map. This is a again a contradiction to $G$ being $(2,2)$-tight. Similarly, if $|F|> 2|V(F)|-2$ for some balanced $F\subseteq E(G_0)$, then, by Proposition 2, we may switch the vertices in $V(F)$ so that each edge  of $F$ has trivial gain. But then $E(G)$ contains a copy of $F$ as a subset, which again violates the $(2,2)$-tightness of $G$. 

Conversely, suppose that $G_0$ is $(2,2,1)$-gain-tight, but $G$ is not $(2,2)$-tight. Clearly, if  $|E(G)|\neq 2|V(G)|-2$, then $|E(G_0)|\neq 2|V(G_0)|-1$, a contradiction. Thus,  there exists a subgraph $H$ of $G$ with $|E(H)|> 2|V(H)|-2$.  Note that $H$ is not $S$-symmetric, for otherwise $c(H)$ violates the $(2,2,1)$-gain-sparsity count.  Clearly, we may choose $H$ so that it has a minimal number of vertices, and we may then remove edges if necessary so that $|E(H)|= 2|V(H)|-1$. Let $H'$ be the symmetric copy of $H$, i.e., $H'$ has the vertex set $\{\theta(1)(v) |\,v\in V(H)\}$ and edge set $\{\theta(1)(e) |\,e\in E(H)\}$. By symmetry, we have $|E(H')|= 2|V(H')|-1$.

\begin{claim} If $V(H\cap H')=\emptyset$, then $c(H\cup H')$ is balanced.
\end{claim}
\begin{proof} Suppose $V(H \cap H')=\emptyset$ and $c(H\cup H')$ is unbalanced. Then there exists an unbalanced cycle $C$ in $c(H\cup H')$ and we may switch the vertices of $C$ so that all edges of $C$ have gain $0$, except for a single edge, say $(v,w)$. Then in $H\cup H'$ we have the edges $\{v, \theta(1)(w)\}$ and $\{\theta(1)(v),w\}$, as well as paths from $v$ to $w$ and  from $\theta(1)(v)$ to $\theta(1)(w)$. If $\{v, \theta(1)(w)\}$ and $\{\theta(1)(v),w\}$ are both in $E(H)$, then they are also both in $E(H')$, a contradiction. Analogously, we obtain a contradiction if $\{v, \theta(1)(w)\},\{\theta(1)(v),w\}\in E(H')$.

Thus, wlog we may assume that $\{v, \theta(1)(w)\}\in E(H)\setminus E(H')$ and $\{\theta(1)(v),w\}\in E(H')\setminus E(H)$. Consider a path $v, v_1, v_2, \ldots, v_k, w$ from $v$ to $w$ in $H\cup H'$. Clearly, $\{v,v_1\}\in E(H)\setminus E(H')$ for otherwise $v\in  V(H\cap H')$. Continuing in this fashion, we deduce that $\{v_k,w\}\in E(H)\setminus E(H')$. Thus,  $w\in V(H)$. However, $w$ is also a vertex of  $H'$ since $\{\theta(1)(v),w\}\in E(H')\setminus E(H)$. This contradicts $V(H \cap H')=\emptyset$, and the proof of the claim is complete. 
\end{proof}

Now, suppose first that $V(H\cap H')=\emptyset$. Then  we have $|E(H \cup H')|= 2|V(H)|-1 +2|V(H' )|-1=  2|V(H \cup H')|-2$, and hence $|c(E(H\cup H'))|=2|c(V(H\cup H'))|-1$. Thus,  $c(H\cup H')$ violates the $(2,2,1)$-gain-sparsity count since $c(H\cup H')$ is balanced (by the claim).

So it remains to consider the case  $V(H\cap H')\neq \emptyset$.  Since $H$ is not $S$-symmetric, we have $H\neq H'$. Thus, by the minimality of $H$, we have  $|E(H\cap H')|\leq 2|V(H\cap H')|-2$. It follows that $|E(H \cup H')|\geq  2|V(H)|-1 +2|V(H' )|-1- (2|V(H\cap H')|-2)=  2|V(H \cup H')|$. Thus, $|c(E(H\cup H'))|\geq 2|c(V(H\cup H'))|$, and hence $c(H\cup H')$ violates the $(2,2,1)$-gain-sparsity count. 

\end{proof}

Thus, by combining Theorem \ref{thm:surf3} with Theorem~\ref{thm:cylinderreflection}, it follows that for a graph $G$, satisfying the conditions of Proposition~\ref{prop:22tight}, a generic realisation of $G$ on the cylinder $\Y$ is isostatic if and only if  an $S$-generic realisation of $G$ on $\Y$ is $S$-isostatic, where $S=\C_s$ or $\C_i$ (see also Conjecture~\ref{con:incsym}).



\section{Inductive Constructions} \label{sec:recchar}

To prove Theorems \ref{thm:sphereinversion}--\ref{thm:conecyclic} we will make use of a celebrated proof technique in rigidity theory: an inductive proof using Henneberg-type graph constructions \cite{jkt,nop,N&R,Ross,schtan,W1}. This comes in two steps. First we prove a characterisation of $(2,k,k_S)$-gain-tight graphs, for the relevant choices of $k$ and $k_S$, showing that all such graphs can be generated from the smallest such graph, or graphs, by simple operations. Then we apply these operations to frameworks and show that they preserve the dimension of the nullspace of the orbit-surface rigidity matrix. 

\subsection{Admissible Operations}

Let $(H,\psi)$ be an $S$-gain graph. The \emph{Henneberg 1 move} is the addition of a new vertex $v$ and two edges $e_1$ and $e_2$ to $H$ such that the new edges are incident with $v$ and are not both loops at $v$. The three possible ways this can be done are illustrated in Figure \ref{fig:hen1}.  Note that if $e_1$ and $e_2$ are not loops and not parallel edges, then their labels can be arbitrary. This move is called H1a and is depicted  in Figure \ref{fig:hen1}(a). If they are non-loops, but parallel edges, then the labels are assigned so that $\psi(e_1)\neq \psi(e_2)$, assuming that $e_1$ and $e_2$ are directed to $v$. This move is called H1b and is depicted in Figure \ref{fig:hen1}(b). Finally, if one of the edges, say $e_1$, is a loop at $v$, then we set $\psi(e_1)\neq id$. This move is called H1c and is depicted in Figure \ref{fig:hen1}(c).

\begin{center}
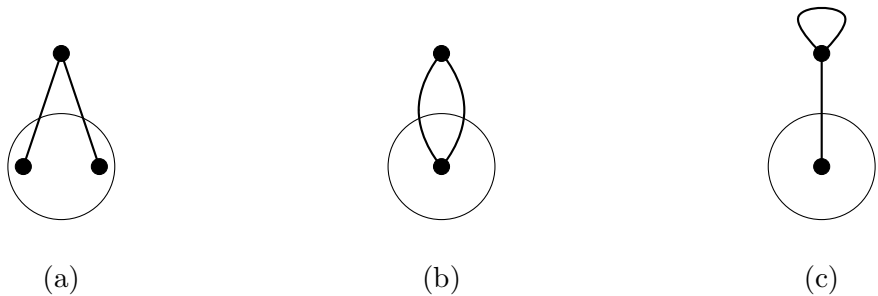
\begin{figure}[ht]
\centering
\begin{tikzpicture}\tikzstyle{every node}=[circle, draw=black, fill=black, inner sep=0pt, minimum width=5pt];
 \draw (-.3,0.75) circle (0.1pt){};
 \draw (4.7,2) circle (0.1pt) {};
 \draw (0,0) circle (20pt);
 \draw (5,0) circle (20pt);
 \draw (-5,0) circle (20pt);
 \filldraw (-5.5,0) circle (3pt);
  \filldraw (-4.5,0) circle (3pt);
   \filldraw (0,0) circle (3pt);
    \filldraw (5,0) circle (3pt);
     \filldraw (-5,1.5) circle (3pt);
     \filldraw (0,1.5) circle (3pt);
     \filldraw (5,1.5) circle (3pt);
 \draw[black,thick]
(-5.5,0) -- (-5,1.5) -- (-4.5,0);
 \draw[black,thick]
(5,0) -- (5,1.5);
\draw[thick] plot[smooth, tension=1] coordinates{(0,0) (-.3,0.75) (0,1.5)};
\draw[thick] plot[smooth, tension=1] coordinates{(0,0) (.3,0.75) (0,1.5)};
\draw[thick] plot[smooth, tension=1] coordinates{(5,1.5) (4.7,2) (5.3, 2) (5,1.5)};
\node [draw=white, fill=white] (b) at (-5,-1.5) {(a)};
\node [draw=white, fill=white] (b) at (0,-1.5) {(b)};
\node [draw=white, fill=white] (b) at (5,-1.5) {(c)};
\end{tikzpicture}
\caption{H1a, H1b and H1c moves. Directions and gain labels of the edges are omitted in the figure.  }
\label{fig:hen1}
\end{figure}
\end{center}

The \emph{Henneberg 2 move} deletes an edge of $(H,\psi)$ and adds a new vertex and three new edges to $(H,\psi)$.
First, one chooses an edge $e$ of $H$ (which will be deleted) and a vertex $z$ of $H$ which may be an end-vertex of $e$. Then one subdivides $e$, with a new vertex $v$ and new edges $e_1$ and $e_2$, such that the tail of $e_1$ is the tail of $e$ and the tail of $e_2$ is the head of $e$. The gains of the new edges are assigned so that $\psi(e_1)\cdot \psi(e_2)^{-1}=\psi(e)$. Finally, we add a third new edge, $e_3$, to $H$. This edge is oriented from $z$ to $v$ and its gain is such that every 2-cycle $e_i e_j$, if it exists, is unbalanced. 
There are four possible ways this can be done, as illustrated in Figure \ref{fig:hen2}.

Suppose first that the edge $e$ is not a loop. If none of the edges $e_i$ is a loop or a parallel edge, then the move is called H2a (see Figure~\ref{fig:hen2}(a)). If none of the edges $e_i$ is a loop, but exactly two of the edges are parallel edges (i.e., $z$ is an end-vertex of $e$), then the move is called H2b (see Figure~\ref{fig:hen2}(b)). If the edge $e$ is a loop, then the moves corresponding to H2a and H2b are called H2c and H2d, respectively 
(see Figures~\ref{fig:hen2}(c) and (d)).

\begin{center}
\begin{figure}[ht]
\centering
\begin{tikzpicture}
 \draw (3.4,0.75) circle (0.01pt) node[anchor=east]{};
  \draw (3.4,0.75) circle (0.01pt) node[anchor=west]{};
   \draw (4.4,0.75) circle (0.01pt) node[anchor=west]{};
   \draw (-0.5,0) circle (0.01pt) node[anchor=north]{};
 \draw (4,0) circle (0.01pt) node[anchor=north]{};
  \draw (0,.4) circle (0.01pt) node[anchor=east]{};
 \draw (-.31,0.84) circle (0.01pt) node[anchor=east]{};
 \draw (0,0) circle (27pt);
 \draw (4,0) circle (27pt);
   \filldraw (-0.7,0) circle (3pt);
   \filldraw (0.7,0) circle (3pt);
    \filldraw (0,-.3) circle (3pt);
    \filldraw (3.3,0.1) circle (3pt);
    \filldraw (4.7,0.1) circle (3pt);
     \filldraw (0,1.5) circle (3pt);
     \filldraw (4,1.5) circle (3pt);
\draw[black,dashed]
(3.3,0.1) -- (4.7,0.1);
\draw[black,dashed]
(-.7,0.1) -- (0,-.3);
\draw[black,thick]
(0,-.3) -- (0,1.5) -- (-.7,0.1);
\draw[black,thick]
(4,1.5) -- (4.7,0.1);
\draw[black,thick]
(0,1.5) -- (0.7,0);
\draw[thick] plot[smooth, tension=1] coordinates{(3.3,0) (3.4,0.75) (4,1.5)};
\draw[thick] plot[smooth, tension=1] coordinates{(3.3,0) (3.9,0.75) (4,1.5)};

 \draw (7.6,0.75) circle (0.01pt) node[anchor=east]{};
  \draw (7.6,0.75) circle (0.01pt) node[anchor=west]{};
   \draw (8.2,0.75) circle (0.01pt) node[anchor=west]{};
 
 \draw (7.8,0) circle (0.01pt) node[anchor=north]{};
  \draw (7.8,.4) circle (0.01pt) node[anchor=east]{};
 \draw (7.8,0) circle (27pt);

    \filldraw (7.5,0.1) circle (3pt);
    \filldraw (8.5,0.1) circle (3pt);

     \filldraw (7.8,1.5) circle (3pt);
\draw[dashed] plot[smooth, tension=1] coordinates{(7.5,0.1) (7.2,-0.4) (7.8, -0.4) (7.5,0.1)};
\draw[black,thick]
(7.8,1.5) -- (8.5,0.1);
\draw[thick] plot[smooth, tension=1] coordinates{(7.5,0) (7.4,0.75) (7.8,1.5)};
\draw[thick] plot[smooth, tension=1] coordinates{(7.5,0) (7.9,0.75) (7.8,1.5)};

 \draw (11.6,0.75) circle (0.01pt) node[anchor=east]{};
  \draw (11.6,0.75) circle (0.01pt) node[anchor=west]{};
 
 \draw (11.8,0) circle (0.01pt) node[anchor=north]{};
  \draw (11.8,.4) circle (0.01pt) node[anchor=east]{};
 \draw (11.8,0) circle (27pt);

    \filldraw (11.5,0.1) circle (3pt);

     \filldraw (11.8,1.5) circle (3pt);
\draw[dashed] plot[smooth, tension=1] coordinates{(11.5,0.1) (11.2,-0.4) (11.8, -0.4) (11.5,0.1)};
\draw[black,thick]
(11.5,0) -- (11.8,1.5);
\draw[thick] plot[smooth, tension=1] coordinates{(11.5,0) (11.4,0.75) (11.8,1.5)};
\draw[thick] plot[smooth, tension=1] coordinates{(11.5,0) (11.9,0.75) (11.8,1.5)};

\node [draw=white, fill=white] (b) at (0,-1.5) {(a)};
\node [draw=white, fill=white] (b) at (4,-1.5) {(b)};
\node [draw=white, fill=white] (b) at (8,-1.5) {(c)};
\node [draw=white, fill=white] (b) at (12,-1.5) {(d)};
\end{tikzpicture}
\caption{H2a, H2b, H2c and H2d moves. Directions and gain labels of the edges are omitted in the figure.}
\label{fig:hen2}
\end{figure}
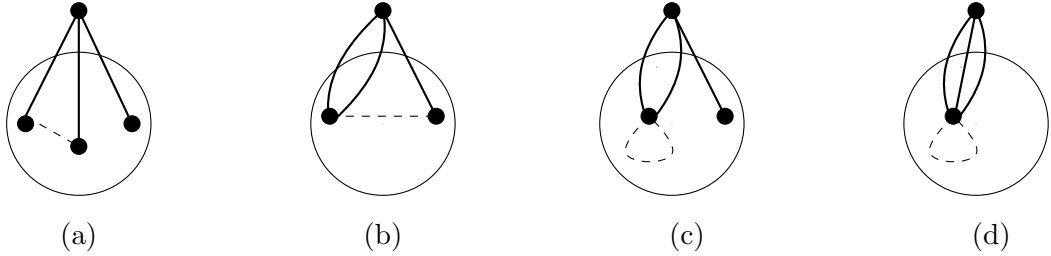
\end{center}

For a $(2,k,k_S)$-gain-tight $S$-gain graph $(H,\psi)$, an inverse Henneberg 1 or 2 move on $v\in V(G_0)$ is \emph{admissible} if the resulting $S$-gain graph $H'$ is $(2,k,k_S)$-gain-tight and the
covering graph of $H'$ is simple.

A \emph{vertex-to-$K_4$} operation on an  $S$-gain graph $(H,\psi)$ removes a vertex $v$ (of arbitrary degree) and 
all the edges incident with $v$, and adds in a copy of $K_4$ with only trivial gains (see also Figure~\ref{fig:vk4}). Without loss of generality we may assume that all edges incident with $v$ are directed to $v$, i.e., are of the form $(x,v)$ for some $x\in V(H)$.
Each removed edge $(x,v)$ is replaced by an edge $(x,y)$ for some $y$ in the new $K_4$, where the gain is preserved, that is, $\psi((x,v))=\psi((x,y))$. 

For a $(2,k,k_S)$-gain-tight $S$-gain graph $(H,\psi)$, the inverse move, a \emph{$K_4$-contraction} on a copy of $K_4$ 
with only trivial gains
 is admissible if the resulting graph $H'$ is $(2,k,k_S)$-gain-tight and the covering graph
is simple. Note that we only apply the $K_4$-contraction when there are no additional edges induced by the vertices of the $K_4$.

\begin{center}
\begin{figure}[ht]
\centering
\begin{tikzpicture}
\draw (0,0) circle (27pt);
\draw (5,0) circle (27pt);
 
\filldraw (0,1.5) circle (3pt);
\filldraw (.2,0.5) circle (3pt);
\filldraw (-.2,0.5) circle (3pt);
\filldraw (0.6,0.35) circle (3pt);
\filldraw (-0.6,0.35) circle (3pt);

\filldraw (4.65,1.35) circle (3pt);
\filldraw (5.35,1.35) circle (3pt);
\filldraw (4.65,1.8) circle (3pt);
\filldraw (5.35,1.8) circle (3pt);

\filldraw (5.2,0.5) circle (3pt);
\filldraw (4.8,0.5) circle (3pt);
\filldraw (5.6,0.35) circle (3pt);
\filldraw (4.4,0.35) circle (3pt);

\draw[black]
(-.6,0.35) -- (0,1.5)  -- (-.2,.5) -- (0,1.5) -- (.2,.5);
\draw[black]
(.6,.35) -- (0,1.5);

\draw[black]
(2,0) -- (2.85,0);

\draw[black]
(2.85,0.15) -- (2.85,-.15) -- (3,0) -- (2.85,.15); 

\draw[black]
(4.4,0.35) -- (4.65,1.35) -- (5.35,1.35) -- (4.65,1.8) -- (5.35,1.8) -- (5.35,1.35) -- (5.6,.35);

\draw[black]
(5.2,0.5) -- (4.65,1.8) -- (4.65,1.35) -- (4.8,.5);

\draw[black]
(4.65,1.35) -- (5.35,1.8);

\end{tikzpicture}
\caption{The vertex-to-$K_4$ operation (in this case expanding a degree 4 vertex which is not incident to any loop). Directions and gain labels of the edges are omitted.}
\label{fig:vk4}
\end{figure}
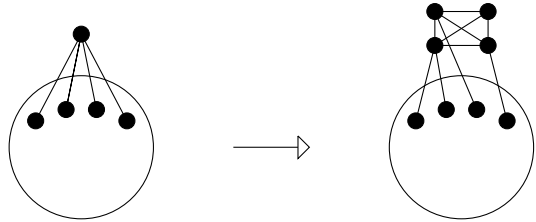
\end{center}

 In the following, an edge $e=(x,v)$ with gain $\psi(e)$ in $(H,\psi)$ will be denoted by $(x,v)_{\psi(e)}$. 
A \emph{vertex-to-4-cycle} operation on an $S$-gain graph $(H,\psi)$ removes a vertex $v$ and all the edges incident with $v$, adds in two new vertices $v_1,v_2$, and chooses two neighbours $a,b$ of $v$ (w.l.o.g. with edges $(a,v)_\alpha$ and $(b,v)_\beta$) and creates a 4-cycle $a,v_1,b,v_2$ with edges $(a,v_1)_\alpha, (a,v_2)_\alpha, (b,v_1)_\beta, (b,v_2)_\beta$ (see also Figure~\ref{fig:4cycle}). Each of the removed edges $(x,v)$, $x\neq a,b$, is replaced by an edge $(x,v_i)$ for some $i=1,2$, where the gain is preserved, that is, $\psi((x,v))=\psi((x,v_i))$. If the deleted vertex $v$ is incident to a loop $(v,v)$ in $H$ (this may be the case if $H$ is $(2,k,k_S)$-gain tight for $k_S=1$, for example), then this loop is replaced by a loop $(v_i,v_i)$ (with the same gain) for some $i=1,2$.

The inverse operation, a \emph{4-cycle contraction}, on a $(2,k,k_S)$-gain-tight $S$-gain graph is admissible if the resulting graph $H'$ is $(2,k,k_S)$-gain-tight and the covering graph of $H'$ is simple. It will suffice for our purposes to contract only 4-cycles in which each edge has trivial gain. Thus, we may restrict to $\alpha$ and $\beta$ above both equaling the identity, so it is easy to see that if $H$ is $(2,k,k_S)$-gain-tight, then applying a vertex-to-4-cycle operation to $H$ results in a $(2,k,k_S)$-gain-tight graph.

\begin{center}
\begin{figure}[ht]
\centering
\begin{tikzpicture}
\draw (0,0) circle (27pt);
\draw (5,0) circle (27pt);

\filldraw (0,0) circle (3pt);
\filldraw (-.7,-.1) circle (3pt);
\filldraw (.7,.1) circle (3pt);

\filldraw (5,.5) circle (3pt);
\filldraw (5,-.5) circle (3pt);
\filldraw (4.3,-.1) circle (3pt);
\filldraw (5.7,.1) circle (3pt);

\draw[black]
(-.7,-.1) -- (0,0) -- (.7,.1);

\draw[black]
(-.2, .2) -- (0,0) -- (.2,.2);

\draw[black]
(-.2,- .2) -- (0,0) -- (.2,-.2);

\draw[black]
(4.3, -.1) -- (5,.5) -- (5.7,.1) -- (5,-.5) -- (4.3,-.1);

\draw[black]
(4.8, .7) -- (5,.5) -- (5.2,.7);

\draw[black]
(4.8, -.7) -- (5,-.5) -- (5.2,-.7);

\draw[black]
(2,0) -- (2.85,0);

\draw[black]
(2.85,0.15) -- (2.85,-.15) -- (3,0) -- (2.85,.15); 
\end{tikzpicture}
\caption{The vertex-to-4-cycle operation. Directions and gain labels of the edges are omitted.}
\label{fig:4cycle}
\end{figure}
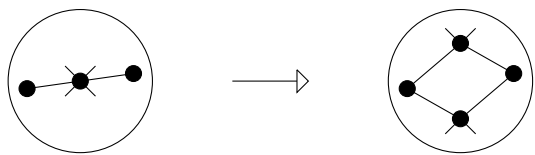
\end{center}

An \emph{edge joining} operation takes two $(2,k,1)$-gain-tight $S$-gain graphs $H_0,H_1$ and creates the graph $H_0 \oplus H_1$ which has vertex set the disjoint union of $V(H_0)$ and $V(H_1)$ and edge set $E(H_0)\cup E(H_1) \cup \{(a,b)\}$ where $a\in V(H_0)$ and $b\in V(H_1)$ is an edge with arbitrary gain. It is clear that $H_0 \oplus H_1$ is $(2,k,1)$-gain-tight if and only if $H_0$ and $H_1$ are $(2,k,1)$-gain-tight.

Note that in the covering graph, each of the above operations is a graph operation that preserves the underlying symmetry.
Some of them can be recognised as performing standard - non-symmetric - Henneberg operations~\cite{BSlaman,W1}
simultaneously.


\subsection{Recursive Characterisations}

We now derive inductive constructions for $(2,i,i)$-gain-tight graphs, where $i=1,2,3$, and for $(2,2,1)$-gain-tight graphs.
Inductive constructions of $(2,3,1)$-gain tight graphs were established in \cite[Theorem 4.4]{jkt} (using Henneberg 1 and Henneberg 2 moves only).
Note that as the balanced and unbalanced subgraph conditions are the same for $(2,i,i)$-gain-tight graphs,  $i=1,2,3$, we do not need to worry about preserving cycles with non-trivial gain in the first three theorems.

For a vertex $v$ of a directed multi-graph $G_0$, we will denote the set of vertices which are adjacent to $v$ (in the underlying undirected multi-graph) by $N(v)$. Each of the vertices in $N(v)$ is called a \emph{neighbour} of $v$ in $G_0$. 

In the following 4 theorems it is easy to establish that the operations under consideration preserve $(2,\ell,m)$-gain-sparsity. We concentrate on the converse where in all 4 cases it is clear that the minimum degree is 2 or 3.

\begin{thm}\label{thm:233construction}
Let $S$ be a group, $G$ be a simple graph and $(G_0,\psi)$ be its quotient $S$-gain graph. Then $(G_0,\psi)$ is $(2,3,3)$-gain-tight if and only if $(G_0,\psi)$ can be constructed sequentially from $K_2$ by H1a and H2a operations.
\end{thm}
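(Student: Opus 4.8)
The plan is a standard Henneberg-type induction, with the only genuine work being the combinatorial case analysis that guarantees the existence of an \emph{admissible} inverse move. The easy direction (sufficiency) I would dispose of in one line, as the paper suggests: if $(G_0,\psi)$ is obtained from a $(2,3,3)$-gain-tight graph by H1a or H2a, then a direct count shows the result is again $(2,3,3)$-gain-tight, and since $K_2$ is trivially $(2,3,3)$-gain-tight (one edge, $1 = 2\cdot 2 - 3$, and the only nonempty subgraph $F$ is the edge itself, with $|F| = 1 \leq 2|V(F)| - 3 = 1$), every graph so constructed is $(2,3,3)$-gain-tight. Note that a $(2,3,3)$-gain-tight graph has no loops (a loop $F$ would need $1 \leq 2\cdot 1 - 3 = -1$) and no parallel edges $\{x,v\},\{x,v\}$ (that pair violates $2 \leq 2\cdot 2 - 3$), so in fact the covering-graph-simplicity requirement in the definition of admissibility is automatic here; this is why only H1a and H2a (not H1b, H1c, H2b, H2c, H2d) can occur, and why there is no distinction between balanced and unbalanced subgraphs to track.

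For the converse I would argue by induction on $|V(G_0)|$. Let $(G_0,\psi)$ be $(2,3,3)$-gain-tight with $|V(G_0)| \geq 3$ (the base case $|V(G_0)| \leq 2$ forces $G_0 = K_2$ by the counts). Since $|E(G_0)| = 2|V(G_0)| - 3$, the average degree is strictly less than $4$, so there is a vertex $v$ of degree $2$ or $3$. If $\deg(v) = 2$: its two incident edges are non-loops and, by the no-parallel-edges observation above, not parallel, so deleting $v$ is an inverse H1a move; the resulting graph is clearly still $(2,3,3)$-gain-sparse and has the right edge count, hence is $(2,3,3)$-gain-tight, and we are done by induction. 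If $\deg(v) = 3$, say with neighbours (distinct, again by no parallels, unless $v$ has a double edge to one neighbour and a single to another --- I will need to handle the degree sequence $(2,1)$ among the three edges carefully) $a, b, c$: I want to find a pair, say $\{a,b\}$, such that adding the edge $ab$ (with the gain forced by the H2a compatibility condition $\psi(e_1)\psi(e_2)^{-1} = \psi(e)$, taking $z = c$) to $G_0 - v$ yields a $(2,3,3)$-gain-tight graph whose covering graph is simple. The edge-count is automatic; the obstruction is that adding $ab$ might create a multiple edge (if $ab \in E(G_0)$ already) or might violate the sparsity bound $|F| \leq 2|V(F)| - 3$ for some $F \ni ab$, equivalently there is already a subset spanning $a,b$ of edges of $G_0 - v$ with exactly $2|V(F)| - 3$ edges.

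The heart of the proof is the counting lemma that shows at least one of the three pairs $\{a,b\}, \{b,c\}, \{a,c\}$ is "allowed". This is exactly the kind of argument that appears in \cite{jkt} and in the non-symmetric Laman theory \cite{BSlaman}: suppose for contradiction that each of the three candidate pairs is blocked, i.e., for each pair there is a $(2,3,3)$-gain-tight (so "critical") subgraph of $G_0 - v$ containing that pair, or the pair is already an edge. One takes these blocking subgraphs $X_{ab}, X_{bc}, X_{ac}$, uses a submodularity / union-intersection inequality on the count function $f(F) = 2|V(F)| - 3 - |F|$ (for which $f$ is submodular on edge-subsets whose vertex sets interact appropriately, and a union of two critical sets sharing $\geq 2$ vertices is critical while the intersection count is controlled), and combines two of them into a critical subgraph $Y$ of $G_0 - v$ that contains all of $a, b, c$. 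But then $Y + v$ (adding $v$ back with its three edges) would have $|E(Y+v)| = |E(Y)| + 3 = (2|V(Y)| - 3) + 3 = 2(|V(Y)|+1) - 2 > 2|V(Y+v)| - 3$, contradicting $(2,3,3)$-gain-sparsity of $G_0$. The case where a candidate pair is already an edge of $G_0 - v$ is folded in by treating that single edge as a critical set on two vertices. I expect the bookkeeping around exactly which pairs can be simultaneously blocked --- and the subcase where $v$ has a double edge to one of its neighbours --- to be the main obstacle; everything else is routine counting. No new ideas beyond \cite{jkt} should be needed, since dropping the balanced/unbalanced distinction only \emph{simplifies} the analysis there.
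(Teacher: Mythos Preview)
Your proposal is correct and follows essentially the same route as the paper: observe that $(2,3,3)$-gain-tightness forces the gain graph to be simple (no loops, no parallels), reduce to the purely combinatorial $(2,3)$-tight situation, and invoke the classical Laman/Henneberg blocking-set argument for the inverse H2a move. The paper's own proof is in fact shorter than yours --- it simply points to the standard references for the degree-$3$ reduction --- so your expanded sketch of the submodularity/critical-set argument is more than adequate. One small clean-up: your parenthetical worry about a ``degree sequence $(2,1)$'' with a double edge to one neighbour is moot, since you have already correctly established that a $(2,3,3)$-gain-tight graph has no parallel edges; every degree-$3$ vertex automatically has three distinct neighbours, and you can drop that subcase entirely.
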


\begin{proof} Note that if $G_0$ is $(2,3,3)$-gain-tight, then it cannot contain any loop or multiple edges. It is easy to see that if there exists a vertex $v\in V(G_0)$ of degree $2$, then the removal of $v$ yields another $(2,3,3)$-gain-tight graph. Thus, we may assume that $G_0$ has no vertex of degree $2$.
Since $G_0$ is $(2,3,3)$-gain-tight, every vertex of degree 3 must have
three distinct neighbors.
 But now, standard arguments can be used to show that an inverse H2a operation (performed on one of the degree 3 vertices and one pair of its neighbors)  preserves $(2,3,3)$-gain-tightness \cite{Lamanbib,BSlaman}.
\end{proof}

The next two theorems are reminiscent of \cite[Theorems 3.1 and 1.2]{nop}. The new difficulty being the existence of loops and multiple edges with certain gains. We will need the following lemma.

\begin{lem}\label{lemma:deg3}
Let $G_0$ be a $(2,i,i)$-gain-tight gain graph whose covering graph is simple, for $i=1,2$, with a vertex $v$ with $N(v)=\{a,b,c\}$ and no self-loop attached to $v$. Then either $v$ is contained in a copy of $K_4$ whose gain labelling is equivalent to the trivial gain labelling or there is an inverse H2a move on $v$ that results in a $(2,i,i)$-gain-tight gain graph $G_0'$ whose covering graph is simple.
\end{lem}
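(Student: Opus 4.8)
The plan is to fix a degree-3 vertex $v$ with $N(v)=\{a,b,c\}$ and no loop, and consider the three candidate inverse H2a moves: for each pair from $\{a,b,c\}$, say $\{a,b\}$, we delete $v$ and its three incident edges and add a single edge between $a$ and $b$ whose gain is chosen so that the two edges of the path $a,v,b$ that we are ``merging'' are respected (i.e.\ if the edges at $v$ are $(a,v)_\alpha$, $(b,v)_\beta$, the new edge $(a,b)$ gets gain $\alpha\beta^{-1}$, up to orientation). Call the three resulting gain graphs $G_0^{ab}, G_0^{bc}, G_0^{ac}$. Each satisfies $|E|=2|V|-i$, so the only obstructions to one of them being a valid admissible inverse H2a move are: (i) the covering graph fails to be simple (the new edge is parallel to an existing one in a ``bad'' way, or is a loop), or (ii) $(2,i,i)$-gain-sparsity is violated, i.e.\ there is a set $F'$ containing the new edge with $|F'|>2|V(F')|-i$. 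I would show that if all three candidate moves fail, then $v$ together with $a,b,c$ induces a $K_4$ with gains equivalent to trivial.

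\textbf{Key steps.} First I would rule out the loop case: if the new edge on $\{a,b\}$ is a loop, then $a=b$, contradicting that $v$ has three \emph{distinct} neighbours (which holds since $G_0$ is $(2,i,i)$-gain-tight and the covering graph is simple — a degree-3 vertex with a repeated neighbour would force a violating $2$- or $3$-vertex subgraph). Second, the simplicity-failure case: the covering graph of $G_0^{ab}$ is non-simple precisely when there is already an edge between $a$ and $b$ in $G_0$ whose gain (relative to the chosen base) clashes with $\alpha\beta^{-1}$. Third, and this is the heart, the sparsity-failure analysis: if $G_0^{ab}$ violates $(2,i,i)$-sparsity, there is a set $F'\ni (a,b)$ with $|F'|=2|V(F')|-i+1$ (it can be taken tight-plus-one by minimality); then $F := (F' - (a,b)) \cup \{(a,v),(b,v)\}$, viewed back in $G_0$, together with the third edge $(c,v)$, must be analysed using the $(2,i,i)$-count on $G_0$. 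A standard counting argument (as in \cite{nop}, combined with the submodularity of the function $|V(\cdot)|$ and the balanced/unbalanced bookkeeping — which here is trivial since the balanced and unbalanced counts coincide) shows that the existence of such blocking sets for all three pairs forces $a,b,c$ to lie in a common tight set $F_0$ with $v$, and a further count pins down $|F_0 \cup \{v\}|$ and its vertex set to exactly four vertices carrying six edges, i.e.\ a $K_4$; switching the vertices of this $K_4$ (Proposition~\ref{prop:lem2}, since it is balanced) shows its gains are equivalent to trivial.

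\textbf{Main obstacle.} The hard part will be the combinatorial count showing that simultaneous failure of all three inverse H2a moves forces the $K_4$ configuration: one must carefully combine the at most three ``blocker'' sets (one per pair of neighbours) and the edge $(c,v)$ and its symmetric counterparts, and use inclusion–exclusion / submodularity to squeeze the vertex counts down to four. The situations where the blockers overlap in complicated ways, and where a blocker already contains one of the other neighbours, require case analysis; the presence of loops is excluded by hypothesis but multiple edges (with distinct gains) between the neighbours must still be tracked, since they affect which candidate edge is ``already present.'' Once the vertex set is forced to be $\{v,a,b,c\}$ with all $\binom{4}{2}=6$ edges present, the $(2,i,i)$-tightness of that subgraph ($6 = 2\cdot 4 - i$ would need $i=2$; for $i=1$ one gets $6 \le 2\cdot 4 - 1 = 7$ with equality impossible, so the relevant statement is that the $K_4$ is balanced) forces it to be balanced, and Proposition~\ref{prop:lem2} finishes.
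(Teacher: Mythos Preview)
Your overall approach is close to the paper's, but there is one genuine gap: your argument for why the resulting $K_4$ is balanced is incorrect. You write that ``the $(2,i,i)$-tightness of that subgraph\ldots forces it to be balanced,'' but this is false: for $(2,i,i)$-gain-sparse graphs the balanced and unbalanced counts coincide (as you yourself note earlier), so no sparsity count can ever force balance. A $K_4$ with $6$ edges on $4$ vertices satisfies $6 \le 2\cdot 4 - i$ for both $i=1,2$ regardless of whether it is balanced.

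The correct reason the $K_4$ is balanced is much more direct, and it is the key observation you are missing. The inverse H2a move on the pair $\{a,b\}$ inserts the edge $(a,b)$ with the \emph{specific} gain $\alpha^{-1}\beta$ (in your notation); the covering graph fails to be simple precisely when the edge $(a,b)_{\alpha^{-1}\beta}$ is already present in $G_0$. If the simplicity obstruction holds for all three pairs, then the edges $(a,b)_{\alpha^{-1}\beta}$, $(a,c)_{\alpha^{-1}\gamma}$, $(b,c)_{\beta^{-1}\gamma}$ are all present, and together with the three edges at $v$ these six edges form a $K_4$ in which every cycle has trivial gain \emph{by construction}; e.g.\ the cycle $v,a,b,v$ has gain $\alpha\cdot(\alpha^{-1}\beta)\cdot\beta^{-1}=\mathrm{id}$. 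Balance then follows, and Proposition~\ref{prop:lem2} gives the trivial labelling.

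This observation also lets you bypass the involved blocker analysis you flag as the ``main obstacle.'' The paper's decomposition is cleaner: first dispose of the case where all three simplicity obstructions hold (giving the balanced $K_4$ immediately, as above). In the remaining case at least one of the three specific edges is absent, and since the balanced and unbalanced counts coincide you may now forget the gains entirely and run the standard $(2,i)$-tight counting argument (exactly as in Nixon--Owen~\cite{no}) to show that some inverse H2a move is admissible. Your proposed route of combining mixed sparsity and simplicity blockers via inclusion--exclusion to squeeze the vertex set down to four can be made to work, but it is more laborious, and the balance conclusion at the end would still require the gain computation above rather than a tightness count.
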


\begin{proof}
Let $\alpha,\beta,\gamma$ be the gains on $(v,a),(v,b),(v,c)$ respectively. If the edges $(a,b),(a,c),(b,c)$ are all present with gains $\beta-\alpha,\gamma-\alpha$ and $\gamma-\beta$ respectively, then any inverse H2a move will give a gain graph whose covering graph is not simple. In this case, though, the $K_4$ defined by these  vertices and edges is balanced so Propositions~\ref{prop:lem1} and \ref{prop:lem2} show the labelling is equivalent to the trivial gain labelling.

Thus we may suppose that not all 3 of these edges (with the specified gains) are present.
Now we can forget the gains and argue exactly as in \cite[Lemma 3.1]{no}. That is, by pure counting arguments, we deduce that, if $(a,b)_{\beta-\alpha}$ is not present then either there is an inverse H2a move adding $(a,b)_{\beta-\alpha}$ or another of the edges, say $(a,c)_{\gamma-\alpha}$, is not present and either we can add that edge or the third edge $(b,c)_{\gamma-\beta}$ is not present and we can add at least one of the edges. 
\end{proof}

\begin{thm}\label{thm:222construction}
Let $S$ be a  group, $G$ be a simple graph and $(G_0,\psi)$ be its quotient $S$-gain graph. Then $(G_0,\psi)$ is $(2,2,2)$-gain-tight if and only if $(G_0,\psi)$ can be constructed sequentially from $K_1$ by H1a, H1b, H2a, H2b, vertex-to-$K_4$ and vertex-to-4-cycle operations.
\end{thm}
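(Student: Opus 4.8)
## Proof Proposal for Theorem \ref{thm:222construction}

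The plan is to prove the harder (``only if'') direction by induction on $|V(G_0)|$, following the strategy of \cite[Theorems 3.1 and 1.2]{nop} but handling the new combinatorial features coming from loops and multiple edges with prescribed gains. The base case is $K_1$, and the inductive step splits according to the minimum degree $\delta$ of $G_0$. Since $(G_0,\psi)$ is $(2,2,2)$-gain-tight we have $|E(G_0)| = 2|V(G_0)| - 2$, so the average degree is strictly less than $4$; hence $\delta \in \{2,3\}$ (degree $0$ or $1$ is impossible as that vertex would force a violation of $|E|=2|V|-2$ on the complement, or simply be deletable trivially — one checks $\delta\ge 2$ directly from tightness). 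The key point throughout is that the $(2,2,2)$-count has identical balanced and unbalanced sparsity bounds ($\ell = m = 2$), so we never need to track which cycles are unbalanced; this is exactly the simplification noted before Theorem \ref{thm:233construction}.

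\textbf{Degree $2$ vertices.} If $v$ has degree $2$, there are two subcases. If the two edges at $v$ go to distinct neighbours (or form a non-loop pair of parallels with distinct gains), then deleting $v$ and its two edges yields a graph $G_0'$ with $|E(G_0')| = 2|V(G_0')| - 2$; one checks directly that $G_0'$ remains $(2,2,2)$-gain-sparse (removing a vertex and $2$ edges cannot create a violation), so $G_0'$ is $(2,2,2)$-gain-tight, its covering graph is simple, and $G_0$ is obtained from it by an inverse-admissible H1a or H1b move. The remaining possibility is that $v$ carries a single loop plus one non-loop edge — but then the loop must be unbalanced (covering graph simple, so gain $\neq \mathrm{id}$), and deleting $v$ together with the loop and the non-loop edge is an inverse H1c move; however H1c is \emph{not} in the allowed operation list for $(2,2,2)$. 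So I must rule this case out: a vertex of degree $2$ incident to a loop would mean the single non-loop edge is a bridge to the rest, and a short counting argument (the loop contributes a $1$-vertex subgraph with $1$ edge, violating $|F| \le 2|V(F)| - 2$) gives a contradiction. Hence for $(2,2,2)$ no degree-$2$ vertex carries a loop, and H1a/H1b always suffice.

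\textbf{Degree $3$ vertices.} Now assume $\delta = 3$. If some degree-$3$ vertex $v$ has no loop and three distinct neighbours $\{a,b,c\}$, apply Lemma \ref{lemma:deg3}: either there is an admissible inverse H2a or H2b move on $v$ (the H2b case arising when two of $v$'s edges are parallel, i.e. $z$ is a neighbour of $v$), giving a smaller $(2,2,2)$-gain-tight graph with simple cover, or $v$ lies in a balanced copy of $K_4$ whose gains are equivalent to trivial. In the latter case I switch vertices (Propositions \ref{prop:lem1}, \ref{prop:lem2}) so the $K_4$ has all-trivial gains, verify no extra edges are induced on these four vertices (else an inverse H2 move is available after all, or tightness is violated), and perform a $K_4$-contraction; admissibility (the contracted graph is $(2,2,2)$-gain-tight with simple cover) is checked by the standard argument. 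If instead every degree-$3$ vertex either has a loop or has a repeated neighbour with \emph{both} of its non-loop edges doubled, one reduces to the vertex-to-$K_4$ / vertex-to-4-cycle inverses: a degree-$3$ vertex with a loop plus two parallel non-loop edges to a single neighbour $a$ is the image of a vertex-to-4-cycle operation (the loop on $v$ becomes a loop on one of $v_1,v_2$ as described in the definition), and one shows the inverse 4-cycle contraction is admissible by the counting bound; a degree-$3$ vertex with a loop and edges to two distinct vertices is handled similarly via vertex-to-$K_4$ or reduces to an earlier case.

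\textbf{Main obstacle.} I expect the delicate part to be the exhaustive case analysis for degree-$3$ vertices that carry a loop (possible here precisely because $m = 2$, so a vertex of degree $3$ can have a loop contributing $1$ edge to a $1$-vertex subgraph with bound $2\cdot 1 - 2 = 0$ — wait, that already violates sparsity, so in fact loops are \emph{forbidden} for $(2,2,2)$!). Re-examining: a loop at $v$ is a subgraph $F$ with $|V(F)| = 1$, $|F| = 1 > 0 = 2|V(F)| - 2$, contradicting gain-sparsity. So $(2,2,2)$-gain-tight graphs have no loops, and the vertex-to-4-cycle operation's loop clause is vacuous here; the genuinely needed operations are H1a, H1b, H2a, H2b, vertex-to-$K_4$ and vertex-to-4-cycle exactly when a degree-$3$ vertex is ``locked'' inside a trivial $K_4$ or a trivial $4$-cycle configuration with extra edges that block all inverse Henneberg moves. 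The real work, then, is the careful counting (as in \cite[Lemma 3.1]{no} and \cite{nop}) showing that when no inverse H1 or H2 move applies at any minimum-degree vertex, a trivial-gain $K_4$ admitting a $K_4$-contraction, or a trivial-gain $4$-cycle admitting a $4$-cycle contraction, must be present — and that the resulting smaller graph still has a \emph{simple} covering graph, which is where the gain equivalences via switching (Propositions \ref{prop:lem1}, \ref{prop:lem2}, Lemma \ref{lem:balance}) are essential.
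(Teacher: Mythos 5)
Your proposal matches the paper's proof in essence: observe that $(2,2,2)$-gain-tight graphs contain no loops (since a loop is a one-vertex subgraph with one edge, violating $|F|\le 2|V(F)|-2$), reduce degree-$2$ vertices by inverse H1a/H1b, apply Lemma~\ref{lemma:deg3} to degree-$3$ vertices with three distinct neighbours to find an inverse H2a move or a trivial-gain $K_4$, and finish with a $K_4$-contraction or, when blocked by an external vertex $x$ with two equally-gained edges into the $K_4$, a $4$-cycle contraction. One organisational point: the degree-$3$ vertex with exactly two neighbours should be treated as a separate case handled by an inverse H2b move, not folded into the application of Lemma~\ref{lemma:deg3} (whose hypotheses require three distinct neighbours); your parenthetical remark conflates the two. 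Your mid-proposal detour through loop cases is harmless but unnecessary --- the opening no-loop observation already rules them out.
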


\begin{proof} Note that if $G_0$ is $(2,2,2)$-gain-tight, then it cannot contain any loop. 
If there is no inverse $H1a$ or $H1b$ move then $G_0$ has minimum degree 3. It is elementary to show that a degree 3 vertex with exactly two neighbours can always be reduced using an inverse H2b move. So we may suppose that every degree 3 vertex $v$ has 3 distinct neighbours. By Lemma~\ref{lemma:deg3}, we may assume that $v$ is contained in a  $K_4$ whose gain labelling is equivalent to the trivial gain labelling.
The vertices of this $K_4$ can induce no additional edges since $G_0$ is $(2,2,2)$-gain-tight.
Denote this copy of $K_4$ as $K$.
$K$ is admissible for a $K_4$-to-vertex contraction unless there is a vertex $x \notin K$ and edges $(x,a),(x,b)$ for $a,b\in K$ with equal gains. Since the final vertex $c\in K$ is not adjacent to $x$, simple counting \cite{nop} shows there is a 4-cycle contraction merging $v$ and $x$ which results in a $(2,2,2)$-gain-tight graph contrary to our assumption. 
\end{proof}

\begin{thm}\label{thm:211construction}
Let $S$ be a group, $G$ be a simple graph, and $(G_0,\psi)$ be its quotient  $S$-gain graph. Then $(G_0,\psi)$ is $(2,1,1)$-gain-tight if and only if $(G_0,\psi)$ can be constructed sequentially from either a single vertex with an unbalanced loop, $K_4+e$ (with trivial gains on the $K_4$ and a non-trivial gain on $e$) or $K_5-e$ (with trivial gain on every edge) by H1a, H1b, H1c, H2a, H2b, H2c, H2d, vertex-to-$K_4$, vertex-to-4-cycle and edge joining operations.
\end{thm}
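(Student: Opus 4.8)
The plan is to argue by induction on $|V(G_0)|$, establishing only the non-trivial direction that every $(2,1,1)$-gain-tight $S$-gain graph with simple covering graph can be reduced by one of the listed inverse operations to a smaller such graph (the forward direction being routine, as noted in the paper). The base cases are the three minimal graphs: a single vertex with an unbalanced loop, $K_4+e$, and $K_5-e$. Since $G_0$ is $(2,1,1)$-gain-tight we have $|E(G_0)|=2|V(G_0)|-1$, so the average degree is just under $4$, forcing a vertex of degree $2$ or $3$ (a vertex of degree $1$ would have to carry a loop, but then its edge count is already $\le 2\cdot 1-1$ handled in the base case, and degree-$0$ is impossible for $|V|>1$ by tightness). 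I would split into cases according to this minimum degree.

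\textbf{Degree 2.} If $v$ has degree $2$, then either both edges at $v$ form an unbalanced pair of parallel edges (or one is an unbalanced loop together with one non-loop edge — wait, a loop contributes $1$ to the edge count but degree $2$, so in fact a vertex of degree $2$ with a loop has only one non-loop edge, giving $|E|=2$ at a single-vertex-plus-neighbour configuration), and an inverse H1a, H1b, or H1c move removes $v$. The only obstruction is that the resulting graph might fail $(2,1,1)$-gain-tightness because the two neighbours (or the single neighbour) now violate a sparsity count, or the covering graph becomes non-simple; standard counting arguments (cf.\ \cite{nop1,no}) show this cannot happen, since any violated set together with $v$ and its edges would already violate tightness in $G_0$. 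When $v$ has degree $2$ but lies in a small tight subgraph preventing edge addition in the inverse-H2 sense, this is not an issue here because degree-$2$ vertices are handled purely by inverse H1 moves.

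\textbf{Degree 3.} This is where the main work — and the main obstacle — lies, and it closely parallels the proofs of Theorems \ref{thm:222construction} and \cite[Theorem 3.1]{nop}, but with the extra subtlety that $(2,1,1)$-tight graphs admit unbalanced loops and a richer supply of multiple edges. If $v$ has a loop, the loop must be unbalanced, contributing $1$ to the count, so $v$ has exactly one other edge — but that gives degree $3$ only if that edge is doubled, i.e.\ $v$ is joined to a single neighbour by an unbalanced pair plus... I would carefully enumerate: a degree-$3$ vertex either has three distinct non-loop neighbours, or has two neighbours with a multiple edge, or carries a loop. In the loop or multiple-edge subcases, an inverse H2c/H2d or H2b move applies after a short counting check. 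In the generic subcase of three distinct neighbours $\{a,b,c\}$ with gains $\alpha,\beta,\gamma$ on the edges to $v$, I invoke an analogue of Lemma \ref{lemma:deg3}: either some inverse H2a move (adding one of $(a,b)_{\beta\alpha^{-1}}$, $(a,c)_{\gamma\alpha^{-1}}$, $(b,c)_{\gamma\beta^{-1}}$) succeeds, or all three such edges are present with exactly those gains, so $v$ sits in a balanced $K_4$ which, by Propositions \ref{prop:lem1}–\ref{prop:lem2}, is switching-equivalent to a trivially-gained $K_4$. In that case, exactly as in Theorem \ref{thm:222construction}, I attempt a $K_4$-contraction; if that is blocked by an outside vertex $x$ adjacent to two $K_4$-vertices with matching gains, then since the third $K_4$-vertex is non-adjacent to $x$, a $4$-cycle contraction merging $v$ with $x$ produces a $(2,1,1)$-gain-tight graph. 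The one remaining possibility — that no reduction of the above types applies anywhere in the graph — should force $G_0$ to be one of the three base graphs; I would verify this by a degree-sequence/edge-count argument showing that once the minimum degree is $3$ and every degree-$3$ vertex sits in an irreducible $K_4$ or $4$-cycle configuration, the only tight graphs without any admissible move are $K_4+e$ and $K_5-e$.

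\textbf{Main obstacle.} The hardest part will be the bookkeeping in the degree-$3$ case to simultaneously guarantee that (i) the reduced graph is $(2,1,1)$-gain-tight — requiring separate verification of the balanced count $|F|\le 2|V(F)|-1$ and the general count $|F|\le 2|V(F)|-1$ (here $\ell=m=1$, so, as the paper remarks before Theorem \ref{thm:233construction}, the two counts coincide and no cycle-gain preservation is needed), and (ii) the covering graph stays simple. The edge-joining operation and the presence of unbalanced loops mean one must also rule out the degenerate situation where $G_0$ is $2$-separable in a way that no local move respects tightness; there I expect the edge-joining inverse (splitting $G_0$ across a single edge $(a,b)$ whose removal disconnects it into two $(2,1,1)$-gain-tight pieces) to be exactly the tool needed, so a preliminary reduction should be: if $G_0$ has a cut-edge, apply inverse edge-joining; otherwise $G_0$ is essentially $2$-connected and the degree-based analysis above goes through.
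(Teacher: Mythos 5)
Your overall strategy matches the paper's: reduce minimum-degree-$2$ vertices by inverse Henneberg~1 moves; for minimum-degree-$3$ vertices handle the loop and repeated-neighbour subcases by inverse H2 moves; invoke Lemma~\ref{lemma:deg3} for the three-distinct-neighbours case; and fall back on $K_4$-contraction, $4$-cycle contraction, and an edge-joining/bridge decomposition, with a final global count to squeeze out the base graphs. Doing the bridge reduction up front rather than at the end is a harmless reordering, and your instinct to verify admissibility of the cut-edge split via the tightness count is correct.

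However, there is a genuine error in the $K_4$ case. You write that if the $K_4$-contraction is blocked by an external vertex $x$ with two edges $(x,a),(x,b)$ of equal gain into the $K_4$, then ``since the third $K_4$-vertex is non-adjacent to $x$, a $4$-cycle contraction merging $v$ with $x$ produces a $(2,1,1)$-gain-tight graph.'' That sentence is imported verbatim from the $(2,2,2)$ setting (Theorem~\ref{thm:222construction}), where the count $|E|\le 2|V|-2$ really does forbid $x$ from being adjacent to the fourth $K_4$-vertex $c$: a trivially-gained $K_4$ on $\{v,a,b,c\}$ together with edges $xa,xb,xc$ would have $9>8=2\cdot 5-2$ edges. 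Under the weaker $(2,1,1)$ count, $9\le 9=2\cdot 5-1$ is permitted, so $(x,c)$ \emph{can} be present. If moreover $\psi((x,c))=\mathrm{id}$ the $4$-cycle contraction produces a non-simple covering graph (since then $\psi((x,c))=\psi((v,c))$), so the reduction fails. This is precisely the mechanism by which $K_5-e$ arises as an irreducible block; your proof cannot simply assert that the $4$-cycle contraction always succeeds. You do list $K_5-e$ among the base graphs and mention it in passing in your final sentence, but the intermediate claim is false as stated, and without it the case analysis does not close. The paper handles this by showing that every irreducible degree-$3$ vertex is wrapped in a copy of $K_5-e$ or $K_4+e$, then proving these copies $Y_i$ are vertex disjoint (via the identity $f(Y_i\cup Y_j)=2-f(Y_i\cap Y_j)$ and the fact that every proper subgraph $X$ of $K_5-e$ has $f(X)\ge 2$), and finally running an incidence count on $V_0,E_0$ (the vertices and edges outside all $Y_i$) to force either $G_0=Y_i$ or a bridge. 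Your ``degree-sequence/edge-count argument'' gestures at this but does not actually carry it out; it is the load-bearing final step and must be made explicit.
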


In the proof we use the terminology $2G$ to refer to the graph with vertex set $V(G)$ and 2 copies of each edge in $E(G)$. Of course we will be referring to gain graphs, so some appropriate gain assignment is assumed.  It will also be convenient to define $f(G):=2|V(G)|-|E(G)|$.

\begin{proof} 
Note that $G_0$ may contain loops. 
If there is no inverse H1a, H1b or H1c move then the minimum degree in $G_0$ is 3 and any such vertex has no self-loop. If there is no inverse H2d move then any such vertex $v$ has at least two neighbours.

Suppose $N(v)=\{a,b\}$ and there are two edges from $a$ to $v$. If there is at most one edge between $a$ and $b$ then any $H_0 \subset G_0$ containing $a$ and $b$ has $f(H_0)\geq 2$. Since the two edges from $a$ to $v$ have different gains (since $G$ is simple) it is easy to see that we can choose a gain for a new edge $ab$ that makes $H_0+ ab$ unbalanced. Thus we may assume $v$ is contained in a copy of $2K_3-e$. However, now there is an inverse H2c move.

Therefore, we may suppose that $v$ has 3 distinct neighbours.  By Lemma~\ref{lemma:deg3}, we may assume that $v$ is contained in a  $K_4$ whose gain labelling is equivalent to the trivial gain labelling.
 Denote this copy as $K$ and suppose, for now, that the vertices of $K$ induce no additional edges.

$K$ admits an admissible $K_4$-to-vertex contraction unless there is a vertex $x \notin K$ and edges $(x,a),(x,b)$ for $a,b\in K$ with equal gains. In such case there is a 4-cycle contraction merging $v$ and $x$ which results in a $(2,1,1)$-gain-tight graph unless for the final vertex $c\in K$ the edge $(x,c)\in E(G_0)$ with $\psi((x,c))=id$. (Note that if  $\psi((x,c))=id$, then $\psi((x,c))=\psi((v,c))=id$ so that a $4$-cycle-contraction would not result in a simple covering graph.)

Now, the graph induced by $v,a,b,c$ and $x$ is a copy of $K_5-e$. Repeat the whole process for every degree 3 vertex. This gives us copies of $K_5-e$ or, if the vertices of $K$ did induce additional edges, copies of $K_4+e$ (with non-zero gain on $e$).

We now argue as in \cite[Lemma 4.10]{no}. Let $Y=\{Y_1,\dots ,Y_n\}$ be the subgraphs which are copies of $K_5- e$ or $K_4+e$. They are necessarily vertex disjoint since $f(Y_i\cup Y_j)=2-f(Y_i\cap Y_j)$ and every proper subgraph $X$ of $K_5\backslash e$ has $f(X) \geq 2$.
Let $V_0$ and $E_0$ be the sets of vertices and edges of $G_0$ which are in none of the $Y_i$. Then
\[
f(G_0)=\sum_{i=1}^nf(Y_i)+2|V_0|-|E_0|
\]
so $|E_0|=2|V_0|+n-1.$ Each vertex in $V_0$ is incident to at least $4$ edges.
If every $Y_i$ is incident to at least $2$ edges in $E_0$, then there are at least $4|V_0|+2n$ edge/vertex incidences in $E_0$. This
implies $|E_0|\geq 2|V_0|+n$, a contradiction. Thus,
either there is a copy $Y_i$ with no incidences, which would imply $G_0=Y_i=K_5-e$ or $G_0=Y_i=K_4+e$, since $G_0$ is connected, or there is a copy with one incidence, i.e., $G_0$ contains a bridge and there is an edge separation move on this bridge contrary to our assumption.
\end{proof}

For the following we have to be more careful to preserve the gain-sparsity of subgraphs.

\begin{thm}\label{thm:221construction}
Let $S$ be a group of order 2, $G$ be a simple graph and $(G_0,\psi)$ be its quotient  $S$-gain graph. Then $(G_0,\psi)$ is $(2,2,1)$-gain-tight if and only if $(G_0,\psi)$ can be constructed sequentially from a single vertex with an unbalanced loop or from $K_4+e$ (where the $K_4$ has trivial gains and $e$ has a non-trivial gain) by H1a, H1b, H1c, H2a, H2b, H2c, vertex-to-$K_4$, vertex-to-4-cycle and edge joining operations.
\end{thm}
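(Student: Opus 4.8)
The forward implication---that each of the operations H1a, H1b, H1c, H2a, H2b, H2c, vertex-to-$K_4$, vertex-to-$4$-cycle and edge joining preserves $(2,2,1)$-gain-tightness, with the covering-simplicity built into the admissibility requirements---is routine, exactly as for Theorems~\ref{thm:233construction}--\ref{thm:211construction}, so I would concentrate on the converse and prove it by induction on $|V(G_0)|$, the two base cases being the single vertex carrying an unbalanced loop and $K_4+e$. Since a disconnected graph $H$ satisfies $|E(H)|\le 2|V(H)|-2$, every $(2,2,1)$-gain-tight graph is connected, and since $|E(G_0)|=2|V(G_0)|-1$ its average degree is below $4$, so $G_0$ has a vertex $v$ of degree at most $3$. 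Writing $f(H):=2|V(H)|-|E(H)|$, note $(2,2,1)$-sparsity gives $f\ge 1$ for every subgraph and $f=2$ for every balanced tight subgraph.

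First I dispose of the low-degree cases, following the degree arguments in the proofs of Theorems~\ref{thm:211construction} and~\ref{thm:222construction}. If $\deg v=2$ and $v$ carries only an unbalanced loop, connectedness forces $G_0$ to be the first base case; otherwise $\deg v=2$ with two non-loop edges and an inverse H1a (or H1b, with distinct gains forced by simplicity of the cover, hence an unbalanced $2$-cycle) removes $v$, which plainly preserves $(2,2,1)$-gain-tightness. If $\deg v=3$ and $v$ carries a loop then the loop is unbalanced (a balanced loop violates $|F|\le 2|V(F)|-2$), so an inverse H1c applies. If $\deg v=3$ with a repeated neighbour $a$, then either there is at most one $ab$-edge present, in which case every subgraph through $a$ and $b$ has $f\ge 2$ (adjoining $v$ and its three edges would otherwise give $f=0$) and, the two $av$-edges having distinct gains, we may choose the gain of a new edge $ab$ so as to keep both the general and the balanced count, giving an inverse H2b; or there are two $ab$-edges, $v$ lies in a copy of $2K_3-e$, and contracting the two $av$-edges to an unbalanced loop at $a$ is an inverse H2c. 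So from now on $G_0$ is not a base case and every degree-$3$ vertex has three distinct neighbours and no loop.

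The heart of the argument is the $(2,2,1)$-analogue of Lemma~\ref{lemma:deg3}: if $v$ has $N(v)=\{a,b,c\}$ with gains $\alpha,\beta,\gamma$ on $(v,a),(v,b),(v,c)$, then either one of the three inverse H2a moves on $v$ (adjoining to $G_0-v$ the edge $ab$, $ac$ or $bc$ with the gains forced as in Lemma~\ref{lemma:deg3}) produces a $(2,2,1)$-gain-tight graph with simple cover, or $v$ lies in a $K_4$ whose gain labelling is equivalent to the trivial one. The novelty over the $(2,i,i)$ case is that the forced new edge, say $ab$, can be obstructed not only by an unbalanced subgraph of $G_0-v$ tight for the general count $2|V|-1$---a gain-independent obstruction handled verbatim as in \cite[Lemma~3.1]{no} and \cite{nop}---but also by a balanced subgraph $X$ with $f(X)=2$ whose $a$-$b$ gain equals the forced value. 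Using that the three forced gains sum to zero in $\mathbb{Z}_2$, one shows that if all three H2a moves are simultaneously blocked one may amalgamate the blocking structures---switching balanced obstructions to trivial gains via Propositions~\ref{prop:lem1}--\ref{prop:lem2} and gluing along connected intersections via Lemma~\ref{lem:balance}---and, together with the three edges at $v$, obtain a balanced $K_4$ on $\{v,a,b,c\}$ whose labelling is equivalent to the trivial one and inside which there is no further surplus. Carrying this balanced-subgraph bookkeeping in parallel with the usual incidence count, and ruling out the cases where a balanced obstruction is only partly responsible for a blockage, is the step I expect to be the main obstacle.

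Once $v$ is placed inside a trivial-gain $K_4$, call it $K$, I would finish as in the proof of Theorem~\ref{thm:211construction}, with the simplification that $(2,2,1)$-tightness is stricter than $(2,1,1)$-tightness on balanced subgraphs, so the $K_5-e$ configuration of that proof cannot arise here (a copy of $K_5-e$ with trivial gains is balanced with $9>2\cdot5-2$ edges). If the vertices of $K$ induce no further edge, then either the $K_4$-to-vertex contraction at $K$ is admissible, or some $x\notin K$ is joined to two vertices of $K$ by edges of equal gain; switching $x$ to make those two edges trivial, the $4$-cycle they form with $v$ admits a $4$-cycle contraction merging the non-adjacent corners $v$ and $x$, and the only way simplicity of the cover could fail is if $x$ were also joined to the third vertex of $K$ with trivial gain, which would make $\{v,a,b,c,x\}$ span nine edges equivalent to the trivial labelling on five vertices, contradicting the balanced count; so the contraction is always available. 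That these contractions preserve the count, including the balanced count, I would verify from the submodularity of $f$ together with the observation---as in the switching argument in the proof of Theorem~\ref{thm:maxwell}---that a balanced subgraph can be switched to have all-trivial gains, reducing its bookkeeping to ordinary $(2,2)$-sparsity. If instead the vertices of $K$ induce an extra edge, it is necessarily a loop or a parallel edge of non-trivial gain, so they span a copy $Y_i$ of $K_4+e$. Collecting all such $Y_i$ over the degree-$3$ vertices, they are pairwise vertex-disjoint since every proper subgraph of $K_4+e$ has $f\ge 2$, whence $f(Y_i\cap Y_j)\ge 2$ would force $f(Y_i\cup Y_j)\le 0$; a count of edge/vertex incidences among the $Y_i$, the remaining vertices (each of degree $\ge 4$) and the remaining edges then shows that either $G_0$ equals some $Y_i$---a base case---or some $Y_i$ is attached to the rest of $G_0$ by a single bridge, on which an inverse edge joining move applies, completing the induction.
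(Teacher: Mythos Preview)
Your outline tracks the paper's proof closely: the low-degree reductions and the two-neighbour degree-$3$ case are handled essentially the same way, and the three-neighbour case is correctly identified as hinging on placing $v$ inside a balanced $K_4$. For that step the paper proves a detailed claim (its Claim~\ref{clm:221k4part2}) via a case split on how many of the three forced edges $(a,b)_{\alpha_{av}-\alpha_{bv}}$, $(a,c)_{\alpha_{av}-\alpha_{cv}}$, $(b,c)_{\alpha_{bv}-\alpha_{cv}}$ are already present, switching the balanced blockers to trivial gains and then reading off which gain on a new edge makes the relevant blocker unbalanced. Your sketch is along the right lines, but it does not carry out this case analysis, and your parenthetical remark about the three forced gains summing to zero in $\mathbb{Z}_2$ is not by itself enough to produce the required unbalanced cycle in each sub-case.

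The one substantive divergence is what happens once the trivial-gain $K_4$ is found. You assert that the $K_5-e$ configuration of Theorem~\ref{thm:211construction} cannot arise, since a trivial-gain $K_5-e$ violates the balanced count. But the paper does allow $K_5-e$: if the $K_4$-contraction is blocked by some $x$ with $(x,a)_\alpha,(x,b)_\alpha$, and moreover $(x,c)$ is present with the \emph{other} gain, then $\{v,a,b,c,x\}$ spans an \emph{unbalanced} $K_5-e$, which violates no count, and the paper carries these $K_5-e$'s through the bridge argument alongside the $K_4+e$'s (ruling out $G_0=K_5-e$ only at the end, by appealing back to Claim~\ref{clm:221k4part2}). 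Your statement that ``the contraction is always available'' only addresses cover-simplicity in the trivial-gain case; you still owe a verification that the $4$-cycle contraction preserves $(2,2,1)$-gain-sparsity when $(x,c)$ is present with non-trivial gain, and this is not covered by the ``routine'' check the paper does for the case $(x,c)\notin E(G_0)$. It can in fact be done---for instance by lifting to the covering graph, where the operation becomes a pair of disjoint ordinary $4$-cycle contractions on a $(2,2)$-tight simple graph, and then invoking Proposition~\ref{prop:22tight}---and it does yield a modest simplification of the paper's endgame; but as written your proposal has a gap here, and ``submodularity of $f$'' alone will not close it.
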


\begin{proof} We will think of $S$ as the group $\mathbb{Z}_2=\{0,1\}$ with addition as the
group operation.
If there is no inverse H1a, H1b or H1c move then the minimum degree in $G$ is 3 and any such vertex has no self-loop. 
Since $S$ has order 2, any such vertex $v$ has at least two neighbours.

First, suppose $N(v)=\{a,b\}$.
%
Suppose that $v$ is not contained in a copy of $2K_3-e$.
It suffices to check the case when $a$ and $b$ are not joined by an edge in $G_0$. If there is no admissible H2b move, then it is straightforward to deduce that there are distinct subgraphs $H_1,H_2$ of $G_0-v$ with $a,b\in V(H_i)$, $f(H_i)=2$ for $i=1,2$ and all paths in $H_i$ from $a$ to $b$ have gain $\alpha_i$ where $\alpha_1\neq \alpha_2$. (The gain of a path in a gain graph
 is defined analogously to the gain of a closed walk (recall Section~\ref{sec:balgaingr}).) Then $H_1\cap H_2$ is connected since $f(H_1\cap H_2)=2$, which implies that all paths from $a$ to $b$ in $H_1\cap H_2$ have 2 distinct gains, a contradiction. Thus, there is a  choice of gain for the edge $(a,b)$ so that the corresponding inverse H2b move  is admissible.
%
Thus $v$ is contained in a copy of $2K_3-e$ (with appropriate gains).
However, now there is an inverse H2c move.

Now let $N(v)=\{a,b,c\}$. 

\begin{claim}\label{clm:221k4part2}
$v$ is contained in a copy of $K_4$ in which every edge has gain $0$.
\end{claim}

\begin{proof}
We will show that if $v$ is not contained in a copy of $K_4$ with gain 0 on every edge, then there is an admissible H2a move. 

Let $\alpha_{av}, \alpha_{bv}, \alpha_{cv}$ be the gains on the edges $(a,v),(b,v),(c,v)$. Suppose first that $(a,b)_{\alpha_{av}-\alpha_{bv}}$ and $ (b,c)_{\alpha_{bv}-\alpha_{cv}} \notin E(G_0)$ (the edges $(a,b)$ and $(b,c)$ with other gains, or the edge 
$(a,c)$ with any gain may or may not be in $E(G_0)$). See Figure \ref{fig:proof1}.
If there is no admissible H2a move, then there must exist balanced subgraphs $H_{ab}$ of $G_0-v$ such that $a,b \in V(H_{ab})$ and $f(H_{ab})=2$ and $H_{bc}$ such that $b,c \in V(H_{bc})$ and $f(H_{bc})=2$.
If $c \in V(H_{ab})$ then let $H=H_{ab}$ and adding $v$ and its 3 incident edges to $H$ gives a graph $H^*$ with $f(H^*)=1$. Similarly if $a \in V(H_{bc})$ then we can let $H=H_{bc}$. If $c\notin V(H_{ab})$ and $a\notin V(H_{bc})$ then we still must have $f(H_{ab} \cup H_{bc})=2$. Thus we can let $H=H_{ab}\cup H_{bc}$ and adding $v$ and its 3 incident edges to $H$ gives a graph $H^*$ with $f(H^*)=1$. $H_{ab}\cap H_{bc}$ is connected, so Lemma~\ref{lem:balance} implies $H$ is balanced. 

Since $H$ is balanced we may apply gain switches to make every edge have identity gain.
Now, consider the three edges $va, vb$ and $vc$. If they all have 0 gains, then $H^*$ is balanced, contrary to the fact that $f(H^*)=1$. If they all have gain 1, then we can switch $v$ using Proposition \ref{prop:lem1} and again $H^*$ is balanced, a contradiction. 

By switching $v$ now, we may assume w.l.o.g. that $va$ has gain 1 and $vb$ and $vc$ have gain 0. Since $H_{ab}$ is connected, there is a path $P$ (with only gain 0 edges) from $a$ to $b$. Now, $P$ together with $va$ and $vb$ is an unbalanced cycle. Thus, if we do an inverse H2a move (adding the edge $ab$ with gain 1), then $H_{ab} \cup ab$ is unbalanced with the correct count so the move was valid.

\begin{center}
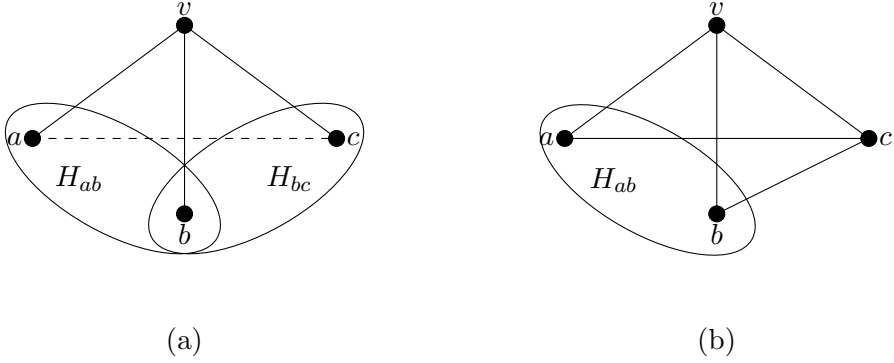
\begin{figure}[ht]
\centering
\begin{tikzpicture}
\filldraw (0,5) circle (3pt)node[anchor=south]{$v$};
\filldraw (0,2.5) circle (3pt)node[anchor=north]{$b$};
\filldraw (-2,3.5) circle (3pt)node[anchor=east]{$a$};
\filldraw (2,3.5) circle (3pt)node[anchor=west]{$c$};

\draw[rotate=330] (-2.3,2.1) ellipse (45pt and 20pt)node[anchor=east]{$H_{ab}$};
\draw[rotate=30] (2.3,2.1) ellipse (45pt and 20pt)node[anchor=west]{$H_{bc}$};

\draw[black]
(-2,3.5) -- (0,5) -- (2,3.5);
\draw[black]
(0,2.5) -- (0,5);

\draw[dashed]
(-2,3.5) -- (2,3.5);

\filldraw (7,5) circle (3pt)node[anchor=south]{$v$};
\filldraw (7,2.5) circle (3pt)node[anchor=north]{$b$};
\filldraw (5,3.5) circle (3pt)node[anchor=east]{$a$};
\filldraw (9,3.5) circle (3pt)node[anchor=west]{$c$};

\draw[rotate=330] (3.8,5.6) ellipse (45pt and 20pt)node[anchor=east]{$H_{ab}$};

\draw[black]
(7,2.5) -- (9,3.5) -- (5,3.5) -- (7,5) -- (9,3.5);
\draw[black]
(7,2.5) -- (7,5);

\node [draw=white, fill=white] (b) at (0,.8) {(a)};
\node [draw=white, fill=white] (b) at (7,.8) {(b)};
\end{tikzpicture}
\caption{The first two cases in the proof of the second claim. Directions and labels omitted.}
\label{fig:proof1}
\end{figure}
\end{center}

Now suppose that $(a,b)_{\alpha_{av}-\alpha_{bv}} \notin E(G_0)$ but $ (a,c)_{\alpha_{av}-\alpha_{cv}}$ and $(b,c)_{\alpha_{bv}-\alpha_{cv}} \in E(G_0)$. By Proposition \ref{prop:lem1}, we may then assume that the gains of $(v,a),(v,b),(v,c),(a,c),(b,c)$ are all 0. See Figure \ref{fig:proof1}.
Now if $H_{ab}$ is a subgraph of $G_0-v$ containing $a,b$ but not $c$, then $f(H_{ab})\geq 2$. We are done unless equality holds and $H_{ab}$ is balanced. Now consider paths from $a$ to $b$ in $H_{ab}$. If all such paths have 0 gain then all cycles in $H_{ab}\cup v \cup c$ (along with the relevant edges) have gain 0, but $f(H_{ab}\cup v \cup c)=1$. Thus, there must be a path from $a$ to $b$ with gain 1 and adding $(a,b)$ with gain 0 during the inverse H2a move gives an unbalanced subgraph, as required.

Finally, suppose  $(a,b)_{\alpha_{av}-\alpha_{bv}},  (a,c)_{\alpha_{av}-\alpha_{cv}}$ and $(b,c)_{\alpha_{bv}-\alpha_{cv}} \in E(G_0)$. By Proposition \ref{prop:lem1}, we may switch vertices to make a copy of $K_4$ with gain 0 on each edge.
\end{proof}

Let $K$ denote the copy of $K_4$ containing $v$. 
As in the previous proof, we suppose first that $v$ belongs to a $K_4$ whose vertices induce no additional edge. Since we cannot apply a $K_4$-contraction, there are vertices $a,b \in K$ and a vertex $x \notin K$ such that $(a,x)_{\alpha},(b,x)_{\alpha} \in E(G_0)$.
To see this, note that any unbalanced subgraph containing $K$ is still unbalanced as a subgraph of the contracted graph.
Now let $c$ be the final vertex in $K$.

\begin{claim}\label{clm:2214cycle}
$(c,x) \in E(G_0)$.
\end{claim}

\begin{proof}
Suppose $(c,x) \notin E(G_0)$. By applying Proposition \ref{prop:lem1} to $x$ (if necessary) we may assume that the 4-cycle $C$ induced by $v,a,b,x$ has label 0 on each edge. Apply 4-cycle contraction to $C$ merging $v$ and $x$. It is routine, as in the previous proofs, to check that the counts hold for unbalanced subgraphs. The sparsity conditions for balanced subgraphs hold since every edge of $C$ (and $K$) has gain 0.
\end{proof}

We have shown that the subgraph induced by $K$ and $x$ is $(2,1)$-tight. When the vertices of $K$ induce an additional edge we have a $K_4+e$.
Finally, we may show, exactly as in the proof of the previous theorem, that $G_0$ contains a bridge. Note that this time the copies of $K_5-e$ must have edges with non-zero gain since $G_0$ is $(2,2,1)$-gain-tight so the case when $G_0=Y_i=K_5-e$ cannot happen (an inverse H2a move would have been possible by the first claim).
\end{proof}

We will briefly discuss extensions to $(k,l,m)$-gain-tight graphs for other triples in the final section.


\section{Operations on Frameworks Supported on Surfaces} \label{sec:geom}

We now consider the geometric question of how these inductive operations behave as operations on frameworks rather than on graphs. We pursue this by a combination of limiting arguments and matrix techniques using the  orbit-surface rigidity matrix and making extensive use of results in \cite{nop,nop1,W2}.

\subsection{Henneberg moves}

Our first lemma is simple linear algebra.

\begin{lem}\label{lem:hen1SM}
Let $\M\in \{\S,\Y,\C\}$ and let $S$ be any possible point group. Let $(G,p)$ be an $S$-regular $S$-isostatic framework in $\mathscr{R}^{\M}_{(G,S,\theta)}$ with quotient $S$-gain graph $(G_0,\psi)$. Let $(G_0',\psi')$ be formed from $(G_0,\psi)$ by a Henneberg 1 move and let $G'$ be the corresponding covering graph. Then any $S$-regular realisation of $G'$ is $S$-isostatic.
\end{lem}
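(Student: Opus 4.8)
The plan is to show that a Henneberg~1 move on the quotient gain graph corresponds, on the covering graph level, to adding one vertex orbit and two edge orbits in a way that only increases the rank of the orbit-surface rigidity matrix by the maximal possible amount, namely $3$ (two from the new edge rows, one from the new surface-normal row). Since the move replaces $(G_0,\psi)$ by $(G_0',\psi')$ with $|V(G_0')|=|V(G_0)|+1$ and $|E(G_0')|=|E(G_0)|+2$, preserving $(2,k,k_S)$-gain-tightness (which is routine and was already noted), it suffices to show $\rank O_{\M}(G',p',S) = \rank O_{\M}(G,p,S) + 3$ for some, hence every, $S$-regular realisation. Because $S$-regularity is an open dense condition, the standard strategy is: exhibit \emph{one} convenient placement $p'$ of the new orbit representative $v$ on $\M$ for which the three new rows are independent modulo the old row space, then conclude by semicontinuity of rank that the generic (i.e. $S$-regular) realisation does at least as well, and the gain-tightness count forces equality.

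The key steps, in order, are as follows. First, fix an $S$-regular $S$-isostatic $(G,p)$ with $\rank O_{\M}(G,p,S) = 2|V(G_0)|+ (\,\text{from Lemma~\ref{lem:surf2}-type count}\,)$; more precisely $O_{\M}(G,p,S)$ has full row rank $|E(G_0)|+|V(G_0)|$ by $S$-independence, and the nullspace has dimension exactly $k_S$ by Lemma~\ref{lem:nullspace} together with isostaticity. Second, analyse the three cases H1a, H1b, H1c separately only insofar as the placement of $v$ and its two incident edge representatives differ: in each case the new vertex $v$ contributes a fresh block of three columns, the surface-normal row $N_0(p_v)$ is automatically nonzero and supported entirely in that block, so it is independent of everything else as long as $p_v$ is a regular point of $\M$. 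Third, for the two new edge rows $e_1,e_2$: their restriction to the new column block is, by Definition~\ref{orbitmatrixdef}, a vector of the form $p_v - \tau(\psi(e_i))^{-1}(p_{w_i})$ (or the loop analogue $2p_v-\tau(\psi(e_i))(p_v)-\tau(\psi(e_i))^{-1}(p_v)$ for a loop in H1c), and I would choose $p_v$ so that these two vectors together with $N_0(p_v)$ span the tangent-plus-normal directions at $p_v$ — i.e. are linearly independent in $\mathbb{R}^3$. This is where the case distinction bites: for H1a the two gains can be arbitrary and $w_1\neq w_2$, for H1b the two edges are parallel with distinct gains (so $\tau(\psi(e_1))^{-1}p_w \neq \tau(\psi(e_2))^{-1}p_w$), and for H1c one edge is a non-trivial loop. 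In every case a dimension count on the variety of bad placements of $p_v$ shows the independence fails only on a proper subvariety, so a generic $p_v\in\M$ works. Fourth, having the three new rows independent modulo the old row space for this particular placement, invoke: $S$-regularity of the new framework means its rank is at least this value; the gain-tightness count caps it; hence equality and $S$-isostaticity.

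The main obstacle I expect is the loop case H1c and, more subtly, making the independence-of-new-rows argument genuinely uniform: one must check that the three new rows cannot be forced into a dependence by the \emph{old} rows (not just among themselves), which requires knowing that no nontrivial $S$-symmetric stress of the old framework can ``reach into'' the new columns — but the old stresses are zero on the new columns by construction, so a dependence among the extended rows that uses the new rows with nonzero coefficient would project to a genuine dependence of $(N_0(p_v),$ two edge-vectors$)$ in the $3$-dimensional new block, which is exactly what the generic placement of $p_v$ rules out. Concretely I would phrase this as: the extended matrix has block-triangular structure $\left[\begin{smallmatrix} O_{\M}(G,p,S) & 0 \\ \ast & B \end{smallmatrix}\right]$ where $B$ is the $3\times 3$ block of new-column entries of the three new rows, so $\rank$ of the extension equals $\rank O_{\M}(G,p,S) + \rank B$, and one only has to certify $\rank B = 3$ for a generic choice of the new joint position on $\M$. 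The remaining routine work is checking that a regular point of each of $\S,\Y,\C$ supporting the required independence exists and is generic — elementary but case-by-case, and I would cite the analogous non-symmetric computations in \cite{nop,nop1} rather than redo them.
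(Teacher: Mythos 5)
Your proof follows the paper's approach exactly: both exploit the block lower-triangular structure $\bigl[\begin{smallmatrix}O_{\M}(G,p,S)&0\\ \ast&B\end{smallmatrix}\bigr]$, with $B$ the $3\times 3$ block of new-column entries in the two new edge rows and the new normal row, and conclude that the rank rises by exactly $3$ once $B$ is certified generically nonsingular (after which the $(2,k,k_S)$-gain-tight count fixes the nullity at $k_S$). One small imprecision worth flagging: the identity $\rank O_{\M}(G',p',S)=\rank O_{\M}(G,p,S)+\rank B$ is not automatic for a block lower-triangular matrix in general, even when the top block has full row rank, but it does hold once $B$ is nonsingular — which is exactly what you then verify, so the argument is unaffected.
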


\begin{proof}
Let $(G',p')$ be an $S$-regular realisation of $G'$.
Then simply note that by the block structure of $O_\M(G',p',S)$ and the regularity of $(G',p')$, we have
 $$\rank O_\M(G',p',S)= \rank O_\M(G,p,S)+3.$$
\end{proof}

Our second lemma is more involved but by utilising the proof technique of \cite[Lemma $4.2$]{nop} we can still argue for any group and surface simultaneously.

\begin{lem}\label{lem:hen2SM}
Let $\M\in \{\S,\Y,\C\}$ and let $S$ be any possible point group. Let
 $(G,p)$ be an $S$-generic $S$-isostatic framework in $\mathscr{R}^{\M}_{(G,S,\theta)}$ with quotient $S$-gain graph $(G_0,\psi)$. Let $(G_0',\psi')$ be formed from $(G_0,\psi)$ by a Henneberg 2 move and let $G'$ be the corresponding covering graph. Then any $S$-generic realisation of $G'$ is $S$-isostatic.
\end{lem}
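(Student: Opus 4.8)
The plan is to adapt the standard rigidity-theoretic argument for Henneberg 2 moves to the orbit-surface setting, following the template of \cite[Lemma 4.2]{nop}. Since $(G,p)$ is $S$-generic and $S$-isostatic, by Theorem~\ref{orbitmatrixthm} and the rank characterisation in Lemma~\ref{lem:surf2} (applied in the symmetric setting via Lemma~\ref{lem:nullspace}), the orbit-surface rigidity matrix $O_\M(G,p,S)$ has linearly independent rows and rank $3|V(G_0)| - k_S$. It suffices to exhibit a single $S$-regular realisation $(G',p')$ of $G'$ that is $S$-isostatic, since then every $S$-regular (hence every $S$-generic) realisation of $G'$ is $S$-isostatic. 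Recall that a Henneberg 2 move on the quotient gain graph deletes an edge $e$ and its orbit, and adds a new vertex orbit $v$ together with three edge orbits $e_1, e_2, e_3$, with gains chosen so that $\psi(e_1)\psi(e_2)^{-1} = \psi(e)$ and all $2$-cycles among the new edges are unbalanced. In the covering graph this performs a simultaneous symmetric family of ordinary Henneberg 2 (vertex split) operations.

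First I would set up the limiting argument. Start from the given $S$-generic framework $(G,p)$; it realises $G_0$ minus the edge $e$ together with the vertex $z$ (the vertex to which $e_3$ attaches) and the two endpoints $a,b$ of $e$. Place the new orbit representative $p_v$ on $\M$ as a generic point on the relative interior of the bar corresponding to $e$ — more precisely, choose $p_v$ on $\M$ so that, in the covering framework, the new joint and its orbit lie on the bars being subdivided; for a surface of positive type this is a one-parameter family of admissible placements, and genericity subject to this constraint is what we need. Then form $(G', p'^0)$ as the degenerate realisation where $v$ is collinear with $a$ and $b$ on the surface. The key observation, exactly as in \cite{nop,nop1}, is that at this degenerate position the row of $O_\M(G', p'^0, S)$ corresponding to $e_3$ is (up to the collinearity relation) in the span of the rows for $e_1, e_2$ and the edge $e$ of the original graph, so that $O_\M(G', p'^0, S)$ has the same rank deficiency as $O_\M(G,p,S)$ does — namely $O_\M(G', p'^0, S)$ has rank exactly $3|V(G_0')| - k_S$ precisely when the degenerate configuration is chosen generically on the subdivided bars. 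Then I would argue that rank can only increase under perturbation: the set of $S$-regular realisations is Zariski-open, and since $3|V(G_0')| - k_S$ rows are already independent at the degenerate point (using that the gains on $e_1, e_2, e_3$ were chosen to make the relevant $2$-cycles unbalanced, which is exactly what prevents a forced dependency among the new rows in the covering graph), an $S$-generic realisation of $G'$ achieves the maximal possible rank $3|V(G_0')| - k_S$, hence is $S$-isostatic by Lemma~\ref{lem:surf2} together with the tightness count $|E(G_0')| = 2|V(G_0')| - k_S$ inherited from the Henneberg 2 move.

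The main obstacle I expect is the case analysis over the four variants H2a, H2b, H2c, H2d and over the different point groups $S$ and surfaces $\M$, in particular handling loops (cases H2c, H2d) and parallel edges (cases H2b, H2d) in the quotient gain graph. When $e$ is a loop, the subdivided configuration and the collinearity relation live in the covering graph where a loop corresponds to an edge joining a vertex to a nontrivial translate of itself under $\tau(\psi(e))$, and one must check that placing $p_v$ generically on that covering bar still yields the correct rank drop; the gain condition $\psi(e_1)\psi(e_2)^{-1} = \psi(e)$ is precisely what guarantees the subdivision makes sense in the cover. Similarly, for the surfaces the normal-vector rows $\N_0(p_0)$ must be tracked: adding the orbit $v$ adds one normal row $N(p_v)$, and one checks this row is independent of the rest for generic $p_v \in \M$ (this is automatic since it is the only row with a nonzero entry in the $v$-columns up to the edge rows, and genericity prevents accidental dependence). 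The cleanest route is to invoke the covering-graph picture: each operation lifts to $|S|$ simultaneous ordinary Henneberg 2 moves, apply the classical result \cite[Lemma 4.2]{nop} (which is proved for an arbitrary surface) to the covering framework to conclude it is infinitesimally rigid on $\M$ at a generic such realisation, observe that a generic $S$-symmetric realisation of $G'$ restricts to a realisation of the cover that is generic enough for that result to apply, and then transfer back via Theorem~\ref{orbitmatrixthm}: $S$-symmetric infinitesimal rigidity of $(G',p')$ is equivalent to the kernel of $O_\M(G',p',S)$ being $k_S$-dimensional, which follows from the infinitesimal rigidity of the cover restricted to the symmetric subspace. The verification that genericity of $p$ over $\bQ_S$ passes to the needed genericity of the subdivided configuration — i.e. that adjoining the one free coordinate of $p_v$ keeps the transcendence degree at $2|V(G_0')|$ — is a routine field-theory step that I would state but not belabor.
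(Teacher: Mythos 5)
Your broad idea---specialise the new vertex orbit towards a degenerate position and argue by a limiting argument in the style of \cite[Lemma~4.2]{nop}---is the right one, and it is indeed what the paper does. But several of the specific mechanisms you propose either do not match how the argument actually has to be run or are outright wrong.

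The ``cleanest route'' via the covering graph is not available. First, an $S$-generic symmetric framework $(G',p')$ is \emph{not} generic on $\M$ in the sense needed to apply \cite[Lemma~4.2]{nop} to the cover: the coordinates of symmetric images of a joint are algebraically dependent, so the cover fails the transcendence-degree requirement. Second, and more decisively, the covering graph of a $(2,k,k_S)$-gain-tight gain graph is usually \emph{not} $(2,k)$-tight. For instance, for $\C_i$ on the sphere a triangle with trivial gains is $(2,3,3)$-gain-tight (and is $S$-isostatic by Theorem~\ref{thm:sphereinv}), but its cover is two disjoint triangles, which is certainly not generically isostatic on $\S$. So there is no nonsymmetric rigidity statement about the cover to transfer back.

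Your degenerate configuration is also not the one the argument actually needs. You place $p_v$ ``on the relative interior of the bar corresponding to $e$'', but the bar is a straight segment that in general lies off the surface $\M$; a point of $\M$ collinear with $p_a$ and $p_b$ need not exist at all, let alone lie generically. The paper instead specialises the new joint orbit to \emph{coincide} with one endpoint orbit of the deleted edge $e$, approaching tangentially in a carefully chosen direction $x_i(a)$ (orthogonal in the tangent plane to the direction that would destroy information about the bar constraint), and then extracts a convergent subsequence of normalized non-trivial $S$-symmetric motions by Bolzano--Weierstrass. The crucial step is showing that the limit motion, restricted to the original vertex set, gives an $S$-symmetric infinitesimal motion of $(G,p)$ \emph{including the deleted edge $e$}, whence by $S$-isostaticity of $(G,p)$ it is trivial, contradicting that the sequence was normalized and orthogonal to the trivial space. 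This is a contradiction argument about motions, not a direct rank computation at the degenerate point. Your row-span claim (``the row for $e_3$ is in the span of $e_1$, $e_2$ and $e$'') does not directly make sense since $e\notin E(G')$, and the rank-semicontinuity argument you sketch would require establishing full rank at the degenerate configuration, which is precisely the hard part you have not supplied.

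In short: keep the specialisation idea, but replace the collinear/midpoint degeneration by a coincidence degeneration approached along the right tangential direction, replace the rank-span reasoning by the Bolzano--Weierstrass contradiction on normalized motions, and drop the covering-graph shortcut entirely.
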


\begin{proof}
Let $S=\{x_1=id,x_2,\ldots, x_{|S|}\}$. For an edge $e_0=(1,2)$ (with gain $\alpha \in S$) in $G_0$ we apply the argument in \cite[Lemma $4.2$]{nop} simultaneously to each edge in the edge orbit $c^{-1}(e_0)$, where $c:G\to G_0$ is the covering map. (Note that if $e_0$ is a loop, then the proof follows analogously.) Suppose $V(G_0')=V(G_0)\cup\{0\}$ and let $p'=(p_{x_1(0)}, p_{x_2(0)},\ldots, p_{x_{|S|}(0)},p)$, where $(G',p')$ is $S$-generic. 
Suppose that $(G',p')$ is not $S$-symmetric infinitesimally rigid. Then it follows that every specialised framework in $\mathscr{R}^{\M}_{(G',S,\theta)}$ is $S$-symmetric infinitesimally flexible. 
Let $x_i(a)$ and $x_i(b)$ be orthogonal tangent vectors at $p_{x_i\circ\alpha(2)}$ where $x_i(b)$ is orthogonal to $p_{x_i\circ\alpha(2)}-p_{x_i(1)}$. 
Consider a sequence of specialisations $(G', p^k)$ in which only the joints $p_{x_1(0)},\ldots, p_{x_{|S|}(0)}$ are specialised to the joints $p_{x_1(0)}^k,\ldots, p_{x_{|S|}(0)}^k$, respectively,  
 and for each $i=1,\ldots, |S|$, $p_{x_i(0)}^k$ tends to $p_{x_i\circ\alpha(2)}$ in the direction $x_i(a)$.
 See Figure \ref{fig:contract}.
 More precisely, the normalised vector $(p_{x_i\circ\alpha(2)}-p_{x_i(0)}^k)/\|p_{x_i\circ\alpha(2)}-p_{x_i(0)}^k\|$ converges to $x_i(a)$, as $k\to \infty$.

  Each of the $S$-symmetric frameworks  $(G', p^k)$ has a unit norm non-trivial $S$-symmetric infinitesimal motion $u^k$ which is orthogonal to the space of $S$-symmetric trivial infinitesimal motions of its framework, $(G',p^k)$. By the Bolzano-Weierstrass theorem there is a subsequence of the sequence $u^k$ which converges to a vector, $u^\infty$ say, of unit norm. Discarding framework points and relabeling we may assume this holds for the original sequence. The $S$-symmetric limit motion of the degenerate $S$-symmetric framework $(G',p^\infty)$ is denoted by
 $u^\infty=(u_{x_1(0)}^\infty, u_{x_2(0)}^\infty,\ldots, u_{x_{|S|}(0)}^\infty,u)$. Also, we have $p^\infty =(p_{x_1\circ \alpha(2)}, p_{x_2\circ \alpha(2)},\ldots, p_{x_{|S|}\circ \alpha(2)},p)$.

We claim that the velocities $u_{x_i(1)}, u_{x_i\circ \alpha(2)}$ give an $S$-symmetric infinitesimal motion of the framework on $\M$ consisting of the bars joining $p_{x_i(1)}$ and $p_{x_i\circ\alpha(2)}$, $i=1,\ldots, |S|$. To see this note that in view of the well-behaved convergence of  $p_{x_i(0)}^k$ to $p_{x_i\circ\alpha(2)}$ (in the $x_i(a)$ direction) it follows that the velocities
 $u_{x_i\circ\alpha(2)}$ and $u_{x_i(0)}^\infty$ have the same component in the $x_i(a)$ direction, and so $(u_{x_i\circ\alpha(2)} -u_{x_i(0)}^\infty)\cdot x_i(a)=0$. Since $u_{x_i\circ\alpha(2)} -u_{x_i(0)}^\infty$ is tangential to $\M$ it follows from the choice of $x_i(a)$ that $u_{x_i\circ\alpha(2)} -u_{x_i(0)}^\infty$ is orthogonal to $p_{x_i\circ\alpha(2)}-p_{x_i(1)}$. On the other hand $u_{x_i(1)}-u_{x_i(0)}^\infty$ is orthogonal to $p_{x_i\circ\alpha(2)}-p_{x_i(1)}$ and so taking differences $u_{x_i\circ\alpha(2)} -u_{x_i(1)}$  is orthogonal to $p_{x_2\circ \alpha(2)}-p_{x_i(1)}$, as desired.

It now follows, by the symmetry-forced rigidity of $(G,p)$ and (hence) the symmetry-forced rigidity of the degenerate framework $(G',p^\infty)$, that the restriction motion $u^\infty_{\rm res}=u$,
and hence $u^\infty$ itself, is a trivial $S$-symmetric infinitesimal motion. This is a contradiction since the motion has unit norm and is orthogonal to the space of trivial $S$-symmetric infinitesimal motions.
\end{proof}

\begin{center}
\begin{figure}[ht]
\centering
\begin{tikzpicture}
\filldraw (0,2) circle (.5pt);

\filldraw (-3.7,4) circle (3pt)node[anchor=east]{$p_{x_1(0)}$};
\filldraw (3.7,4) circle (3pt)node[anchor=south]{$p_{x_3(0)}$};
\filldraw (0,-1.4) circle (3pt)node[anchor=west]{$p_{x_2(0)}$};

\filldraw (-4.5,4.5) circle  (3pt) node[anchor=east]{$p_{x_1(1)}$};
\filldraw (4.5,4.5) circle(3pt) node[anchor=west]{$p_{x_3(1)}$};
\filldraw (0,-2.3) circle (3pt) node[anchor=north]{$p_{x_2(1)}$};

\filldraw (-2,4.5) circle (3pt);
\filldraw (4.2,2.3) circle (3pt);
\filldraw (-2,-1.2) circle (3pt);

\filldraw (-2.2,2) circle (3pt)node[anchor=north]{$p_{x_1\circ\alpha(2)}$};
\filldraw (1.5,3.7) circle (3pt)node[anchor=east]{$p_{x_3\circ\alpha(2)}$};
\filldraw (1.3,.6) circle (3pt)node[anchor=west]{$p_{x_2\circ\alpha(2)}$};

\draw[black,thick]
(-2,4.5) -- (-3.7,4) -- (-4.5,4.5);

\draw[black,thick]
(-3.7,4) -- (-2.2,2);

\draw[black,thick]
(4.5,4.5) -- (3.7,4) -- (4.2,2.3);

\draw[black,thick]
(3.7,4) -- (1.5,3.7);

\draw[black,thick]
(0,-2.3) -- (0,-1.4) -- (-2,-1.2);

\draw[black,thick]
(0,-1.4) -- (1.3,.6);

\draw[dashed] plot[smooth, tension=1] coordinates{(-3.7,4) (-3.5,3) (-2.2,2)};
\draw[dashed] plot[smooth, tension=1] coordinates{(3.7,4) (2.5,4.3) (1.5,3.7)};
\draw[dashed] plot[smooth, tension=1] coordinates{(0,-1.4) (1,-.6) (1.3,.6)};

\draw[thick]
(-3.8,3.1) -- (-3.5,3) -- (-3.4,3.3);

\draw[thick]
(2.7,4.1) -- (2.5,4.3) -- (2.7,4.5);

\draw[thick]
(1.1,-.85) -- (1,-.6) -- (.7,-.7);
\end{tikzpicture}
\caption{An illustration of the specialisation in the case $S=\C_3$.}
\label{fig:contract}
\end{figure}
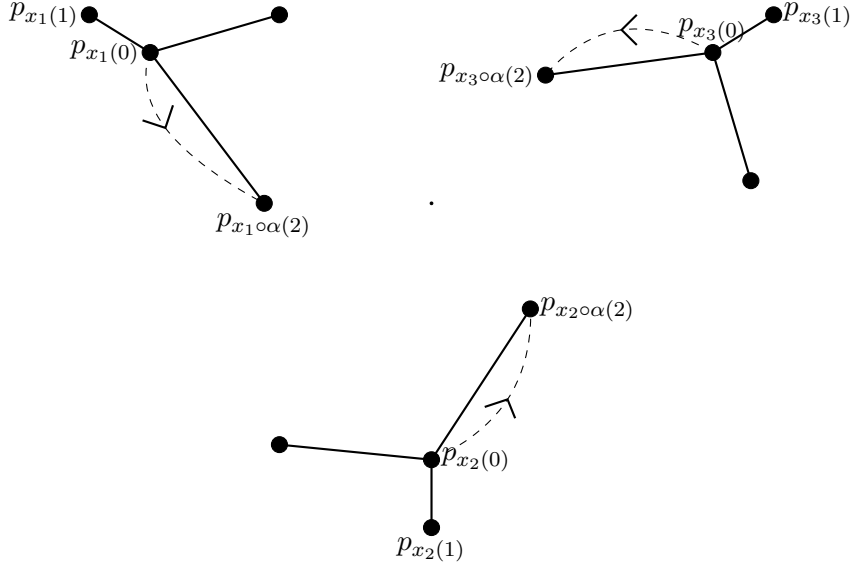
\end{center}

\subsection{Vertex Surgery moves}

We now consider the vertex-to-$K_4$ move. For rotational symmetry on the cylinder we present a direct matrix argument. 

\begin{lem}\label{lem:vtok4cylinderrotation}
Let  $S$ be $\C_m$ with the $z$-axis as the rotational axis.
Let $(G,p)$ be an $S$-generic $S$-isostatic framework in $\mathscr{R}^{\Y}_{(G,S,\theta)}$ with quotient $S$-gain graph $(G_0,\psi)$ and let $(G_0',\psi')$ be formed from $(G_0,\psi)$ by a vertex-to-$K_4$ move. Then any $S$-generic realisation of the covering graph $G'$ of $(G_0',\psi')$ is $S$-isostatic.
\end{lem}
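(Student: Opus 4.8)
The plan is to give a direct rank-counting argument on the orbit-surface rigidity matrix, exploiting the fact that for $\C_m$ with axis the $z$-axis, the vertices of the new copy of $K_4$ carry only trivial gains, so the rows and columns associated with them form (essentially) an ordinary surface rigidity matrix block. First I would set up notation: let $v$ be the vertex of $G_0$ being expanded, with incident edges $(x_1,v)_{\alpha_1},\dots,(x_t,v)_{\alpha_t}$ (all oriented into $v$), and let $a,b,c,d$ be the four new vertices of the $K_4$, all edges of the $K_4$ having identity gain, with each old edge $(x_s,v)_{\alpha_s}$ replaced by some $(x_s,y_s)_{\alpha_s}$, $y_s\in\{a,b,c,d\}$. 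Since $(G,p)$ is $S$-generic and $S$-isostatic, by Lemma~\ref{lem:surf2} (applied in the orbit setting, i.e.\ to $O_\Y(G,p,S)$, whose relevant dimension count is governed by Theorem~\ref{thm:maxwell} with $k=2$, $k_S=2$) we have that $O_\Y(G,p,S)$ has independent rows and corank $2$; equivalently $(G_0,\psi)$ is $(2,2,2)$-gain-tight. By Theorem~\ref{thm:222construction}'s sparsity bookkeeping the new graph $(G_0',\psi')$ is again $(2,2,2)$-gain-tight, so it suffices to prove that $O_\Y(G',p',S)$ has independent rows for some — hence every — $S$-generic $p'$.

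The key step is a limiting/specialisation argument in the style of \cite[Lemma 4.2]{nop} combined with an explicit construction of a good placement of the $K_4$. Concretely, I would choose the placement $p'$ of the four new joints $a,b,c,d$ on the cylinder $\Y$ to be a small, $\C_m$-symmetric perturbation (one representative copy on the quotient, lifted freely) of a "collapsed" configuration in which $p'_a,p'_b,p'_c,p'_d$ all coincide with $p_v$ (more precisely, tend to $p_v$ along generic tangent directions at $p_v$). In the limit, the three rows of $O_\Y$ corresponding to the $K_4$-edges meeting a fixed vertex and the surface-normal row degenerate in a controlled way, and the old edge $(x_s,y_s)_{\alpha_s}$ converges to the old edge $(x_s,v)_{\alpha_s}$ of $O_\Y(G,p,S)$. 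I would then argue: a nontrivial self-stress (row dependence) of $O_\Y(G',p',S)$ for generic $p'$ would persist under specialisation (its support is determined combinatorially, and genericity forces the "generic rank is attained" principle via \cite[Corollary 3.2]{JMN}), so it would yield in the limit a row dependence of the block matrix $\begin{bmatrix} O_\Y(G,p,S) \\ * \end{bmatrix}$ where $*$ are the "internal" $K_4$ rows restricted to the collapsed joint; a short local analysis at the collapsed vertex shows the $K_4$-internal stress coefficients must vanish, and then the remaining dependence is precisely a nonzero self-stress of $O_\Y(G,p,S)$, contradicting $S$-isostaticity of $(G,p)$. Since the $K_4$ has only trivial gains and sits at a single collapsed point, the internal analysis reduces to the ordinary (non-symmetric) fact that $K_4$ on a surface of type $2$ contributes exactly the right count, which is where \cite[W2]{W2} and the arguments of \cite{nop} are invoked.

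The main obstacle I anticipate is making the limiting argument rigorous in the symmetric setting: one must ensure the perturbation of the four new joints can be taken genuinely $\C_m$-symmetric and still generic on the quotient (so that $O_\Y(G',p',S)$ attains its maximal rank at the perturbed point, while the limit point is only "generic away from the collapse"), and one must verify that the degeneration of the three $K_4$-rows at a vertex plus the normal row does not accidentally create or destroy rank in a way that spoils the bookkeeping — i.e.\ that the limit rank equals $\rank O_\Y(G,p,S) + 9$ exactly (three new vertices on the quotient, each contributing $3$). I would handle this by tracking the nine-dimensional column block for $\{a,b,c,d\}$ minus the collapse directions carefully, using that a generic $K_4$ on $\Y$ is isostatic (Theorem~\ref{thm:surf3} with $k=2$) to identify the limiting rank of the internal block, and then assembling the two pieces along the block-triangular structure of $O_\Y$. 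As a sanity fallback, for this specific group one could instead give a fully explicit matrix computation (as the phrasing "a direct matrix argument" in the excerpt suggests), writing the $\C_m$-action as a block-diagonal rotation in the $xy$-plane fixing $z$, and checking the rank increment by elementary column operations; I would present that computation as the core of the proof and relegate the limiting heuristic to motivation.
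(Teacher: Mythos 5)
The overall shape of your proposal matches the paper's: a block decomposition of $O_\Y(G',p',S)$ with the $K_4$-columns isolated, followed by specialisation to a configuration $\tilde p$ in which the four new joints coincide at $p_v$, leading to a contradiction with $S$-isostaticity of $(G,p)$. However, your central mechanism — tracking a self-stress through the specialisation and arguing that "a short local analysis at the collapsed vertex shows the $K_4$-internal stress coefficients must vanish" — has a genuine gap. At the collapsed configuration $\tilde p$, where $p'_a=p'_b=p'_c=p'_d=p_v$, every row of $O_\Y(G',\tilde p,S)$ corresponding to an internal $K_4$ edge is \emph{identically zero} (the entries are coordinates of $p'_i - p'_j = 0$), so the cokernel of $O_\Y(G',\tilde p,S)$ is always at least $6$-dimensional and the stress coefficients on those rows are completely unconstrained. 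The assertion that they "must vanish" is therefore false, and "a nontrivial cokernel persists in the limit" carries no information, since a large cokernel is automatic at $\tilde p$. You would need a normalisation/compactness argument to show the limiting stress is nonzero \emph{away} from the $K_4$ rows, which your sketch does not supply.

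The paper avoids this by arguing dually with kernel vectors (motions) and by exploiting a structural fact your proposal does not isolate: the complementary block $X_2$ (rows for edges of $E(G_0')\setminus E(K_4)$ and normals of $v_5,\dots,v_n$; columns for $v_5,\dots,v_n$) is a \emph{square} matrix. The argument is: subtract a trivial motion so that $u_4=0$; since the covering of the trivial-gain $K_4$ is $S$-isostatic on $\Y$, the restriction $u_{K_4}$ lies in the trivial space and hence $u_{K_4}=0$; therefore $X_2(p')\,u_{G'\setminus K_4}=0$ with $u_{G'\setminus K_4}\neq 0$, so $\det X_2$ vanishes identically as a polynomial (by genericity of $p'$); specialising gives a nonzero $w$ with $X_2(\tilde p)\,w=0$; and $(\mathbf 0,w)$ is then a nonzero kernel vector of $O_\Y(G,p_*,S)$ that vanishes at $v_4=v$, hence non-trivial, contradicting $S$-isostaticity of $(G,p_*)$. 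The two ideas you are missing are (i) the reduction to $u_{K_4}=0$ via isostaticity of the quotient $K_4$ block rather than a stress-vanishing claim, and (ii) the observation that $X_2$ is square, which converts genericity into the polynomial identity $\det X_2\equiv 0$ and makes the specialisation step clean. Your "fallback" instinct that the result should be a direct matrix computation is correct, but as written your main argument does not close.
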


\begin{proof}
The proof is similar to \cite[Lemma $5.2$]{nop1} with $K_4$ replacing $H$.
Let $n$ = $|V(G_0')|$. Let $v_*$ be a fixed vertex of $K_4$.
Let $(G',p')$ be $S$-generic.
Consider the orbit-surface rigidity matrix $O_{\Y}(G',p',S)$ with column triples in the order
of  $v_1, v_2 ,v_{3}, v_*, v_{5}, \dots , v_n$
where $v_1, v_2,v_3 ,v_4=v_*$ are the vertices of $K_4$.
Order the rows of $O_{\Y}(G',p',S)$ in the order of the edges $e_1, \dots ,e_6$
for $K_4$
followed by the $n$ rows of the block diagonal matrix whose
diagonal entries are the respective normal vectors
 to $\Y$ at $p'_1,\ldots, p'_n$,
followed by the remaining rows for the edges of $E(G_0')\setminus E(K_4)$.
Note that the submatrix formed by the first  $10$ rows  is the
$1$ by $2$ block matrix 
\[ \begin{bmatrix}
O_{\Y}(c^{-1}(K_4),p',S) &0 \end{bmatrix},\]
where $c:G'\to G_0'$ is the covering map.
Suppose, by way of contradiction, that $G_0'$
is not $S$-isostatic. Then, there is a vector $u$ in the kernel
of $O_{\Y}(G',p',S)$ which corresponds to an $S$-symmetric non-trivial infinitesimal motion.

\begin{claim}\label{claim:k4} W.l.o.g. we may assume that $u_{K_4}=0$.
\end{claim}

\begin{proof} We will identify an $S$-symmetric infinitesimal motion of a framework $(G,p)$ with its restriction to the  vertices of the quotient $S$-gain graph $G_0$ of $G$. 

By adding to $u$ some trivial $S$-symmetric infinitesimal motion, we may assume that $u_4=0$.
Write $u=(u_{K_4}, u_{G_0'\setminus K_4})$ where $u_{K_4} = (u_1^x,u_1^y, u_1^z, \dots ,u_4^x,u_4^y, u_4^z)$.
The matrix $O_{\Y}(G',p',S)$ has the block form
\[
O_{\Y}(G',p',S)=\begin{bmatrix}
O_{\Y}(c^{-1}(K_4),p',S) & 0\\X_1 & X_2
\end{bmatrix}
\]
where $X =[X_1\, X_2]$ is the matrix formed by the last $|E(G')|- 6+n-4$
rows. Since $K_4$ is $S$-isostatic on $\Y$ and $O_{\Y}(c^{-1}(K_4),p',S)u_{K_4}=0$ it follows that
$u_{K_4}$ is an $S$-symmetric trivial infinitesimal motion. But  $u_4=0$ and so  $u_{K_4} =0$.

\end{proof}

Consider now the framework vector $\tilde{p}=(p_4,p_4,p_4,p_4,p_{5},\dots,p_n)$
in which the first $4$ framework joints are specialised to $p_4$ and let $p_*=(p_4,p_{5},\dots,p_n)$
be the restricted vector with associated $S$-generic framework $(G,p_*)$. By the hypotheses, this framework is $S$-symmetric infinitesimally rigid.

Note that the matrix $X_2=X_2(p')$ is square. Moreover, it has the vector $u_{G'\setminus K_4}$ in the kernel, and $u_{G'\setminus K_4}$ is clearly non-zero (since $u$ is non-zero and $u_{K_4}=0$ by the claim  above). 

Thus the determinant as a polynomial in the coordinates of the $p_i'$ vanishes identically because of genericity. It follows that $\det X_2(\tilde{p})$ vanishes identically and that there is a nonzero vector, $w_{G_0'\setminus K_4}$ say, in the kernel. But now we obtain the contradiction
\[ \begin{bmatrix}O_{\Y}(G,p_*,S) \end{bmatrix}\begin{bmatrix} \mathbf{0} \\ w_{G_0'\setminus K_4}\end{bmatrix}=\begin{bmatrix} a & \mathbf{0}\\ *& 
 X_2(\tilde{p})
\end{bmatrix}\begin{bmatrix} \mathbf{0}\\ w_{G_0'\setminus K_4} \end{bmatrix}=\mathbf{0},  \]
where $a=(0,1,0)$. 
\end{proof}




For  combinations of surfaces and symmetry groups with $k_S=1$ the above argument breaks down. Instead we adapt a limiting argument from \cite[Section 5.2]{nop}.

Let $\M\in\{\Y,\C\}$ and let $S$ be chosen so that $k_S=1$.
Take the complete graph on four vertices with identity gain on each edge as an $S$-gain graph and denote the covering graph by $H$. For $j=1,2,\dots,|S|$ and $i=1,2,3,4$, let $p_{x_j(i)}$  be the joints for some $S$-symmetric realisation of $H$. Further, let $p|_j=(p_{x_j(1)},p_{x_j(2)},p_{x_j(3)}, p_{x_{j}(4)})$ for $j=1,\dots, |S|$, and let $(H,p)=(H,(p|_1,p|_2,\dots, p|_{|S|}))$.

Since $k<3$, $\M$ has distinct principal curvatures $\kappa_{s_j},\kappa_{t_j}$ with associated orthonormal vectors $\hat{s}_j,\hat{t}_j$ in the tangent plane at $p_{x_j(1)}$ for $j=1,\dots ,|S|$. Recall that, in a neighbourhood $U$ of $p_{x_j(1)}$ on $\M$, we can use Taylor's theorem to express $p|_j$ as a function of $s_j$ and $t_j$: 
\[ p|_j(s_j,t_j)=p_{x_j(1)}+(s_j\hat{s}_j+t_j\hat{t}_j)+\frac{1}{2}(\kappa_{s_j}s_j^2+\kappa_{t_j}t_j^2)\hat{n}_j+r(s_j,t_j) \]
where $\hat{n}_j$ is the unit normal at $p_{x_j(1)}$, the vectors $\hat{s}_j,\hat{t}_j,\hat{n}_j$ form a right-handed orthonormal triple and $r(s_j,t_j)$ denotes the higher-order terms.    Let $p_{x_j(i)}=p|_j(s_{x_j(i)},t_{x_j(i)})$, so $p_{x_j(2)},p_{x_j(3)},p_{x_j(4)}\in U$, and let $p_{x_j(i)}^{\epsilon_k}=p|_j(\epsilon_ks_{x_j(i)},\epsilon_kt_{x_j(i)})$ for $i=2,3,4$. 
We say that a sequence of frameworks $(H,p^{j,\epsilon_k})$ with 
$$p^{j,\epsilon_k}=(p|_1, \dots, p|_{j-1}, p_{x_j(1)},p_{x_j(2)}^{\epsilon_k},p_{x_j(3)}^{\epsilon_k}, p_{x_j(4)}^{\epsilon_k}, p|_{j+1}, \dots, p|_{|S|})$$
where $\epsilon_k\rightarrow 0$ as $k\rightarrow \infty$ is a \emph{well-behaved $K_4$ contraction} if the local coordinates $s_{x_j(2)},t_{x_j(2)},s_{x_j(3)}, \break t_{x_j(3)},  s_{x_j(4)},t_{x_j(4)}$ satisfy
\[ det \begin{pmatrix}s_{x_j(2)}&t_{x_j(2)}&s_{x_j(2)}t_{x_j(2)}\\ s_{x_j(3)} & t_{x_j(3)} & s_{x_j(3)}t_{x_j(3)}\\ s_{x_j(4)} & t_{x_j(4)} & s_{x_j(4)}t_{x_j(4)}\end{pmatrix}\neq 0. \]
Note that, by symmetry, $(H,p^{j,\epsilon_k})$  is a well-behaved $K_4$ contraction for some $j\in\{1,\dots |S|\}$ if and only if it is for all $j=1,\ldots,|S|$.

\begin{lem}\label{lem:NOP}
Let $\M\in\{\Y,\C\}$ and let $S$ be chosen so that $k_S=1$. Take the complete graph on four vertices with identity gain on each edge as an $S$-gain graph and denote the covering graph by $H$. Let $p_{x_j(i)}$, for $j=1,2,\dots,|S|$ and for $i=1,2,3,4$, be the joints for some $S$-symmetric realisation of $H$. Let $$p^{\epsilon_k} = (p_{x_1(1)},p_{x_1(2)}^{\epsilon_k},p_{x_1(3)}^{\epsilon_k}, p_{x_{|1|}(4)}^{\epsilon_k},p_{x_2(1)},p_{x_2(2)}^{\epsilon_k},p_{x_2(3)}^{\epsilon_k}, p_{x_2(4)}^{\epsilon_k}, \dots, p_{x_{|S|}(1)},p_{x_{|S|}(2)}^{\epsilon_k},p_{x_{|S|}(3)}^{\epsilon_k}, p_{x_{|S|}(4)}^{\epsilon_k}),$$
and let $(H,p^{\epsilon_k})$,
 for $k=1,2,\dots$, be a well-behaved contraction of $S$-symmetric frameworks on $\M$. Further, let $u^k, k=1,2,\dots$, be an associated sequence of $S$-symmetric infinitesimal motions which forms a convergent sequence in $\mathbb{R}^{12|S|}$. Then the limit vector has the form $$(u_{{x_1}(1)},u_{x_1(1)},u_{x_1(1)},u_{x_1(1)},u_{x_2(1)},u_{x_2(1)},u_{x_2(1)},u_{x_2(1)},\dots, u_{x_{|S|}(1)},u_{x_{|S|}(1)},u_{x_{|S|}(1)},u_{x_{|S|}(1)}).$$
\end{lem}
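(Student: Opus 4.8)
The plan is to reduce the statement to the corresponding non-symmetric limiting lemma for frameworks on surfaces in \cite[Section 5.2]{nop}, exploiting the fact that an $S$-symmetric infinitesimal motion of $(H,p^{\epsilon_k})$ is completely determined by its restriction to a single copy of $K_4$ in the covering graph $H$.

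First I would record the symmetry bookkeeping. Since the $S$-gain graph has only trivial gains, the covering graph $H$ is the disjoint union of the copies $K_4^{(j)}$ of $K_4$ on the vertex sets $\{x_j(1),\dots,x_j(4)\}$, $j=1,\dots,|S|$, and $\theta$ permutes these copies via $\theta(x)(x_j(i))=(xx_j)(i)$. Because $(H,p^{\epsilon_k})\in\mathscr{R}^{\M}_{(H,S,\theta)}$ and $u^k$ is $S$-symmetric, we have $p^{\epsilon_k}_{x_j(i)}=\tau(x_j)\bigl(p^{\epsilon_k}_{x_1(i)}\bigr)$ and $u^k_{x_j(i)}=\tau(x_j)\bigl(u^k_{x_1(i)}\bigr)$ for all $i,j$. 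As each $\tau(x_j)$ is a fixed orthogonal map, the convergent sequence $u^k$ restricts to a convergent sequence $u^k|_1:=(u^k_{x_1(1)},\dots,u^k_{x_1(4)})$ in $\mathbb{R}^{12}$, and $\lim_k u^k_{x_j(i)}=\tau(x_j)\bigl(\lim_k u^k_{x_1(i)}\bigr)$. Hence it suffices to prove that $\lim_k u^k|_1=(w,w,w,w)$ for some $w\in\mathbb{R}^3$: then $u_{x_j(1)}=\lim_k u^k_{x_j(1)}=\tau(x_j)(w)=\lim_k u^k_{x_j(i)}$ for every $i$, which is exactly the asserted form of the limit.

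Next I would observe that $u^k|_1$ lies in $\ker R_{\M}(K_4,p^{\epsilon_k}|_1)$, where $p^{\epsilon_k}|_1=(p_{x_1(1)},p^{\epsilon_k}_{x_1(2)},p^{\epsilon_k}_{x_1(3)},p^{\epsilon_k}_{x_1(4)})$, since the six infinitesimal flex equations for the edges inside $K_4^{(1)}$ and the tangency conditions at these four joints are among the equations satisfied by any $S$-symmetric infinitesimal motion of $(H,p^{\epsilon_k})$. Moreover, restricted to this first copy, the construction of $p^{\epsilon_k}$ is precisely a well-behaved $K_4$-contraction on $\M$ in the sense of \cite[Section 5.2]{nop}: by hypothesis the $3\times 3$ determinant in the local curvature coordinates $s_{x_1(i)},t_{x_1(i)}$ is nonzero (and, as already observed after the definition, this holds for the first block if and only if it holds for all of them, so the hypothesis of the lemma really is symmetric). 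Applying the non-symmetric limiting lemma of \cite[Section 5.2]{nop} to the sequence of $K_4$-frameworks $(K_4,p^{\epsilon_k}|_1)$ together with the infinitesimal motions $u^k|_1$ then yields that $\lim_k u^k|_1$ has all four velocity vectors equal, and the reduction above completes the proof.

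The step requiring the most care is the compatibility between the $S$-symmetry and the local setup: to guarantee that the simultaneously contracted configuration $p^{\epsilon_k}$ is genuinely $S$-symmetric — and hence that $(H,p^{\epsilon_k})$ and its $S$-symmetric motions $u^k$ are as in the statement — one must choose the orthonormal curvature frames $\hat s_j,\hat t_j,\hat n_j$ at $p_{x_j(1)}$ to be the $\tau(x_j)$-images of $\hat s_1,\hat t_1,\hat n_1$, which is legitimate since $\tau(x_j)$ is an ambient isometry mapping $\M$ onto itself and therefore carrying principal directions to principal directions; the well-behaved condition then transfers between blocks as stated. Once the motion has been restricted to a single copy of $K_4$, the analytic heart of the argument — Bolzano--Weierstrass, the Taylor expansion of $p|_1$ near $p_{x_1(1)}$, and the limiting flex estimate that uses the distinctness of the principal curvatures of $\M$ (valid here because $\M\in\{\Y,\C\}$ has type $k<3$) — is exactly that of \cite{nop} and needs no modification.
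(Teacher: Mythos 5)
Your proposal is correct and follows essentially the same route as the paper: restrict the $S$-symmetric motion to a single copy of $K_4$ in the covering graph, apply the limiting/contraction analysis of \cite[Section 5.2]{nop} to that copy, and propagate the conclusion to the other copies via the equivariance $u^k_{x_j(i)}=\tau(x_j)\,u^k_{x_1(i)}$. The paper reproduces the analytic computation in its appendix precisely because \cite[Lemma 5.4]{nop} is stated only for surfaces of type $k=1$; you correctly identify that what the argument actually uses is distinctness of principal curvatures (type $k<3$), so your reliance on the cited lemma is acceptable provided this extension is acknowledged, as you do.
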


\begin{proof}
Denote the copies of $K_4$ in $H$ as $K^1, \ldots , K^{|S|}$, and let $(K^j, p|_j)$ be the corresponding realisations induced by $(H,p)$. Let $u=u^1$ and
denote the $S$-symmetric infinitesimal motion $$u=(u_{{x_1}(1)},u_{x_1(2)},u_{x_1(3)},u_{x_1(4)},\dots, u_{x_{|S|}(1)},u_{x_{|S|}(2)},u_{x_{|S|}(3)},u_{x_{|S|}(4)}).$$

By focusing on the restricted motion $u|_{K^j}$ and the subsequence $u^k|_{K^j}$, associated with $(K^j, p|_j)$, we can follow the proof of \cite[Lemma 5.4]{nop} with only trivial modifications, to establish that 
the well-behaved contraction sequence  $(H, p^{\epsilon_k})$   takes the infinitesimal motions $u^k|_{K^j}$ to $(u_{x_j(1)},u_{x_j(1)},u_{x_j(1)},u_{x_j(1)})$. 
Since that Lemma was stated for surfaces of type $k=1$ and our lemma includes the cylinder, with symmetry groups of type $k_S=1$, we include the details in the appendix.
%
Since $u|_{K^{\ell}}$ is the image of $u|_{K^j}$ under some $x_t\in S$, we may simultaneously use the argument from the appendix to see that $u^k|_{K^\ell}$ is taken to $(u_{x_\ell(1)},u_{x_\ell(1)},u_{x_\ell(1)},u_{x_\ell(1)})$ for each $\ell$, where $u_{x_\ell(1)}$ is the image of $u_{x_j(1)}$ under $x_t\in S$.  The contracted framework is again an $S$-symmetric framework on $\M$ and the limit vector of the sequence of $S$-symmetric infinitesimal motions has  the required form.   
\end{proof}

%

%
%

\begin{lem}\label{lem:vtok4type1}
Let $G_0'\rightarrow G_0$ be a vertex-to-$K_4$ move applied to an $S$-gain graph $G_0'$ with the new $K_4$ having all identity gains and $V(G_0)=\{v_1,v_2,\dots,v_n\}$ where $v_1,v_2,v_3,v_4$ are the vertices of the new $K_4$. Let $G'$ and $G$ be the covering graphs corresponding to $G_0'$ and $G_0$, and let $(G',p')$ and $(G,p)$ be the corresponding  $S$-generic frameworks on $\M$ with $\M\in\{\Y,\C\}$ and $S$ chosen so that $\M$ has symmetric type $k_S=1$. Suppose that $(G',p')$ is $S$-isostatic. Then $(G,p)$ is also $S$-isostatic.
\end{lem}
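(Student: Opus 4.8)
The plan is to produce a single $S$-symmetric (possibly non-generic) realisation of $G_0$ on $\M$ that is $S$-symmetric infinitesimally rigid. This will suffice: a vertex-to-$K_4$ move preserves $(2,k,k_S)$-gain-tightness, and $(G_0',\psi')$ is $(2,k,k_S)$-gain-tight by Theorem~\ref{thm:maxwell} (since $(G',p')$ is $S$-isostatic), so $|E(G_0)|=2|V(G_0)|-k_S$; hence the orbit-surface rigidity matrix of any realisation of $G_0$ has exactly $3|V(G_0)|-k_S$ rows, and an $S$-symmetric infinitesimally rigid realisation of $G_0$ is automatically $S$-isostatic. Since $S$-symmetric infinitesimal rigidity is inherited by every $S$-regular realisation, it then follows that the $S$-generic $(G,p)$ is $S$-symmetric infinitesimally rigid, and hence $S$-isostatic.

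To build the realisation, write $v$ for the vertex of $G_0'$ blown up by the move and take $(G',p')$ itself as the starting point, so the configuration on $V(G_0')\setminus\{v\}$ is $S$-generic; let $p'_v$ be the position of $v$. Parametrise $\M$ near $p'_v$ as in the paragraph preceding Lemma~\ref{lem:NOP}, place the orbit representative $v_1$ of the new $K_4$ at $p'_v$, and place $v_2,v_3,v_4$ at the points with local coordinates $(\epsilon s_i,\epsilon t_i)$, where the $(s_i,t_i)$ are chosen generically so that the determinant condition defining a well-behaved $K_4$ contraction holds; give the six $K_4$ edges the identity gain and reattach the edges formerly at $v$ to the new $K_4$ with the gains prescribed by the move. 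Propagating by the $S$-action gives an $S$-symmetric framework $(G,p^{\epsilon})$ on $\M$, which is a genuine framework for all small $\epsilon>0$. I claim $(G,p^{\epsilon})$ is $S$-symmetric infinitesimally rigid for some such $\epsilon$.

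Suppose not. Fixing the generic choice of the $(s_i,t_i)$ and letting $\epsilon=\epsilon_k\to 0$ produces a well-behaved $K_4$ contraction, each member $(G,p^{\epsilon_k})$ of which carries a unit-norm $S$-symmetric infinitesimal motion $u^k$ orthogonal to the space of trivial $S$-symmetric infinitesimal motions of $(G,p^{\epsilon_k})$. By Bolzano--Weierstrass we may pass to a subsequence with $u^k\to u^\infty$, $\|u^\infty\|=1$. The restrictions of $u^k$ to the $4|S|$ joints of the $K_4$-orbit are $S$-symmetric infinitesimal motions of the contracting $K_4$-orbit subframework on $\M$, so Lemma~\ref{lem:NOP} shows that $u^\infty$ is constant on each orbit of the new $K_4$. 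Since the $K_4$ edges all have identity gain and $p^{\epsilon_k}_{x_j(i)}\to p_{x_j(1)}$ as $k\to\infty$, the vector $u^\infty$ descends to an infinitesimal motion of the framework obtained by identifying, for each $j$, the four joints of the $j$-th lifted $K_4$ to a single joint at $p_{x_j(1)}$: the $K_4$ edges become degenerate loops imposing no constraint, and by the definition of the vertex-to-$K_4$ move the remaining vertices, edges and gains are precisely those of $G_0'$. Thus $u^\infty$ restricts to an $S$-symmetric infinitesimal motion $\bar u$ of $(G',p')$.

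Since $(G',p')$ is $S$-isostatic, $\bar u$ is trivial, i.e. the restriction of an $S$-symmetric tangential isometry $\rho$ of $\M$; as the four joints of each lifted $K_4$ sit at the common point $p_{x_j(1)}$ in the limit, $u^\infty$ is itself the restriction of $\rho$ to the joints of the limit framework, and hence a trivial $S$-symmetric infinitesimal motion there. On the other hand, the space of trivial $S$-symmetric infinitesimal motions has the constant dimension $k_S$ and depends continuously on the joint positions, so passing to the limit in the relations $u^k\perp(\text{trivial motions of }(G,p^{\epsilon_k}))$ shows that $u^\infty$ is orthogonal to the space of trivial $S$-symmetric infinitesimal motions of the limit framework. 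Therefore $u^\infty=0$, contradicting $\|u^\infty\|=1$. Hence some $(G,p^{\epsilon})$ is $S$-symmetric infinitesimally rigid, which completes the proof. The only genuinely non-formal point, once Lemma~\ref{lem:NOP} is available, is the degeneration bookkeeping in the last two paragraphs: identifying the collapsed framework with $(G',p')$, and arguing that the limit flex $u^\infty$ is simultaneously trivial (via the rigidity of $(G',p')$) and orthogonal to the trivial motions (via continuity of that space); everything else is routine counting and a standard limiting argument adapted from \cite[Section~5.2]{nop}.
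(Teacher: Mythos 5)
Your proof is correct, and the core is the same as the paper's: a limiting argument built on the well-behaved $K_4$ contraction and Lemma~\ref{lem:NOP}, exploiting that the trivial $S$-symmetric motion space is one-dimensional ($k_S=1$). The one genuine organisational difference is the direction of travel. The paper assumes $(G,p)$ is not $S$-isostatic for an $S$-generic $p$, takes an $S$-generic sequence $p^{\epsilon_k}$ contracting the lifted $K_4$'s, extracts a unit-norm $S$-symmetric flex $u^k$ orthogonal to the trivial motion, and uses Lemma~\ref{lem:NOP} to show the limit descends to a non-trivial flex of $(G',q')$ — contradicting that $S$-isostatic is a generic property. You instead start from the given $S$-isostatic $(G',p')$, blow the vertex $v$ up into a small $K_4$ along a one-parameter family $(G,p^{\epsilon})$, and argue that if every member were flexible the limit flex $u^\infty$ would be both trivial (via $(G',p')$'s rigidity and the collapse of the $K_4$ to a point) and orthogonal to the trivial motion (via continuity of that one-dimensional space), forcing $u^\infty=0$. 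Your version avoids having to keep the approximating sequence $S$-generic, which is a mild cleanup; it also makes explicit the continuity of the trivial-motion space that the paper uses implicitly. Beyond that the two proofs are the same in substance, and both correctly reduce to Lemma~\ref{lem:NOP}.
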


\begin{proof}
Let $p_i=p(v_i)$ for $i=1,2,3,4$ and let $p_{x_j(i)}$ for $j=1,2,\dots,|S|$ be the joints of $(G,p)$ corresponding to the orbit of $p_i$ (and so for each $j$, $v_{x_j(i)}$ is the corresponding vertex of $G$). Suppose that $(G,p)$ is not $S$-isostatic. Since $(G',p')$ is $S$-isostatic, $G_0'$ is $(2,k,k_S)$-gain-tight so $G_0$ is also $(2,k,k_S)$-gain-tight. It follows that $(G,p)$ is $S$-dependent with a non-trivial $S$-symmetric infinitesimal motion. Let $p^{\epsilon_k}$ be a sequence of $S$-generic framework vectors for $(G,p)$ with $p_{x_j(i)}^{\epsilon_k}\rightarrow p_{x_j(1)}$, as $k\rightarrow \infty$, for $i=2,3,4$ and for $1\leq j \leq |S|$, where the $K_4$ contraction is well-behaved (for all $j=1,\ldots, |S|$). Since $(G,p^{\epsilon_k})$ is $S$-generic  and $\M$ has symmetric type $k_S=1$, there is exactly 1 trivial $S$-symmetric infinitesimal motion of $(G,p^{\epsilon_k})$ for each $k$. Moreover, since $(G,p)$ is $S$-dependent there is a non-trivial $S$-symmetric infinitesimal motion of $(G,p^{\epsilon_k})$ for each $k$. 

Hence, for all $k$, there exists an $S$-symmetric infinitesimal motion $u^k=(u_{x_{1}(1)}^k,\dots,u_n^k)$ of $(G,p^{\epsilon_k})$ such that $u^k$ is in the tangent space of $p^{\epsilon_k}$, $u^k$ has norm 1 and $u^k$ is orthogonal to the trivial $S$-symmetric infinitesimal motion. By passing to a subsequence, if required, $u^k$ converges to an $S$-symmetric infinitesimal motion $u$ of the limit framework $(G,q)$ as $k\rightarrow \infty$. It now follows from Lemma \ref{lem:NOP}  that $u$ has the form
 $$(u_{x_1(1)},u_{x_1(1)},u_{x_1(1)},u_{x_1(1)},u_{x_2(1)},u_{x_2(1)},u_{x_2(1)},u_{x_2(1)},\dots,u_{x_{|S|}(1)},u_{x_{|S|}(1)},u_{x_{|S|}(1)},u_{x_{|S|}(1)},u_r,\dots,u_n).$$ Moreover, $u$ is in the tangent space of $q$, $u$ has norm 1 and $u$  is orthogonal to the trivial $S$-symmetric infinitesimal motion. 
This implies that the $S$-generic framework $(G',q')$ (where $q'$ arises from $q$ by restricting $V(G)$ to $V(G')$)  has an $S$-symmetric infinitesimal motion $$u^-=(u_{x_{1}(1)},u_{x_{2}(1)},\dots,u_{x_{|S|}(1)},u_r,\dots,u_n)$$ which is orthogonal to the unique trivial $S$-symmetric infinitesimal motion of $(G',q')$, contradicting the hypothesis that $(G',p')$ was $S$-isostatic.

\end{proof}

\begin{lem}\label{lem:vto4cycleCC}
Let $\M \in \{\Y,\C\}$ and 
\begin{itemize}
\item if $\M=\Y$ let $S$ be $\C_m$ (with the $z$-axis as the rotational axis), $\C_s$ (with mirror orthogonal to the $z$-axis or a plane containing the $z$-axis) or $C_i$ and
\item if $\M=\C$ let $S$ be $\C_m$ (with the $z$-axis as the rotational axis), $\C_s$ (with mirror orthogonal to the $z$-axis), $\C_i$, $\C_{mh}$ (with the $z$-axis as the rotational axis) or $\S_{2m}$ (with the $z$-axis as the rotational axis).
\end{itemize}
Let $(G,p)$ be an $S$-generic $S$-isostatic framework in $\mathscr{R}^{\M}_{(G,S,\theta)}$ with quotient $S$-gain graph $(G_0,\psi)$ and let $(G_0',\psi')$ be formed from $(G_0,\psi)$ by a vertex-to-4-cycle move. Then any $S$-generic realisation of the covering graph $G'$ of $(G_0',\psi')$ is $S$-isostatic.
\end{lem}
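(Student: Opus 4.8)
The plan is to argue by contradiction: assuming that the expanded graph $(G_0',\psi')$ is $S$-dependent, I will produce --- using a single degenerate realisation, the inverse of the $4$-cycle contraction that collapses the two new vertices $v_1,v_2$ to a common point --- a nontrivial $S$-symmetric infinitesimal flex of an $S$-generic realisation of the original graph $G_0$, contradicting $S$-isostaticity of $(G,p)$. Note that here only two vertices collapse, and to a point distinct from the two chosen neighbours, so no rate-of-convergence or curvature information is needed and one specialisation suffices, in contrast with the limiting argument required for the vertex-to-$K_4$ move (Lemmas~\ref{lem:NOP} and~\ref{lem:vtok4type1}). To set up: since the vertex-to-$4$-cycle move preserves $(2,\ell,m)$-gain-sparsity and $(G,p)$ is $S$-isostatic, $(G_0,\psi)$ is $(2,k,k_S)$-gain-tight by Theorem~\ref{thm:maxwell}, hence so is $(G_0',\psi')$; in particular $|E(G_0')|+|V(G_0')|=3|V(G_0')|-k_S$. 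Since this gain count holds, a realisation of $G_0'$ is $S$-isostatic precisely when the rows of its orbit-surface rigidity matrix are independent. So if the $S$-generic framework $(G',p')$ is not $S$-isostatic, those rows are dependent, and as rank can only drop on special realisations, $O_{\M}(G',q',S)$ has a row dependence, i.e.\ $\dim N_{\M}(G',q',S)>k_S$, for \emph{every} realisation $q'$ of $G_0'$ on $\M$.

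Write $v_1,v_2$ for the new vertices and $a,b$ for the two chosen neighbours, so that $(a,v)$ and $(b,v)$ are identity-gain edges of $G_0$ and $a,v_1,b,v_2$ is a $4$-cycle of $G_0'$ all of whose edges carry the identity gain. Fix an $S$-generic framework $(G,q)$ on $\M$ and let $q'$ be the realisation of $G_0'$ with $q'_{v_1}=q'_{v_2}:=q_v$ and $q'_x:=q_x$ for every other vertex. Then $\dim N_{\M}(G',q',S)>k_S$; since this configuration still carries only a $k_S$-dimensional space of trivial $S$-symmetric infinitesimal motions (the coincident orbit pairs merely get equal velocities), we may choose a nonzero $S$-symmetric infinitesimal motion $u$ of $(G',q')$ orthogonal to all trivial ones.

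The key step is to show that the collapse forces $u_{v_1}=u_{v_2}$. The rows of $O_{\M}(G',q',S)$ for the edges $(a,v_1)$ and $(a,v_2)$ read $(q_v-q_a)\cdot(u_{v_1}-u_a)=0$ and $(q_v-q_a)\cdot(u_{v_2}-u_a)=0$, whence $(q_v-q_a)\cdot(u_{v_1}-u_{v_2})=0$; the edges $(b,v_1),(b,v_2)$ likewise give $(q_v-q_b)\cdot(u_{v_1}-u_{v_2})=0$; and the two normal rows place $u_{v_1}-u_{v_2}$ in the tangent plane $T_{q_v}\M$. Hence $u_{v_1}-u_{v_2}$ lies in $\{q_v-q_a,\,q_v-q_b\}^{\perp}\cap T_{q_v}\M$, the intersection of a line with a plane in $\mathbb{R}^3$; this is $\{0\}$ unless $N(q_v)\in\mathrm{span}\{q_v-q_a,q_v-q_b\}$, which is the vanishing of a rational polynomial in the coordinates that is not identically zero on $\M\times\M\times\M$ for $\M\in\{\Y,\C\}$ (a witness is immediate, e.g.\ $q_v=(1,0,0),\,q_a=(0,1,1),\,q_b=(0,1,-1)$ on the cylinder). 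By $S$-genericity of $q$ and the specialisation property of generic frameworks on $\M$, this condition fails at $q$, so $u_{v_1}=u_{v_2}$.

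Finally, identifying the common value $u_{v_1}=u_{v_2}$ with a velocity at the contracted vertex $v$ turns $u$ into a vector $\bar u$ on the quotient gain graph $G_0$ at the configuration $q$. Every row of $O_{\M}(G,q,S)$ is then among the constraints satisfied by $u$: the normal at $v$ is the common normal of $v_1$ and $v_2$; the extra copies of $(a,v)$ and $(b,v)$ present in $G_0'$ are duplicate rows at $q'$; every other edge of $G_0$ at $v$, and a loop at $v$ (possible when $k_S=1$), is the image of an edge (respectively loop) of $G_0'$ at $v_1$ or $v_2$. Hence $\bar u\in N_{\M}(G,q,S)$, and $\bar u\neq 0$ because $u\neq 0$. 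If $\bar u$ were a trivial $S$-symmetric infinitesimal motion $t$ of $(G,q)$, then assigning velocity $t_v$ to both $v_1$ and $v_2$ would recover $u$ as a nonzero trivial $S$-symmetric infinitesimal motion of $(G',q')$, contrary to the choice of $u$; so $\bar u$ is a nontrivial $S$-symmetric infinitesimal flex of $(G,q)$. As $(G,q)$ is $S$-generic, hence $S$-regular, $(G,p)$ is then not $S$-symmetric infinitesimally rigid either, contradicting the hypothesis. I expect the only delicate point to be the genericity claim in the key step: one must verify surface-by-surface that $\{N(q_v)\in\mathrm{span}\{q_v-q_a,q_v-q_b\}\}$ is a proper subvariety of $\M^3$ --- which is exactly why the lemma is confined to the concrete surfaces $\Y$ and $\C$.
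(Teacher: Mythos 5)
Your proof is correct, but it takes the dual route to the paper's. The paper works on the \emph{cokernel} side: at the same coincident configuration (its $p^*$ with $p^*(v_0)=p(v_1)$) it shows that any $S$-symmetric self-stress $(\omega,\lambda)$ of $(G',p^*)$ pushes forward to an $S$-symmetric self-stress of $(G,p)$, which vanishes by $S$-isostaticity, and then the equilibrium equation at the coincident pair, together with the non-coplanarity of $p_1-p_{\alpha_2(2)}$, $p_1-p_{\alpha_3(3)}$ and the surface normal, forces the remaining stress coefficients to be zero. Hence the degenerate realisation is $S$-independent, and a small perturbation gives an $S$-generic $S$-isostatic realisation. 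You work on the \emph{kernel} side and by contradiction: you take an alleged non-trivial $S$-symmetric motion of the degenerate framework, observe that the duplicated edge rows plus the coincident normal rows force $u_{v_1}=u_{v_2}$ (the kernel-side analogue of the stress cancellation), and descend to a non-trivial motion of $(G,q)$. Both arguments hinge on the identical genericity fact, namely that the two bar directions at $q_v$ and the normal $N(q_v)$ span $\bR^3$. The stress route avoids the contradiction scaffolding and, as written in the paper, also handles arbitrary gains $\alpha,\beta$ on the new 4-cycle, whereas you restrict to trivial gains (which is all the inductive characterisations actually invoke, so no harm is done). Two very minor points in your write-up: the span of the trivial $S$-symmetric motions at the coincident configuration has dimension \emph{at most} $k_S$ (it is the image of a fixed $k_S$-dimensional space of Killing fields under restriction), which is all you need; and one should also record that $q_v-q_a$, $q_v-q_b$ being independent is itself a generic condition, folded into the same proper-subvariety argument.
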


The proof is similar to \cite[Proposition 1]{W2} with minor modifications due to the different definition of a stress in the surface context (recall Definition \ref{defn:stress}) and to using the orbit-surface rigidity matrix rather than the rigidity matrix of a $3$-frame.

\begin{proof}
Choose a vertex $v_1$ of $G_0$ to be split and  suppose $v_2,v_3,\dots,v_k$ are the neighbors of $v_1$. Without loss of generality, we may assume that all edges joining $v_1$ and $v_i$, $i=1,\ldots, k$, are directed away from $v_1$ and that the edge $(v_1,v_i)$ has gain $\alpha_i$. (If the edge $(v_1,v_i)$ appears $l>1$ times, then we denote the corresponding edge gains by $\alpha_i=\alpha_{i1}, \ldots,\alpha_{il}$, for a fixed numbering of the edges; in this case, the gains $\alpha_{i1},\ldots, \alpha_{il}$ are of course all distinct.) Let $G_0^*$  be the $S$-gain graph obtained from $G_0$ by adding
a new vertex $v_0$ and two edges $(v_0,v_2)$ and $(v_0,v_3)$ with respective gains $\alpha_2$ and $\alpha_3$. The covering graph of $G_0^*$ is denoted by $G^*$. Further, we let $(G^*,p^*)$ be the framework obtained from $(G,p)$ by setting 
 $p^*(v_0)=p(v_1)$ and $p^*(v_i)=p(v_i)$ for all other vertices $v_i$, $i\neq 0$, of $G^*$.

Consider the $S$-gain graph $G_0'$ which is obtained from $G_0^*$ by swapping some number of edges $(v_1,v_j)$ for $j\in \{4,5,\dots\}$ to edges $(v_0,v_j)$ (keeping the same gains). (If there are multiple edges joining $v_1$ with $v_2$ and $v_3$, then we may also swap edges of the form $(v_1,v_2)$ and $(v_1,v_3)$ to edges $(v_0,v_2)$ and $(v_0,v_3)$, provided that their gains are not equal to $\alpha_{2}$ or $\alpha_{3}$.) Let $G'$ be the covering graph of $G_0'$.   

Let $(\omega,\lambda)$ be an $S$-symmetric self-stress  on $(G',p^*)$. In the following, we will identify an $S$-symmetric self-stress of a framework with its restriction to the edges and vertices of the corresponding quotient $S$-gain graph. 
Let $\omega_{ij}=\omega(\{v_i,\alpha_j(v_j)\})$ for $i=0,1$ and $j=2,3$, and let $\lambda_i=\lambda(v_i)$. Then note that if we  set $\tilde{\omega}_{12}=\omega_{02}+\omega_{12}$, $\tilde{\omega}_{13}=\omega_{03}+\omega_{13}$,  $\tilde{\lambda}_1=\lambda_0+\lambda_1$, and $\tilde{\omega}(e)=\omega(e)$ for all other edges $e$ and $\tilde{\lambda}_i=\lambda_i$ for all other vertices $v_i$, then $(\tilde{\omega}, \tilde{\lambda})$ is an $S$-symmetric self-stress of $(G,p)$.  Thus, since $(G,p)$ is $S$-isostatic, we must have $\tilde{\omega}_{1j}=0$ for each $j$ and $\tilde{\lambda}_i=0$ for each $i$.

It follows that around $p_1$ we have $\omega_{12}(p_1-p_{\alpha_2(2)})+\omega_{13}(p_1-p_{\alpha_3(3)})+\lambda_1 s_1=0$. Since $(G,p)$ is $S$-generic, and $p_1-p_{\alpha_2(2)}$, $p_1-p_{\alpha_3(3)}$ and the normal $s_1$ to $\M$ at $p_1$ are not coplanar, we have $\omega_{12}=\omega_{13}=\lambda_1=0$. Similarly, we deduce that $\omega_{02}=\omega_{03}=\lambda_0=0$. Thus, the rows of $O_{\M}(G',p^*,S)$  are linearly independent. Now we perturb $(G',p^*)$ within a neighbourhood $B(p^*,\epsilon) \cap \M$, for sufficiently small $\epsilon$ to find an $S$-generic position $(G',p')$ which is guaranteed to be $S$-isostatic since $(G',p^*)$ is.
\end{proof}

\begin{lem}\label{lem:joinCC}
Let $\M \in \{\Y,\C\}$ and 
\begin{itemize}
\item if $\M=\Y$ let $S$ be $\C_m$ (with the $z$-axis as the rotational axis), $\C_s$ (with mirror orthogonal to the $z$-axis or a plane containing the $z$-axis) or $C_i$ and
\item if $\M=\C$ let $S$ be $\C_m$ (with the $z$-axis as the rotational axis), $\C_s$ (with mirror orthogonal to the $z$-axis), $\C_i$, $\C_{mh}$ (with the $z$-axis as the rotational axis) or $\S_{2m}$ (with the $z$-axis as the rotational axis).
\end{itemize}
Let $(G^1,p^1)$ and $(G^2,p^2)$ be two $S$-generic $S$-isostatic frameworks in $\mathscr{R}^{\Y}_{(G,S,\theta)}$ with quotient $S$-gain graphs $(G^1_0,\psi^1)$ and $(G^2_0,\psi^2)$.
Then an $S$-generic framework $(G^1\oplus G^2, p)$ corresponding to the edge join of $G^1_0$ and $G^2_0$ with joining edge given arbitrary gain is $S$-isostatic on $\M$.
\end{lem}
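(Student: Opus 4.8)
The plan is to exploit the block form of the orbit-surface rigidity matrix together with the observation that, for every pair $(\M,S)$ in the statement, the surface $\M$ has symmetric type $k_S=1$ and the one-dimensional space of $S$-symmetric trivial infinitesimal motions is spanned by the restriction of a single vector field $\rho$ on $\M$: a short inspection of the character tables shows that $\rho$ is the infinitesimal rotation about the $z$-axis in every case, except when $S=\C_s$ on $\Y$ with mirror plane containing the $z$-axis, in which case $\rho$ is the infinitesimal translation along the $z$-axis. Since the edge join preserves $(2,k,1)$-gain-tightness and each $(G_0^i,\psi^i)$ is $(2,k,1)$-gain-tight by Theorem~\ref{thm:maxwell}, we have $|E(G_0^1\oplus G_0^2)|=2|V(G_0^1\oplus G_0^2)|-1=2|V(G_0^1\oplus G_0^2)|-k_S$; so, by the symmetric analogue of Lemma~\ref{lem:surf2} (an $S$-isostatic framework has independent orbit-surface rows and a $k_S$-dimensional null space), it suffices to prove that the $S$-generic framework $(G^1\oplus G^2,p)$ is $S$-symmetric infinitesimally rigid, i.e.\ that $\dim N_\M(G^1\oplus G^2,p,S)=k_S=1$; the matching row count then forces the rows of $O_\M$ to be independent, giving minimality.

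By Theorem~\ref{orbitmatrixthm} an element $u\in N_\M(G^1\oplus G^2,p,S)$ is the restriction to the vertex orbits of an $S$-symmetric infinitesimal motion of $(G^1\oplus G^2,p)$ on $\M$. Write $e=(a,b)_\gamma$ for the joining edge, with $a\in V(G_0^1)$ and $b\in V(G_0^2)$, and let $u^i$ be the restriction of $u$ to $V(G_0^i)$. Since every row of $O_\M(G^i,p|_{G^i},S)$ becomes, after padding with zeros, a row of $O_\M(G^1\oplus G^2,p,S)$, we get $u^i\in N_\M(G^i,p|_{G^i},S)$. As $p$ is $S$-generic so is the restriction $p|_{G^i}$, hence $(G^i,p|_{G^i})$ is $S$-regular and therefore $S$-symmetric infinitesimally rigid since $(G^i,p^i)$ is; thus $N_\M(G^i,p|_{G^i},S)$ is one-dimensional, and as it also contains the restriction of $\rho$ we conclude $u^i=c_i\,\rho|_{G^i}$ for some scalars $c_1,c_2$.

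The joining edge now does the rest. Applying $\rho$ to all joints of $(G^1\oplus G^2,p)$ produces a trivial infinitesimal motion on $\M$, so its restriction to the vertex orbits lies in $N_\M(G^1\oplus G^2,p,S)$; reading off the Case~1 row of $O_\M$ for $e$ (Definition~\ref{orbitmatrixdef}) this gives
\[
A+B=0,\qquad A:=\big(p_a-\tau(\gamma)(p_b)\big)\cdot\rho(p_a),\quad B:=\big(p_b-\tau(\gamma)^{-1}(p_a)\big)\cdot\rho(p_b).
\]
Evaluating the same row on $u$ and substituting $u^i=c_i\rho|_{G^i}$ yields $c_1A+c_2B=0$, hence $(c_1-c_2)A=0$. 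It remains to see that $A\neq0$ for $S$-generic $p$: $A$ is a polynomial in the joint coordinates with coefficients in $\bQ_S$, so by $S$-genericity it is enough to exhibit one $S$-symmetric configuration with $A\neq0$. When $\rho$ is the rotation about the $z$-axis, $A$ equals, up to sign, the $z$-coordinate of $p_a\times\tau(\gamma)(p_b)$, which on both $\Y$ and $\C$ is a non-trivial function of the (free) angular and radial parameters of $p_a$ and $p_b$ for every admissible gain $\gamma$; when $\rho$ is the translation along the $z$-axis, $A$ is the difference of the $z$-coordinates of $p_a$ and $\tau(\gamma)(p_b)$, again not identically zero. Hence $A\neq0$, so $c_1=c_2$ and $u=c_1\,\rho|_{G^1\oplus G^2}$ is trivial; therefore $\dim N_\M(G^1\oplus G^2,p,S)=1=k_S$ and $(G^1\oplus G^2,p)$ is $S$-isostatic.

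The only genuinely delicate point is the last one: one has to confirm, surface by surface and group by group, that the unique $S$-symmetric trivial motion $\rho$ takes the claimed form and that the scalar $A$ is never forced to vanish whatever the (arbitrary) gain $\gamma$ of the joining edge. Everything else is the familiar restriction-plus-genericity scheme already used for the other operations in this section, and it goes through uniformly for all the listed surfaces and groups precisely because in each case $k_S=1$.
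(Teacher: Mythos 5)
Your argument is essentially the same as the paper's, just fully spelled out. The paper observes that the two blocks $O_\M(G^i,p^i,S)$ each have a $1$-dimensional kernel, so the block-diagonal matrix has a $2$-dimensional kernel, and then simply asserts that the joining-edge row generically kills the extra degree of freedom; you prove that assertion by restricting the motion to the two sides, writing $u^i=c_i\,\rho|_{G^i}$ for the symmetric isometry $\rho$, and showing the joining-edge row forces $c_1=c_2$ since the scalar $A=(p_a-\tau(\gamma)p_b)\cdot\rho(p_a)$ is not identically zero as a polynomial in the coordinates. That last computation is correct and is exactly the content of the paper's ``$S$-generically, eliminates''.

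One caveat worth flagging: your opening claim that ``for every pair $(\M,S)$ in the statement, the surface $\M$ has symmetric type $k_S=1$'' is not true for $\C_m$ on the cylinder $\Y$ with the $z$-axis as rotational axis, where $k_S=2$ (both the axial translation and the axial rotation are $\C_m$-symmetric, which is why Theorem~\ref{thm:cylindercyclic} uses the $(2,2,2)$-count). In that case the block matrix has a $4$-dimensional kernel and one edge cannot reduce it to $2$. This is really an imprecision in the lemma's statement: edge joining is never invoked for $\C_m$ on $\Y$ (Theorem~\ref{thm:222construction} does not use it), and the remark immediately following the lemma acknowledges that the claim fails whenever the symmetric type is not $1$. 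So your proof is fine for the cases in which the lemma is actually applied, but you should state the $k_S=1$ hypothesis rather than assert it holds for all listed groups.
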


\begin{proof}
The orbit-surface matrices $O_{\M}(G^1,p^1,S)$ and $O_{\M}(G^2,p^2,S)$ have maximal rank. Hence the block matrix
\[ \begin{bmatrix}O_{\M}(G^1,p^1,S) & \mathbf{0}\\ \mathbf{0} & O_{\M}(G^2,p^2,S) \end{bmatrix} \]
has a 2-dimensional nullspace. The final joining edge, $S$-generically, eliminates the additional $S$-symmetric infinitesimal motion.
\end{proof}

Note that the final lemma clearly fails for settings where there are more or less than one isometry. 


\section{Laman type theorems} \label{sec:thms}

We have now put together enough results to prove our main theorems.
For convenience let us say that, for an $S$-gain graph $G_0$, $V(G_0)=\{1,\dots, n\}$ and $p_i=(x_i,y_i,z_i)$ for each $i$.

\begin{proof}[of Theorem \ref{thm:sphereinv}]
Theorem \ref{thm:maxwell} proves the necessity.

For the sufficiency we use induction and Theorem \ref{thm:233construction}. Let the edge of $K_2$ be $e=(1,2)$ with gain $\alpha\in \C_i$, and let $p_{\alpha(i)}=(x_i',y_i',z_i')$, $i=1,2$.
Then note that $O_{\S}(K_2,p,\C_i)$ is the $3\times 6$ matrix 
\[ \begin{bmatrix} x_1-x_2' & y_1-y_2'  & z_1-z_2' & x_2-x_1' & y_2-y_1' & z_2-z_1' \\ x_1 & y_1 & z_1 & 0 & 0 & 0\\ 0 & 0 & 0 & x_2 & y_2 & z_2 \end{bmatrix}\]
which is easily checked to have rank 3.
Theorem \ref{thm:233construction} gives us a short list of operations that generate all $(2,3,3)$-gain-tight graphs. For the inductive step suppose $(G,p)$ is $\C_i$-isostatic and suppose $G'$ is formed from $G$ by any one of these operations. Then Lemmas \ref{lem:hen1SM} and \ref{lem:hen2SM} confirm that any $\C_i$-generic realisation of $G'$ is $\C_i$-isostatic.
\end{proof}

\begin{proof}[of Theorem \ref{thm:spheredih}]
Theorem \ref{thm:maxwell} proves the necessity.

For the sufficiency we use induction and \cite[Theorem 4.4]{jkt}. 
Let $S\in \{\C_{mh},S_{2m}\}$ and let $K_1^*$ denote a loop at vertex $1$ with non-trivial gain $\alpha$ (in particular, note that the gain $\alpha$ cannot be an inversion). Further, let $p_{\alpha(1)}=(x_1',y_1',z_1')$ and $p_{\alpha^{-1}(1)}=(x_1'',y_1'',z_1'')$
Then note that $O_{\S}(K_1^*,p,S)$ is the $2\times 3$ matrix
\[ \begin{bmatrix} 2x_1-x_1'-x_1''& 2y_1-y_1'-y_1'' & 2z_1-z_1'-z_1''\\ x_1 & y_1 & z_1 \end{bmatrix}\]
which is easily checked to have rank 2 in each case.
\cite[Theorem 4.4]{jkt} gives us a short list of operations that generate all $(2,3,1)$-gain-tight graphs. For the inductive step suppose $(G,p)$ is $S$-isostatic and suppose $G'$ is formed from $G$ by any one of these operations. Then Lemmas \ref{lem:hen1SM} and \ref{lem:hen2SM} confirm that any $S$-generic realisation of $G'$ is $S$-isostatic.
\end{proof}

\begin{proof}[of Theorem \ref{thm:cylindercyclic}]
Theorem \ref{thm:maxwell} proves the necessity.

For the sufficiency we use induction and Theorem \ref{thm:222construction}. 
First note that $O_{\Y}(K_1,p,\C_m)$, where $K_1$ is the vertex $1$, is the $1\times 3$ matrix 
\[ \begin{bmatrix} x_1 & y_1 & 0 \end{bmatrix}\]
with rank 1.
Theorem \ref{thm:222construction} gives us a short list of operations that generate all $(2,2,2)$-gain-tight graphs. For the inductive step suppose $(G,p)$ is $\C_m$-isostatic and suppose $G'$ is formed from $G$ by any one of these operations. Then Lemmas \ref{lem:hen1SM}, \ref{lem:hen2SM}, \ref{lem:vtok4cylinderrotation} and \ref{lem:vto4cycleCC} confirm that any $\C_m$-generic realisation of $G'$ is $\C_m$-isostatic.
\end{proof}

\begin{proof}[of Theorem \ref{thm:cylinderreflection}]
Theorem \ref{thm:maxwell} proves the necessity.

For the sufficiency we use induction and Theorem \ref{thm:221construction}. 
Let $S\in \{\C_s,C_i\}$ and $K_1^*$ denote a loop at vertex $1$ with non-trivial gain $\alpha$. Further, let $p_{\alpha(1)}=(x_1',y_1',z_1')$. Clearly, $p_{\alpha(1)}=p_{\alpha^{-1}(1)}$, since $S$ is a group of order $2$.
Note that $O_{\Y}(K_1^*,p,S)$ is the $2\times 3$ matrix 
\[ \begin{bmatrix} 2(x_1-x_1') & 2(y_1-y_1') &2(z_1-z_1') \\ x_1 & y_1 & 0 \end{bmatrix}\]
which has rank 2 in each case.
Also $O_{\Y}(K_4+e,p,S)$ is a $11\times 12$ matrix 
which can easily be checked to have rank 11 for each choice of $S$.\footnote{Let $V(K_4+e)=\{v_1,v_2,v_3,v_4\}$, let $e=v_1v_2$ or let $e$ be a loop on $v_1$ and let $q(v_1)=(\frac{1}{\sqrt{2}},\frac{1}{\sqrt{2}},4),q(v_2)=(1,0,2),q(v_3)=(-1,0,-1),q(v_4)=\frac{1}{\sqrt{2}},\frac{1}{\sqrt{2}},-1)$. It is elementary to check that $\rank O_{\Y}(K_4+e,q,S)=11$ for each group (in the case of the vertical mirror we used the plane through the point $(0,1,0)$). It follows that the generic rank is also 11.}
Theorem \ref{thm:221construction} gives us a short list of operations that generate all $(2,2,1)$-gain-tight graphs. For the inductive step suppose $(G,p)$ is $S$-isostatic and suppose $G'$ is formed from $G$ by any one of these operations. Then Lemmas \ref{lem:hen1SM}, \ref{lem:hen2SM}, \ref{lem:vtok4type1}, \ref{lem:vto4cycleCC} and \ref{lem:joinCC} confirm that any $S$-generic realisation of $G'$ is $S$-isostatic.
\end{proof}

\begin{proof}[of Theorem \ref{thm:conecyclic}]
Theorem \ref{thm:maxwell} proves the necessity.

For the sufficiency we use induction and Theorem \ref{thm:211construction}. 
Let $S\in \{\C_m,\C_s, \C_i,\C_{mh},S_{2m}\}$ and $K_1^*$ denote a loop at vertex $1$ with non-trivial gain $\alpha$. Further, let $p_{\alpha(1)}=(x_1',y_1',z_1')$ and $p_{\alpha^{-1}(1)}=(x_1'',y_1'',z_1'')$.
Note that $O_{\C}(K_1^*,p,S)$ is a $2\times 3$ matrix 
\[ \begin{bmatrix} 2x_1-x_1'-x_1''& 2y_1-y_1'-y_1'' & 2z_1-z_1'-z_1'' \\ x_1 & y_1 & -z_1 \end{bmatrix}\]
which has rank 2 for each choice of $S$.
Also  $O_{\C}(K_4+f,p,S)$ is a $11 \times 12$ matrix and $O_{\C}(K_5-e,p,S)$ is a $14\times 15$ matrix 
which can easily be checked to have rank 11 and 14 for each respective choice of $S$.\footnote{Let $q(v_1)=(1,0,1),q(v_2)=(3,0,-3),q(v_3)=(1,1,\sqrt{2}),q(v_4)=(-2,2,2\sqrt{2}),q(v_5)=(\sqrt{2},-\sqrt{2},-2)$. Let $V(K_4+f)=\{v_1,v_2,v_3,v_4\}$ with $f=v_1v_3$ or with $f$ being a loop on $v_1$ and let $V(K_5-e)=\{v_1,v_2,v_3,v_4,v_5\}$ with $e=v_4v_5$. Then $\rank O_{\C}(K_4+f,q,S)=11$ and $\rank O_{\C}(K_5-e,q,S)=14$ for each group. Since $p$ is generic, it follows that $\rank O_{\C}(K_4+f,p,S)=11$ and $\rank O_{\C}(K_5-e,p,S)=14$. }

Theorem \ref{thm:211construction} gives us a short list of operations that generate all $(2,1,1)$-gain-tight graphs. For the inductive step suppose $(G,p)$ is $S$-isostatic and suppose $G'$ is formed from $G$ by any one of these operations. Then Lemmas \ref{lem:hen1SM}, \ref{lem:hen2SM}, \ref{lem:vtok4type1}, \ref{lem:vto4cycleCC} and \ref{lem:joinCC} confirm that any $S$-generic realisation of $G'$ is $S$-isostatic.
\end{proof}


\section{Further Work} \label{sec:furtherwork}

We finish by outlining a number of avenues of further developments. We start with a slight diversion into matroid theory.

\subsection{Matroids and inductive constructions} \label{subsec:mic}

Let $\mathcal{I}_{k,\ell,m}$ be the family of $(k,\ell,m)$-gain-sparse edge sets in $(H,\psi)$.
Then, as noted in \cite{jkt,schtan}, $\mathcal{I}_{k,\ell,m}$ forms the family of independent sets
of a matroid on $E(H)$ for certain $(k,\ell, m)$. Let $M(k,\ell,m):=(E(H),\I_{k,\ell,m})$ whether or not the triple $k,\ell,m$ induces a matroid.
Using 3 basic matroids as `building blocks' we can use standard matroid techniques to see that $M(k,\ell,m)$ is a matroid for a large range of triples $k,l,m \in \bN$.

First note that when $\ell=m$, $(k,\ell,m)$-gain-sparsity is exactly $(k,\ell)$-sparsity (on a multi-graph) and $M(k,\ell)$ is known to be a matroid for all $0\leq \ell <2k$ \cite{W1}. We also assume in this paper that $m \leq \ell$.

Our 3 basic matroids are the \emph{frame matroid} $M(1,1,0)$ \cite{zas1}, the \emph{cycle matroid} $M(1,1,1)$ and the \emph{bicircular matroid} $M(1,0,0)$. Then the previous sentence tells us that each possible option for $M(1,\ell,m)$ is a matroid. For $k>1$, using matroid union and Dilworth truncation we know the following:

\begin{enumerate}
\item $M(k,k+t,m+t)$ is a matroid for $0\leq t <k$ and $m\leq k$ (take $m$ copies of the cycle matroid and $k-m$ copies of the frame matroid and then apply $t$ Dilworth truncations),
\item $M(k,k-m+n+t,n+t)$ with $0 \leq n \leq m \leq k$ and $n+t<k+m$ is a matroid (take $n$ copies of the cycle matroid, $m-n$ copies of the bicircular matroid and $k-m$ copies of the frame matroid and then apply $t$ Dilworth truncations).
\end{enumerate}

By the above remarks we know that: $M(2,3,3)$ is a matroid (see also Theorem \ref{thm:sphereinv}); $M(2,3,2)$ is a matroid (this count does not appear in symmetry-forced rigidity analyses of frameworks in the plane or on surfaces, as the surface must be a sphere and there does not exist a symmetry group with two fully symmetric rotations. However it does occur for periodic frameworks \cite{Ross} or in `anti-symmetric' rigidity analyses (see Section~\ref{subsec:incsym})); $M(2,3,1)$ is a matroid (see also Theorem \ref{thm:spherecyclic}); $M(2,3,0)$ is not a matroid in general \cite{schtan}; $M(2,2,2)$ is a matroid (see also Theorem \ref{thm:cylindercyclic}); $M(2,2,1)$ is a matroid (see also Theorem \ref{thm:cylinderreflection}); $M(2,2,0)$ is a matroid (see also Conjecture \ref{con:cylinderX} below); $M(2,1,1)$ is a matroid (see also Theorem \ref{thm:conecyclic}); $M(2,1,0)$ is a matroid (see also Conjecture \ref{con:coneX} below); $M(2,0,0)$ is a matroid (this count appears for frameworks on surfaces of type $k=0$ which we do not consider in this paper (recall Section~\ref{sec:fwsurf})). 

For $k\geq 3$ these observations still give us a lot of information; however they do not tell us anything about the case when $\ell - m >k$. We do not know if $M(2,3,0)$ is typical or atypical for such triples.

Returning to the subject of the paper, we note that combining the results of Section \ref{sec:recchar} with \cite[Theorem $4.4$]{jkt} and \cite[Theorem $4.7$]{Ross} gives inductive constructions for $(2,\ell,m)$-gain-tight graphs (sometimes only for particular groups) for all $1\leq m\leq \ell <4$. However, the case when $m=0$ is completely open. One indication of the potential difficulty to overcome here is that the minimum vertex degree in the graph may be 4. The analogue of the Henneberg moves for degree 4 vertices are known as X and V-replacement \cite{GSS,jkt,N&R,T&W}. V-replacement is known to not preserve $(2,\ell)$-sparsity. X-replacement has been used to some effect in \cite{jkt} so it is plausible that it could be used for the cases in question here. However, while X-replacement (as an operation on frameworks) is easy to understand in the plane (the generic argument is based on the simple fact that, generically, two lines intersect (see also \cite{jkt})), the question whether the corresponding operation  in 3-dimensions preserves generic rigidity is still open \cite{GSS,N&R} (two generic lines in 3D need not intersect!). This difficulty also arises for X-replacements on frameworks supported on surfaces since two lines will typically not intersect in a point on the surface. However, the X-replacement operation on frameworks supported on surfaces may still be more accessible than  the X-replacement operation in the general  3-dimensional case.

\subsection{The sphere}

As indicated in Section~\ref{sec:combchar}, it seems difficult to establish characterisations for symmetry-forced rigidity on the sphere for groups other than $\mathcal{C}_m$, $\mathcal{C}_s$, $\mathcal{C}_i$, $\mathcal{C}_{mh}$ and $\mathcal{S}_{2m}$, as there are no tangential isometries (i.e., rotations) which are symmetric with respect to these groups. 

An exception are the groups $\mathcal{C}_{mv}$, where $m$ is odd, as for these groups, we may combine results in \cite{jkt} and \cite{BSWWconing} to obtain a characterisation for symmetry-forced rigidity on the sphere even though there are no rotations which are symmetric with respect to $\mathcal{C}_{mv}$. Theorems~\ref{thm:spherecyclic}, \ref{thm:sphereinversion} and \ref{thm:spheredih}  provide characterisations for the groups $\mathcal{C}_m$, $\mathcal{C}_s$, $\mathcal{C}_i$, $\mathcal{C}_{mv}$ (with $m$ odd), $\mathcal{C}_{mh}$ (with $m$ odd) and $\mathcal{S}_{2m}$ (with $m$ even). The obstacles for  $\mathcal{C}_{mv}$, where $m$ is even, were described in Section~\ref{sec:combchar} (see also \cite{jkt}). This leaves the groups $\mathcal{C}_{mh}$, where $m$ is even, and $\mathcal{S}_{2m}$, where $m$ is odd. 

\begin{defn} \label{def:gainsparsitysph}
Let $(H,\psi)$ be a $\mathcal{C}_{mh}$-gain graph, where $m$ is even, or a  $\mathcal{S}_{2m}$-gain graph, where $m$ is odd. Then
$(H,\psi)$ is called \emph{$(2,3,1)^i$-gain-sparse} if 
\begin{itemize}
\item $|F|\leq 2|V(F)|-3$ for any nonempty $F\subseteq E(H)$ and $v\in V(F)$ with $\langle F \rangle_v =\mathcal{C}_1$ or $\langle F \rangle_v =\mathcal{C}_i$;
\item $|F|\leq 2|V(F)|-1$ otherwise.
\end{itemize}
A $(2,3,1)^i$-gain-sparse graph $(H,\psi)$ satisfying $|F|= 2|V(F)|-1$ is called \emph{$(2,3,1)^i$-gain-tight}. 
\end{defn}

\begin{con}\label{con:spherecmh}
Let $S$ be the group $\mathcal{C}_{mh}$, where $m$ is even, or the  group $\mathcal{S}_{2m}$, where $m$ is odd. Let $(G,p)$ be an $S$-generic realisation on $\S$ and let $(G_0,\psi)$ be the quotient $S$-gain graph of $G$. Then $(G,p)$ is $S$-isostatic if and only if $(G_0,\psi)$ is $(2,3,1)^i$-gain-tight.
\end{con}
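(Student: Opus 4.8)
The plan is to follow the two-stage template of Theorems~\ref{thm:sphereinv} and~\ref{thm:spheredih}: necessity by a Maxwell-type count, and sufficiency by an inductive construction of $(2,3,1)^i$-gain-tight graphs combined with geometric lemmas showing that each construction move preserves $S$-isostaticity on $\S$. The geometric half is essentially already available, so the substance lies on the combinatorial side.

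\emph{Necessity.} Write $\iota$ for the inversion. Both $\mathcal{C}_{mh}$ (with $m$ even) and $\mathcal{S}_{2m}$ (with $m$ odd) contain $\mathcal{C}_i=\{\mathrm{id},\iota\}$, and every proper non-trivial subgroup $\langle F\rangle_v$ that can occur is cyclic, a reflection group, an improper-rotation group $\mathcal{C}_{m'h}$ or $\mathcal{S}_{2m'}$, or the inversion group $\mathcal{C}_i$. Since $\iota$ commutes with every rotation about the centre of $\S$, the space of tangential isometries of $\S$ symmetric with respect to $\mathcal{C}_i$ (or the trivial group $\mathcal{C}_1$) is $3$-dimensional, while for each of the remaining subgroups it is $1$-dimensional. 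I would then run the argument of Theorem~\ref{thm:maxwell}: Claim~\ref{claim:switch} holds verbatim on $\S$, so given $F\subseteq E(G_0)$ and $v\in V(F)$ one switches vertices so that the edges of $F$ carry gains in $\langle F\rangle_v$; the corresponding rows of $O_{\S}(G,p,S)$ then form, after row scaling, the orbit-surface rigidity matrix on $\S$ of the covering graph of $(F,\psi|_F)$ with respect to $\langle F\rangle_v$, whose rank is bounded by $3|V(F)|-k_{\langle F\rangle_v}$ by Lemma~\ref{lem:nullspace}. If $\langle F\rangle_v\in\{\mathcal{C}_1,\mathcal{C}_i\}$ this yields $|F|\le 2|V(F)|-3$ (for $\mathcal{C}_1$ one may also invoke Lemma~\ref{lem:surf2}, and for $\mathcal{C}_i$ one may invoke Theorem~\ref{thm:sphereinv}), and otherwise $|F|\le 2|V(F)|-1$. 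The global equality $|E(G_0)|=2|V(G_0)|-1$ is the case $F=E(G_0)$ together with $k_S=1$. This is exactly the $(2,3,1)^i$-gain-tightness of Definition~\ref{def:gainsparsitysph}.

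\emph{Sufficiency.} The one missing ingredient is a recursive characterisation: every $(2,3,1)^i$-gain-tight $S$-gain graph with simple covering graph should be generated from a short explicit list of base graphs by Henneberg~1 and Henneberg~2 moves. This would parallel \cite[Theorem~4.4]{jkt} for ordinary $(2,3,1)$-gain-tight graphs, but one must now track the subgroup $\langle F\rangle_v$ induced by the $(2,3)$-dense pieces so that it never becomes $\mathcal{C}_1$ or $\mathcal{C}_i$; in particular the base list should contain a single loop with gain outside $\{\mathrm{id},\iota\}$ (as in Theorem~\ref{thm:spheredih}) together with whatever small graphs turn out to be irreducible under these moves and carry a $\mathcal{C}_i$-block. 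Granting this, the geometry is already in hand: Lemmas~\ref{lem:hen1SM} and~\ref{lem:hen2SM} are stated for $\M=\S$ and for \emph{any} possible point group, so every Henneberg move preserves $S$-isostaticity, while each base graph is dealt with by an explicit rank computation of a small orbit-surface rigidity matrix, exactly as in the proofs of Theorems~\ref{thm:sphereinv} and~\ref{thm:spheredih}.

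\emph{Main obstacle.} The hard part is the inductive characterisation of $(2,3,1)^i$-gain-tight graphs. Because the sparsity bound depends on $\langle F\rangle_v$, this is not a ``pure'' $(2,\ell,m)$-count, and one must first verify that it defines a matroid on $E(G_0)$ (needed to run the standard minimum-degree arguments); this is the $\mathcal{C}_i$-analogue of the ``maximum $\mathcal{D}$-tight'' situation in \cite{jkt}. It is also not clear a priori that Henneberg~1 and~2 moves suffice. In contrast to the cyclic cases of Theorem~\ref{thm:spherecyclic}, no trivially-labelled block can occur here, since a $K_4$ with trivial gains already violates the count for $\mathcal{C}_1$-balanced subgraphs; should additional moves (vertex-to-$K_4$, vertex-to-$4$-cycle, edge-joining, or the $X$-replacement variants discussed in Section~\ref{subsec:mic}) be needed, new geometric lemmas showing that they preserve $S$-isostaticity on $\S$ would be required. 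That step would be genuinely harder than the cylinder and cone analogues (Lemmas~\ref{lem:vtok4type1}, \ref{lem:vto4cycleCC} and~\ref{lem:joinCC}): since $\S$ has type $k=3$, the coning reduction of Theorem~\ref{thm:spherecyclic} is unavailable once the group contains $\mathcal{C}_i$, and the limiting and self-stress arguments of Section~\ref{sec:geom} would have to be redone against a $3$-dimensional space of trivial motions in the unsymmetrised picture.
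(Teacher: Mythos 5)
The statement you were asked to prove is labelled in the paper as a \emph{conjecture} (Conjecture~\ref{con:spherecmh}); the authors give no proof of it, and it appears in the ``Further Work'' section precisely because of the combinatorial obstacle you identify. There is therefore no paper proof to compare against, and your honest diagnosis of the situation is the correct response.

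On the necessity half, your argument is sound and coincides with what the paper does offer: it is the content of Theorem~\ref{thm:maxwellnon-abelian} together with the discussion of the subgroups $\langle F\rangle_v$ immediately preceding it, instantiated on $\S$, where $k_{\langle F\rangle_v}=3$ when $\langle F\rangle_v\in\{\mathcal{C}_1,\mathcal{C}_i\}$ and $k_{\langle F\rangle_v}=1$ for every other relevant subgroup. One wrinkle worth noting: Theorem~\ref{thm:maxwellnon-abelian} is stated only for non-cyclic $S$, and $\mathcal{S}_{2m}$ with $m$ odd \emph{is} cyclic, so you cannot cite it verbatim in that case; but, as you in effect argue, the proof of Theorem~\ref{thm:maxwell} through Claim~\ref{claim:switch} and Lemma~\ref{lem:nullspace} applies uniformly once one allows the switched gains to land in $\langle F\rangle_v$ rather than in $\{\mathrm{id}\}$, so this is a cosmetic rather than a substantive gap. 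Your alternative derivations of the two bounds from Lemma~\ref{lem:surf2} (for $\mathcal{C}_1$) and from Theorem~\ref{thm:sphereinv} (for $\mathcal{C}_i$) are also valid.

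On the sufficiency half you stop exactly where the paper does. The missing step — a recursive characterisation of $(2,3,1)^i$-gain-tight graphs, preceded by a verification that this subgroup-dependent count is matroidal and accompanied by geometric lemmas for whatever moves turn out to be needed — is the open problem the authors flag in Sections~\ref{subsec:mic} and~\ref{subsec:sphere}. Your observations that a trivially-gained $K_4$ already violates the balanced count (so a vertex-to-$K_4$ base block cannot arise), and that any additional moves would need new geometric lemmas because the coning reduction of Theorem~\ref{thm:spherecyclic} is unavailable once $\mathcal{C}_i\leq S$ and the unsymmetrised trivial-motion space on $\S$ is $3$-dimensional, are both accurate readings of the difficulty. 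In short: you have written a faithful expansion of the conjecture and correctly located the genuine gap; no complete argument exists in the paper.
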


\begin{table}
\caption{Summary of counts for the various symmetry groups on the sphere $\S$.} 
\begin{center}
\begin{tabular}{|c|c|c|c|c|}
\hline\noalign{\smallskip}
Group & Necessary count & Sufficient? \\
\noalign{\smallskip}\hline\noalign{\smallskip}
$\C_s$ & $(2,3,1)$-gain-tight & Theorem \ref{thm:spherecyclic}\\
$\C_m$ &  $(2,3,1)$-gain-tight & Theorem \ref{thm:spherecyclic}\\
$\C_i$ &  $(2,3,3)$-gain-tight & Theorem \ref{thm:sphereinv}\\
$\C_{mv}$, $m$ odd &  maximum $\mathscr{D}$-tight & Theorem \ref{thm:spherecyclic}\\
$\C_{mv}$, $m$ even &  maximum $\mathscr{D}$-tight & No, see \cite{jkt}\\
$\C_{mh}$, $m$ odd &  $(2,3,1)$-gain-tight & Theorem \ref{thm:spheredih}\\
$\C_{mh}$, $m$ even & $(2,3,1)^i$-gain-tight & Conjecture \ref{con:spherecmh}\\
$\S_{2m}$, $m$ odd &  $(2,3,1)^i$-gain-tight &  Conjecture \ref{con:spherecmh}\\
$\S_{2m}$, $m$ even &  $(2,3,1)$-gain-tight & Theorem \ref{thm:spheredih}\\
$\D_{m}$ &  $(2,3,0)^r$-gain-tight & ?\\
$\D_{mh}$ & Theorem~\ref{thm:maxwellnon-abelian} & ?\\
$\D_{md}$ &  Theorem~\ref{thm:maxwellnon-abelian} & ?\\
$\mathcal{T}, \mathcal{T}_h, \mathcal{T}_d, \mathcal{O}, \mathcal{O}_h, \mathcal{I},\mathcal{I}_h $, &  Theorem~\ref{thm:maxwellnon-abelian} & ?\\
\noalign{\smallskip}\hline
\end{tabular}
\end{center}
\end{table}

\subsection{The cylinder} For the cylinder, we offer the following conjectures.

\begin{con}\label{con:cylinderX}
Let $S$ be the cyclic group $\mathcal{C}_2$ representing $2$-fold rotation around an axis which is orthogonal to the $z$-axis. Let $(G,p)$ be an $S$-generic realisation on $\Y$ and let $(G_0,\psi)$ be the quotient $S$-gain graph of $G$. Then $(G,p)$ is $S$-isostatic if and only if $(G_0,\psi)$ is $(2,2,0)$-gain-tight.
\end{con}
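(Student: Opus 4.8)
The plan is to follow the same two-step strategy used for Theorems~\ref{thm:sphereinv}--\ref{thm:conecyclic}: first obtain a Henneberg-type recursive characterisation of $(2,2,0)$-gain-tight $\mathcal{C}_2$-gain graphs, and then verify that each building operation preserves $\mathcal{C}_2$-isostaticity of the orbit-surface rigidity matrix $O_{\Y}(G,p,\mathcal{C}_2)$. Necessity follows immediately from Theorem~\ref{thm:maxwell}, since here $k=2$ (the cylinder has type $2$) and $k_S=k_{\mathcal{C}_2}=0$ (the half-turn about an axis perpendicular to the $z$-axis is itself a symmetry operation, so there is no tangential isometry of $\Y$ that is symmetric with respect to this $\mathcal{C}_2$): a $\mathcal{C}_2$-isostatic framework forces $(G_0,\psi)$ to be $(2,2,0)$-gain-tight. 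So the content is sufficiency.

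For the combinatorial side, I would prove that every $(2,2,0)$-gain-tight $\mathcal{C}_2$-gain graph can be generated from a small list of base graphs (the single vertex with an unbalanced loop, and perhaps $K_4$ with two disjoint unbalanced loops added, or $2K_2$-type configurations — the precise base list should be read off from the extremal examples where no reduction applies) by the operations H1a, H1b, H1c, H2a, H2b, H2c, vertex-to-$K_4$, vertex-to-4-cycle, edge-joining, and — crucially — X-replacement (degree-4 vertex splitting). This is exactly the obstacle flagged in Section~\ref{subsec:mic}: when $m=0$ the minimum degree can be $4$, so Henneberg moves alone do not suffice and one is forced to handle a degree-$4$ vertex $v$. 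When $v$ has four distinct neighbours one must show that some X-replacement (pairing the four edges into two paths through two new degree-$3$ vertices) yields a $(2,2,0)$-gain-tight graph with simple covering graph; when $v$ has fewer neighbours or is incident to a loop, the lower-multiplicity cases force $v$ into a copy of $K_4$ (with trivial gains, by switching, using Propositions~\ref{prop:lem1}--\ref{prop:lem2} and Lemma~\ref{lem:balance}) or into a configuration admitting a $4$-cycle contraction or an edge-join reduction, arguing by the same $f$-counting and balance-tracking bookkeeping as in the proofs of Theorems~\ref{thm:211construction} and \ref{thm:221construction}. The genuinely new graph-theoretic work is the X-replacement reduction lemma for $\mathcal{C}_2$-gain graphs with the $(2,2,0)$ count, establishing that the relevant "Whiteley-type" counting argument (cf.\ \cite{jkt,N&R}) still produces a valid gain assignment keeping all unbalanced subgraphs within count.

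For the geometric side, Lemmas~\ref{lem:hen1SM}, \ref{lem:hen2SM}, \ref{lem:vtok4cylinderrotation}, \ref{lem:vto4cycleCC} (with $S=\mathcal{C}_2$ the half-turn, which is a reflection-type case in the $\M=\Y$ list once one checks the argument works for this axis) and \ref{lem:joinCC} already cover all operations \emph{except} X-replacement, so those steps are routine. The remaining geometric step is to show X-replacement preserves maximal rank of $O_{\Y}(G,p,\mathcal{C}_2)$. Following the approach sketched in Section~\ref{subsec:mic} and in \cite{jkt}, I would argue this at a special position: place the two new degree-$3$ joints at the same point (the image of the old joint $v$), apply the orbit analogue of the argument that a self-stress of the split framework restricts to one of the unsplit framework around $v$ (forcing the relevant stress coefficients and the normal-direction multiplier $\lambda_v$ to vanish, using that $p_i-p_j$ for the two bars at $v$ together with the cylinder normal at $p_v$ are not coplanar for a $\mathcal{C}_2$-generic configuration), conclude the split framework is independent at the degenerate configuration, and then perturb within $\Y$ to a $\mathcal{C}_2$-generic position. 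The subtlety — and the main obstacle — is that in $3$-space X-replacement need not preserve rigidity because two generic lines do not meet; on the cylinder one must either show the relevant two lines \emph{do} meet in the needed surface-constrained degenerate limit, or replace the "two lines intersect" input with a surface-specific limiting argument in the spirit of Lemma~\ref{lem:hen2SM}. If this cannot be made to work in full generality for the half-turn on $\Y$, the fallback is to record Conjecture~\ref{con:cylinderX} as stated and prove only the necessity direction together with whatever sub-cases of X-replacement are tractable; but the conjecture is asserting the counts \emph{are} sufficient, so the proof must push the X-replacement geometry through. Concretely:

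\begin{proof}[Proof sketch of Conjecture~\ref{con:cylinderX}]
Necessity is Theorem~\ref{thm:maxwell} with $k=2$, $k_S=0$. For sufficiency, first establish a recursive construction: every $(2,2,0)$-gain-tight $\mathcal{C}_2$-gain graph with simple covering graph is obtained from the single vertex carrying an unbalanced loop (together with a short explicit list of small exceptional graphs) by H1a, H1b, H1c, H2a, H2b, H2c, vertex-to-$K_4$, vertex-to-$4$-cycle, edge-joining, and X-replacement. The reductions to low-degree vertices proceed as in Theorems~\ref{thm:211construction} and \ref{thm:221construction}, tracking balance with Propositions~\ref{prop:lem1}--\ref{prop:lem2} and Lemma~\ref{lem:balance}; the new case is a degree-$4$ vertex $v$ with four distinct neighbours, for which a counting argument shows some X-replacement at $v$ preserves $(2,2,0)$-gain-tightness and simplicity of the covering graph. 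Then check each operation preserves maximal rank of $O_{\Y}(G,p,\mathcal{C}_2)$: Lemmas~\ref{lem:hen1SM}, \ref{lem:hen2SM}, \ref{lem:vtok4cylinderrotation}, \ref{lem:vto4cycleCC}, \ref{lem:joinCC} handle all but X-replacement, and X-replacement is handled by a degenerate-configuration-plus-perturbation argument analogous to Lemma~\ref{lem:vto4cycleCC}, using that for a $\mathcal{C}_2$-generic realisation the two bar directions at $v$ and the cylinder normal at $p_v$ are linearly independent. Finally, checking that $O_{\Y}$ of each base graph has maximal rank at a generic point completes the induction.
\end{proof}
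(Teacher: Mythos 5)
This statement is left open as a conjecture in the paper — there is no proof of sufficiency in the source — and the reasons the authors give for this are exactly the obstacles your sketch names. You have correctly identified the approach one would want (Henneberg-style recursion plus geometric preservation of rank, with necessity coming from Theorem~\ref{thm:maxwell} with $k=2$, $k_S=0$), and you have correctly located the difficulty: with $m=0$ the minimum degree in a $(2,2,0)$-gain-tight graph can be $4$, so X-replacement is unavoidable, and neither the combinatorial admissibility of X-replacement for this count nor the geometric preservation of rank of $O_{\Y}(G,p,\mathcal{C}_2)$ under X-replacement is established.

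However, what you have written does not close either gap; it restates them. The combinatorial step — that some pairing of the four edges at a degree-$4$ vertex yields an admissible X-replacement preserving $(2,2,0)$-gain-tightness and simplicity of the cover — is asserted via ``a counting argument shows'' with no argument given, and this is genuinely delicate: the $(2,2,0)$ count allows unbalanced subgraphs to be $(2,0)$-tight, so the bookkeeping of which new edges can be added without violating a tight unbalanced subgraph is not a routine transcription of Theorems~\ref{thm:211construction} or~\ref{thm:221construction}. The geometric step is the harder one and your proposed fix (place the two new joints at the same point, argue a self-stress restricts, perturb) is the vertex-to-$4$-cycle argument of Lemma~\ref{lem:vto4cycleCC}, which produces a \emph{split into two new vertices joined by a $4$-cycle through two old neighbours}; X-replacement is a different surgery (delete two nonadjacent edges, add a new vertex joined to their four endpoints), and the corresponding degenerate configuration places the new joint at the intersection of two bar-lines, which generically does not lie on the cylinder. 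This is precisely the ``two generic lines in $3$D need not intersect'' problem flagged in Section~\ref{subsec:mic}, and your sketch acknowledges it and then proceeds as if it were resolved. Until you produce an actual admissibility lemma and an actual geometric lemma for X-replacement on $\Y$ with the half-turn, this remains a conjecture, as the paper records it.
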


This conjecture is of particular interest because it implies that there is a symmetry-preserving motion in a framework that counts to be generically minimally rigid without symmetry. We illustrate such a motion in the following example.

\begin{example}
Let $G_0$ be the gain graph consisting of a 0-gain $K_4$ on vertices $a,b,c,d$, together with an additional edge $(c,d)$ with gain 1. Then, with symmetry group $\mathcal{C}_2$ as defined in Conjecture~\ref{con:cylinderX}, the covering graph $G$ of $G_0$ consists of two vertex disjoint copies of $K_4$ joined by two edges. Theorem \ref{thm:surf3} implies that generic realisations of $G$ (without symmetry) are rigid on the cylinder $\Y$. However, the quotient $\C_2$-gain graph $G_0$ of $G$ satisfies $|E(G_0)|=7<8=2|V(G_0)|-0$. Thus, embedded $\mathcal{C}_2$-generically on $\Y$, as in Figure \ref{fig:cylindermotion}, Theorem \ref{thm:maxwell} implies the existence of a non-trivial continuous motion on $\Y$.
\end{example}

\begin{figure}
\begin{center}
\begin{tikzpicture}
\draw[black,thick] (0,4) ellipse (1.5 and 0.75);
\draw[black,thick] (0,0) ellipse (1.5 and 0.75);
 \draw[black,thick]
  (-1.5,0) -- (-1.5,4);
\draw[black,thick]
(1.5,0) -- (1.5,4);


\draw[dashed] (-.5,1.2) -- (.5,2.6);

\filldraw (1,3) circle (3pt);
\filldraw (1,.8) circle (3pt);
\filldraw[gray] (0,3.5) circle (3pt);
\filldraw[gray] (0,1.3) circle (3pt);

\filldraw (-1,.8) circle (3pt);
\filldraw (-1,3) circle (3pt);
\filldraw (0,.3) circle (3pt);
\filldraw (0,2.5) circle (3pt);

 \draw[black,thick]
  (1,3) -- (0,3.5) -- (-1,3) -- (0,2.5) -- (1,3) -- (-1,3);

 \draw[black,thick]
  (0,3.5) -- (0,2.5);

 \draw[black,thick]
  (1,.8) -- (0,1.3) -- (-1,.8) -- (0,.3) -- (1,.8) -- (-1,.8);

 \draw[black,thick]
  (0,1.3) -- (0,.3);

\draw[black,thick]
(-1,3) -- (-1,.8);

\draw[black,thick]
(1,3) -- (1,.8);

\end{tikzpicture}
\end{center}
\caption{A $\C_2$-symmetric framework $(G,p)$ on the cylinder $\Y$ which has a non-trivial symmetry-preserving motion, but whose underlying graph $G$ is generically isostatic on $\Y$ (without symmetry).  The grey joints are at the `back' of the cylinder.}
\label{fig:cylindermotion}
\end{figure}
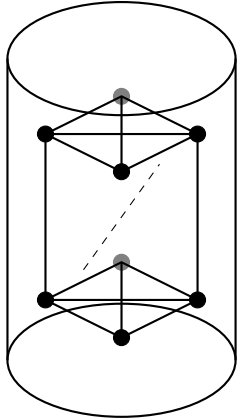

\begin{defn} \label{def:gainsparsitycyl}
Let $(H,\psi)$ be a $\mathcal{C}_{mh}$-gain graph, a  $\mathcal{C}_{mv}$-gain graph or a $\mathcal{S}_{2m}$-gain graph. Then
$(H,\psi)$ is called \emph{$(2,2,1)^r$-gain-sparse} if 
\begin{itemize}
\item $|F|\leq 2|V(F)|-2$ for any nonempty $F\subseteq E(H)$  and $v\in V(F)$ with $\langle F \rangle_v =\mathcal{C}_1$ or $\langle F \rangle_v =\mathcal{C}_{m'}$, $m'\leq m$;
\item $|F|\leq 2|V(F)|-1$ otherwise.
\end{itemize}
A $(2,2,1)^r$-gain-sparse graph $(H,\psi)$ satisfying $|F|= 2|V(F)|-1$ is called \emph{$(2,2,1)^r$-gain-tight}. 
\end{defn}

\begin{con}\label{thm:cylinderdihedral}
Let $S$ be the group  $\mathcal{C}_{mh}$, $\mathcal{C}_{mv}$ or $\mathcal{S}_{2m}$, where the rotational axis is the cylinder axis. Let $(G,p)$ be an $S$-generic framework in $\mathscr{R}^{\Y}_{(G,S,\theta)}$ with quotient $S$-gain graph $(G_0,\psi)$. Then $(G,p)$ is $S$-isostatic if and only if $(G_0,\psi)$ is $(2,2,1)^r$-gain tight.
\end{con}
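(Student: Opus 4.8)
The plan is to follow the two-part template used for Theorems~\ref{thm:sphereinv}--\ref{thm:conecyclic}: first establish necessity via a refined Maxwell-type count, then prove sufficiency by an inductive argument combining a recursive construction of $(2,2,1)^r$-gain-tight graphs with geometric lemmas showing each construction move preserves $S$-isostaticity on $\Y$.

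For necessity, I would prove an analogue of Theorem~\ref{thm:maxwellnon-abelian} adapted to the cylinder (valid also for the cyclic cases, since the argument does not use non-cyclicity). The key point is that $\Y$ has type $k=2$ (the tangential isometries being rotation about and translation along the $z$-axis), but $\C_{mh}$, $\C_{mv}$ and $\S_{2m}$ all have $k_S=1$: for $\C_{mv}$ only the $z$-translation is $S$-symmetric, while for $\C_{mh}$ and $\S_{2m}$ only the $z$-rotation is. Crucially, if $F\subseteq E(G_0)$ has $\langle F\rangle_v$ equal to $\C_1$ or to a cyclic rotation subgroup $\C_{m'}$ ($m'\le m$) about the cylinder axis, then \emph{both} tangential isometries are $\langle F\rangle_v$-symmetric, so $k_{\langle F\rangle_v}=2$; otherwise $\langle F\rangle_v$ contains a reflection or improper rotation and $k_{\langle F\rangle_v}=1$. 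Exactly as in the proof of Theorem~\ref{thm:maxwell}, switching a vertex does not change the rank of $O_{\Y}(G,p,S)$ (the surface-normal rows transform by the same isometry as the edge rows, up to a scalar), so after switching we may assume the gains on $F$ lie in $\langle F\rangle_v$; the corresponding submatrix of $O_{\Y}(G,p,S)$ is then the orbit-surface matrix for $\langle F\rangle_v$-symmetry on $\Y$, which by Lemma~\ref{lem:nullspace} has nullity at least $k_{\langle F\rangle_v}$. Applying this to an $S$-isostatic framework yields $|F|\le 2|V(F)|-k_{\langle F\rangle_v}$ for all $F$ and $|E(G_0)|=2|V(G_0)|-1$, i.e. $(G_0,\psi)$ is $(2,2,1)^r$-gain-tight in the sense of Definition~\ref{def:gainsparsitycyl}.

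For sufficiency, the first and main task is a recursive characterisation of $(2,2,1)^r$-gain-tight graphs. I would aim to show that every such graph is generated from a short list of base graphs — a single vertex with a loop carrying a reflection/improper-rotation gain, and $K_4+e$ with a reflection/improper-rotation gain on $e$ (a rotational gain on the loop or on $e$ would be forbidden by the refined count) — using the operations H1a--H1c, H2a--H2d, vertex-to-$K_4$, vertex-to-4-cycle and edge joining, as in Theorem~\ref{thm:221construction}. The subtlety over Theorem~\ref{thm:221construction} is that the partition of subgraphs into ``tight with bound $2|V|-2$'' (induced group $\C_1$ or $\C_{m'}$) versus ``bound $2|V|-1$'' is finer than balanced/unbalanced, so when reducing a low-degree vertex I must verify that no inverse move creates a rotational cycle violating the stronger bound, and when a forbidden configuration blocks all Henneberg moves I must produce the relevant $K_4$ (with trivial, hence rotational, gains) or a bridge, as in \cite{no,nop}. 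I expect that, after tracking induced subgroups carefully, Lemma~\ref{lemma:deg3} and the counting arguments of \cite{no,nop} go through with the obvious modifications; this is the step I expect to be the main obstacle, and it is conceivable that an extra operation (an $X$-replacement, or a dedicated move handling rotational $2$-cycles) is needed in some cases, which is why the statement is phrased as a conjecture rather than a theorem.

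For the geometric step, most of the required lemmas are already available: Lemma~\ref{lem:hen1SM} (Henneberg~1), Lemma~\ref{lem:hen2SM} (Henneberg~2), Lemma~\ref{lem:vtok4type1} (vertex-to-$K_4$, valid since $k_S=1$ on $\Y$ for these groups), Lemma~\ref{lem:vto4cycleCC} (vertex-to-4-cycle) and Lemma~\ref{lem:joinCC} (edge joining). Since Lemmas~\ref{lem:vto4cycleCC} and~\ref{lem:joinCC} are stated only for $\C_m$, $\C_s$, $\C_i$ on $\Y$, I would extend their (short) proofs to $\C_{mh}$, $\C_{mv}$ and $\S_{2m}$; the arguments only use the vanishing of the relevant $S$-symmetric self-stress space and that $p_1-p_{\alpha_2(2)}$, $p_1-p_{\alpha_3(3)}$ and the surface normal at $p_1$ are not coplanar at an $S$-generic point, both of which remain valid. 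Finally I would check the base cases directly by exhibiting an explicit $S$-generic realisation on $\Y$ and computing that the orbit-surface rigidity matrix has full rank $3|V(G_0)|-1$, as in the proofs of Theorems~\ref{thm:cylinderreflection} and~\ref{thm:conecyclic}. Assembling these gives that $(G,p)$ is $S$-isostatic if and only if $(G_0,\psi)$ is $(2,2,1)^r$-gain-tight.
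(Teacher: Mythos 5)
This statement is a \emph{conjecture} in the paper (Conjecture~\ref{thm:cylinderdihedral}), not a proven theorem, so there is no proof to compare against. Your proposal is a roadmap that follows exactly the two-step template the paper uses for its proven Theorems~\ref{thm:sphereinv}--\ref{thm:conecyclic}, and you correctly pinpoint why the paper stops short of a proof.

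Two things are worth spelling out. First, the \emph{necessity} half is not actually open: it follows, essentially as you describe, from the machinery of Theorems~\ref{thm:maxwell} and~\ref{thm:maxwellnon-abelian}. Your observation that the subgroup-tracking argument does not really need non-cyclicity is correct and is the relevant point for $\mathcal{S}_{2m}$ and $\mathcal{C}_{mh}$ with $m$ odd (which are cyclic), since for these groups $\langle F\rangle_v$ can still land in a purely rotational subgroup with $k_{\langle F\rangle_v}=2$, forcing the refined bound $|F|\leq 2|V(F)|-2$. Your computation of $k_{\langle F\rangle_v}$ for these groups on the cylinder (translation survives for $\mathcal{C}_{mv}$; rotation survives for $\mathcal{C}_{mh}$ and $\mathcal{S}_{2m}$; both survive for trivial and rotational subgroups) is right, as is the remark that a rotational gain on the base loop or on the extra edge of $K_4+e$ would violate $(2,2,1)^r$-gain-sparsity.

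Second, the genuine gap is exactly the one you flag: there is no recursive characterisation of $(2,2,1)^r$-gain-tight graphs analogous to Theorem~\ref{thm:221construction}. Because the refined count distinguishes subgraphs by the induced subgroup $\langle F\rangle_v$ rather than merely by balance, the arguments in Theorem~\ref{thm:221construction} and Lemma~\ref{lemma:deg3} do not carry over directly: when you undo a Henneberg move you must avoid not just creating an over-counted balanced subgraph but also creating an over-counted subgraph whose induced subgroup is a proper nontrivial rotation group, and the case analysis for degree-3 vertices with two or three neighbours becomes more delicate. This is the same type of obstruction the paper discusses in Section~\ref{subsec:mic} for the $m=0$ counts, and it is why the authors leave the statement as a conjecture. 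The geometric side is, as you say, largely in place: Lemmas~\ref{lem:hen1SM}, \ref{lem:hen2SM} and \ref{lem:vtok4type1} already cover these groups (the last because $k_S=1$), and extending Lemmas~\ref{lem:vto4cycleCC} and~\ref{lem:joinCC} from $\mathcal{C}_m,\mathcal{C}_s,\mathcal{C}_i$ to $\mathcal{C}_{mv},\mathcal{C}_{mh},\mathcal{S}_{2m}$ on $\Y$ should be routine since their proofs use only non-coplanarity at $S$-generic points and the dimension count $k_S=1$. So your plan is sound as far as it goes; the missing combinatorial lemma is the real open problem.
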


We remark that $(2,2,1)^r$-gain-tight graphs are a special class of $(2,2,1)$-gain-tight graphs.

For the groups $\mathcal{D}_m$, $\mathcal{D}_{mh}$ and $\mathcal{D}_{md}$ it is plausible that the necessary conditions given in Theorem \ref{thm:maxwellnon-abelian} are also sufficient, but difficult to prove due to the $(2,2,0)$-gain-sparsity count (recall Section~\ref{subsec:mic}). Moreover, the remarks in Section \ref{subsec:sphere} give a warning that unexpected behaviour may arise, and hence we do not provide explicit conjectures for these groups.


\begin{table}
\caption{Summary of counts for the various symmetry groups on the cylinder $\Y$. We use `containing' as short hand for a plane containing the $z$-axis and `perpendicular' for a line perpendicular to the $z$-axis.}
\begin{center}
\begin{tabular}{|c|c|c|c|c|}
\hline\noalign{\smallskip}
Group & rotation axis & reflection plane & Necessary count & Sufficient? \\
\noalign{\smallskip}\hline\noalign{\smallskip}
$\C_s$ & - & containing & $(2,2,1)$-gain-tight & Theorem \ref{thm:cylinderreflection}\\
$\C_s$ & - & $z=0$ & $(2,2,1)$-gain-tight & Theorem \ref{thm:cylinderreflection}\\
$\C_m$ & $z$-axis & - & $(2,2,2)$-gain-tight & Theorem \ref{thm:cylindercyclic}\\
$\C_2$ & perpendicular & - & $(2,2,0)$-gain-tight & Conjecture \ref{con:cylinderX}\\
$\C_i$ & - & - & $(2,2,1)$-gain-tight & Theorem \ref{thm:cylinderreflection}\\
$\C_{mv}$ & $z$-axis & containing & $(2,2,1)^r$-gain-tight & Conjecture \ref{thm:cylinderdihedral}\\
$\C_{mh}$ & $z$-axis & $z=0$ & $(2,2,1)^r$-gain-tight & Conjecture \ref{thm:cylinderdihedral}\\
$\S_{2m}$ & $z$-axis & $z=0$ & $(2,2,1)^r$-gain-tight & Conjecture \ref{thm:cylinderdihedral}\\
$\D_{m}$ & $z$-axis & $z=0$ & Theorem~\ref{thm:maxwellnon-abelian} & ?\\
$\D_{mh}$ & $z$-axis & $z=0$ & Theorem~\ref{thm:maxwellnon-abelian} & ?\\
$\D_{md}$ & $z$-axis & $z=0$ & Theorem~\ref{thm:maxwellnon-abelian} & ?\\\noalign{\smallskip}\hline
\end{tabular}
\end{center}
\end{table}

\subsection{The cone}

For the cone, note that there is no symmetry group which turns a generically rigid framework with a free action on the vertex set on the cone into a flexible one.
However we do suggest the following conjecture. 

\begin{con}\label{con:coneX}
Let $S$ be the group $\mathcal{C}_2$ representing $2$-fold rotation about an axis perpendicular to the $z$-axis (i.e., perpendicular to the axis of the cone) or the group $\mathcal{C}_s$, where the mirror plane of the reflection  contains the $z$-axis. Let $(G,p)$ be an $S$-generic framework in $\mathscr{R}^{\C}_{(G,S,\theta)}$ with quotient $S$-gain graph $(G_0,\psi)$. Then $(G,p)$ is $S$-isostatic if and only if $(G_0,\psi)$ is $(2,1,0)$-gain-tight.
\end{con}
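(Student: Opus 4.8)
Necessity is immediate: for both groups the only tangential isometry of the cone is the rotation about the $z$-axis, and its infinitesimal generator $(-y,x,0)$ is anti-invariant under a half-turn about a perpendicular axis and under a reflection in a plane through the $z$-axis, so $k_S=0$ in each case; Theorem~\ref{thm:maxwell} with $k=1$, $k_S=0$ then forces the quotient $S$-gain graph of an $S$-isostatic framework to be $(2,1,0)$-gain-tight. For sufficiency one follows the template of Theorems~\ref{thm:sphereinv}--\ref{thm:conecyclic}: first a recursive construction of $(2,1,0)$-gain-tight $\mathbb{Z}_2$-gain graphs (with simple covering graph), then a proof that each move in that construction preserves maximal rank of $O_{\C}(G,p,S)$. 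The point is that \emph{both} ingredients fall outside what is currently available, which is why the statement is only conjectural.

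\emph{The combinatorial step.} In contrast to the counts with $m\ge1$ handled in Section~\ref{sec:recchar}, a $(2,1,0)$-gain-tight graph may have minimum degree $4$: exhausting the inverse H1/H2, $K_4$-contraction and $4$-cycle-contraction moves can leave an irreducible graph of minimum degree $4$. One must therefore augment the toolkit by a degree-$4$ operation, the natural candidate being \emph{X-replacement} (delete a degree-$4$ vertex $v$ with neighbours $a,b,c,d$, add a new vertex $v'$, redistribute the four edges two-to-$v$ and two-to-$v'$, and add one new edge --- say between $a$ and $b$ --- with gain chosen so that the result is $(2,1,0)$-gain-tight with simple cover); V-replacement is not an option since it does not preserve $(2,\ell)$-sparsity. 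One also has to pin down the base graphs --- the minimal $(2,1,0)$-gain-tight $\mathbb{Z}_2$-gain graphs (a finite list to be determined; candidates include the two-vertex graph with two non-trivial loops and a pair of parallel edges of distinct gains, and suitably-gained graphs on the $K_5$ skeleton) --- and to replace the $(2,k,1)$ edge-joining move by a decomposition along bridges, observing that when $k_S=0$ a disjoint union of $S$-isostatic pieces is $S$-isostatic. The heart of this step is the assertion that every $(2,1,0)$-gain-tight graph which is not a base graph and admits no admissible inverse H1, H2, $K_4$- or $4$-cycle-contraction admits an admissible inverse X-replacement; this should be attacked by the critical-subgraph counting method of Theorems~\ref{thm:211construction} and~\ref{thm:221construction}, importing the structure theory of $(2,0)$-tight graphs carrying a fixed-point-free involution --- and it is precisely here that the pathological $(2,0)$-tight examples (e.g.\ $K_5$ plus a degree-$2$ vertex) do not intervene, since they have no free involution, which is what makes the conjecture plausible. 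As noted in Section~\ref{subsec:mic}, this combinatorial problem is at present completely open.

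\emph{The geometric step, and the main obstacle.} The inverse H1 and H2 moves are handled by Lemmas~\ref{lem:hen1SM} and~\ref{lem:hen2SM}, whose proofs are valid for any $k_S$ (the limit motion being trivial, hence zero, when $k_S=0$). The vertex-to-$K_4$, vertex-to-$4$-cycle and gluing lemmas (\ref{lem:vtok4cylinderrotation}--\ref{lem:joinCC}) are stated for $k_S=1$ and must be re-derived for $k_S=0$; the collapse-geometry limiting arguments appear to transfer since they are controlled by the surface type $k=1$ rather than by $k_S$, but Lemma~\ref{lem:joinCC} as stated fails whenever there is not exactly one isometry and must be replaced by a fresh argument for disjoint unions and bridges. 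The genuinely hard point, however, is X-replacement: one must show that splitting a degree-$4$ vertex preserves maximal rank of $O_{\C}(G,p,S)$. In the plane the corresponding non-symmetric statement rests on two generic lines meeting; in $\bR^3$, and \emph{a fortiori} on the cone, two generic lines need not meet --- nor need they meet on $\C$ --- so no elementary argument is available, and X-replacement in three dimensions is a long-standing open problem. The hope would be to use the symmetry: under $\C_2$ or $\C_s$ the two bars one wishes to cross form a single orbit, so $O_{\C}(G,p,S)$ records only one representative of the pair, and the obstructing incidence may reduce to a single line meeting a plane section of $\C$ rather than two lines meeting in $\bR^3$; making this precise --- either by a stress-counting argument on $O_{\C}$ in the spirit of Lemma~\ref{lem:vto4cycleCC}, or by a coning/projection reduction of the cone to a planar X-replacement problem as in the proof of Theorem~\ref{thm:spherecyclic} --- is the step we do not know how to carry out, and it is the reason the statement is left as a conjecture.
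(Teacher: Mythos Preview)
The statement is a \emph{conjecture}, and the paper offers no proof of it; your proposal is not a proof either, but rather a correct and well-informed explanation of why the statement remains open. Your analysis matches the paper's own diagnosis (Section~\ref{subsec:mic}): necessity is immediate from Theorem~\ref{thm:maxwell} with $k=1$, $k_S=0$, while sufficiency founders on the $(2,1,0)$-gain-tight recursive construction because minimum degree can be $4$, V-replacement fails to preserve sparsity, and X-replacement on surfaces in $\mathbb{R}^3$ is open (two generic lines need not meet on the surface). Your additional speculation --- that the order-$2$ symmetry might reduce the X-replacement incidence problem to something tractable --- goes beyond what the paper says and is a reasonable line of attack, but it is speculation, not an argument.

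One small correction: your remark that Lemma~\ref{lem:joinCC} ``must be replaced by a fresh argument'' when $k_S=0$ is right in spirit, but note that a disjoint union of two $S$-isostatic pieces with $k_S=0$ is already $S$-isostatic with no joining edge needed (the orbit-surface matrix is block-diagonal with square, full-rank blocks), so this particular step is actually \emph{easier}, not harder; the difficulty is entirely concentrated in the degree-$4$ reduction. Also, your list of candidate base graphs is somewhat off: a single vertex with two non-trivial loops is not a $\mathbb{Z}_2$-gain graph with simple cover (only one non-identity gain is available), so the base-graph enumeration for $(2,1,0)$ over $\mathbb{Z}_2$ needs more care than you indicate.
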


It is also plausible that the remaining groups ($\mathcal{C}_{mv}$, $\mathcal{D}_m$, $\mathcal{D}_{mh}$ and $\mathcal{D}_{md}$) can be understood similarly. For $\mathcal{C}_{mv}$ and $\mathcal{D}_m$, for example, the conjecture would be that $(2,1,0)^r$-gain tightness characterises the symmetry-forced isostatic frameworks (where $(2,1,0)^r$-gain-tightness is defined analogously to Def.~\ref{def:gainsparsitycyl}), since $k_{\langle F\rangle_v}=1$  if $\langle F\rangle_v$ is a purely rotational group (with the rotational axis being the axis of the cone) and $k_{\langle F\rangle_v}=0$ otherwise.   

\begin{table}
\caption{Summary of counts for the various symmetry groups on the cone $\C$. We use `containing' as short hand for a plane containing the $z$-axis and `perpendicular' for a line perpendicular to the $z$-axis.}
\begin{center}
\begin{tabular}{|c|c|c|c|c|}
\hline\noalign{\smallskip}
Group & rotation axis & reflection plane & Necessary count & Sufficient? \\
\noalign{\smallskip}\hline\noalign{\smallskip}
$\C_s$ & - & containing & $(2,1,0)$-gain-tight & Conjecture \ref{con:coneX}\\
$\C_s$ & - & $z=0$ & $(2,1,1)$-gain-tight & Theorem \ref{thm:conecyclic}\\
$\C_m$ & $z$-axis & - & $(2,1,1)$-gain-tight & Theorem \ref{thm:conecyclic}\\
$\C_2$ & perpendicular & - & $(2,1,0)$-gain-tight & Conjecture \ref{con:coneX}\\
$\C_i$ & - & - & $(2,1,1)$-gain-tight & Theorem \ref{thm:conecyclic}\\
$\C_{mv}$ & $z$-axis & containing & $(2,1,0)^r$-gain-tight & ?\\
$\C_{mh}$ & $z$-axis & $z=0$ & $(2,1,1)$-gain-tight & Theorem \ref{thm:conecyclic}\\
$\S_{2m}$ & $z$-axis & $z=0$ & $(2,1,1)$-gain-tight & Theorem \ref{thm:conecyclic}\\
$\D_{m}$ & $z$-axis & $z=0$ & $(2,1,0)^r$-gain-tight & ?\\
$\D_{mh}$ & $z$-axis & $z=0$ & Theorem~\ref{thm:maxwellnon-abelian} & ?\\
$\D_{md}$ & $z$-axis & $z=0$ & Theorem~\ref{thm:maxwellnon-abelian} & ?\\\noalign{\smallskip}\hline
\end{tabular}
\end{center}
\end{table}

\subsection{Non-free actions}

For the cylinder, rotational symmetry about the $z$-axis is necessarily a free group action. However, for reflection symmetry $s$ about a plane containing the $z$-axis, for example, this plane intersects the cylinder in 2 disjoint lines. If we allow symmetry-generic realisations to include joints lying on these lines then we must adapt the counts and the orbit-surface rigidity matrix accordingly. Any such  joint which is `fixed' by the reflection $s$ will have only 1 degree of freedom, as it has to stay on the reflection plane of $s$ and on the cylinder. While  we do not expect any new complication to arise in this more general situation, the proofs will become significantly more messy due to the reduced number of columns in the orbit-surface matrix for fixed vertices \cite{BSWWorbit}. 
Similar observations apply to the cone.

\subsection{Surfaces with 1 isometry}

In \cite{nop} Laman type theorems were developed for any surface with exactly 1 isometry. Here we have concentrated on the cone. We expect that our methods are adaptable to any other surface with one isometry (such as tori, hyperboloids and paraboloids).

However, we highlight that the same group acting on two different surfaces (that a priori have the same number of trivial motions) can give different numbers of symmetric trivial motions and hence different combinatorial counts with an example theorem.

\begin{thm}[Reflection symmetry on the elliptical cylinder]
Let $\F$ be an elliptical cylinder about the $z$-axis and let $\C_s$ be generated by a reflection whose mirror plane contains the $z$-axis. Let $(G,p)$ be a framework in $\mathscr{R}^{\F}_{(G,\C_s,\theta)}$ with quotient $\C_s$-gain graph $(G_0,\psi)$.
Then $(G,p)$ is $\C_s$-isostatic if and only if $(G_0,\psi)$ is $(2,1,1)$-gain-tight.
\end{thm}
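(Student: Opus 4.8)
The plan is to mirror the proof of Theorem~\ref{thm:conecyclic}. Two facts about the ambient surface fix the count. First, $\F$ is an irreducible algebraic surface whose only continuous isometry is translation along the $z$-axis, so $\F$ has type $k=1$. Second, because the mirror of the reflection generating $\C_s$ contains the $z$-axis, this $z$-translation is $\C_s$-symmetric, so the symmetric type of $\F$ under $\C_s$ is $k_{\C_s}=1$. (Note that a reflection in the plane $z=0$ would instead make the $z$-translation anti-symmetric, and the count would become $(2,1,0)$ --- precisely the discrepancy flagged in the discussion above; this is why the hypothesis on the mirror matters.) Given these two values, and reading the statement, as in the other theorems of this section, for $\C_s$-generic $(G,p)$, the necessity direction is immediate from Theorem~\ref{thm:maxwell} applied with $\M=\F$, $k=1$ and $k_S=1$.

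For sufficiency I would induct using the recursive characterisation of $(2,1,1)$-gain-tight gain graphs, Theorem~\ref{thm:211construction}: every such $\C_s$-gain graph is built from a single vertex with an unbalanced loop, from $K_4+e$ (trivial gains on the $K_4$, non-trivial gain on $e$), or from $K_5-e$ (all trivial gains), by a sequence of the moves H1a--H2d, vertex-to-$K_4$, vertex-to-4-cycle and edge joining. For the base of the induction I would write down an explicit $\C_s$-symmetric realisation of each seed graph on $\F$ --- most conveniently with the orbit representatives placed where the ellipse is axis-aligned, so that the surface normals are easy to exhibit --- and check that $O_{\F}(\,\cdot\,,\C_s)$ attains its maximal possible rank ($2$ for the loop, $11$ for $K_4+e$, $14$ for $K_5-e$). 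Since rank is maximised at $\C_s$-generic configurations, this shows the $\C_s$-generic realisations of the seeds are $\C_s$-isostatic.

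For the inductive step I would invoke the geometric lemmas of Section~\ref{sec:geom}. The key observation is that the proofs of Lemmas~\ref{lem:hen1SM}, \ref{lem:hen2SM}, \ref{lem:vtok4type1}, \ref{lem:vto4cycleCC}, \ref{lem:joinCC} and the auxiliary Lemma~\ref{lem:NOP} use only the following features of $(\M,S)$: $\M$ is irreducible and algebraic; $\M$ has type $k=1$; under $S$ it has symmetric type $k_S=1$; and $\M$ has two distinct principal curvatures at each generic point (this last being what drives the well-behaved $K_4$-contraction underlying Lemma~\ref{lem:NOP}). The elliptical cylinder has all of these --- its principal curvatures are $0$ along the ruling and a strictly positive function of the angular parameter in the orthogonal direction, hence distinct everywhere --- so each of those lemmas holds verbatim with $\F$ in place of $\Y$ or $\C$, and with $\C_s$ (mirror containing the $z$-axis) in the role of the type-$1$ symmetry group. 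Feeding these preservation results into the induction supplied by Theorem~\ref{thm:211construction}, exactly as in the proof of Theorem~\ref{thm:conecyclic}, completes the argument.

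I expect the main work to be bookkeeping rather than a genuinely new idea. One must re-read each geometric lemma of Section~\ref{sec:geom} to confirm that nothing beyond the four listed properties of $(\M,S)$ is secretly used --- in particular that Lemma~\ref{lem:joinCC} really needs only $k_S=1$ (so that the block orbit-surface matrix has a $2$-dimensional null space that the joining edge removes), and that the limiting and Taylor-expansion arguments behind Lemmas~\ref{lem:hen2SM} and~\ref{lem:NOP} never exploit the particular algebraic form of $\Y$ or $\C$. The other mildly fiddly point is the base-case rank computations, since the normals to $\F$ are less pleasant than those to $\Y$: one should commit to a single convenient configuration, verify full rank of the $11\times 12$ and $14\times 15$ orbit-surface matrices there, and conclude that the generic rank is the same.
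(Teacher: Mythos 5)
Correct, and this is exactly the proof the paper intends: the theorem appears in Section~\ref{sec:furtherwork} without any proof (the authors only remark that the cone methods adapt to surfaces with one isometry), so your proposal fills that gap. Your verification that the elliptical cylinder has type $k=1$, that a mirror containing the $z$-axis fixes the $z$-translation flex so $k_{\C_s}=1$, that the geometric lemmas of Section~\ref{sec:geom} depend only on $(k,k_S)=(1,1)$ and on generically distinct principal curvatures, and that the statement should be read for $\C_s$-generic $(G,p)$, are precisely the checks needed to run the proof of Theorem~\ref{thm:conecyclic} with the elliptical cylinder in place of the cone.
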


This is in contrast with the same group for the cone, where Theorem \ref{thm:maxwell} implies that the $\C_s$-gain graph must be $(2,1,0)$-gain-tight.

\subsection{Incidental symmetry} \label{subsec:incsym}

In this paper, we focused on the \emph{symmetry-forced} rigidity of symmetric frameworks on surfaces. More generally, one may ask when an $S$-symmetric framework $(G,p)$ on a surface $\M$ is not only $S$-symmetric infinitesimally rigid (i.e., $(G,p)$ has no non-trivial $S$-symmetric infinitesimal motion), but also infinitesimally rigid (i.e., $(G,p)$ has no non-trivial infinitesimal motion at all).

 A fundamental result in the rigidity analysis of (`incidentally') symmetric frameworks in Euclidean $d$-space is that the rigidity matrix $R(G,p)$ of an $S$-symmetric framework $(G,p)$ can be transformed into a block-decomposed form, where each block $R_i(G,p)$ corresponds to an irreducible representation $\rho_i$ of the group $S$ \cite{KG2,BS2}. This breaks up the rigidity analysis of $(G,p)$ into a number of independent subproblems. In fact, the symmetry-forced rigidity properties of $(G,p)$ are described by the block matrix $R_1(G,p)$ corresponding to the trivial irreducible representation $\rho_1$ of $S$. In \cite{BSWWorbit} the orbit rigidity matrix was derived to simplify the symmetry-forced rigidity analysis of Euclidean frameworks (the orbit rigidity matrix $O(G,p,S)$ is equivalent to the block matrix $R_1(G,p)$, but it can be constructed without using any methods from group representation theory).  Moreover, in the very recent paper \cite{schtan}, an `orbit rigidity matrix' was established for each of the blocks  $R_i(G,p)$, and these new tools were successfully used to characterise  $S$-generic infinitesimally rigid frameworks for a number of point groups $S$.
 
These methods can clearly be extended to analyse the infinitesimal rigidity of $S$-generic frameworks on surfaces. However, note that for each surface $\M$ and each symmetry group $S$ considered in this paper (except for the groups $\C_s$ and $\C_i$ on the cylinder $\Y$), there always exists an irreducible representation $\rho_i$ of $S$ with the property that there is no trivial $\rho_i$-symmetric infinitesimal motion (i.e., there is no trivial motion in the kernel of the corresponding block matrix $R_i(G,p)$). Therefore, we need to deal with a $(2,k,0)$-gain-sparsity count in each of these cases (more precisely, $(2,3,0)$-gain-sparsity for the sphere, $(2,2,0)$-gain-sparsity for the cylinder, and $(2,1,0)$-gain-sparsity for the cone), which gives rise to the difficulties outlined in Sections~\ref{subsec:sphere} and \ref{subsec:mic}.

 For the groups $\C_s$ and $\C_i$ on the cylinder $\Y$, however, the block-decomposed orbit-surface rigidity matrix consists of two blocks, one corresponding to the trivial representation $\rho_1$ of the group (this block is equivalent to the orbit-surface rigidity matrix) and one corresponding to  the other irreducible representation $\rho_2$ (this block is equivalent to an `anti-symmetric' orbit-surface  rigidity matrix), and for both of these blocks, one needs to consider the same $(2,2,1)$-gain-sparsity count to test whether the block has maximal rank. Therefore, we propose the following  conjecture.

\begin{con}\label{con:incsym}
Let $S$ be the group $\C_s$ or the group $\C_i$, and let $(G,p)$ be an $S$-generic framework on the cylinder  $\Y$. 
Then $(G,p)$ is isostatic if and only if $(G,p)$ is $S$-isostatic.
\end{con}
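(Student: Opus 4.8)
The plan is to reduce both ``$(G,p)$ is isostatic'' and ``$(G,p)$ is $S$-isostatic'' to the single combinatorial condition that the quotient $S$-gain graph $(G_0,\psi)$ is $(2,2,1)$-gain-tight, and then to invoke Theorem~\ref{thm:cylinderreflection} together with a new analogue of it for the \emph{anti-symmetric} block of the surface rigidity matrix.

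For a group $S$ of order $2$ the surface rigidity matrix $R_{\Y}(G,p)$ of an $S$-symmetric framework block-decomposes, after a change of basis adapted to the two irreducible representations $\rho_1$ (trivial) and $\rho_2$ (sign) of $S$, as $R_{\Y,1}(G,p)\oplus R_{\Y,2}(G,p)$; this is the surface analogue of the block-decomposition of \cite{KG2,BS2}, and each block is an $(|E(G_0)|+|V(G_0)|)\times 3|V(G_0)|$ matrix (the two surface-normal rows of a vertex orbit contribute one row to each block). The block $R_{\Y,1}$ is row-equivalent to the orbit-surface rigidity matrix $O_{\Y}(G,p,S)$; write $O'_{\Y}(G,p,S):=R_{\Y,2}(G,p)$ for the anti-symmetric block, whose rows are obtained from Definition~\ref{orbitmatrixdef} by inserting a sign on the $\tau(\psi(e))$-terms while leaving the surface-normal rows unchanged. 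Reading off the character table of $S$ one checks that for $\M=\Y$ and $S\in\{\C_s,\C_i\}$ the two-dimensional space of tangential isometries of $\Y$ splits as one $\rho_1$-symmetric and one $\rho_2$-symmetric trivial motion, so both $k_S=1$ and the anti-symmetric trivial-motion dimension $k'_S$ equal $1$; in particular $\dim\ker O_{\Y}(G,p,S)\ge 1$ and $\dim\ker O'_{\Y}(G,p,S)\ge 1$. Finally, since the $S$-action is free on $V(G)$ and on $E(G)$ (automatic for $\C_i$, and true for $\C_s$ by the standing assumption on fixed joints), we have $|V(G)|=2|V(G_0)|$ and $|E(G)|=2|E(G_0)|$, so the condition $|E(G)|=2|V(G)|-2$ forced by an isostatic framework on $\Y$ is identical to the edge count $|E(G_0)|=2|V(G_0)|-1$ appearing in $(2,2,1)$-gain-tightness and in $S$-isostaticity.

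Combining the block-decomposition with $\dim\ker R_{\Y}(G,p)\ge 2$ (and using that, because the normal rows form a block-diagonal matrix, a non-redundant set of edge rows together with them is automatically linearly independent), one obtains for $S$-generic $(G,p)$:
\begin{align*}
(G,p)\text{ isostatic}
&\iff \rank R_{\Y}(G,p)=3|V(G)|-2 \ \text{ and }\ |E(G_0)|=2|V(G_0)|-1\\
&\iff \rank O_{\Y}(G,p,S)=\rank O'_{\Y}(G,p,S)=3|V(G_0)|-1 \ \text{ and }\ |E(G_0)|=2|V(G_0)|-1 .
\end{align*}
By Theorem~\ref{thm:cylinderreflection}, the first rank equality together with the edge count is equivalent to $(G_0,\psi)$ being $(2,2,1)$-gain-tight, and that same theorem states that this is precisely the condition for $(G,p)$ to be $S$-isostatic. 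Hence the theorem reduces to the following statement $(\star)$: \emph{for $S\in\{\C_s,\C_i\}$ and $(G,p)$ $S$-generic on $\Y$, the block $O'_{\Y}(G,p,S)$ has maximal rank $3|V(G_0)|-1$ if and only if $(G_0,\psi)$ is $(2,2,1)$-gain-tight.}

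Statement $(\star)$ is proved by the two-step strategy of Theorem~\ref{thm:cylinderreflection}. For necessity one repeats the proof of Theorem~\ref{thm:maxwell} with $O'_{\Y}$ in place of $O_{\Y}$: switching a vertex scales the affected columns by $\tau(\alpha)$ up to sign and so preserves rank; a balanced edge set $F$, after switching to trivial gains, yields a submatrix row-equivalent to a standard surface rigidity matrix of a framework on $\Y$, forcing $|F|\le 2|V(F)|-2$; and for unbalanced $F$ the bound $\dim\ker\ge k'_S=1$ forces $|F|\le 2|V(F)|-1$. For sufficiency one runs the induction of Theorem~\ref{thm:221construction}: the base cases (a single vertex with an unbalanced loop, and $K_4+e$) are settled by an explicit rank computation of $O'_{\Y}$ at a convenient $S$-symmetric configuration, exactly as in the footnote to the proof of Theorem~\ref{thm:cylinderreflection}, and each admissible operation (H1a--c, H2a--c, vertex-to-$K_4$, vertex-to-$4$-cycle, edge joining) is shown to preserve the maximal rank of $O'_{\Y}$ by re-running Lemmas~\ref{lem:hen1SM}--\ref{lem:joinCC} with anti-symmetric infinitesimal motions and anti-symmetric self-stresses replacing the $S$-symmetric ones; the Bolzano--Weierstrass limiting arguments and the determinant/specialisation arguments carry over verbatim because the anti-symmetric motions still form a linear subspace containing a one-dimensional space of trivial motions. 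The main obstacle is exactly $(\star)$: although the combinatorics is word-for-word the same as in the symmetric case, one must check that no geometric lemma conceals a degeneracy special to the sign representation --- in particular that the base matrices $O'_{\Y}(K_4+e,p,S)$ genuinely have rank $11$ for $\C_s$ (for both admissible mirror positions) and for $\C_i$, and that the limiting constructions of Section~\ref{sec:geom}, restricted to the anti-symmetric subspace, do not disturb the trivial-motion bookkeeping on which those proofs rely; we expect these verifications to be routine but somewhat lengthy.
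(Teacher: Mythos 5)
The statement you were asked to prove is Conjecture~\ref{con:incsym}, which the paper explicitly leaves open; there is no proof in the text to compare against. Your proposal is essentially a careful formalisation of the authors' own motivating discussion in Section~\ref{subsec:incsym}: block-decompose $R_{\Y}(G,p)$ into a symmetric block (equivalent to $O_{\Y}(G,p,S)$) and an anti-symmetric block $O'_{\Y}(G,p,S)$, observe that the $1+1$ splitting of the two tangential isometries of $\Y$ for $S\in\{\C_s,\C_i\}$ gives each block a one-dimensional trivial kernel, and reduce the conjecture to the statement $(\star)$ that the anti-symmetric block attains rank $3|V(G_0)|-1$ at an $S$-generic configuration if and only if $(G_0,\psi)$ is $(2,2,1)$-gain-tight. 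That reduction, including the rank-additivity argument and the check that the free action makes the isostatic edge count on $G$ equivalent to $|E(G_0)|=2|V(G_0)|-1$, is correct.

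The gap is that $(\star)$ \emph{is} the content of the conjecture, and you do not prove it; you defer it as ``routine but somewhat lengthy.'' Establishing $(\star)$ would require an anti-symmetric analogue of Theorem~\ref{thm:maxwell}, anti-symmetric analogues of every geometric lemma in Section~\ref{sec:geom} (the contraction and Bolzano--Weierstrass limiting arguments in Lemmas~\ref{lem:hen2SM}, \ref{lem:NOP} and \ref{lem:vtok4type1}, and the self-stress argument of Lemma~\ref{lem:vto4cycleCC}), and explicit base-case rank computations for $O'_{\Y}$. The combinatorial side (Theorem~\ref{thm:221construction}) does indeed transfer unchanged, but the geometric side is not automatic: Lemma~\ref{lem:vtok4type1}, for instance, normalises the motion to be orthogonal to the \emph{unique} trivial $S$-symmetric motion, and the anti-symmetric trivial motion is a different isometry (rotation about the axis rather than $z$-translation for a vertical mirror, and vice versa for the horizontal mirror or for inversion), so the bookkeeping must be re-verified case by case. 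Until that is done the argument is a strategy, not a proof --- which is precisely why the paper records this statement as a conjecture rather than a theorem.
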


\subsection{Algorithmic implications}

We expect that $(k,\ell,m)$-gain-sparsity can be checked deterministically in polynomial time whenever $(k,\ell,m)$-gain-sparsity is a matroidal property. This would confirm that our theorems provide efficient combinatorial descriptions of symmetry-forced rigidity. We leave the exact details to the reader, but remark that: the quotient gain graph of a given symmetric simple graph can clearly be obtained in polynomial time;  $(2,\ell,\ell)$-gain-sparsity for $\ell=0,1,2,3$ is known to be polynomial time computable \cite{L&S}; 
the case corresponding to Theorem \ref{thm:spherecyclic} has been considered \cite[Section $10$]{jkt}; and the remaining cases considered in this paper can be checked using similar arguments.

\section*{Acknowledgements}

We would like to thank the anonymous referee for a very careful reading and numerous helpful suggestions.

%




\section*{Appendix}

We adopt the set-up of Lemma~\ref{lem:NOP}. We claimed that
 the sequence of well-behaved frameworks in Lemma~\ref{lem:NOP}  takes $u^k$ to $$(u_{{x_1}(1)},u_{x_1(1)},u_{x_1(1)},u_{x_1(1)},u_{x_2(1)},u_{x_2(1)},u_{x_2(1)},u_{x_2(1)},\dots, u_{x_{|S|}(1)},u_{x_{|S|}(1)},u_{x_{|S|}(1)},u_{x_{|S|}(1)}).$$   

Consider an $S$-symmetric infinitesimal motion $u$ of $(H,p)$.
By definition, $u_{{x_j}(i)}\cdot\N_{{x_j}(i)} = 0$, where $\N_{{x_j}(i)}$ is the unit normal at $p_{{x_j}(i)}$ and $(p_{x_j(i)}-p_{x_j(i')})\cdot (u_{x_j(i)}-u_{x_j(i')}) = 0$ for $1 \leq i,i'  \leq 4$, $1 \leq j  \leq |S|$.
Since, for each $j$, $(K^j, p|_j)$  is infinitesimally rigid in $\mathbb{R}^3$, the restricted motion $u|_{K^j}$ is equal to $u^{(j)}_a + u^{(j)}_b$, where $u^{(j)}_b$ is
determined by translation by the vector $b^j$ and where $u^{(j)}_a$  corresponds to an infinitesimal
rotation about a line through $p_{x_j(1)}$ with direction vector $a^j$. 

We will show that  the subsequence  $u^k|_{K^j}$ converges to the limit $(u_{x_j(1)},u_{x_j(1)}, u_{x_j(1)},u_{x_j(1)})$ for some $j\in \{1,\dots, |S|\}$. By symmetry, this suffices to prove the claim. The argument that follows is a direct adaptation of Lemma~\cite[Lemma 5.4]{nop}.

We have $u_{x_j(1)}=b^j$ and we may choose $a^j$ so that $u_{x_j(i)}-u_{x_j(1)}=(p_{x_j(i)}-p_{x_j(1)}) \times a^j$ for $i=2,3,4$. By elementary manipulation of the cross product this gives us the equations \begin{equation} \label{eqn:app1} a^j \cdot (\N_{x_j(i)} \times (p_{x_j(i)}-p_{x_j(1)}))+b^j\cdot \N_{x_j(i)}=0 \quad \textrm{ for } i=2,3,4. \tag{A.1}\end{equation}
We have $$\frac{d(p|_j)}{d(s_{x_j(i)})}= \hat{s}_j+\kappa_{s_j} s_{x_j(i)}\hat{n}_j + r_{s_{x_j(i)}} \,  \textrm { and } \, \frac{d(p|_j)}{d(t_{x_j(i)})}= \hat{t}_j+\kappa_{t_j} t_{x_j(i)}\hat{n}_j + r_{t_{x_j(i)}}$$ where $\|r_{s_{x_j(i)}}\|$
and $\|r_{t_{x_j(i)}}\|$ are of order  $\|(s_{x_j(i)},t_{x_j(i)})\|^2$, and also
the normal vectors $$\N(s_{x_j(i)},t_{x_j(i)})=\frac{d(p|_j)}{d(s_{x_j(i)})}(s_{x_j(i)},t_{x_j(i)}) \times \frac{d(p|_j)}{d(t_{x_j(i)})}(s_{x_j(i)},t_{x_j(i)}).$$
This gives us, in a neighbourhood of $p_{x_j(1)}$, a continuous choice of  \begin{equation} \label{eqn:app2} \N(s_{x_j(i)},t_{x_j(i)})= \hat{s}_j \times \hat{t}_j + \kappa_{t_j} t_{x_j(i)} \hat{s}_j \times \hat{n}_j + \kappa_{s_j} s_{x_j(i)} \hat{n}_j \times \hat{t}_j + \overline{r}_j   = \hat{n}_j - (\kappa_{t_j} t_{x_j(i)} \hat{t}_j + \kappa_{s_j} s_{x_j(i)} \hat{s}_j )  + \overline{r}_j, \tag{A.2} \end{equation}
where $\|\overline{r}_j\|$ is of order $\|(s_{x_j(i)},t_{x_j(i)})\|^2$.
At the point $p_{x_j(i)}^\epsilon=p|_j(\epsilon s_{x_j(i)},\epsilon t_{x_j(i)})$, by (\ref{eqn:app2}), these normals take the form $$ \N_{x_j(i)}^\epsilon=\N(\epsilon s_{x_j(i)},\epsilon t_{x_j(i)})= \hat{n}_j - \epsilon(\kappa_{t_j} t_{x_j(i)}\hat{t}_j,+\kappa_{s_j} s_{x_j(i)}\hat{s}_j)+\overline{r}_{x_j(i)}^\epsilon,$$
where $\| \overline{r}_{x_j(i)}^\epsilon\|=O(\epsilon^2)$. By (\ref{eqn:app1}), an infinitesimal motion $u^{\epsilon}$ of $(K^j,p|_j^\epsilon)$ on $\M$ has associated equations  \begin{equation} \label{eqn:app3} a^{j,\epsilon} \cdot (\N_{x_j(i)}^\epsilon \times (p_{x_j(i)}^\epsilon-p_{x_j(1)}))+b^{j,\epsilon}\cdot \N_{x_j(i)}^\epsilon=0 \quad \textrm{ for } i=2,3,4. \tag{A.3} \end{equation} We can now identify the cross product $ \N_{x_j(i)}^\epsilon \times (p_{x_j(i)}^\epsilon-p_{x_j(1)})$ as
$$\hat{n}_j - \epsilon(\kappa_{t_j} t_{x_j(i)}\hat{t}_j,+\kappa_{s_j} s_{x_j(i)}\hat{s}_j) \times (\epsilon(s_{x_j(i)}\hat{s}_j+t_{x_j(i)}\hat{t}_j)+\frac{1}{2}\epsilon^2(\kappa_{s_j} s_{x_j(i)}^2+\kappa_{t_j} t_{x_j(i)}^2)\hat{n}_j)+R_{x_j(i)}^\epsilon $$ with $\| R_{x_j(i)}^\epsilon\|= O(\epsilon^3)$. We may assume, by passing to a subsequence, that $\epsilon$ runs through a sequence $\epsilon_k$ tending to zero and that the associated unit norm motions $u^\epsilon$ converge to a limit motion $u^0$ of the degenerate framework
 $(K^j,(p_{x_j(1)},p_{x_j(1)},p_{x_j(1)}, p_{x_j(1)}))$ on $\M$. 

Let $b^{j,0}=u_{x_j(1)}^0$ and let $b^{j,\epsilon}$ and $a^{j,\epsilon}$ be the associated vectors. 
While $b^{j,\epsilon}=u_{x_j(1)}^\epsilon$ converges to $b^{j,0}$, as $\epsilon=\epsilon_k\rightarrow 0$, the sequence $(a^{j,\epsilon_k})$ may be unbounded.
However, in view of the three equations $u_{x_j(i)}^\epsilon-u_{x_j(1)}^\epsilon=(p_{x_j(i)}^\epsilon-p_{x_j(1)})\times a^{j,\epsilon}$ and the definition of $p_{x_j(i)}^\epsilon$ it follows that $\|a^{j,\epsilon_k}\|$ is at worst of order $1/\epsilon_k$. We shall show that $\|a^{j,\epsilon_k}\|$ is in fact bounded and so, from the equation above, the desired conclusion follows.
Using (\ref{eqn:app3}) we see that
$$ a^{j,\epsilon} \cdot (\epsilon s_{x_j(i)} \hat{t}_j - \epsilon t_{x_j(i)} \hat{s}_j - \epsilon^2 s_{x_j(i)}t_{x_j(i)}(\kappa_{s_j}-\kappa_{t_j})\hat{n}_j +R_{x_j(i)}^\epsilon)-\kappa_{s_j}(b^{j,\epsilon} \cdot \hat{s}_j)\epsilon s_{x_j(i)} - \kappa_{t_j}(b^{j,\epsilon} \cdot \hat{t}_j)\epsilon t_{x_j(i)} + R_{x_j(i)}^\epsilon $$ is equal to zero, where $R_{x_j(i)}^\epsilon=\|b^{j,\epsilon} \cdot R_{x_j(i)}^\epsilon\|=O(\epsilon^2)$.
Note that $\|a^{j,\epsilon}\cdot R_{x_j(i)}^\epsilon\|=O(\epsilon^2)$ and introduce coordinates $a_s^{j,\epsilon}, a_t^{j,\epsilon}, a_n^{j,\epsilon}$ for $a^{j,\epsilon}$. Inputting these coordinates into the equation above and canceling a factor of $\epsilon$, it follows that
$$ (a_s^{j,\epsilon}\hat{s}_j+a_t^{j,\epsilon} \hat{t}_j+a_n^{j,\epsilon}\hat{n}_j)\cdot (s_{x_j(i)}\hat{t}_j-t_{x_j(i)}\hat{s}_j-\epsilon s_{x_j(i)}t_{x_j(i)}(\kappa_{s_j}-\kappa_{t_j})\hat{n}_j)-\kappa_{s_j}(b^{j,\epsilon}\cdot \hat{s}_j)s_{x_j(i)}-\kappa_{t_j}(b^{j,\epsilon}\cdot \hat{t}_j)t_{x_j(i)}$$ is of order $O(\epsilon)$ for $i=2,3,4$. Thus 
$$ -a_s^{j,\epsilon} t_{x_j(i)} + a_t^{j,\epsilon} s_{x_j(i)} - a_n^{j,\epsilon} \epsilon s_{x_j(i)}t_{x_j(i)}(\kappa_{s_j}-\kappa_{t_j})=d_{x_j(i)}^{j,\epsilon} $$ for $i=2,3,4,$ where $$d_{x_j(i)}^{j,\epsilon}=b^{j,\epsilon} \cdot (\kappa_{s_j} s_{x_j(i)}\hat{s}_j+\kappa_{t_j} t_{x_j(i)} \hat{t}_j)+X_{x_j(i)}^{j,\epsilon}$$ with $X_{x_j(i)}^{j,\epsilon}=O(\epsilon)$. Let $\eta_j=\epsilon(\kappa_{s_j}-\kappa_{t_j})$ for $i=2,3,4$, let $A_{j,\epsilon}$ be the matrix
\[ \begin{pmatrix}-t_{x_j(2)} & s_{x_j(2)} & -s_{x_j(2)}t_{x_j(2)}\eta_j\\ -t_{x_j(3)} & s_{x_j(3)} & -s_{x_j(3)}t_{x_j(3)}\eta_j \\ -t_{x_j(4)} & s_{x_j(4)} & -s_{x_j(4)}t_{x_j(4)}\eta_j \end{pmatrix} \] and note that det$A_{j,\epsilon}=C\epsilon$ for some nonzero constant $C$. By Cramer's rule we have \[ a_n^{j,\epsilon} = (\mbox{det }A_{j,\epsilon})^{-1}\mbox{det} \begin{pmatrix}-t_{x_j(2)} & s_{x_j(2)} & d_{x_j(2)}^\epsilon\\ -t_{x_j(3)} & s_{x_j(3)} & d_{x_j(3)}^\epsilon\\-t_{x_j(4)} & s_{x_j(4)}& d_{x_j(4)}^\epsilon \end{pmatrix} =(\mbox{det }A_{j,\epsilon})^{-1}\mbox{det}\begin{pmatrix} -t_{x_j(2)}& s_{x_j(2)} & X_{x_j(2)}^\epsilon\\ -t_{x_j(3)} & s_{x_j(3)} & X_{x_j(3)}^\epsilon\\ -t_{x_j(4)} & s_{x_j(4)} & X_{x_j(4)}^\epsilon\end{pmatrix}\] since the column for $d_{x_j(i)}^{j,\epsilon}-X_{x_j(i)}^{j,\epsilon}$ is a linear combination of the first two columns. It follows that the sequence $a_n^{j,\epsilon_k}$ is bounded. The boundedness of $(a_s^{j,\epsilon_k})$, and similarly $(a_t^{j,\epsilon_k})$, follows more readily, since \[ a_s^{j,\epsilon}=(\mbox{det }A_{j,\epsilon})^{-1}\mbox{det}\begin{pmatrix} d_{x_j(2)}^\epsilon & s_{x_j(2)} & -s_{x_j(2)}t_{x_j(2)}\eta_j\\ d_{x_j(3)}^\epsilon & s_{x_j(3)} & -s_{x_j(3)}t_{x_j(3)}\eta_j\\ d_{x_j(4)}^\epsilon & s_{x_j(4)} & -s_{x_j(4)}t_{x_j(4)} \eta_j \end{pmatrix} \] and the $\epsilon$ factors cancel. Thus, the sequence of vectors $a^{j,\epsilon_k}$ is bounded, as desired.

\end{document}